\newtheorem{theorem}{Theorem}
\newtheorem{proposition}{Proposition}
\newtheorem{lemma}{Lemma}
\newtheorem{remark}{Remark}
\newtheorem{definition}{Definition}
\theoremstyle{remark}
\newcommand{\re}{\text{\rm Re }}
\newcommand{\im}{\text{\rm Im }}
\newcommand{\Sb}{\text{\bf S}}
\newcommand{\tb}{\text{\bf t}}
\newcommand{\Hol}{\text{\rm Hol }}
\newcommand{\emk}{\text{\rm cap }}
\newcommand{\D}{\mathbb{D}}
\newcommand{\Ha}{\mathbb{H}}
\newcommand{\de}{\partial}
\newcommand{\R}{\mathbb R}
\newcommand{\C}{\mathbb C}
\newcommand{\B}{\mathbb B}
\def\rg{{\sf rg}\,}
\DeclareMathOperator{\Gr}{Gr}
\DeclareMathOperator{\kernel}{ker}
\DeclareMathOperator{\cokernel}{coker}
\DeclareMathOperator{\virtcard}{virtcard}
\DeclareMathOperator{\virtdim}{virtdim}
\DeclareMathOperator{\ind}{ind}
\DeclareMathOperator{\id}{id}
\DeclareMathOperator{\spn}{span}
\begin{document}

\title{Classical and stochastic L\"owner-Kufarev equations
}
\thanks{All the authors have been supported by the ESF Networking Programme HCAA. The author$^{\dag}$ has been  supported by ERC grant `HEVO - Holomorphic Evolution Equations', no. 277691; The authors$^{\ddag}$ has been partially supported by the Ministerio de Econom\'{\i}a y Competitividad and the European Union (FEDER), projects
MTM2009-14694-C02-02 and MTM2012-37436-C02-01, and  by La Consejer\'{\i}a de Educaci\'{o}n y Ciencia de la Junta de Andaluc\'{\i}a; The author$^{\sharp}$ has been  supported by the grants of the Norwegian Research Council \#204726/V30 and  \#213440/BG.}
\author[Bracci, Contreras, D{\'\i}az-Madrigal, Vasil'ev]{Filippo Bracci$^{\dag}$, Manuel D. Contreras$^{\ddag}$, Santiago D{\'\i}az-Madrigal$^{\ddag}$, and Alexander Vasil'ev$^{\sharp}$}

\address{F.~Bracci:\\
              Dipartimento Di Matematica,
Universit\`{a} di Roma `Tor Vergata'\\
Via Della Ricerca Scientifica 1\\ 00133
Roma, Italy}
\email{fbracci@mat.uniroma2.it}           

\address{M.~D.~Contreras and  S.~D{\'\i}az-Madrigal:\\
              Departamento de Matem\'atica Aplicada II\\
Escuela T\'ecnica Superior de Ingenier{\'\i}a\\
Universidad de Sevilla\\
Camino de los Descubrimientos, s/n\\
41092 Sevilla, Spain}
 \email{contreras@us.es; madrigal@us.es}
 
\address{A.~Vasil'ev:
Department of Mathematics\\
University of Bergen\\
 P.O.~Box~7803\\
 Bergen N-5020, Norway}
 \email{alexander.vasiliev@math.uib.no}

\maketitle

\begin{abstract}
In this paper we present  a historical and scientific account of the development of the theory of the L\"owner-Kufarev classical and stochastic
equations spanning the 90-year period from the seminal paper by K.~L\"owner in 1923 to recent generalizations and stochastic versions and their relations to conformal field theory.
\keywords{L\"owner \and Kufarev \and Pommerenke \and Schramm \and Evolution family \and Subordination chain \and Conformal mapping \and Integrable system \and Brownian motion}

\subjclass[2010]{Primary: 01A70, 30C35; Secondary: 17B68, 70H06, 81R10}
\end{abstract}

\tableofcontents

\section{Introduction}
\label{intro}

The L\"owner theory went through several periods of its development. It was born in 1923 in the seminal paper by L\"owner~\cite{Loewner}, its {\bf formation}
was completed in 1965 in the paper by Pommerenke~\cite{Pommerenke1} and was finally formulated thoroughly in his monograph~\cite{Pommerenke} unifying L\"owner's
ideas of semigroups and evolution equations and Kufarev's contribution~\cite{Kufarev, Kufarev47} of the $t$-parameter  differentiability (in the Carath\'eodory kernel) of a family of conformal maps  of simply connected domains
$\Omega(t)$ onto the canonical domain $\mathbb D$, the unit disk in particular, and of PDE for subordination L\"owner chains of general type. It is worthy  mentioning that the 20-year period between papers~\cite{Loewner} and~\cite{Kufarev} was illuminated by Goluzin's impact~\cite{Goluzin36} on further applications of L\"owner's parametric method to extremal problems for univalent functions, as well as works by Fekete and Szeg\"o~\cite{FeketeS}, Peschl~\cite{Peschl}, Robinson~\cite{Robinson}, Komatu~\cite{Komatu}, Bazilevich~\cite{Bazilevich36, Bazilevich} related to this period.

The next period became {\bf applications} of the {\it parametric method} to
solving concrete problems for conformal maps culminating at the de~Branges proof~\cite{Branges} of the celebrating Bieberbach conjecture~\cite{Bieberbach} in 1984. During approximately
16 years after this proof the interest to univalent functions had been somehow decaying. However, the {\bf modern period} has been marked by burst of interest to Stochastic (Schramm)-L\"owner Evolution (SLE) which has implied an elegant description of several 2D conformally invariant statistical physical systems at criticality by means of a solution to the Cauchy problem for the L\"owner equation with a random driving term given by 1D Brownian motion, see \cite{SLE, Schramm}. At the same time, several connections with mathematical physics were
discovered recently, in particular, relations with a singular representation of the Virasoro algebra using L\"owner martingales in \cite{BB, Friedrich, FriedrichWerner, MK} and with a Hamiltonian formulation and  a construction of the KP integrable hierarchies in \cite{MarkProkhVas, MarkinaVasiliev, MV2011}.

\section{L\"owner}

Karel L\"{o}wner was born on  May 29, 1893 in L\'any, Bohemia.
 He also used
the German spelling Karl of his first name until his immigration to the U.S.A. in 1939 after Nazis occupied Prague, when he changed it to Charles Loewner as a new start in a new country.
Although coming from a Jewish family and speaking Czech at home, all of his education was in German, following his father's wish. After finishing a German Gimnasium in Prague in 1912 he entered the Charles University of Prague and received his Ph.D. from this university in 1917 under the supervision of Georg Pick.

After four and a half years in German Technical University in Prague where the mathematical environment was not stimulating enough, L\"owner took up a position at the University of Berlin (now Humboldt Universit\"at Berlin) with great enthusiasm where he started to work surrounded by Schur, Brauer, Hopf, von~Neumann, Szeg\"o, and other  famous names. Following  a brief lectureship  at Cologne in 1928 L\"owner returned back to the Charles University of Prague where he hold a chair in Mathematics until the occupation of Czechoslovakia in 1939. When the Nazis
occupied Prague, he was put in jail. Luckily, after paying
the `emigration tax'  twice over he was allowed to leave the country with
his family and moved, in 1939, to the U.S.A. After von Neumann arranged a position for him at the Louisville University, L\"owner was able to start from the bottom his career. Following further appointments at the Brown (1944) and Syracuse (1946) Universities, he moved in 1951 to his favorite place, California, where he worked at the Stanford University until his death. His former Ph.D. student in Prague, Lipman Bers, testifies~\cite{OConnor} that L\"owner was a man whom everybody liked, a man at peace with himself, a man who was incapable of malice. He was  a great teacher, Lipman Bers, Adriano Garcia, Gerald Goodman, Carl Fitzgerald,  Roger Horn, Ernst Lammel, Charles Titus were his students among 26 in total. L\"owner died as a Professor Emeritus in Stanford on January 8, 1968, after a brief illness. For more about L\"owner's biography, see~\cite{OConnor}.

L\"owner's work covers wide areas of complex analysis and
differential geometry, and displays his deep
understanding of Lie theory and his passion for semigroup theory.
Perhaps it is worth to mention here two of his discoveries. First was his introduction of infinitesimal methods for univalent mappings~\cite{Loewner}, where he
defined basic notions of semigroups of conformal maps of $\mathbb D$ and their evolution families leading to the sharp estimate $|a_3|\leq 3$ in the Bieberbach conjecture
$|a_n|\leq n$  in the class $\Sb$ of normalized univalent  maps $f(z)=z+a_2z^2+\dots$ of the unit disk $\mathbb D$.

 It is well-known that the
Bieberbach conjecture was finally proven by de Branges in 1984 \cite{Branges1, Branges} . In his proof, de Branges
introduced ideas related to L\"owner's but he used only a distant relative of L\"owner's
equation in connection with his main apparatus, his own rather sophisticated theory
of composition operators.
However, elaborating on de Branges' ideas, FitzGerald and Pommerenke
\cite{F-P} discovered how to avoid altogether the composition operators and rewrote the proof in classical
terms, applying the \emph{bona fide} L\"owner equation and providing in this way a direct and powerful application of L\"owner's classical method.

The second paper~\cite{Loewner2}, we want to emphasize on, is dedicated to properties of $n$-monotonic matrix functions,
which turned to be of importance for electrical engineering and for quantum physics. L\"owner was also interested in the problems of fluid mechanics, see e.g.,~\cite{LoewnerFM},
while working at Brown University 1944--1946 on a war-related program.
Starting with some unusual applications of the theory of univalent functions to the flow of incompressible fluids, he later applied his methods to the difficult
problems of the compressible case. This led him naturally to the study of partial differential equations in which he obtained significant differential inequalities and theorems regarding general conservation laws.

\section{Kufarev}

The counterpart subordination chains and the L\"owner-Kufarev PDE for them were introduced by Pavel Parfen'evich Kufarev (Tomsk,  March 18,
1909 -- Tomsk, July 17, 1968).
Kufarev entered the Tomsk State University in 1927, first, the Department of Chemistry, and then, the Department of Mathematics, which he
successfully finished in 1931.  After a year experience in industry in Leningrad he returned back to Tomsk in 1932 and all his academic life
was connected with this university, the first university in Siberia. Kufarev started to work at the Department of Mathematics led by professor
Lev Aleksandrovich Vishnevski{\u\i} (1887--1937). At the same time, two prominent Western mathematicians came to Tomsk.
One was Stefan Bergman (1895--1977).
Being of Jewish origin he was forced from his position in Berlin in 1933 due to the `Restoration of the civil service', an anti-Semitic Hitler's law. Bergman came to Tomsk
in 1934 and was working there until 1936, then at Tbilisi in Georgia  in 1937 and had to leave Soviet Union under Stalin's oppression towards foreign scientists.
For two years
he was working at the Institute Henri Poincar\'e in Paris, and then, left France for the United States in 1939 because of German invasion, he finally worked at Stanford from 1952 together with L\"owner.

The history of the other one is more tragic. Fritz Noether (1884--1941), the youngest   son of Max Noether (1844--1921) and the younger brother of Emmy Noether (1882--1935), came to
Tomsk in the same year as Bergman, because of the same reason, and remained there until 1937, when Vishnevski{\u\i} and `his group' were accused of espionage.
Noether and Vishnevski{\u\i} were arrested by NKVD (later KGB) and Noether was transported to Orel concentration camp where he was jailed until 1941. Nazis approached Orel in 1941 and many prisoners were executed on September 10, 1941 (Stalin's order from September 8,1941). Fritz Noether was among them following the official Soviet version of 1988, cf.~\cite{Dick}.

Fitz's wife Regine returned back to Germany from Soviet Union in 1935 under psychological depression and soon died. The same year his famous sister Emmy died in Pennsylvania after a feeble operation.   His two sons were deported from Soviet Union. Fortunately, they were given refuge in Sweden as a first step.
However, many years later Evgeniy Berkovich and Boris M. Schein published a correspondence~\cite{Berkovich}, also~\cite{Shein}, in which prof. Boris Shein referred to prof. Saveli{\u\i} Vladimirovich Falkovich (1911--1982,  Saratov), who met Fritz Noether in Moscow metro in the late fall of 1941. Noether and Falkovich knew each other and Noether talked his story on the arrest and tortures in Tomsk NKVD. In particular, NKVD agents confiscated many of his things and books. Noether  said that he finally was released from Orel Central and went to Lubyanka (NKVD/KGB headquarters)
for some traveling documents to visit his family. Then the train stopped and their conversation was interrupted. Since he did not come to Tomsk (his son's Gottfried Noether testimony), he most probably was arrested again. This ruins the official Soviet version of Noether's death and indicates that the story is not completed yet\footnote{I have also some personal interest to this story because me, Falkovich and Shein worked in different  periods at the same Saratov State University. {\it A.~Vasil'ev}}. One of the possible reasons is that several Jewish prisoners (first of all, originating from Poland,~e.g., Henrik Erlich and Victor Alter; and from Germany)
were released following Stalin's plans of creating an Anti-Hitler Jewish Committee, which further was not realized. Noether could be among them being rather known mathematician
himself and having support from Einstein and Weyl.

Kufarev was also accused of espionage being a member of Vishnevski{\u\i}'s `terrorist group' together with Boris~A.~Fuchs, but they were not arrested in contrast to Fritz Noether, see~\cite{indictment}.

Bergman and Noether supported Kufarev's thesis defense in 1936. Then Kufarev was awarded the Doctor of Sciences degree (analogue of German habilitation) in 1943 and remained the unique mathematician in Siberia with this degree until 1957 when the {\it Akademgorodok} was founded  and Mikhail A. Lavrentiev invited many first-class mathematicians to Novosibirsk. Kufarev became a full professor of the Tomsk State University in 1944, served as a dean of the Faculty of Mechanics and Mathematics 1952--1955, and remained a professor of the university until his death in 1968 after a hard illness.

The main interests of Kufarev were in the theory of univalent functions and applications to fluid mechanics, in particular, in Hele-Shaw flows, see an overview in~\cite{Vas09}. His main results on the parametric method were published in two papers~\cite{Kufarev, Kufarev47}, where he considered subordination chains of general type and wrote corresponding PDE for mapping functions, but he returned back to this method all the time together with his students, combining variational and parametric methods,
creating the parametric method for multiply connected domains, half-plane version of the parametric method, etc.

\section{Pommerenke and unification of the parametric method}

Christian Pommerenke (born December 17, 1933 in Copenhagen, Professor Emeritus at the Technische Universit\"at Berlin) became that person who unified L\"owner and Kufarev's ideas and
thoroughly combined analytic properties of the ordering of the images
of univalent mappings of the unit disk with evolutionary aspects of semigroups of conformal maps. He seems to have been the
first one to use the expression `L\"owner chain' for describing
the family of `increasing' univalent mappings in L\"owner's
theory.

Recall that we denote the unit disk by $\mathbb D=\{\zeta:\,|\zeta|<1\}$ and the class of normalized univalent maps $f\colon \mathbb D \to \mathbb C$, $f(z)=z+a_2z^2+\dots$ by $\Sb$.
A $t$-parameter family $\Omega(t)$ of simply connected hyperbolic univalent domains forms a {\it L\"owner subordination chain}  in the complex plane $\mathbb
C$, \ for \ $0\leq t< \tau$ (where $\tau$ may be $\infty$), if
$\Omega(t)\subseteq \Omega(s)$, whenever $t<s$, and the family is continuous in the Carath\'eodory sense.
We
suppose that the origin is a point of
$\Omega(0)$.

A L\"owner subordination chain $\Omega(t)$ is described by a $t$-dependent family of conformal maps $z=f(\zeta,t)$
from $\mathbb D$ onto $\Omega(t)$, normalized by $f(\zeta,t)=a_1(t)\zeta+a_2(t)\zeta^2+\dots$,
$a_1(t)>0$, $\dot{a}_1(t)>0$.  Pommerenke \cite{Pommerenke1, Pommerenke}
described governing evolution equations in partial and ordinary derivatives, known now as
the L\"owner-Kufarev equations.

One can normalize the growth of
evolution of a subordination chain by the conformal radius of
$\Omega(t)$ with respect to the origin setting  $a_1(t)=e^t$.

We say that the  function $p$ is from the Carath\'eodory class if it is analytic in $\mathbb D$, normalized as $p(\zeta)=1+p_1\zeta+p_2\zeta^2+\dots,\quad
\zeta\in \mathbb D,$ and such that $\re p(\zeta)>0$ in~$\mathbb D$.
Given a L\"owner subordination
chain of domains $\Omega(t)$ defined for $t\in [0,\tau)$, there exists
a function $p(\zeta,t)$, measurable in $t\in [0,\tau)$ for any fixed $z\in \mathbb D$, and from the Carath\'eodory class for almost all   $t\in [0,\tau)$, such that
the conformal mapping $f\colon\mathbb D\to \Omega(t)$ solves the equation
\begin{equation}
\frac{\partial f(\zeta,t)}{\partial t}=\zeta\frac{\partial
f(\zeta,t)}{\partial \zeta}p(\zeta,t),\label{LK}
\end{equation}
for $\zeta\in \mathbb D$ and for almost all $t\in [0,\tau)$.   The
equation (\ref{LK}) is called the L\"owner-Kufarev equation due to
two seminal papers: by L\"owner \cite{Loewner} who considered the case when
\begin{equation}
p(\zeta,t)=\frac{e^{iu(t)}+\zeta}{e^{iu(t)}-\zeta},\label{yadro}
\end{equation}
where $u(t)$ is a continuous function regarding to $t\in [0,\tau)$,
 and by Kufarev \cite{Kufarev}
who proved differentiability of $f$ in $t$ for all $\zeta$  in the case of general $p$ from the Carath\'eodory class.

 Let us consider a reverse process. We are given an
initial domain $\Omega(0)\equiv \Omega_0$ (and therefore, the
initial mapping $f(\zeta,0)\equiv f_0(\zeta)$), and an analytic function
$p(\zeta,t)$ of positive real part normalized by $p(\zeta,t)=1+p_1\zeta+\dots$. Let us solve the equation (\ref{LK})
and ask ourselves, whether the solution $f(\zeta,t)$ defines a subordination
chain of simply connected univalent domains $f(\mathbb D,t)$. The initial condition
$f(\zeta,0)=f_0(\zeta)$ is not given on the characteristics of the
partial differential equation (\ref{LK}), hence the solution exists
and is unique but not necessarily univalent. Assuming $s$ as a parameter along the characteristics
we have $$ \frac{dt}{ds}=1,\quad \frac{d\zeta}{ds}=-\zeta
p(\zeta,t), \quad \frac{df}{ds}=0,$$ with the initial conditions
$t(0)=0$, $\zeta(0)=z$, $f(\zeta,0)=f_0(\zeta)$, where $z$ is in
$\mathbb D$.  Obviously, we can assume $t=s$. Observe that the domain of $\zeta$ is the entire unit disk. However, the solutions to
the second equation of the characteristic system range within the unit disk but do not fill it.
Therefore, introducing another letter $w$ (in order to distinguish the function $w(z,t)$  from the variable $\zeta$) we arrive at the Cauchy problem for the  L\"owner-Kufarev
equation in ordinary derivatives
\begin{equation}
\frac{dw}{dt}=-wp(w,t),\label{LKord}
\end{equation}
 for a function $\zeta=w(z,t)$
with the initial condition $w(z,0)=z$. The equation (\ref{LKord}) is a non-trivial  characteristic
equation for (\ref{LK}). Unfortunately, this approach requires the
extension of $f_0(w^{-1}(\zeta,t))$ into the whole $\mathbb D$ (here $w^{-1}$ means the inverse function in $\zeta$) because the solution to
(\ref{LK}) is the function $f(\zeta,t)$  given as
$f_0(w^{-1}(\zeta,t))$, where $\zeta=w(z,s)$ is a solution of the
initial value problem for the characteristic equation (\ref{LKord})
that maps $\mathbb D$ into $\mathbb D$. Therefore, the solution of the initial
value problem for the equation (\ref{LK}) may be non-univalent.

On the other hand,  solutions to the
equation (\ref{LKord}) are holomorphic univalent functions
$w(z,t)=e^{-t}(z+a_2(t)z^2+\dots)$ in the unit disk that map $\mathbb D$  into
itself. Every function $f$ from the class $\Sb$ can be
represented by the limit
\begin{equation}
f(z)=\lim\limits_{t\to\infty}e^t w(z,t),\label{limit}
\end{equation}
where $w(z,t)$ is a solution to \eqref{LKord}  with some  function $p(z,t)$ of positive real part for almost
all $t\geq 0$ (see \cite[pages 159--163]{Pommerenke}). Each function
$p(z,t)$ generates a unique function from the class $\Sb$. The
reciprocal statement is not true. In general, a function $f\in \Sb$
can be obtained using different functions $p(\cdot,t)$.

Now we are ready to formulate the condition of  univalence of the solution to the equation (\ref{LK}) in terms of the limiting function \eqref{limit}, which
can be obtained by combination of known results of \cite{Pommerenke}.

\begin{theorem}\label{ThProkhVas}{\rm \cite{Pommerenke, ProkhVas}} Given  a function
$p(\zeta,t)$ of positive real part normalized by $p(\zeta,t)=1+p_1\zeta+\dots$, the solution to   the equation (\ref{LK})
is unique, analytic and univalent with respect to $\zeta$ for almost all $t\geq 0$, if and only if, the initial condition
$f_0(\zeta)$ is taken in the form \eqref{limit}, where the function $w(\zeta,t)$ is the solution to the equation \eqref{LKord}
with the same driving function $p$.
\end{theorem}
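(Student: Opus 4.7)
The plan is to treat \eqref{LKord} as the characteristic ODE of the first-order PDE \eqref{LK} and to identify the compatibility condition on $f_0$ that makes the characteristic method produce a univalent extension to the whole unit disk. I would introduce the two-parameter flow $w(z,s,t)$ solving $\frac{dw}{dt}=-w\,p(w,t)$ with $w(z,s,s)=z$; Carath\'eodory uniqueness gives the cocycle identity $w(z,0,t)=w(w_s(z),s,t)$ for $0\le s\le t$, where $w_t(z):=w(z,0,t)$. Applying the representation \eqref{limit} to the ODE restarted at time $t$ yields a well-defined univalent function
\[
F_t(\zeta):=\lim_{s\to\infty} e^{s}\,w(\zeta,t,s), \qquad \zeta\in\mathbb D,
\]
with $e^{-t}F_t\in\Sb$ for every $t\ge 0$.

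For the ``if'' direction I assume $f_0=F_0$ and \emph{define} $f(\zeta,t):=F_t(\zeta)$. The cocycle identity combined with the definition of $F_t$ gives at once $f(w_t(z),t)=f_0(z)$; differentiating this identity in $t$ and inserting \eqref{LKord} reproduces \eqref{LK} for $f$ at almost every $t$, and univalence is built in.

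For the ``only if'' direction, let $f$ be any analytic solution of \eqref{LK} that is univalent in $\zeta$ for a.e.\ $t$, with $f(\zeta,0)=f_0(\zeta)$. The method of characteristics forces $f(w_t(z),t)=f_0(z)$. Reading off the first Taylor coefficient in \eqref{LK} using $p(0,t)=1$ gives $\dot a_1(t)=a_1(t)$, hence $a_1(t)=e^t$ and $g_t:=e^{-t}f(\cdot,t)\in\Sb$. Inverting the characteristic identity yields
\[
e^{t}w_t(z)=e^{t}g_t^{-1}\bigl(e^{-t}f_0(z)\bigr),
\]
so \eqref{limit} amounts to showing that the normalized inverses $\phi_t(\zeta):=e^{t}g_t^{-1}(e^{-t}\zeta)$ tend to $\zeta$ as $t\to\infty$, locally uniformly in $\zeta\in\mathbb C$. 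Uniqueness of the analytic univalent solution follows from the same characteristic identity, because $f_0$ already determines $f$ on $w_t(\mathbb D)$ while $F_t$ supplies the only univalent extension of $f_0\circ w_t^{-1}$ to all of $\mathbb D$.

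The main obstacle I anticipate is the passage $\phi_t\to\mathrm{id}$. Expanding $\phi_t(\zeta)=\zeta+\sum_{n\ge 2}e^{-(n-1)t}c_n(t)\zeta^n$, one needs the coefficients $c_n(t)$ of $g_t^{-1}$ to be controlled uniformly in $t$; each $c_n(t)$ is a universal polynomial in the Taylor coefficients of $g_t\in\Sb$, which are bounded by Bieberbach/de~Branges, and a Koebe distortion argument then gives local uniform convergence on $\mathbb C$ and identifies the limit as the identity. This is essentially the same content as the Pommerenke limit representation recalled just before the statement, so the theorem is really a translation between the PDE \eqref{LK} and the integral representation \eqref{limit}.
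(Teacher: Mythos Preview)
The paper does not give a self-contained proof of this theorem; it only sketches the mechanism in the paragraphs preceding the statement (the characteristic system for \eqref{LK}, the fact that $f(\zeta,t)=f_0(w^{-1}(\zeta,t))$ need not extend univalently to all of $\mathbb D$, and the representation \eqref{limit}) and then cites \cite{Pommerenke, ProkhVas}. Your proposal is a faithful fleshing-out of exactly that sketch: the two-parameter flow and cocycle identity make precise the paper's characteristic argument, and your definition $F_t(\zeta)=\lim_{s\to\infty}e^{s}w(\zeta,t,s)$ is the time-shifted version of \eqref{limit} that the paper implicitly relies on when it says the result follows ``by combination of known results of \cite{Pommerenke}.'' The convergence $\phi_t\to\mathrm{id}$ that you flag as the main point is indeed the substance of Pommerenke's argument (compactness of $\Sb$ plus Koebe $1/4$ to ensure $e^{-t}f_0(z)$ eventually lies in the domain of $g_t^{-1}$), and your outline of it via uniform control of the inverse coefficients is correct---note that the paper even records L\"owner's sharp bound on inverse coefficients a few paragraphs later, which is more than you need here.
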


Concluding this section we remark that the L\"owner and L\"owner-Kufarev equations are described in several monographs~\cite{Ahlfors, Aleksandrov, Conway, Duren, Goluzin, Graham, Hayman, Pommerenke, Rosenblum}.

\section{Half-plane version}

In 1946, Kufarev \cite[Introduction]{Kuf1946} first mentioned   an evolution equation in the upper
half-plane $\mathbb H$ analogous to the one introduced by L\"owner in the unit
disk, and was first studied by Popova~\cite{Popova54} in 1954. In 1968, Kufarev, \ Sobolev and Sporysheva \cite{KSS}
introduced a combination  of Goluzin-Shiffer's variational and  parametric methods for this equation for the class of univalent
functions in the upper half-plane, which
is known to be related to  physical problems in
hydrodynamics. They showed its application to the extremal problem of finding the range of $\{\re e^{i\alpha} f(z),\, \im f(z)\}$, $\im z >0$. Moreover, during the second half of the past
century, the Soviet school intensively studied Kufarev's
equations for $\mathbb H$. We ought to cite here at least contributions by
Aleksandrov \cite{Aleksandrov}, Aleksandrov and  Sobolev \cite{AleksSTSob},  Goryainov and  Ba \cite{Goryainov, Goryainov-Ba}. However, this work was mostly unknown
to many Western mathematicians, in particular, because some of it appeared in journals not easily accessible from the outside of the Soviet Union. In fact, some of Kufarev's papers were not even reviewed by Mathematical Reviews. Anyhow, we refer the reader to \cite{obituary}, which contains a complete bibliography of his papers.

In order to introduce Kufarev's equation properly, let us fix some notation. Let $\gamma$ be a Jordan arc in
the upper half-plane $\mathbb H$ with starting point $\gamma(0)=0$. Then there exists a unique conformal map
$g_t:\Ha\setminus \gamma[0,t]\to\Ha$ with the normalisation
$$
g_t(z)=z+\frac{c(t)}{z}+O\left(\frac{1}{z^2}\right),\quad z\sim\infty.
$$
After a reparametrisation of the curve $\gamma$, one can assume that $c(t)=2t$.
Under this normalisation, one can show that $g_t$ satisfies the
following differential equation:
\begin{equation}\label{hydro-Low}
\frac{d g_t(z)}{d t}=\frac{2}{g_t(z)-\xi(t)}, \qquad g_0(z)=z.
\end{equation}
The equation is valid up to a time $T_z\in (0,+\infty]$ that
can be characterised as the first time $t$ such that $g_t(z)\in
\R$ and where $h$ is a continuous real-valued function.
Conversely, given a continuous function
$h\colon[0,+\infty)\to\R$, one can consider the following
initial value problem for each $z\in\Ha$:
\begin{equation}\label{hydro2}
\frac{d w}{dt}=\frac{2}{w-\xi(t)}, \qquad w(0)=z\;.
\end{equation}
Let $t\mapsto w^z(t)$ denote the unique solution to this Cauchy problem and let $g_t(z):=w^z(t)$. Then $g_t$
maps holomorphically a (not necessarily slit) subdomain of the upper half-plane $\Ha$ onto $\Ha$. Equation \eqref{hydro2} is nowadays known  as the {\it chordal L\"owner differential equation} with
the function $h$ as the driving term. The name is due to the fact that the curve $\gamma [0,t]$ evolves in time as
$t$ tends to infinity into a sort of chord joining two boundary points. This kind of construction can be used
to model evolutionary aspects of decreasing families of domains in the complex plane. The equation \eqref{LKord} with the function $p$ given by \eqref{yadro} in this context is called
the {\it radial L\"owner equation}, because in the slit case, the tip of slit tends to the origin in the unit disk.

Quite often it is presented the half-plane version considering the inverse
of the functions $g_t$ (see, i.e. \cite{KSS}). Namely, the conformal
mappings $f_t=g_t^{-1}$ from~$\mathbb H$ onto
$\mathbb H\setminus\gamma\big([0,t)\big)$  satisfy the  PDE
\begin{equation}\label{EQ_chordal_PDE}
\frac{\partial f_t(z)}{\partial t}=-f_t'(z)\frac{2}{z-\xi(t)}.
\end{equation}

We remark that using the Cayley transform $T(z)=\frac{z+1}{1-z}$ we
obtain  that the
chordal L\"owner equation in the unit disk takes the form
\begin{equation}\label{chordal}
\frac{\partial h_t(z)}{\partial t}=-h_t'(z)(1-z)^2p(z,t),
\end{equation}
where $\re p(z, t)\geq 0$ for all $t\geq 0$ and $z\in \D$.
From a geometric point of view, the difference between this family of
parametric functions and those described by
the L\"owner-Kufarev equation (\ref{LK}) is clear: the ranges of the
solutions of \eqref{chordal} decrease with $t$ while in the former
equation \eqref{LK} increase. This duality `decreasing' versus
`increasing' has been recently analyze in \cite{SMP-duality}  and,
roughly speaking, we can say that the `decreasing' setting can be deduced
from the  `increasing' one.

The stochastic version of \eqref{hydro2} with a random entry $\lambda$ will be discussed in Section~\ref{SLE}.

Analogues of the L\"owner-Kufarev methods appeared also in the theory of planar quasiconformal maps but only few concrete problems were solved using them, see~\cite{Shah} and
\cite{GR}.

\section{Applications to extremal problems. Optimal control}

After L\"owner himself, one of the first who applied L\"owner's
method to extremal problems in the theory of univalent
functions in 1936 was Gennadi{\u\i} Mikhailovich Goluzin (1906--1952, St.~Petersburg--Leningrad, Russia)~\cite{Goluzin36, Goluzin36a} and Ernst Peschl (1906, Passau--1986, Eitorf, Germany)~\cite{Peschl}.
Goluzin was a founder of Leningrad school in geometric function theory, a student of Vladimir I.~Smirnov.
He obtained in an elegant way
several new and sharp estimates. The most important of them
is the sharp estimate in the  rotation theorem. Namely, if $f\in \Sb$, then
\begin{equation*}
|\arg f'(z)|\leq
\begin{cases} 4\arcsin |z| & \text{if $0<|z|\leq\frac{1}{\sqrt{2}}$,}
\\
\pi+\log\frac{|z|^2}{1-|z|^2} &\text{if $\frac{1}{\sqrt{2}}<|z|<1$.}
\end{cases}
\end{equation*}

Goluzin himself proved~\cite{Goluzin36a} sharpness only for the case $0<|z|\leq\frac{1}{\sqrt{2}}$, Bazilevich~\cite{Bazilevich36} completed the proof for $\frac{1}{\sqrt{2}}<|z|<1$ later during the same year.

Ernst Peschl was a student of Constantin Carath\'eodory and  obtained his doctorate at Munich in 1931. He spent two years 1931--1933  it Jena working with Robert K\"onig, then moved to
M\"unster, Bonn, Braunschweig and finally return to the Rheinische Friedrich-Wilhelms University in Bonn where he worked until his retirement in 1974. In contrast with other heroes
of our story Peschl remained in Germany under Nazi and even was a member of the Union of National Socialist Teachers 1936--1938 (he was thrown out of the Union of National Socialist Teachers since he had not paid his membership fees), however, it was only in order to continue his academic career. Being fluent in French, Peschl served as a military interpreter
during the II-nd World War until 1943. Peschl~\cite{Peschl} applied L\"owner's method to prove the following statement. If $f\in\Sb$ and $f(z)=z+a_2z^2+\dots$, then $2\Phi\left(\re \frac{a_2}{2}\right)-1\leq \re(a_3-a_2^2)\leq 1$, where
\[
\Phi(x)=\frac{x^2}{\varphi^2(x)}(2\varphi(x)-1),
\]
where $\varphi(x)$ is a unique solution to the equation  $x+\varphi e^{1-\varphi}=0$. The result is sharp. The paper is much deeper than only this inequality and became the first serious treatment of the problem of description of the coefficient body $(a_2,a_3,\dots, a_n)$ in the class $S$, which was later treated in a nice monograph by Schaeffer and Spencer~\cite{SS}.   Let us mention that the above inequality was repeated by Goryainov~\cite{Goryainov80} in 1980 who also presented all possible extremal functions.

Related result is a nice matter of the paper by Fekete and Szeg\"o~\cite{FeketeS}, see also~\cite[page 104]{Duren} who ingeniously applied L\"owner's method in order to disprove
a conjecture $|c_{2n+1}|\leq 1$ by Littlewood and Paley~\cite{LP} on coefficients of odd univalent functions $\Sb^{(2)}\subset \Sb$ defined by the expansion $h(z)=z+c_3z^3+c_5z^5+\dots$.
\begin{theorem}{\rm \cite{FeketeS}} Suppose that $f\in \Sb$ and $0<\alpha<1$. Then
\[
|a_3-\alpha a_2^2|\leq 1+2^{-2\alpha/(1-\alpha)}.
\]
This bound is sharp for all $0<\alpha<1$.
\end{theorem}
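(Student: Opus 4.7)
The plan is to apply L\"owner's parametric method in the slit case \eqref{limit}--\eqref{yadro}, extract closed-form integral expressions for $a_2$ and $a_3$ in terms of the driving term, and then solve a single explicit variational problem. By the classical density of single-slit mappings in $\Sb$, it suffices to treat $f\in\Sb$ arising via \eqref{limit} from a driving term \eqref{yadro}; writing $\kappa(t):=e^{-iu(t)}$, one has $p(w,t)=1+2\kappa(t)w+2\kappa(t)^{2}w^{2}+O(w^{3})$. I expand $w(z,t)=e^{-t}z+c_2(t)z^2+c_3(t)z^3+\cdots$, substitute into \eqref{LKord}, and match powers of $z$; setting $A_n(t):=e^{t}c_n(t)$ so that $a_n=\lim_{t\to\infty}A_n(t)$, the resulting linear ODEs integrate explicitly, and a Fubini interchange collapses the iterated integral in $A_3$, yielding
\begin{equation*}
a_2=-2\int_0^\infty \kappa(t)e^{-t}\,dt,\qquad a_3-a_2^2=-2\int_0^\infty\kappa(t)^2 e^{-2t}\,dt.
\end{equation*}

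Combining the two identities gives
\begin{equation*}
a_3-\alpha a_2^2=-2\int_0^\infty\kappa(t)^2 e^{-2t}\,dt+4(1-\alpha)\left(\int_0^\infty\kappa(t)e^{-t}\,dt\right)^{\!2}.
\end{equation*}
After rotating $f$ via $z\mapsto e^{-i\beta}f(e^{i\beta}z)$ so that $a_3-\alpha a_2^2\geq 0$, I write $\kappa(t)=e^{i\theta(t)}$, set $\phi(t):=\cos\theta(t)\in[-1,1]$, apply $\cos(2\theta)=2\cos^2\theta-1$ and $\Re(X+iY)^2=X^2-Y^2$, and discard the nonpositive term $-4(1-\alpha)Y^2$ with $Y:=\int_0^\infty\sin\theta(t)e^{-t}\,dt$, reducing the inequality to
\begin{equation*}
|a_3-\alpha a_2^2|\leq 1+4(1-\alpha)\left(\int_0^\infty\phi(t)e^{-t}\,dt\right)^{\!2}-4\int_0^\infty\phi(t)^2 e^{-2t}\,dt,
\end{equation*}
a variational problem in the free function $\phi\colon[0,\infty)\to[-1,1]$.

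A pointwise Euler--Lagrange computation identifies the maximizer of the right-hand side as the two-piece ``interior-then-boundary'' function $\phi(t)=(1-\alpha)Xe^{t}$ for $t\in[0,T]$ and $\phi(t)\equiv 1$ for $t\geq T$, where $X:=\int_0^\infty\phi(t)e^{-t}\,dt$ and $T$ are fixed by the continuity condition $(1-\alpha)Xe^T=1$ together with the self-consistency relation $X=(1-\alpha)XT+e^{-T}$. Solving this algebraic system gives $T=\alpha/(1-\alpha)$ and $X=e^{-T}/(1-\alpha)$, and substitution produces the claimed bound $1+2e^{-2\alpha/(1-\alpha)}$. Sharpness will be obtained by constructing a measurable $\theta(t)$ with $\cos\theta(t)=\phi(t)$ whose sign of $\sin\theta(t)=\pm\sqrt{1-\phi(t)^2}$ is partitioned on two complementary measurable subsets so as to force $Y=0$, which restores equality in every step of the reduction.

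The hard part will be the variational step. The optimization is a free-boundary problem over the nonconvex admissible class of cosines of measurable phase functions, so I must (i) exclude competitor regimes such as $\phi\equiv -1$ or alternative switching structures, (ii) justify the interior/boundary splitting via a Pontryagin-type pointwise argument valid for $L^\infty$ controls, and (iii) verify that the asserted upper bound is actually attained by a specific $\theta(t)$ whose imaginary part $Y$ vanishes, so that the term discarded above does not destroy sharpness. The algebraic calculation producing the explicit constant $1+2e^{-2\alpha/(1-\alpha)}$ is then routine.
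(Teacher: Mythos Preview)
The paper does not supply its own proof here; the theorem is quoted with references to Fekete--Szeg\H{o} and to Duren. Your plan is exactly the classical L\"owner-method argument carried out in those sources: the coefficient integrals, the rotation combined with the $\cos 2\theta=2\cos^2\theta-1$ reduction, and the one-parameter optimization leading to the switching time $T=\alpha/(1-\alpha)$ are all standard and correct, and they yield $1+2e^{-2\alpha/(1-\alpha)}$ (the printed bound $1+2^{-2\alpha/(1-\alpha)}$ in the statement is a typographical slip for $1+2\,e^{-2\alpha/(1-\alpha)}$, as the paper's own corollary $|c_5|\le\tfrac12+e^{-2/3}$ confirms).

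One remark on the sharpness step: partitioning the sign of $\sin\theta(t)$ on $[0,T]$ to force $Y=0$ makes $u(t)$ discontinuous, so the extremal you construct lies outside the single-slit class \eqref{yadro} you invoked for the upper bound. This is not a gap---the inequality extends to all of $\Sb$ by density and continuity of $f\mapsto a_3-\alpha a_2^2$, while the extremal belongs to $\Sb$ via the general L\"owner--Kufarev ODE \eqref{LKord} with a merely measurable driving term---but it is cleaner (and closer to the treatment in Duren) to realize the same extremal through the symmetric two-point Carath\'eodory function
\[
p(w,t)=\tfrac12\Bigl(\frac{e^{i\theta(t)}+w}{e^{i\theta(t)}-w}+\frac{e^{-i\theta(t)}+w}{e^{-i\theta(t)}-w}\Bigr)
\]
with continuous $\theta(t)=\arccos\phi(t)$; this produces real Taylor coefficients, so $Y=0$ holds automatically and every inequality in your reduction becomes an equality.
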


The choice $\alpha=\frac{1}{4}$ and a simple recalculation of coefficients of functions from $\Sb^{(2)}$ versus $\Sb$
\[
c_3=\frac{1}{2}a_2,\quad c_5=\frac{1}{2}(a_3-\frac{1}{4}a_2),
\]
lead to the corollary $|c_5|\leq \frac{1}{2}+e^{-2/3}=1.013\dots$. Since the result is sharp, there exist odd univalent functions with coefficients bigger than 1.
Using similar method Robertson~\cite{Robertson} proved that $|c_3|^2+|c_5|^2\leq 2$, which is a particular case of his conjecture $\sum_{k=1}^{n}|c_{2k-1}|^2\leq n$ for the class $\Sb^{(2)}$ with $c_1=1$.

Among other papers on coefficient estimates, let us distinguish the first proof of the Bieberbach conjecture for $n=4$ by Garabedian and Schiffer~\cite{GarSchiffer} in 1955 where
L\"owner's method was also used. Walter Hayman wrote in his review on this paper that the method `not only carries Bieberbach's conjecture one step further from the point where L\"owner placed it over 30 years ago, but also gives a hope, in principle at least, to prove the conjecture for the next one or two coefficients by an increase in labour, rather than an essentially new method.' Finally, the complete proof of the Bieberbach conjecture by de~Branges~\cite{Branges1, Branges} in 1984 used the parametric method. It is worthy mentioning that the coefficient
problem for the inverse functions is much simpler and was solved by L\"owner in the same 1923 paper.
\begin{theorem}{\rm \cite{Loewner}} Suppose that $f\in \Sb$ and that
\[
z=\phi(w)=f^{-1}(w)=w+b_2w^2+\dots
\]
is the inverse function. Then
\[
|b_n|\leq \frac{1\cdot 3\cdot 5\cdots (2n-1)}{(n+1)!}2^n,
\]
with the equality for the function $f(z)=\frac{z}{(1+z)^2}$.
\end{theorem}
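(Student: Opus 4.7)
The plan is to reuse the Löwner machinery of the preceding section: embed $f$ in a Löwner chain, transfer the Löwner equation to the inverse $\phi=f^{-1}$, and then bound the coefficients $b_n$ by induction using a classical Catalan-number identity. More precisely, I take a Löwner chain $\{f(\cdot,t)\}_{t\ge 0}$ with $f(\cdot,0)=f$, $f(z,t)=e^t z+a_2(t)z^2+\cdots$, satisfying~(\ref{LK}) with a Carathéodory driving function $p(z,t)=1+\sum_{k\ge 1}p_k(t)z^k$, so that $|p_k(t)|\le 2$ for all $k$ and almost every $t$. Differentiating $f(\phi(w,t),t)=w$ in $t$ for $\phi(w,t):=f(\cdot,t)^{-1}(w)$ and using~(\ref{LK}) yields the Löwner ODE for the inverse,
\begin{equation*}
\frac{\partial\phi(w,t)}{\partial t}=-\phi(w,t)\,p(\phi(w,t),t),\qquad \phi(w,0)=\phi(w),
\end{equation*}
which is the same equation~(\ref{LKord}); indeed $\phi(w,t)$ is just the Löwner transition~(\ref{LKord}) evaluated at the initial point $\phi(w)$, so~(\ref{limit}) gives $e^t\phi(w,t)\to w$ as $t\to\infty$.

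Next, expand $\phi(w,t)=\sum_{n\ge1}b_n(t)w^n$ with $b_1(t)=e^{-t}$ and $b_n(0)=b_n$, and read off the coefficient of $w^n$ in the ODE:
\begin{equation*}
\frac{d}{dt}\bigl(e^t b_n(t)\bigr)=-e^t\sum_{k=1}^{n-1}p_k(t)\bigl[\phi(\cdot,t)^{k+1}\bigr]_n,
\end{equation*}
where $[\cdot]_n$ denotes the $w^n$-coefficient. Since $e^t b_n(t)\to 0$ as $t\to\infty$ for $n\ge 2$ (by the limit above), integrating from $t$ to $\infty$ gives an explicit formula for $e^t b_n(t)$ driven by the coefficients $b_m(s)$ with $m<n$.

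I then induct on $n$ to prove $|b_n(t)|\le C_n e^{-nt}$, where $C_n=\frac{1\cdot 3\cdots(2n-1)}{(n+1)!}\,2^n=\frac{1}{n+1}\binom{2n}{n}$ is the $n$-th Catalan number. The base $n=1$ is exact. For the inductive step, $|p_k(s)|\le 2$ combined with the induction hypothesis gives $\bigl|[\phi(\cdot,s)^{k+1}]_n\bigr|\le e^{-ns}\sum\prod_i C_{n_i}$, summed over compositions $n_1+\cdots+n_{k+1}=n$ with $n_i\ge1$; the elementary integral $\int_t^\infty e^{(1-n)s}\,ds=e^{(1-n)t}/(n-1)$ then yields
\begin{equation*}
|b_n(t)|\le\frac{2\,e^{-nt}}{n-1}\sum_{\substack{r\ge 2,\ n_1+\cdots+n_r=n\\ n_i\ge 1}}\prod_{i=1}^r C_{n_i},
\end{equation*}
and the classical identity $\sum_{r\ge1}\sum_{n_1+\cdots+n_r=n}\prod_i C_{n_i}=\frac{n+1}{2}C_n$, which is immediate from the Catalan functional equation $C(x)=x(1+C(x))^2$ for $C(x)=\sum_{n\ge 1}C_n x^n$, collapses the right-hand side to $C_n e^{-nt}$. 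Setting $t=0$ gives $|b_n|\le C_n$. Equality is attained by the Koebe function $f(z)=z/(1+z)^2$, whose inverse $\phi(w)=(1-2w-\sqrt{1-4w})/(2w)=\sum_{n\ge 1}C_n w^n$ realises the bound coefficient by coefficient.

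The main technical point is really bookkeeping: one must carefully track the combinatorial structure of $[\phi^{k+1}]_n$ and invoke the Catalan identity so that the inductive constant matches $C_n$ exactly. Conceptually nothing is hidden beyond the Löwner representation~(\ref{limit}) and the Carathéodory bound $|p_k|\le 2$.
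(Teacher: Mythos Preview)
The paper does not actually supply a proof of this theorem; it only quotes the result and attributes it to L\"owner's 1923 paper, remarking that the coefficient problem for inverse functions ``is much simpler'' than the Bieberbach conjecture. So there is nothing in the text to compare against line by line.

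Your argument is correct and is precisely the kind of proof the paper's narrative points to: it is L\"owner's parametric method in its modern Pommerenke form (general Carath\'eodory driving term, equations~\eqref{LK}--\eqref{limit}). The key steps all check: differentiating $f(\phi(w,t),t)=w$ with~\eqref{LK} gives $\partial_t\phi=-\phi\,p(\phi,t)$; the relation $\phi(w,t)=w\bigl(f^{-1}(w),t\bigr)$ together with~\eqref{limit} yields $e^t\phi(w,t)\to w$, hence $e^tb_n(t)\to 0$ for $n\ge2$; the integral representation for $e^tb_n(t)$ then feeds the induction. The combinatorial step is clean: with $C(x)=\sum_{n\ge1}C_nx^n$ satisfying $C=x(1+C)^2$, one computes $\sum_{r\ge1}C(x)^r=\dfrac{C}{1-C}=\dfrac{1-\sqrt{1-4x}}{2\sqrt{1-4x}}$, whose $x^n$-coefficient is $\tfrac{n+1}{2}C_n$; subtracting the $r=1$ term gives exactly $\tfrac{n-1}{2}C_n$, which cancels the $2/(n-1)$ and closes the induction at $C_n$. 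The extremal $f(z)=z/(1+z)^2$ has inverse $\phi(w)=\bigl(1-2w-\sqrt{1-4w}\bigr)/(2w)=\sum_{n\ge1}C_nw^n$, so equality holds for every $n$.

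One small point worth stating explicitly for the reader: the embedding of an arbitrary $f\in\mathbf S$ as the initial element of a L\"owner chain \eqref{LK} is exactly what Pommerenke's theory (cited as \cite{Pommerenke} in the paper) guarantees, so that first sentence is not a gap.
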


Let us just mention that most of elementary estimates of functionals in the class $\Sb$, such as $|f(z)|$, $|\arg \frac{f(z)}{z}|$, $|f'(z)|$, can be obtained by L\"owner's method, see e.g.,
\cite{Aleksandrov, Duren, Hayman, Pommerenke}. In particular, the sharp estimate
\[
\bigg|\arg\frac{zf'(z)}{f(z)}\bigg|\leq\log\frac{1+|z|}{1-|z|}
\]
implies that for every $0<r\leq$tanh$(\pi/4)$ the disk $|z|<r$ is mapped by $f\in \Sb$ onto a starlike domain with respect to the origin. The constant tanh$(\pi/4)$ is sharp and is called
the radius of starlikeness. Krzy\.z~\cite{Krzyz} used the L\"owner method to obtain the radius of close-to-convexity 0.80\dots \ in the class $\Sb$.

In 1950-1952, Kufarev and his student Fales \cite{Kufarev50, Kufarev51, Kufarev52} solved a Lavrentiev problem on the weighted product of the conformal radii of two non-overlapping domains in $\mathbb D$ using his parametric method. Namely, let $z_1$ and $z_2$ be two points in $\mathbb D$ and let $\Gamma$ be a Jordan curve splitting $\mathbb D$ into two
domains $B_1$ and $B_2$ such that $z_1\in B_1$ and $z_2\in B_2$. Let $f_1$ and $f_2$ be two conformal maps of $\mathbb D$ onto $B_1$ and $B_2$ respectively, $f_k(0)=z_k$, $f'_k(0)>0$, $k=1,2$.
The problem is to find $\Gamma$ which maximizes the functional $J(\Gamma;z_1,z_2)=|f'_1(0)|^{\alpha}|f'_2(0)|^{\beta}$, where $\alpha,\beta\geq 0$. For $\alpha=\beta$ the maximum is attained if $\Gamma$ is the non-Euclidean line in $\mathbb D$ which bisects orthogonally the non-Euclidean segment which connects the two points $z_1$ and $z_2$. This was obtaind by Lavrentiev~\cite{Lavr} himself by different approach. For $\alpha\neq\beta$ the answer is much more complicated.

After Kufarev, the Tomsk school in geometric function theory was lead by Igor Aleksandrovich Aleksandrov (born May 11, 1932, Novosibirsk, Russia) who completed his Ph.D. in the Tomsk State University in 1958 under supervision by Kufarev. Together with his students, he developed the parametric method combining it with the Goluzin-Schiffer variational method in 60' and solved several important extremal problems. In 1963 he defended the Doctor of Sciences degree.
This time specialists in univalent functions turned from estimation of functionals on the class $\Sb$ to evaluation of ranges of systems of functionals. One of the natural system is
$I(f)=\{\log \frac{f(z)}{z}, \log \frac{zf'(z)}{f(z)}\}$ for a chosen branch of $\log$, such that $I(id)=(0,0,0,0)$. When $f$ runs over $\Sb$, the range $\mathfrak{B}=\{I(f)\colon f\in \Sb\}$ of $I(f)$ fills a closed bounded set in $\mathbb R^4$. The boundary
of this set was completely described by Popov~\cite{Popov65}, Gutlyanski{\u\i}~\cite{Gut70},  Goryainov and Gutlyanski{\u\i}~\cite{GorGut} (for the sub-class $\Sb_M$ of bounded functions $|f(z)|\leq M$ from $\Sb$).
Goryainov~\cite{Gor83, Gor82, Gor83a} moreover,
obtained
the uniqueness
 of the extremal functions for the boundary of $\mathfrak{B}$ and their form. Goryainov, based on L\"owner-Kufarev equations and on results by Pommerenke~\cite{Pommerenke} and Gutlyanski{\u\i}~\cite{Gut70} on representation of functions from $\Sb$ by \eqref{limit}, created a method
of determination of all boundary functions for this and other systems of functionals, see also~\cite{Goryainov80}. It was time when a part of mathematicians lead by prof. Georgi{\u\i} Dmitrievich Suvorov moved from Tomsk to Donetsk (Ukraine) to a newly established (1966) Institute of Applied Mathematics of the Academy of Sciences of the Ukraine. Vladimir Ya. Gutlyanski{\u\i} was a student of Aleksandrov, Georgi{\u\i} D. Suvorov of Kufarev, and Victor V. Goryainov of Gutlyanski{\u\i}. There were several related works on studying $\mathfrak{B}$,
its projections, generalizations to
other similar results for the classes of univalent functions by Chernetski{\u\i}, Astakhov and other members of this active group. Let us mention  earlier works by Aleksandrov and Kopanev ~\cite{AleksandrovKopanev} who determined the domain of variability $\log f'(z)$ for functions in $\Sb$ for each fixed $z$ by L\"owner-Kufarev method. Earlier Arthur Grad~\cite[pages 263--291]{SS} obtained this domain by variational method. The same authors~\cite{AleksandrovKopanev66} found the range of the system of functionals $\{\log|\frac{f(z)}{z}|,\log|f'(z)|\}$ in the class $\Sb$.

That time several powerful methods for solution of extremal problems competed. Most known are the variational method of Schiffer and Goluzin~\cite{SS, SS54}, area principle~\cite{Lebedev}, method of extremal metrics~\cite{Kuzmina}. The problem is hard first of all because the univalence condition for the class $\Sb$ makes it a non-linear manifold and
all variational methods must be very special. In this situation, a considerable progress was achieved when a general optimization principle~\cite{Pontryagin} appeared in 1964, now known as the Pontryagin Maximum Principle (PMP). It turned out that the L\"owner and L\"owner-Kufarev equations being viewed as evolution equations give controllable systems of differential equations where the driving term or control function is provided by the Carath\'eodory function $p$. Perhaps it was L\"owner himself who first noticed all advantages of combination
of his parametric method with PMP. His last student Gerald S.~Goodman~\cite{Goodman66, Goodman} first explicitly stated this combination in 1966. However, he did not return back to this topic, and later (and independently) Aleksandrov and Popov~\cite{AleksandrovPopov} gave a real start to investigations in this direction. Further applications of this combination
were performed in \cite{AleksandrovKPopov} finding the range of the system of functionals $\{\re (e^{i\alpha}a_2),\re(a_3-a_2^2)\}$ in the class $\Sb$, $\alpha\in [0,\pi/2]$. We observe that
during this period Vladimir Ivanovich Popov was a real driving force of this process, see e.g.,~\cite{Popov69}. However, after some first success \cite{AleksandrovPopova, AleksandrovZK} with already known problems or their modifications, some insuperable difficulties did not allow to continue with a significant progress although the ideas were rather clear. Real breakthrough happened in 1984 when Dmitri Prokhorov~\cite{Prokhorov84} started a series of papers in which he solved several new problems by applying PMP together with the L\"owner-Kufarev equation~\eqref{LKord}.

Let us describe a typical application of PMP in the L\"owner-Kufarev theory. As an example we consider one of the most difficult problems in the theory of univalent functions, namely,
description of the boundary of the coefficient body in the class $\Sb$. A beautiful book by Schaeffer and Spenser~\cite{SS} can be our starting point. If $V_n$ denotes the
$n$-th coefficient body, $V_n=V_n(f)=\{a_2,a_3,\dots,a_n\}$, $f=z+a_2z^2+a_3z^3+\dots$ in the class $\Sb$, then the first non-trivial body $V_3$ was completely described in \cite{SS}.
The authors also give some qualitative description of general $V_n$.
Several qualitative results on $\partial V_n$ were obtained in the monograph~\cite{Bobenko}. In particular, Bobenko developed a method of second variation more general than
that by Duren and Schiffer~\cite{DS}, and proved that $\partial V_n$ is smooth except sets of smaller dimension.

Now let $f(z,t)=e^tw(z,t)$, where $w(z,t)$ is a solution to the L\"owner equation \eqref{LKord} with the function $p$ given by \eqref{yadro}. We introduce the matrix $A(t)$ and the vector $a(t)$ as
\[
A(t)=
\left(
\begin{matrix}
0 & 0 & \dots & 0 & 0 \\
a_1(t) & 0 & \dots & 0 & 0 \\
a_2(t) & a_1(t) & \dots & 0 & 0 \\
\vdots & \vdots & \ddots & \vdots & \vdots \\
a_{n-1}(t) & a_{n-2}(t) & \dots & a_1(t) & 0
\end{matrix}
\right),\quad
a(t)=
\left(
\begin{matrix}
a_1(t)  \\
a_2(t) \\
a_3(t) \\
\vdots  \\
a_{n}(t)
\end{matrix}
\right),
\]
where $a_1(t)\equiv 1$ and $a_2(t)$, $a_3(t)$, $\dots$ are the coefficients of the function $f(z,t).$ Substituting $f(z,t)$ in \eqref{LKord} we obtain a controllable system
\begin{equation}\label{CS}
\frac{da(t)}{dt}=-2\sum\limits_{k=1}^{n-1}e^{-k(t+iu(t))}A^{k}(t)a(t),
\end{equation}
with the initial condition $a^T(0)=(1,0,0,\dots,0)$. The coefficient body  $V_n$ is a reachable set for the controllable system~\eqref{CS}.

The results concerning the
structure and properties of $V_n$ include

\begin{itemize}

\item[(i)] $V_n$ is homeomorphic to a $(2n-2)$-dimensional ball and its
boundary $\partial V_n$ is homeomorphic to a $(2n-3)$-dimensional
sphere;

\item[(ii)] every point $x\in \partial V_n$ corresponds to exactly one function $f\in
\Sb$ which will be called a {\it boundary function} for $V_n$;

\item[(iii)] with the exception for a set of smaller dimension,
at every point $x\in \partial V_n$ there exists a normal vector
satisfying the Lipschitz condition;

\item[(iv)] there exists a connected open set $X_1$ on $\partial V_n$,
such that the boundary $\partial V_n$ is an analytic hypersurface at
every point of $X_1$. The points of $\partial V_n$ corresponding to
the functions that give the extremum to a linear functional belong
to the closure of $X_1$.

\end{itemize}

It is worthy noticing that all boundary functions have a similar
structure. They map the unit disk $\mathbb D$ onto the complex plane
$\mathbb C$ minus piecewise analytic Jordan arcs forming a tree with
a root at infinity and having at most $n-1$ tips. This assertion
underlines the importance of multi-slit maps in the coefficient
problem for univalent functions.

Solutions $a(t)$ to (\ref{CS}) for different
control function $u(t)$ (piecewise continuous, in general)  represent  all points of $\partial
V_n$ as $t\to\infty$. The trajectories $a(t)$, $0\leq t <
\infty$, fill $V_n$ so that every point of $V_n$ belongs to a
certain trajectory $a(t)$. The endpoints of these trajectories
can be interior or else boundary points of $V_n$. This way, we
set $V_n$ as the closure of the reachable set for the control system
(\ref{CS}).

According to property (ii) of $V_n$,
every point $x\in \partial V_n$ is attained by exactly one
trajectory  $a(t)$ which is determined by a choice of the
piecewise continuous control function $u(t)$. The function $f\in \Sb$
corresponding to $x$  is a multi-slit map of $\mathbb D$. If the boundary
tree of $f$ has only one tip, then there is a unique continuous
control function $u(t)$ in $t\in [0,\infty)$ that corresponds to
$f$. Otherwise one obtains multi-slit maps for piecewise continuous control $u$.

In order to reach a boundary point $x\in \partial V_n$ corresponding to a
one-slit map, the trajectory $a(t)$ has to obey extremal
properties, i.e., to be an {\it optimal trajectory}. The continuous
control function $u(t)$ must be optimal, and hence, it satisfies a
necessary condition of optimality. PMP is a
powerful tool to be used that provides a joint interpretation of two
classical necessary variational conditions: the Euler equations and
the Weierstrass inequalities (see, e.g., \cite{Pontryagin}).

To realize the maximum principle we consider an {\it adjoint vector} (or momenta)
\[
\psi(t)=\left(
\begin{array}{c}
\psi_1(t)\\
\cdot\\
\cdot\\
\cdot\\
\psi_n(t)
\end{array} \right),
\]
with the complex valued coordinates $\psi_1,\dots,\psi_n$, and the {\it
Hamiltonian function}
\[
H(a,{\psi},u)=\re\left[\left(-2\sum\limits_{k=1}^{n-1}e^{-k(t+iu(t))}A^{k}(t)a(t)\right)^T\bar{\psi}\right],
\]
where $\bar{\psi}$ means the vector with complex conjugate
coordinates. To come to the Hamiltonian formulation for the
coefficient system we require that $\bar{\psi}$ satisfies the
adjoint system of differential equations
\begin{equation}
\frac{d\bar{\psi}}{dt}=-\frac{\partial H}{\partial a},\quad 0\leq
t<\infty.\label{psi}
\end{equation}
Taking into account (\ref{CS}) we rewrite (\ref{psi}) as
\begin{equation}
\frac{d\bar{\psi}}{dt}=2\sum\limits_{k=1}^{n-1}\left(e^{-k(t+iu(t))}(k+1)(A^T)^k\right)\bar{\psi},\quad \psi(0)=\xi.\label{psi2}
\end{equation}
The maximum principle states that any optimal control function
$u^*(t)$ possesses a maximizing property for the Hamiltonian
function along the corresponding trajectory, i.e.,
\begin{equation}
\max\limits_{u}H(a^*(t),\psi^*(t),u)=H(a^*(t),\bar{\psi}^*(t),u^*),
\quad t\geq 0, \label{max}
\end{equation}
where $a^*$ and $\psi^*$ are solutions to the system (\ref{CS},
\ref{psi2}) with $u=u^*(t)$.

The maximum principle (\ref{max}) yields that
\begin{equation}
\frac{\partial H(a^*(t),\psi^*(t),u)}{\partial
u}\bigg|_{u=u^*(t)}=0.\label{max1}
\end{equation}
Evidently, (\ref{CS}), (\ref{psi}), and (\ref{max1}) imply that
\begin{equation}
\frac{d\, H(a^*(t),\psi^*(t),u^*(t))}{dt}=0,\label{ham}
\end{equation}
for an optimal  control function $u^*(t)$.

If the boundary function $f$ gives a point at the boundary hypersurface $\partial V_n$, then its rotation is also  a boundary function. The rotation operation $f(z)\to e^{-i\alpha}f(e^{i\alpha} z)$ gives a curve on $\partial V_n$ and establishes a certain symmetry of the boundary hypersurface and a change of variables $u\to u+\alpha$, $\psi\to e^{i\alpha}\psi$. This allows us
to normalize the adjoint vector as $\im \psi_n=0$ and $\psi_n=\pm 1$, which corresponds to the projection of $V_n$ onto the hypersurface $\im a_n=0$ of dimension $2n-3$.
By abuse of notation, we continue to write $V_n$ for this projection.
 Further
study reduces to investigation of critical points of the equation  $\frac{\partial}{\partial u}H(a(t),\psi(t),u)=0$, and search and comparison of local extrema. In the case when
two local maxima coincide, the Gamkrelidze theory~\cite{Gamkrelidze} of sliding regimes comes into play. Then one considers the generalized L\"owner equation in which
the function $p$ takes the form
\begin{equation}
p(z,t)=\sum\limits_{k=1}^n \lambda_k\frac{e^{iu_k(t)}+z}{e^{iu_k(t)}-z},\quad \sum\limits_{k=1}^n \lambda_k=1, \label{yadro2}
\end{equation}
where $u_k(t)$ is a continuous functions regarding to $t\in [0,\infty)$, and $\lambda_k\geq 0$ are constant.

\begin{theorem}{\rm \cite{Prokhorov90, Prokhorov93}}
Let  $f\in\Sb$ gives  a nonsingular boundary point of the set $V_n$ and let $f$ map the unit disk onto the plane with piecewise-analytic slits having $m$ finite tips. Then there exist
$m$
real-valued functions $u_1,...,u_m$ continuous on $[0,\infty)$ and positive numbers $\lambda_1,\dots,\lambda_m$, $\sum\limits_{k=1}^m \lambda_k=1$, such that a solution $w = w(z,t)$ to the Cauchy problem for the generalized L\"owner differential equation (\ref{LKord},\ref{yadro2})
represents $f$ according to the formula $f(z) = \lim_{t\to\infty} f(z,t)$. This representation is unique.
\end{theorem}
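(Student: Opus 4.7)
The strategy is to realize $f$ as the asymptotic output of an optimal trajectory of the controllable system (\ref{CS})---with controls enlarged to probability measures on the circle---and to read off the driving data from Pontryagin's Maximum Principle (PMP). By the parametric representation (\ref{limit}), $f(z)=\lim_{t\to\infty}e^t w(z,t)$ for some L\"owner solution $w$ driven by a Carath\'eodory-class function $p$. Writing $p(z,t)=\int_0^{2\pi}\frac{e^{iu}+z}{e^{iu}-z}\,d\sigma_t(u)$ for a probability measure $\sigma_t$, the vector $a(t)$ satisfies the relaxed version of (\ref{CS}) whose right-hand side depends linearly on $\sigma_t$; specializing to $\sigma_t=\sum_{k=1}^m\lambda_k\delta_{u_k(t)}$ reproduces exactly (\ref{yadro2}). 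Hence $V_n$ is the closure of the reachable set of this relaxed system.

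The first substantive step would be to invoke PMP. Since $x\in\partial V_n$ is nonsingular, property (iii) supplies a unique (up to positive scalar) outward normal $\xi$, and $x$ maximizes $a\mapsto\Re\langle a,\xi\rangle$ over $V_n$. PMP then produces a nontrivial adjoint $\psi^*$ solving (\ref{psi2}) with $\psi^*(0)=\xi$ such that the optimal control $\sigma^*_t$ maximizes the Hamiltonian $H(a^*(t),\psi^*(t),\cdot)$ for almost every $t$. Linearity in $\sigma_t$ forces $\sigma^*_t$ to be supported on the finite set $M(t)\subset[0,2\pi]$ of angles that realize this maximum; since the maximand is a trigonometric polynomial in $u$ of degree at most $n-1$, $M(t)$ is finite, and Gamkrelidze's theory of sliding regimes permits a convex combination on $M(t)$, which is precisely the shape of (\ref{yadro2}).

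Next, one must match the combinatorics of $M(t)$ with the geometry of $f(\mathbb D)$. In the L\"owner parametrization each tip of the omitted piecewise-analytic tree corresponds to a characteristic of the L\"owner-Kufarev PDE emanating from a boundary point $e^{iu_k(t)}\in\partial\mathbb D$, so an omitted tree with $m$ tips yields exactly $m$ active angles $u_1(t),\dots,u_m(t)\in M(t)$ with weights $\lambda_1,\dots,\lambda_m>0$ summing to one. Continuity of each $u_k$ on $[0,\infty)$ will follow from analyticity of $H$ in $u$ together with simplicity of each maximizer enforced by the nonsingularity of $x$; uniqueness of the representation then follows from uniqueness of the optimal trajectory through $x$, which is a consequence of property (ii) and the nontriviality of $\xi$.

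The principal obstacle is the identification step of the preceding paragraph: proving that the cardinality of $M(t)$ equals the number of tips $m$, and that the weights $\lambda_k$ can be taken \emph{constant} rather than merely measurable functions of $t$. This will require linking the algebraic maximizing set of the Hamiltonian to the piecewise-analytic geometry of the omitted tree, and exploiting the conformal-radius normalization $a_1(t)=e^t$ to show that the hydrodynamic mass carried by each tip is preserved in time. Auxiliary technicalities---persistence of nondegeneracy of $\partial_u^2 H$ at each maximizer for all $t\ge 0$, and the absence of jumps in the multiplicity of $M(t)$ as two maximizers could coalesce---will probably consume the bulk of the work.
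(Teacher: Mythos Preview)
Your outline follows essentially the same route the paper itself sketches in the surrounding discussion: relax the control to probability measures on the circle, apply PMP with adjoint initial data given by the outward normal at the nonsingular boundary point, and invoke Gamkrelidze's sliding regimes to obtain a driving measure supported on finitely many atoms, yielding (\ref{yadro2}). The paper does not supply a self-contained proof---it states the theorem with citation to \cite{Prokhorov90, Prokhorov93}---but the mechanism it describes (Hamiltonian with $m$ coinciding maxima, $\lambda_k$ as additional controls, persistence of the $m$-maximum structure for $t>0$) matches your plan.

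Two small points where your sketch is slightly imprecise. First, nonsingularity of $x\in\partial V_n$ gives you a well-defined normal and hence a well-defined $\xi=\psi(0)$, but it does not by itself force nondegeneracy of each maximizer of $u\mapsto H(a^*,\psi^*,u)$; that is a separate analytic fact about the trigonometric polynomial structure of $H$, and the paper alludes to it only via the remark that the sliding regime with $m$ maxima at $t=0$ ``preserves this property for $t>0$'' (with possible coalescence). Second, property (ii) gives uniqueness of the boundary \emph{function} $f$, not directly of the driving data; uniqueness of $(u_1,\dots,u_m,\lambda_1,\dots,\lambda_m)$ comes from uniqueness of the optimal control in PMP together with the fact that the $\lambda_k$ are determined as the additional Gamkrelidze controls by the $m-1$ equality constraints among the Hamiltonian values. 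Your ``hydrodynamic mass'' heuristic for the constancy of the $\lambda_k$ is not the actual argument; in Prokhorov's treatment the $\lambda_k$ are constants because they enter as fixed parameters of the relaxed problem on the manifold $\mathfrak M_m$, not as time-varying weights.
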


The boundary $\partial V_n$ is then parametrized by the initial conditions $\xi$ in \eqref{psi2}. If the equation \eqref{psi2} allows to choose only one optimal control $u^*$, then the
boundary point is given by a boundary function that maps the unit disk onto the plane with piecewise-analytic slit having a unique finite tips. If the Hamiltonian function has
some $m$ equal local maximums, then this case is called the sliding regime, and it is provided by $m-1$ equations with respect to $\xi$, which determine the equality of the values of the Hamiltonian function at $m$ critical points. The constants $\lambda_k$ represent additional controls of the problem.

Therefore, the sliding regime of the optimal control problem with $m$ maximum points of the Hamiltonian function is realized when $\xi\in\mathfrak M_m$, where the manifold $\mathfrak M_m$ of dimension $2n-3-m$ is defined by $m-1$ equations for the values of the Hamiltonian function at $m$ critical points for $t = 0$. It is shown in \cite{Prokhorov93} that the sliding regime with $m$ maximum points of function $H$ at $t = 0$ preserves this property for $t > 0$. The number of maximum points may only decrease and only because of the joining of some of them at certain instants $t$.

\begin{theorem}{\rm \cite{Prokhorov90, Prokhorov93}}
The boundary hypersurface $\partial V_n$, $n\geq 2$, is a union of the sets $\Omega_1,\dots,\Omega_{n-1}$,
every pair of which does not have mutual interior points. Each set $\Omega_m$, $1 \geq m \geq n - 1$, corresponds to
the manifold $\mathfrak M_m$, $\mathfrak M_1 = \mathbb R^{2n-4}$, so that the parametric representation
\[
\Omega_m=\left\{a(\infty,\xi,\lambda)\colon \xi\in\mathfrak M_m;\, \xi_n=\pm1;\, \lambda_1,\dots,\lambda_m\geq 0 ;\,\sum\limits_{k=1}^m \lambda_k=1\right\}
\]
holds, where $a(\infty,\xi,\lambda)$ is the manifold coordinate of the system of bicharacteristics $a,\psi$ for the Hamiltonian system (\ref{CS},\ref{psi2}) with continuous branches of the optimal controls given by \eqref{max1}.
\end{theorem}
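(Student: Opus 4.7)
The plan is to refine the preceding theorem by stratifying the nonsingular boundary points of $V_n$ according to the number $m$ of tips of the associated slit tree, and to realise each stratum $\Omega_m$ as the image of a finite-dimensional manifold $\mathfrak{M}_m$ of initial adjoint data $\xi=\psi(0)$ under the bicharacteristic flow of (\ref{CS},\ref{psi2}). Given $x\in\partial V_n$, the previous theorem furnishes a unique representation $x=\lim_{t\to\infty}a(t)$ with $a$ solving \eqref{CS} driven by a kernel of the form \eqref{yadro2} having $m$ branches. Translated through the Pontryagin maximum principle, this says that along the optimal trajectory the Hamiltonian $H(a^*(t),\psi^*(t),u)$ attains its maximum in $u$ simultaneously at $m$ distinct points, and the weights $\lambda_1,\dots,\lambda_m$ in \eqref{yadro2} are exactly those realising the sliding regime.

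To parametrise $\Omega_m$ I would use the initial adjoint vector $\xi$. After the normalisations $\im\psi_n=0$ and $\psi_n=\pm 1$ already performed in the excerpt, $\xi$ ranges over $\R^{2n-4}$, which is $\mathfrak{M}_1$. For $m\geq 2$, impose the $m-1$ real equations expressing that $H$ at $t=0$ takes the same value at each of its $m$ global maxima in $u$; generically (away from a set of smaller dimension where some maxima are degenerate) these cut out a submanifold $\mathfrak{M}_m\subset\R^{2n-4}$ of dimension $2n-3-m$ by the implicit function theorem. Adjoining the $(m-1)$-simplex $\{\lambda:\lambda_k\geq 0,\ \sum\lambda_k=1\}$ of weights gives a total dimension $(2n-3-m)+(m-1)=2n-4$, matching the dimension of $\partial V_n$. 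The map $(\xi,\lambda)\mapsto a(\infty,\xi,\lambda)$ is well defined because \eqref{max1} determines continuous branches $u_k^*(t)$ of the optimal control, and the representation \eqref{limit} guarantees existence of the limit in the class~$\Sb$.

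The main obstacle, and the step I expect to be hardest, is showing that the stratification is preserved by the flow: a trajectory issued from $\mathfrak{M}_m$ must stay in the $m$-maximum stratum for all $t>0$, except that two maxima may merge, in which case the number of critical branches strictly decreases. The strategy is to differentiate the equalities $H(a^*(t),\psi^*(t),u_k^*(t))=H(a^*(t),\psi^*(t),u_1^*(t))$ along the Hamiltonian system $(\ref{CS},\ref{psi2})$ and use \eqref{ham} together with the critical-point conditions $\partial H/\partial u|_{u_k^*(t)}=0$ to conclude that the time-derivative of each level equation vanishes as long as the $u_k^*(t)$ stay distinct. Hence the defining equations of $\mathfrak{M}_m$ propagate; the reverse transition (splitting of one maximum into two) is excluded because it would demand a higher-order contact that is non-generic along the analytic branches $t\mapsto u_k^*(t)$.

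Disjointness of the interiors of the strata then follows from the uniqueness clause of the previous theorem: a point in $\Omega_m\cap\Omega_{m'}$ with $m\neq m'$ would yield two essentially different slit representations of the same extremal $f\in\Sb$, a contradiction. The exhaustion $\partial V_n=\bigcup_{m=1}^{n-1}\Omega_m$ is obtained from the fact that a slit extremal tree in the $n$-th coefficient problem carries at most $n-1$ tips, which follows from counting the free parameters the tree can contribute to a boundary function of $V_n$; the residual set of singular boundary points, of smaller dimension by property (iii) listed before the theorem, is absorbed into the closures of the $\Omega_m$.
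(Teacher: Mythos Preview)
Your proposal is correct and follows essentially the same approach as the one outlined in the paper. Note, however, that this is a survey article: the theorem is quoted from \cite{Prokhorov90, Prokhorov93} and is not proved in the paper itself; the preceding paragraphs merely sketch Prokhorov's method (PMP, adjoint system, sliding regimes, the manifolds $\mathfrak{M}_m$ cut out by $m-1$ level equations, and the persistence statement that the number of maxima can only decrease along the flow), and your argument reconstructs exactly that sketch with the right dimension count $(2n-3-m)+(m-1)=2n-4$ and the right invocation of the uniqueness clause of the preceding theorem for disjointness.
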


This theorem can be used in investigations of topologic, metric, smooth, and analytic properties of the boundary hypersurface $\partial V_n$. In particular, the qualitative results in \cite{SS, Bobenko} can be obtained as corollaries. Equally, this method work for the subclass $\Sb^M$ of bounded functions $|f(z)|\leq M$ from $\Sb$. In this case, one stops at the moment $t=\log M$.

Let us here mention a slightly different approach proposed by Friedland and Schiffer \cite{FS76, FS77} in 1976-77 to the problem of description of $V_n$. Instead of the ordinary L\"owner differential equation \eqref{LKord}, they considered the L\"owner partial differential equation \eqref{LK} with the function $p$ in the form \eqref{yadro}. Their main conclusion
states that if the initial condition for the equation \eqref{LK} is a boundary function for $V_n$, then the solutions to  \eqref{LK} form a one-parameter curves on $\partial V_n$ (for special choice of $u$). The conclusions on the special character of the Koebe functions reflect the angular property of the corresponding points at the boundary hypersurface.

Besides these general results on the coefficient body several concrete problems and conjectures were solved by this method. In particular, several projections of $V_3$ were described
in \cite{Vas85}. The range of the system of set  of functionals $\{f(r_1), f(r_2)\}$, $0<r_1<r_2<1$, was given in~\cite{Vas90} in the subclass $\Sb_R$ of functions from $\Sb$ with real coefficients.
Prokhorov~\cite{Prokhorov91} described $V_4$ in the class $\Sb_R^M=\Sb_R\cap \Sb^M$ generalizing and improving Tammi's results~\cite{Tammi1, Tammi2}. The range of the system
of functionals $\{f(r),a_2,a_3\}$ was described in \cite{Vas90} in the class $\Sb_R^M$.

We would like to mention here some conjectures which were confirmed or disproved by the above described method. First let us introduce the so-called Pick function that plays in the class $\Sb^M$ a role
analogous to the Koebe function $k_{\theta}(z)=\frac{z}{(1-e^{i\theta}z)^2}$ in the class $\Sb$. The Pick function $p_{\theta}(z)$ satisfies the equation
\[
\frac{M^2 p_{\theta}}{(M-p_{\theta})^2}=k_{\theta}(z),\quad M>1,\quad z\in\mathbb D.
\]
The coefficients of $p_{0}(z)=z+p_2(M)z^2+\dots$ we denote by $p_n(M)$. The Jakubowski conjecture says that the estimate $|a_n|\leq p_n(M)$ holds for even $n$ in the class $\Sb^M$ for
sufficiently large $M$. This conjecture was proved in \cite{Prokhorov93a}. As for odd coefficients of univalent functions, it is easy to verify that the necessary maximality conditions for the Pick functions fail for large $M$, see \cite{Prokhorov93a}.

Bombieri \cite{Bombieri} in 1967 posed the problem to find
$$
\sigma_{mn}:=\liminf_{f\to K, f\in S}\frac{n-\R a_n}{m-\R a_m},
\;\;\;m,n\geq2,
$$
$f\to K$ locally uniformly in $U$. We call $\sigma_{mn}$ the Bombieri numbers.
 He conjectured that $\sigma_{mn}=B_{mn}$, where
$$
B_{mn}=\min\limits_{\theta\in [0,2\pi)}\frac{n\,\sin \theta-\sin(n\theta)}{m\,\sin \theta-\sin(m\theta)}.
$$
and proved that $\sigma_{mn}\leq B_{mn}$ for $m=3$ and $n$ odd. It
is noteworthy that Bshouty and Hengartner \cite{BH} proved
Bombieri's conjecture for functions from $S$ having real
coefficients in their Taylor's expansion. Continuing this
contribution by Bshouty and Hengartner, the conjecture for
the whole class $S$ has been  disproved by Greiner and
Roth \cite{GreinerR} for $n=2$, $m=3$, $f\in S$. Actually, they have got
the sharp Bombieri number $\sigma_{32}=(e-1)/4e<1/4=B_{32}$.

It is easily seen that $\sigma_{43}=B_{43}=\sigma_{23}=B_{23}=0$.
Applying L\"owner's parametric representation for univalent
functions and the optimal control method we  found \cite{PV} the exact
Bombieri numbers $\sigma_{42},\sigma_{24}, \sigma_{34}$ and their
numerical approximations $\sigma_{42}\approx 0.050057\dots$,
$\sigma_{24}\approx 0.969556\dots$, and $\sigma_{34}\approx
0.791557\dots$ (the Bombieri conjecture for these permutations of
$m,n$ suggests $B_{42}=0.1$, $B_{24}=1$, $B_{34}=0.828427\dots$). Of
course, our method permits us to reprove the result of \cite{GreinerR} about
$\sigma_{32}$.

Our next target is the fourth coefficient $a_4$ of a function from
$\Sb^M$.  The
sharp estimate $|a_2|\leq 2(1-1/M)=p_2(M)$ in the class $S^M$ is
rather trivial and has been obtained by Pick \cite{Pick} in 1917. The
next coefficient $a_3$ was estimated independently by
Schaeffer and Spencer \cite{SS45} in 1945 and Tammi \cite{Tammi53}
in 1953. The Pick function does not give the maximum to $|a_3|$
and the estimate is much more difficult. Schiffer and Tammi
\cite{ST} in 1965 found that $|a_4|\leq p_4(M)$ for any $f\in S^M$ with
$M>300$. This result was repeated by Tammi \cite[page 210]{Tammi1} in a
weaker form ($M>700$) and there it was conjectured that this
constant could be decreased until 11. The case of function with
real coefficients is simpler: the Pick function gives the maximum
to $|a_4|$ for $M\geq 11$ and this constant is sharp (see \cite{Tammi73},
\cite[page 163]{Tammi2}). By our suggested method we showed \cite{PV} that the Pick
function locally maximizes $|a_4|$ on $S^M$ if $M>
M_0=22.9569\dots$ and does not for $1< M<M_0$. This disproves
Tammi's conjecture.

Among other results obtained by PMP we mention here the Charzy\'nski-Tammi conjecture on  functions from $\Sb^M$ close to identity (or $M$ close to 1). The suggested extremal function
analogous to the Koebe and Pick functions is $n-1$-symmetric function
\[
Q_{n-1}^M(z)=\left(p_0^{M^{n-1}}(z^{n-1})\right)^{1/(n-1)}.
\]
The conjecture proved by Schiffer and Tammi \cite{ST} and Siewierski \cite{Sw} states that the coefficient of the function $Q_{n-1}^M(z)$ gives the extremum to the coefficient $a_n$
of a function from $\Sb^M$, namely
\[
|a_n|\leq \frac{2}{n-1}\left(1-\frac{1}{M^{n-1}}\right),\quad n\geq 2.
\]
Prokhorov~\cite{Prokhorov97} found an asymptotic estimate in the above problem in terms of $\log^2 M$ as $M$ is close to 1. For $M\to \infty$
Prokhorov and Nikulin~\cite{ProkhorovN} obtained also asymptotic estimates in the coefficient problem for the class $\Sb^M$ with . In particular,
\[
|a_n|\leq n-\frac{n(n^2-1)}{3}\frac{1}{M}+o\left(\frac{1}{M}\right),\quad M\to\infty.
\]
More about coefficient problems solved by PMP and L\"owner-Kufarev theory, see~\cite{Prokhorov01}.

\section{One-slit maps}

It is interesting that in spite of many known properties of the L\"owner equations several geometric question
remained unclear until recently.

Let us return back to the original L\"owner equation \eqref{LKord} with the driving function $p$ given by \eqref{yadro}
\begin{equation}
\frac{dw}{dt}=-w\frac{e^{iu(t)}+w}{e^{iu(t)}-w}, \quad w(z,0)\equiv z, \label{LE}
\end{equation}
Solutions  $w(z,t)$ to \eqref{LE}  map $\mathbb D$ onto $\Omega(t)\subset \mathbb D$.
 If
$\Omega(t)= \mathbb D\setminus \gamma(t)$, where $\gamma(t)$ is a Jordan curve in $\mathbb D$ except one of its endpoints, then the
driving term $u(t)$ is uniquely defined and we call the corresponding map $w$ a {\it slit
map}. However, from 1947 \cite{Kufarev47a} it is known that  solutions to (\ref{LE}) with
continuous $u(t)$ may give non-slit maps, in particular, $\Omega(t)$ can be a family of
hyperbolically convex digons in $\mathbb D$.

Marshall and Rohde \cite{Marshall} addressed the following question: {\it Under which
condition on the driving term $u(t)$ the solution to (\ref{LE}) is a slit map?} Their result
states that if $u(t)$ is Lip(1/2) (H\"older continuous with  exponent 1/2), and if for a certain
constant $C_{\mathbb D}>0$, the norm $\|u\|_{1/2}$ is bounded $\|u\|_{1/2}<C_{\mathbb D}$, then the solution $w$ is a
slit map, and moreover, the Jordan arc $\gamma(t)$ is s quasislit (a quasiconformal image of an interval within a Stolz angle). As they also proved, a
converse statement without the norm restriction holds. The absence of the norm restriction in
the latter result is essential. On one hand, Kufarev's example \cite{Kufarev47a} contains
$\|u\|_{1/2}=3\sqrt{2}$, which means that $C_{\mathbb D}\leq 3\sqrt{2}$. On the other hand,
Kager, Nienhuis, and Kadanoff  \cite{Kadanoff} constructed exact slit solutions to the
half-plane version of the L\"owner equation with arbitrary norms of the driving term.

Reminding the half-plane version of the L\"owner equation let $\mathbb H=\{z: \im
z>0\}$, $\mathbb R=\partial \mathbb H$. The functions $h(z,t)$, normalized near infinity by
$h(z,t)=z-2t/z+b_{-2}(t)/z^2+\dots$,  solving the equation
\begin{equation}
\frac{dh}{dt}=\frac{-2}{h-\lambda(t)}, \quad h(z,0)\equiv z, \label{LE2}
\end{equation}
where $\lambda(t)$ is a real-valued
continuous driving term, map $\mathbb H$ onto a subdomain of $\mathbb H$.  The difference in the sign between \eqref{LE2} and \eqref{hydro2} is because in \eqref{hydro2} the equation is for the inverse mapping.
In some papers, e.g., \cite{Kadanoff, Lind}, the authors work with  equations (\ref{LE},
\ref{LE2}) changing (--) to (+) in their right-hand sides, and with the mappings of slit domains
onto $\mathbb D$ or $\mathbb H$. However, the results remain the same for both versions.

The question about the slit mappings and the behaviour of the driving
term $\lambda(t)$ in the case of the half-plane $\mathbb H$ was addressed by Lind \cite{Lind}.
The techniques used by Marshall and Rohde carry over to prove a similar result in the case of
the equation (\ref{LE2}), see \cite[page 765]{Marshall}. Let us denote by $C_{\mathbb H}$ the
corresponding bound for the norm $\|\lambda\|_{1/2}$. The main result by Lind is the sharp
bound, namely $C_{\mathbb H}=4$.

Marshall and Rohde \cite{Marshall} remarked that there exist many examples of driving
terms $u(t)$ which are not Lip(1/2), but which generate slit solutions with simple arcs $\gamma(t)$. In
particular, if $\gamma(t)$ is tangent to $\mathbb T$, then $u(t)$ is never Lip(1/2).

Our result~\cite{ProkhVas09} states that if $\gamma(t)$ is a circular arc tangent to $\mathbb R$, then the
driving term $\lambda(t)\in$Lip(1/3). Besides, we prove that $C_{\mathbb D}=C_{\mathbb H}=4$,
and consider properties of singular solutions to the one-slit L\"owner equation. Moreover, examples of non-slit
solutions filling the whole spectrum $[4,\infty)$ were given in \cite{IPV, LMR}.

The authors analyzed in \cite{LMR} L\"owner traces $\gamma(t)$ driven by $\lambda(t)$ asymptotic to $k\sqrt{1-t}$.
They proved  a form of stability of the self-intersection for such $\lambda(t)$. Being slightly rephrased it reads as follows.
\begin{theorem}{\rm \cite{LMR}}
Let the driving term $\lambda\colon [0,1]\to \mathbb R$ be sufficiently regular with
 the above asymptotic of  $\lambda(t)$
 \[
 \lim\limits_{t\to 1}\frac{|\lambda(1)-\lambda(t)|}{\sqrt{1-t}}=k>4.
 \]
 Then $\gamma(1-0)$ exists, is real, and $\gamma$ intersects $\mathbb R$ at the same angle as the trace for $\lambda=k\sqrt{1-t}$.
\end{theorem}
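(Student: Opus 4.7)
The plan is to reduce the statement to a small perturbation of the self-similar Kager--Nienhuis--Kadanoff model, for which everything is explicit. Translate so $\lambda(1)=0$, set $s=\sqrt{1-t}$, pick a point $z$ on the trace, and introduce the rescaled variable
\[
Y(s)=\frac{h_t(z)-\lambda(t)}{\sqrt{1-t}}.
\]
A direct substitution into the chordal equation \eqref{LE2} together with the asymptotic hypothesis on $\lambda$ turns \eqref{LE2} into a singular ODE of the form
\[
s\,\frac{dY}{ds}=\frac{Y^{2}+kY-4}{Y}+\mathcal{E}(s,Y),
\]
where the error term $\mathcal{E}(s,Y)$ tends to $0$ as $s\to 0^{+}$ at a rate controlled by the regularity of $\lambda$. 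In the unperturbed case $\mathcal{E}\equiv 0$ this is exactly the autonomous ODE that integrates by separation of variables to the Kager--Nienhuis--Kadanoff family.

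For $k>4$ the quadratic $Y^{2}+kY-4$ has two real roots $Y_{\pm}$, one attracting and one repelling as $s\to 0^{+}$; call the attractor $Y_{\ast}$. In the pure self-similar case, the exact slit trace meets $\mathbb{R}$ at an angle $\alpha(k)$ that is explicitly determined by $Y_{\ast}$ via the standard interpretation of $h_{t}^{-1}(\lambda(t)+sY)$ as parametrizing the trace near its tip. This is the target angle appearing in the theorem.

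The central step is to show that the perturbed trajectory still converges to $Y_{\ast}$. The idea is a stable-manifold/Gr\"onwall argument in the singular variable $s$: linearizing at $Y_{\ast}$ yields an exponential contraction in $\log(1/s)$, and the regularity assumption on $\lambda$ guarantees that $\mathcal{E}$ is integrable against $ds/s$ near $s=0$, which is exactly the condition required to push a standard perturbation-of-hyperbolic-equilibrium argument through the regular singular point. Once $Y(s)\to Y_{\ast}$, the tangency information translates back, via $h_{t}(z)-\lambda(t)\sim Y_{\ast}\sqrt{1-t}$ and the usual identification of the trace with $\lim_{y\to 0^{+}}h_{t}^{-1}(\lambda(t)+iy)$, into the existence of $\gamma(1-0)\in\mathbb{R}$ and the prescribed angle of approach to $\mathbb{R}$.

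The main obstacle is precisely that perturbation step at the singular point $s=0$. One has to rule out competing behaviours (spiraling, escape to infinity, or convergence to the repelling root $Y_{-}$), and to quantify exactly how fast $\mathcal{E}(s,Y)$ must vanish in terms of the Lipschitz or H\"older modulus of $\lambda(t)-k\sqrt{1-t}$ at $t=1$. The phrase \emph{sufficiently regular} in the statement is presumably calibrated to exactly this estimate; reading the hypothesis as, say, $\lambda(t)-\lambda(1)+k\sqrt{1-t}=o(\sqrt{1-t})$ with a controlled H\"older modulus should be enough to make the stable-manifold attraction override the error, completing the proof.
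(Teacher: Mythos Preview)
The paper does not prove this theorem; it is quoted from Lind--Marshall--Rohde \cite{LMR} as an external result, with only the angle formula
\[
\lim_{t\to 1}\arg(\gamma(t)-\gamma(1))=\pi\,\frac{1-\sqrt{1-16/k^{2}}}{1+\sqrt{1-16/k^{2}}}
\]
added afterwards and the remark that the same method handles $|k|<4$. So there is no in-paper proof to compare against.

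Your sketch is, however, precisely the strategy of the original \cite{LMR} argument: rescale by $\sqrt{1-t}$, reduce to an autonomous ODE with a regular singular point, identify the hyperbolic equilibrium coming from the exact Kager--Nienhuis--Kadanoff solution, and show by a Gr\"onwall/stable-manifold estimate that the perturbation term is small enough for the rescaled flow to still be attracted to that equilibrium. Translating the limiting value of $Y$ back through $h_t^{-1}$ then gives both $\gamma(1-0)\in\mathbb R$ and the angle.

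One computational slip: with the chordal equation $\dot g_t=2/(g_t-\lambda)$ and $s=\sqrt{1-t}$, the rescaled ODE for $Y=(g_t-\lambda)/s$ reads
\[
s\,\frac{dY}{ds}=-\frac{Y^{2}+kY+4}{Y}+\mathcal{E}(s,Y),
\]
with the constant term $+4$, not $-4$. This is not cosmetic: the quadratic $Y^{2}+kY-4$ has real roots for every $k$, whereas $Y^{2}+kY+4$ has real roots exactly when $|k|\ge 4$, which is where the phase transition (tangential spiral versus transversal hit of $\mathbb R$) actually sits and is what makes the hypothesis $k>4$ meaningful. With that correction your outline matches the LMR proof.
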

Namely,
\[
\lim\limits_{t\to 1}\arg(\gamma(t)-\gamma(1))=\pi\frac{1-\sqrt{1-16/k^2}}{1+\sqrt{1-16/k^2}}.
\]
The method of proof of the above theorem also applies to the case $|k|<4$. In this case the trace $\gamma$ driven by $\lambda$ is a Jordan arc, and $\gamma$ is asymptotically similar to the logarithmic spiral at $\gamma(1)\in \mathbb H$.

Another our result~\cite{IPV} states that an analytic orthogonal slit requires  the 1/2 Lipschitz  vanishing norm, exactly as in Kadanoff's {\it et al}. examples \cite{Kadanoff} with
a line-slit and a circular slit. In this case the conformal radius approaching the origin is of
order Lip 1/2 (compare with Earle and Epstein \cite{Earle}).

\section{Univalence criteria}

Several important univalence criteria can be obtained by means of the L\"owner-Kufarev differential equation. For example, a function $f(z)=z+\dots$ analytic in $\mathbb D$ is spirallike
of type $\alpha\in(-\pi/2,\pi/2)$ (and therefore, univalent) if and only if
\[
\re\left(e^{i\alpha}z\frac{f'(z)}{f(z)}\right)>0,\quad\text{in $\mathbb D$},
\]
see \cite{Spacek, Robertson61} and \cite[page 172]{Pommerenke}. Spirallikeness means that a function $f(z)$ is analytic, univalent, and if $w\in f(\mathbb D),$ $\tau\geq 0$, then
$we^{-e^{-i\alpha}\tau}\in f(\mathbb D)$. If $\alpha=0$, then we obtain the usual class of starlike functions $S^*$.

Next criterion is obtained by integration of the L\"owner-Kufarev equation \eqref{LKord} with a special choice of the driving function $p$ from the Carath\'eodory class.
Let us choose
\[
p(z,t)=\frac{1}{h(z)+th_0(z)}, \quad h_0(z)=i\beta+\alpha\frac{z g'(z)}{g(z)},\quad g\in S^*.
\]
Integrating \eqref{LKord} as a Bernoulli-type equation, and letting $t\to\infty$ leads to the limiting function
\[
f(z)=\left(\frac{\alpha+i\beta}{1+i\alpha}\int_0^z h(z) z^{i\beta-1}g^{\alpha}(z)\, dz\right)^{1/(\alpha+i\beta)}=z+\dots,
\]
$\alpha>0$, $\beta\in \mathbb R$, which is the Bazilevich class $B_{\alpha,\beta}$ introduced in~\cite{Bazilevich55, Bazilevich64}.
Prokhorov~\cite{Prokhorov75} and Sheil-Small~\cite{Sheil} proved that the class $B_{\alpha,\beta}$ is characterized  by the condition
\[
\int_{\theta_1}^{\theta_2}\re F(re^{i\theta})\,d\theta> -\pi,\qquad 0<r<1,\quad 0<\theta_2-\theta_1<2\pi,
\]
\[
F(z)=1+z\frac{f''(z)}{f'(z)}+(\alpha-1+i\beta)z\frac{f'(z)}{f(z)},
\]
under the additional assumption that $f(z)f'(z)/z\neq 0$ on $\mathbb T=\partial \mathbb D$. Moreover, the boundary of $f(\mathbb D)$ is accessible from outside
by the curves $w=a(1+bt)^{1/(\alpha+i\beta)}$.

In 1972, J.~Becker~\cite{Becker72} assumed that the driving function $p$ in \eqref{LKord} satisfied the inequality
\[
\bigg|\frac{p(z,t)-1}{p(z,t)+1}\bigg|\leq k<1, \quad z\in\mathbb D, \quad 0\leq t<\infty.
\]
Then the solutions in the form \eqref{limit} have $k$-quasiconformal extension to the Riemann sphere $\hat{\mathbb C}$. This allowed him to
show that the inequality
\[
(1-|z|^2)\bigg|z\frac{f''(z)}{f'(z)}\bigg|\leq k<1
\]
in $\mathbb D$ for a  analytic function $f(z)=z+\dots$ implies that $f$ is univalent and has a $k$-quasiconformal extension to $\hat{\mathbb C}$. This improves a result of Duren, Shapiro and Shields~\cite{DSS}. Later in 1984 Becker and Pommerenke~\cite{BP} established the criteria
\[
(1-|z|^2)|zf''(z)/f'(z)|\le1,\ f'(0)\ne0\ \ (z\in{\mathbb D})
\]
\[
2\,\text{Re}\,z|f''(z)/f'(z)|\le1\ \ (z\in \mathbb H)
\]
\[
(|z|^2-1)|zf''(z)/f'(z)|\le1\ \ (z\in\mathbb D^-),
\]
where $\mathbb H$ is the right half-plane and $\mathbb D^-$ is the exterior of $\mathbb D$. In all inequalities  the constant 1 is  the best possible. The first criterium
implies that the boundary of $f(\mathbb D)$ is Jordan whereas the second and the third do not necessary imply this. Various univalence conditions were obtained later,
see~\cite{Aks} for more complete list of them.

\section{Semigroups}
Looking at the classical radial L\"owner-Kufarev equation \eqref{LKord} and the classical chordal  L\"owner-Kufarev equation \eqref{hydro2}, one notices that there is a similitude between the two. Indeed, we can write both equations in the form
\[
\frac{d z(t)}{d t}=G(z,t),
\]
with
\[
G(z,t)=(\tau-z)(1-\overline{\tau}z)p(z,t),
\]
where $\tau=0, 1$ and  $\re  p(z,t)\geq0$ for all $z\in \mathbb D$ and $t\geq 0$.

The reason for the previous formula is not at all by chance, but it reflects a very important feature of `Herglotz vector fields' (see Section \ref{generaltheory} for the definition). In order to give a rough idea of what we are aiming, consider the case $\tau=0$ (the radial case). Fix $t_0\in [0,+\infty)$. Consider the holomorphic vector field $G(z):=G(z,t_0)$. Let $g(z):=|z|^2$. Then,
\begin{equation}\label{stringo}
dg_z(G(z))=2\re \langle G(z), z\rangle =-|z|^2\re p(z,t_0) \leq 0, \quad \forall z\in \mathbb D.
\end{equation}
This Lyapunov type inequality has a deep geometrical meaning. Indeed, \eqref{stringo} tells that $G$ points toward the center of the level sets of $g$, which are concentric circles centered at $0$. For each $z_0\in \mathbb D$, consider then the Cauchy problem
\begin{equation}\label{Cauchy}
\begin{cases}
\frac{d w(t)}{dt}=G(w(t)),\\
w(0)=z_0
\end{cases}
\end{equation}
and let $w^{z_0}:[0,\delta)\to \mathbb D$ be the maximal solution (such a solution can propagate also in the `past', but we just consider the `future' time). Since $G$ points inward with respect to all circles centered at $0$, the flow $t\mapsto w^{z_0}(t)$ cannot escape from the circle of radious $g(z_0)$. Therefore, the flow is defined for all future times, namely, $\delta=+\infty$. This holds for all $z_0\in \mathbb D$.

Hence, the vector field $G$ has the feature to be $\mathbb R^+$-semicomplete, that is, the maximal solution of the initial value problem \eqref{Cauchy}  is defined in the interval $[0,+\infty)$. To understand how one can unify both radial and chordal L\"owner theory, we dedicate this section to such vector fields and their flows.

A family  of
holomorphic self-maps of the unit disk $(\phi_t)$ is
a {\sl  (one-parameter) semigroup (of holomorphic functions)} if $\phi\colon (\mathbb R^+, +)\to {\sf
Hol}(\mathbb D,\mathbb D)$ is a continuous homomorphism between the
semigroup of non-negative real numbers and the semigroup of
holomorphic self-maps of the disk with respect to composition,
endowed with the topology of uniform convergence on compact sets.
In other words:
\begin{itemize}
\item[$\bullet$] $\phi_0=\textrm{id}_\mathbb D$;
\item[$\bullet$] $\phi_{t+s}=\phi_s\circ\phi_t$
for all $s$,~$t\geq 0$;
\item[$\bullet$] $\phi_t$ converges to $\phi_{t_0}$
uniformly on compact sets as $t$ goes to~$t_0$.
\end{itemize}
It can be shown that if $(\phi_t)$ is a semigroup, then $\phi_t$ is univalent for all $t\geq 0$.

Semigroups of holomorphic maps are a classical subject of study, both as (local/global) flows of
continuous dynamical systems and from the point of view of
`fractional iteration', the problem of embedding the discrete set of iterates
generated by a single self-map into a one-parameter family (a problem that is still open even in the disk). It is difficult to
 exactly date the birth of this notion but it seems that the
first paper dealing with semigroups of holomorphic maps and their asymptotic
behaviour is due to Tricomi in 1917 \cite{Tricomi}. Semigroups of holomorphic maps also appear in connection with the theory of
Galton-Watson processes (branching processes) started in the
40s by Kolmogorov and Dmitriev \cite{Harris}. An extensive recent survey \cite{Goryainov12} gives a complete overview on details.
Furthermore, they are an important tool in the
theory of strongly continuous semigroups of operators between
spaces of analytic functions (see, for example, \cite{Siskakis}).

A very important contribution to the theory of semigroups of
holomorphic self-maps of the unit disk is due to Berkson and Porta
\cite{Berkson-Porta}. They proved that:

\begin{theorem}[\cite{Berkson-Porta}] A semigroup of holomorphic self-maps of the unit disk $(\phi_t)$
is in fact real-analytic in the variable
$t$, and is the solution of the Cauchy problem
\begin{equation}\label{semig-eq}
\frac{\partial \phi_t(z)}{\partial t}=G(\phi_t(z)), \quad \phi_0(z)=z\;,
\end{equation}
where the map $G$, the {\sl infinitesimal generator} of the semigroup, has the form
\begin{equation}\label{G-BK}
G(z)=(z-\tau)(\overline{\tau}z-1)p(z)
\end{equation}
for some $\tau \in \overline{\mathbb D}$ and a holomorphic function
$p\colon\mathbb D\to\mathbb C$ with $\re p\geq 0$.

Conversely, any vector field of the form \eqref{G-BK} is semicomplete and if, for $z\in \mathbb D$, we  take $w^z$ the solution of the initial value problem
$$
\frac{dw}{dt}=G(w), \quad w(0)=z,
$$
then $\phi_t(z):=w^z(t)$ is a semigroup of analytic functions.
\end{theorem}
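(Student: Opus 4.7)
The plan is to treat the two implications separately. The forward direction (semigroup to infinitesimal generator and Berkson-Porta form) is the substantive part; the converse is an ODE argument combined with a Lyapunov-type estimate modeled on the computation in \eqref{stringo}.

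For the forward direction, the first step is to establish existence of the pointwise derivative
\[
G(z) := \lim_{t \to 0^+} \frac{\phi_t(z) - z}{t},
\]
and to show that this limit defines a holomorphic function on $\mathbb{D}$. Fixing $z_0 \in \mathbb{D}$ and using joint continuity in $(t,z)$ together with the semigroup law $\phi_{t+s} = \phi_t \circ \phi_s$, I would show that the difference quotients $g_t(z) := (\phi_t(z) - z)/t$ form a normal family on compact subsets, and then promote a subsequential limit to a full limit by exploiting the composition identity (any two limits must agree when evaluated along dyadic refinements $t/2^n$). Once existence is established, the chain of equalities
\[
\partial_t \phi_t(z) = \lim_{s \to 0^+} \frac{\phi_s(\phi_t(z)) - \phi_t(z)}{s} = G(\phi_t(z))
\]
yields the Cauchy problem, and the real-analyticity of $t \mapsto \phi_t(z)$ follows a posteriori from the analyticity of the ODE.

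The second step is to derive the Berkson-Porta factorization of $G$. The Denjoy-Wolff theorem assigns to each non-identity $\phi_t$ a distinguished point $\tau_t \in \overline{\mathbb{D}}$ (either an interior fixed point or a boundary Wolff point), and the semigroup law forces $\tau_t$ to be independent of $t$ because fixed points of $\phi_t$ are invariant under all $\phi_s$ that commute with $\phi_t$. Call this common point $\tau$. This $\tau$ is a zero of $G$: if $\tau \in \mathbb{D}$, then $G(\tau) = 0$ directly; if $\tau \in \partial\mathbb{D}$, the Julia-Wolff-Carath\'eodory theorem provides the second-order vanishing captured by the factor $(z-\tau)(\overline{\tau}z-1)$, which degenerates to $\overline{\tau}(z-\tau)^2$ on the unit circle. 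Setting
\[
p(z) := \frac{G(z)}{(z-\tau)(\overline{\tau}z - 1)},
\]
one verifies that $p$ extends holomorphically across $\tau$. The sign condition $\re p \geq 0$ follows from the Lyapunov inequality sketched in \eqref{stringo}: after conjugating by a M\"obius automorphism $M$ sending $\tau$ to $0$ (or, in the boundary case, sending $\mathbb{D}$ to $\mathbb{H}$ with $\tau \mapsto \infty$), one differentiates $|M(\phi_t(z))|^2$ and uses $\phi_t(\mathbb{D}) \subset \mathbb{D}$ to conclude that the derivative is non-positive, which translates back to $\re p \geq 0$.

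For the converse, given $G$ of the prescribed form, the Cauchy-Lipschitz theorem yields a local holomorphic flow $w^z(t)$ for each $z \in \mathbb{D}$. The key point is global existence in positive time: using the same automorphism $M$ sending $\tau$ to $0$ together with $\re p \geq 0$, the Lyapunov computation gives $\tfrac{d}{dt}|M(w^z(t))|^2 \leq 0$, so trajectories remain inside the level set $\{|M| \leq |M(z)|\}$, which is compactly contained in $\mathbb{D}$; hence $w^z$ extends to all of $[0,+\infty)$. Holomorphic dependence on $z$ follows from standard smooth dependence theorems, and the semigroup identity $\phi_{t+s} = \phi_t \circ \phi_s$ follows from uniqueness of ODE solutions. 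The main obstacle I anticipate is the factorization in Step 2 when $\tau \in \partial\mathbb{D}$: rigorously proving that $G$ has the correct second-order boundary vanishing and that $p$ extends across $\tau$ with $\re p \geq 0$ globally requires careful use of the boundary Schwarz lemma, since $G$ itself is only defined inside the open disk while $\tau$ lies on the boundary.
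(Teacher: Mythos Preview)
The paper does not actually prove this theorem; it is stated with attribution to Berkson and Porta and then used. The only part of the argument that appears in the paper is the Lyapunov computation \eqref{stringo} in the paragraph preceding the theorem, which is precisely your converse argument: the inequality $dg_z(G(z))\le 0$ forces forward trajectories to stay in sublevel sets of $|z|$ (after moving $\tau$ to $0$), hence the flow is semicomplete and the semigroup identity follows from ODE uniqueness. On that half your proposal matches the paper's sketch and is correct.

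For the forward direction there is nothing in the paper to compare against, but your outline has a real gap in Step~1. Normality of the difference quotients $g_t=(\phi_t-\mathrm{id})/t$ is not automatic: you need an \emph{a priori} bound of the type $|\phi_t(z)-z|\le Ct$ on compacta before Montel applies, and obtaining such a bound is essentially as hard as proving differentiability itself. More seriously, the ``dyadic'' promotion of a subsequential limit to a full limit does not work as stated. The semigroup relation gives
\[
g_t(z)=\tfrac12\bigl(g_{t/2}(z)+g_{t/2}(\phi_{t/2}(z))\bigr),
\]
but if $g_{t_n}\to G_1$ along one sequence, this identity tells you nothing about the behaviour of $g_{t_n/2}$, which could converge to a different limit along a subsequence. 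Two subsequential limits along unrelated sequences $t_n,s_n\to 0$ are not linked by any dyadic relation. The original Berkson--Porta argument avoids this by passing to the strongly continuous semigroup of composition operators $C_{\phi_t}$ on a Hardy space and invoking Hille--Yosida; alternative direct proofs (Abate, Reich--Shoikhet) use the Schwarz--Pick contraction to control $\phi_t$ uniformly. Either of these is a genuine additional idea that your sketch is missing. Your Step~2 (Denjoy--Wolff point, factoring out $(z-\tau)(\bar\tau z-1)$, sign of $\mathrm{Re}\,p$ via the Lyapunov inequality) is the standard route once $G$ is in hand, and your caveat about the boundary case is well placed: when $\tau\in\partial\mathbb D$ one does not show that $G$ vanishes to second order at $\tau$ but rather defines $p$ in $\mathbb D$ by the quotient and checks $\mathrm{Re}\,p\ge0$ directly from the half-plane version of the Lyapunov inequality.
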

Expression \eqref{G-BK} of the infinitesimal generator is known as its {\sl Berkson-Porta decomposition}.
Other characterizations of vector fields which are infinitesimal generators of semigroups can be seen in \cite{BCD} and references therein.

The dynamics of the semigroup $(\phi_t)$ are governed by the
analytical properties of the infinitesimal generator $G$.
For instance, all the functions of the semigroup have a common fixed point at $\tau$
(in the sense of non-tangential limit if $\tau$ belongs to the
boundary of the unit disk) and asymptotically tends to $\tau$,
which can thus be considered a sink point of the dynamical
system generated by $G$.

When $\tau=0$, it is clear that \eqref{G-BK} is a particular case of
\eqref{LKord}, because the infinitesimal
generator $G$ is of the form $-wp(w)$, where $p$ belongs to the Carath\' eodory class.   As a
consequence, when the semigroup has a fixed point in the
unit disk (which, up to a conjugation by an automorphism of the disk,
amounts to taking $\tau=0$), {\sl once
differentiability in $t$ is proved} Berkson-Porta's theorem can be
easily deduced from L\"owner's theory. However, when the semigroup has no common fixed
points in the interior of the unit disk, Berkson-Porta's result is really a new
advance in the theory.

We have already remarked that semigroups give rise to evolution families;
they also provide examples of L\"owner chains. Indeed,
Heins \cite{Heins} and Siskasis \cite{Siskakis-tesis} have
independently proved that if $(\phi_t)$ is a semigroup of holomorphic self-maps of the
unit disk then there exists a (unique, when suitably
normalised) holomorphic function $h\colon\mathbb D\to \mathbb C$, the {\sl K\"onigs function} of the
semigroup,
such that $h(\phi_t(z))=m_t(h(z))$ for all $t\geq
0$, where $m_t$ is an affine map (in other words, the semigroup is semiconjugated to a
semigroup of affine maps). Then it is easy to see that the maps $f_t(z):=m_t^{-1}(h(z))$, for $t\geq
0$, form a L\"owner chain (in the sense explained in the next section).

The theory of semigroups of holomorphic self-maps has been extensively studied and
generalised: to Riemann surfaces (in particular, Heins \cite{Heins} has shown that Riemann
surfaces with non-Abelian fundamental group admit no non-trivial semigroup of
holomorphic self-maps); to several complex variables; and to infinitely dimensional
complex Banach spaces, by Baker, Cowen, Elin, Goryainov, Poggi-Corradini,
 Pommerenke, Reich,  Shoikhet, Siskakis,  Vesentini, and many others. We refer to
\cite{BCD} and the books \cite{Abate} and
\cite{Reich-Shoikhet} for references and more information on
the subject.

\section{General theory: Herglotz vector fields, evolution families and L\"owner chains}\label{generaltheory}

Although the chordal and radial versions of the L\"owner Theory share common
ideas and structure, on their own they can only be regarded as parallel but
independent theories. The approach of~\cite{Goryainov-Ba, Goryainov,
Goryainov1996, Dubovikov, Goryainov-Kudryavtseva, GoryainovTalk} does show that
there can be (and actually are) much more independent variants of L\"owner
evolution bearing similar structures, but does not solve the problem of
constructing a unified theory covering all the cases.

Recently a new unifying approach has been suggested by Gumenyuk and the three first
authors~\cite{BCM1, BCM2,AbstractLoew, SMP}.
In the previous section we saw that the vector fields which appear in radial and chordal L\"owner equations have the property to be infinitesimal generators for all fixed times. We will exploit such a fact to define a general family of Herglotz vector fields.
Thus, relying partially on the theory of one-parametric semigroups, which can
be regarded as the autonomous version of L\"owner Theory, we can build a new general theory.

\begin{definition}[\cite{BCM1}]~\label{D_BCM-VF}
Let $d\in [1,+\infty]$. A {\it weak holomorphic vector field of order $d$} in
the unit disk $\mathbb{D}$ is a function
$G:\mathbb{D}\times\lbrack0,+\infty)\rightarrow \mathbb{C}$ with the following
properties:

\begin{enumerate}
\item[WHVF1.] for all $z\in\mathbb{D},$ the function $\lbrack
0,+\infty)\ni t\mapsto G(z,t)$ is measurable,

\item[WHVF2.] for all $t\in\lbrack0,+\infty),$ the function $
\mathbb{D}\ni z\mapsto G(z,t)$ is holomorphic,

\item[WHVF3.] for any compact set $K\subset\mathbb{D}$ and  any $T>0$ there
exists a non-negative function $k_{K,T}\in L^{d}([0,T],\mathbb{R})$ such that
\[
|G(z,t)|\leq k_{K,T}(t)
\]
for all $z\in K$ and for almost every $t\in\lbrack0,T].$
\end{enumerate}
We say that $G$ is a {\it (generalized) Herglotz vector field} of order $d$ if,
in addition to conditions~WHVF1\,--\,WHVF3 above, for almost every $t\in
[0,+\infty)$ the holomorphic function $G(\cdot, t)$ is an infinitesimal
generator of a one-parametric semigroup in ${\sf Hol}(\mathbb D,\mathbb D)$.
\end{definition}

Herglotz vector fields in the unit disk can be
decomposed by means of Herglotz functions (and this the reason
for the name). We begin with the following definition:

\begin{definition}
Let $d\in [1,+\infty]$. A {\sl Herglotz function of order $d$}
is a function $p:\mathbb{D}\times\lbrack0,+\infty
)\mapsto\mathbb{C}$ with the following properties:

\begin{enumerate}
\item For all $z\in\mathbb{D},$ the function $\lbrack0,+\infty
)\ni t \mapsto p(z,t)\in\mathbb{C}$ belongs to
$L_{loc}^{d}([0,+\infty),\mathbb{C})$;

\item For all $t\in\lbrack0,+\infty),$ the function
$\mathbb{D}\ni z \mapsto p(z,t)\in\mathbb{C}$ is holomorphic;

\item For all $z\in\mathbb{D}$ and for all $t\in\lbrack0,+\infty),$ we
have $\re p(z,t)\geq0.$
\end{enumerate}
\end{definition}

Then we have the following result which, using the Berkson-Porta formula, gives a general form of the classical Herglotz vector fields:

\begin{theorem}[\cite{BCM1}]Let
$\tau:[0,+\infty)\rightarrow\overline {\mathbb{D}}$ be a
measurable function and let $p:\mathbb D\times [0,+\infty)\to \mathbb C$ be a
Herglotz function of order $d\in [1,+\infty)$. Then the map
$G_{\tau,p}:\mathbb{D}\times\lbrack
0,+\infty)\rightarrow\mathbb{C}$ given by
\begin{equation*}
G_{\tau,p}(z,t)=(z-\tau(t))(\overline{\tau(t)}z-1)p(z,t),
\end{equation*}
for all $z\in\mathbb{D}$ and for all $t\in\lbrack0,+\infty),$
is a Herglotz vector field  of order $d$ on the unit disk.

Conversely, if $G:\mathbb D\times [0,+\infty)\to \mathbb C$ is a Herglotz
vector field of order $d\in [1,+\infty)$ on the unit disk, then
there exist a measurable function
$\tau:[0,+\infty)\rightarrow\overline {\mathbb{D}}$ and a
Herglotz function $p:\mathbb D\times [0,+\infty)\to \mathbb C$  of order $d$
such that $G(z,t)=G_{\tau, p}(z,t)$ for almost every $t\in
[0,+\infty)$ and all $z\in \mathbb D$.

Moreover, if $\tilde{\tau}:[0,+\infty)\rightarrow\overline
{\mathbb{D}}$ is another measurable function and
$\tilde{p}:\mathbb D\times [0,+\infty)\to \mathbb C$ is another Herglotz
function of order $d$ such that $G=G_{\tilde{\tau}, \tilde{p}}$
for almost every $t\in [0,+\infty)$ then
$p(z,t)=\tilde{p}(z,t)$ for almost every $t\in [0,+\infty)$ and
all $z\in \mathbb D$ and $\tau(t)=\tilde{\tau}(t)$ for almost all
$t\in [0,+\infty)$ such that $G(\cdot, t)\not\equiv 0$.
\end{theorem}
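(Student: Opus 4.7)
\emph{Direct implication.} Given a measurable $\tau:[0,+\infty)\to\overline{\mathbb D}$ and a Herglotz function $p$ of order $d$, set $G:=G_{\tau,p}$. Property WHVF1 (measurability of $G(z,\cdot)$ for each fixed $z$) and WHVF2 (holomorphicity of $G(\cdot,t)$ for each fixed $t$) are immediate from the corresponding properties of $\tau$ and $p$ together with the polynomial character in $z$ of $(z-\tau(t))(\overline{\tau(t)}z-1)$. For WHVF3, on any compact $K\subset\{|z|\leq r\}$ with $r<1$, I use the crude bound $|(z-\tau(t))(\overline{\tau(t)}z-1)|\leq 4$ and the Schwarz--Pick/Herglotz inequality
\[
|p(z,t)|\leq\frac{1+|z|}{1-|z|}\,|p(0,t)|,\qquad z\in\mathbb D,
\]
which follows from the Riesz--Herglotz representation of a Carath\'eodory function. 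Combining these, $|G(z,t)|\leq \frac{4(1+r)}{1-r}|p(0,t)|$, and $|p(0,\cdot)|\in L^{d}_{\mathrm{loc}}$ by hypothesis. That $G(\cdot,t)$ is an infinitesimal generator for almost every $t$ is precisely the classical Berkson--Porta theorem applied pointwise in $t$.

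\emph{Converse implication; measurable selection of $\tau$.} Fix a Herglotz vector field $G$ of order $d$. For a.e.\ $t$, Berkson--Porta applied to $G(\cdot,t)$ produces some $\tau(t)\in\overline{\mathbb D}$ and a holomorphic $p(\cdot,t)$ with $\re p(\cdot,t)\geq 0$ realizing the factorization; on the exceptional null set I simply put $\tau(t):=0$, $p(\cdot,t)\equiv 0$. The key point---and the main conceptual obstacle---is to arrange $\tau$ to be measurable. Since $G$ is a Carath\'eodory function in $(z,t)$, every Taylor coefficient $t\mapsto \partial_{z}^{n}G(0,t)/n!$ is measurable; by the pointwise uniqueness of Berkson--Porta, $\tau(t)$ is characterized intrinsically, either as the unique zero of $G(\cdot,t)$ in $\mathbb D$ at which $\re \partial_{z}G(\tau(t),t)\leq 0$ or, in the absence of such an interior zero, as the boundary Denjoy--Wolff point of the semigroup generated by $G(\cdot,t)$. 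Each description is Borel in the Taylor data, producing a measurable $\tau$. Once $\tau$ is fixed, define
\[
p(z,t):=\frac{G(z,t)}{(z-\tau(t))(\overline{\tau(t)}z-1)};
\]
this is measurable in $t$ at every $z$ off the (at most two-point) singular set, and holomorphic in $z$ after one removes the a priori singularity at $z=\tau(t)$, which is removable thanks to the factorization furnished by Berkson--Porta. That $\re p\geq 0$ is likewise built into Berkson--Porta.

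\emph{$L^{d}$-control of $p$ and uniqueness.} By the Schwarz--Pick/Herglotz inequality of the first paragraph, it suffices to show $|p(0,\cdot)|\in L^{d}_{\mathrm{loc}}$ in order to conclude that $p$ is Herglotz of order $d$. From the factorization I read off $G(0,t)=\tau(t)p(0,t)$ and $G'(0,t)=-(1+|\tau(t)|^{2})p(0,t)+\tau(t)p'(0,t)$; combining these with the Carath\'eodory estimate $|p'(0,t)|\leq 2\,\re p(0,t)\leq 2|p(0,t)|$, I split cases on $|\tau(t)|$. If $|\tau(t)|\geq 1/4$, then $|p(0,t)|\leq 4|G(0,t)|$. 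If $|\tau(t)|<1/4$, the identity for $G'(0,t)$ yields $|p(0,t)|\bigl(1-2|\tau(t)|\bigr)\leq|G'(0,t)|$, so $|p(0,t)|\leq 2|G'(0,t)|$. In either case $|p(0,t)|\leq 4|G(0,t)|+2|G'(0,t)|$; both terms lie in $L^{d}_{\mathrm{loc}}$ by WHVF3 (applied to $K=\{|z|\leq 1/2\}$) together with Cauchy's estimate $|G'(0,t)|\leq 4\sup_{|z|=1/2}|G(z,t)|$. Uniqueness, finally, is pointwise Berkson--Porta: wherever $G(\cdot,t)\not\equiv 0$, the intrinsic description forces $\tau(t)=\widetilde\tau(t)$, hence $p(\cdot,t)=\widetilde p(\cdot,t)$; where $G(\cdot,t)\equiv 0$, both $p$ and $\widetilde p$ vanish while $\tau$ is genuinely not determined, consistent with the last clause of the statement.
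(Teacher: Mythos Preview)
The paper under review is a survey and does not supply its own proof of this theorem; it merely quotes the result from the original source \cite{BCM1}. So there is no in-paper argument to compare against.

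That said, your sketch is essentially correct and follows the natural line of attack (which is also, in outline, that of \cite{BCM1}): pointwise application of Berkson--Porta in both directions, plus a measurable-selection argument for $\tau$ and an $L^d$-estimate on $p(0,\cdot)$ via the first two Taylor coefficients of $G$ at the origin. Two small remarks. First, the Herglotz bound you quote, $|p(z,t)|\le \frac{1+|z|}{1-|z|}|p(0,t)|$, is not literally true for arbitrary functions with $\Re p\ge 0$ (the imaginary part of $p(0,t)$ spoils the constant); the correct estimate from the Riesz--Herglotz representation is $|p(z,t)|\le |\Im p(0,t)|+\frac{1+|z|}{1-|z|}\Re p(0,t)$, which still gives $|p(z,t)|\le C_r|p(0,t)|$ on $\{|z|\le r\}$ and is all you need. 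Second, your measurability argument for $\tau$ is the genuinely delicate point and deserves more care than ``each description is Borel in the Taylor data''; in \cite{BCM1} this is handled explicitly, but your outline is on the right track. The $L^d$-control via the case split on $|\tau(t)|$ is clean and correct.
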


\begin{remark} The generalized L\"owner\,-\,Kufarev equation
\begin{equation}\label{EQ_BCM-ODE}
\frac{dw}{dt}=G(w,t),~~ t\ge s,\quad w(s)=z,
\end{equation}
resembles the radial L\"owner\,-\,Kufarev ODE when $\tau\equiv0$ and
$p(0,t)\equiv1$. Furthermore, with the help of the Cayley map between~$\mathbb D$ and
$\mathbb H$, the chordal L\"owner equation appears to be the special case
of~\eqref{EQ_BCM-ODE} with~$\tau\equiv1$.
\end{remark}

We also give a generalization of the concept of evolution families
 in {\it the whole semigroup} ${\sf Hol}(\mathbb D,\mathbb D)$ as follows:
\begin{definition}(\cite{BCM1})~\label{D_BCM-EF}
A family $(\varphi_{s,t})_{t\ge s\ge0}$ of holomorphic self-maps of the unit
disk is an {\it evolution family of order $d$} with $d\in [1,+\infty]$  if
\begin{enumerate}
\item[EF1.] $\varphi_{s,s}={\textrm{ Id}}_{\mathbb{D}}$ for all $s\ge0$,

\item[EF2.] $\varphi_{s,t}=\varphi_{u,t}\circ\varphi_{s,u}$ whenever $0\leq
s\leq u\leq t<+\infty,$

\item[EF3.] for every $z\in\mathbb{D}$ and every $T>0$ there exists a
non-negative function $k_{z,T}\in L^{d}([0,T],\mathbb{R})$ such that
\[
|\varphi_{s,u}(z)-\varphi_{s,t}(z)|\leq\int_{u}^{t}k_{z,T}(\xi)d\xi
\]
whenever $0\leq s\leq u\leq t\leq T.$
\end{enumerate}
\end{definition}
Condition EF3 is to guarantee that any evolution family can be obtained via
solutions of an ODE which resembles both the radial and chordal
L\"owner\,-\,Kufarev equations. The vector fields that drive this generalized
L\"owner\,-\,Kufarev ODE are referred to as {\it Herglotz vector fields.}

\begin{remark}\label{RM_EF-uni}
Definition~\ref{D_BCM-EF} does not require elements of an evolution family to
be univalent. However, this condition is satisfied. Indeed, by
Theorem~\ref{TH_BCM-EF-VF}, any evolution family $(\varphi_{s,t})$ can be
obtained via solutions to the generalized L\"owner\,-\,Kufarev ODE. Hence the
univalence of $\varphi_{s,t}$'s follows from the uniqueness of solutions to
this~ODE. For an essentially different direct proof see
\cite[Proposition~3]{BCM2}.
\end{remark}

\begin{remark} Different notions of evolution families considered previously
in the literature can be reduced to special cases of $L^d$-evolution families
defined above.

The Schwarz lemma and distortion estimates imply that solutions of the classical L\"owner-Kufarev equation \eqref{LKord} are evolutions families of order $\infty$. Also, it can be proved that  for all semigroups of analytic maps $(\phi_t)$, the biparametric family $(\varphi_{s,t}):=(\phi_{t-s})$ is also an evolution family of order $\infty$.
\end{remark}

Equation~\eqref{EQ_BCM-ODE} establishes a 1-to-1 correspondence between
evolution families of order~$d$ and Herglotz vector fields of the same order.
Namely, the following theorem takes place.

\begin{theorem}{\rm (\cite[Theorem~1.1]{BCM1})}\label{TH_BCM-EF-VF} For any evolution family
$(\varphi_{s,t})$ of order $d\in[1,+\infty]$ there exists an (essentially)
unique Herglotz vector field $G(z,t)$ of order $d$ such that for every~$z\in
\mathbb D$ and every~$s\ge0$ the function $[s,+\infty)\ni t\mapsto
w_{z,s}(t):=\varphi_{s,t}(z)$ solves the initial value
problem~\eqref{EQ_BCM-ODE}.

Conversely, given any Herglotz vector field $G(z,t)$ of order
$d\in[1,+\infty]$, for every~$z\in \mathbb D$ and every~$s\ge0$ there exists a unique
solution $[s,+\infty)\ni t\mapsto w_{z,s}(t)$ to the initial value
problem~\eqref{EQ_BCM-ODE}. The formula $\varphi_{s,t}(z):=w_{z,s}(t)$ for all
$s\ge0$, all $t\ge s$, and all $z\in\mathbb D$, defines an evolution
family~$(\varphi_{s,t})$ of order~$d$.
\end{theorem}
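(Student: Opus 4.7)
The plan is to prove the two implications separately and then observe essential uniqueness. The link between an evolution family and its Herglotz vector field is the shifted form of EF2, $\varphi_{s,t+h}(z)=\varphi_{t,t+h}(\varphi_{s,t}(z))$, so that the time derivative at $t$ is governed by the ``diagonal'' generator
\[
G(z,t):=\lim_{h\to 0^{+}}\frac{\varphi_{t,t+h}(z)-z}{h},
\]
which is the candidate we aim to produce.

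For the forward direction I take $(\varphi_{s,t})$ and build $G$ as above. Since EF3 provides an absolute-continuity modulus $k_{z,T}\in L^{d}$ independent of~$s$, the Lebesgue differentiation theorem yields existence of the limit for a.e.~$t$, with the pointwise bound $|G(z,t)|\le k_{z,T}(t)$; combined with Cauchy estimates on a compact $K\subset\mathbb{D}$, this gives the $L^{d}$-bound WHVF3. Montel's theorem applied to the uniformly bounded difference quotients gives holomorphy in $z$, yielding WHVF2, and joint measurability in $t$ is routine, since the quotients themselves are measurable in $t$ and continuous in $z$. To identify $G(\cdot,t)$ as an infinitesimal generator for a.e.~$t$, I fix such a~$t$ and note that $\psi_\varepsilon:=\varphi_{t,t+\varepsilon}\in{\sf Hol}(\mathbb{D},\mathbb{D})$, so the quotients $\varepsilon^{-1}(\psi_\varepsilon-\mathrm{id})$ are exactly of the form whose limit must satisfy the Berkson\,--\,Porta characterisation~\eqref{G-BK}. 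The ODE~\eqref{EQ_BCM-ODE} is then verified by differentiating the shifted EF2 in~$h$ at $h=0$.

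For the converse I feed a Herglotz vector field $G$ of order~$d$ into the Cauchy problem~\eqref{EQ_BCM-ODE}. Conditions WHVF1\,--\,WHVF3 are precisely the Carath\'eodory hypotheses, giving existence and uniqueness of local absolutely continuous solutions. The decisive point is that the maximal solution is defined for all $t\ge s$ and stays in~$\mathbb{D}$. Since $G(\cdot,t)$ is an infinitesimal generator for a.e.~$t$, the Berkson\,--\,Porta decomposition $G(z,t)=(z-\tau(t))(\overline{\tau(t)}z-1)p(z,t)$ with $\re p(\cdot,t)\ge 0$ is available, and a computation of the type~\eqref{stringo} (performed after conjugating by an automorphism sending $\tau(t)$ to the origin) produces a Lyapunov inequality that forces the hyperbolic distance from $w(t)$ to any fixed reference point to be non-increasing. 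This prevents the solution from escaping~$\mathbb{D}$. Defining $\varphi_{s,t}(z):=w_{z,s}(t)$, EF1 is immediate, EF2 follows from uniqueness of solutions (both $u\mapsto\varphi_{s,u}(z)$ and $u\mapsto\varphi_{t,u}(\varphi_{s,t}(z))$ solve the same Cauchy problem on $[t,+\infty)$), and EF3 follows by integrating~\eqref{EQ_BCM-ODE} and invoking WHVF3 on a compact containing the orbit. Essential uniqueness of~$G$ is then automatic: two Herglotz vector fields generating the same $(\varphi_{s,t})$ must both equal the a.e.~right-derivative of the diagonal, and hence agree almost everywhere.

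The main obstacle is the identification step in the forward direction: Lebesgue differentiation delivers good values of~$t$ only separately for each~$z$, whereas the Berkson\,--\,Porta property of $G(\cdot,t)$ is a global statement on~$\mathbb{D}$. To reconcile these, one has to produce a single null set $N\subset[0,+\infty)$ off which the limit exists on a countable dense subset of~$\mathbb{D}$ and, by Montel, upgrades to uniform-on-compacts convergence to a holomorphic map that, by the characterisation of semicomplete vector fields, is an infinitesimal generator. One must then further arrange joint measurability of the Berkson\,--\,Porta data $(\tau(t),p(z,t))$ in~$t$. This coordination between pointwise-a.e.\ analysis in~$t$ and the rigid global structure imposed by \eqref{G-BK} is the technical heart of the theorem.
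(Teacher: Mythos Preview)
The paper under review is a survey and does \emph{not} contain a proof of this theorem: it is stated with an explicit citation to \cite[Theorem~1.1]{BCM1} and the text moves on immediately afterwards. There is therefore no ``paper's own proof'' against which to compare your proposal.

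That said, your outline is a faithful high-level summary of the strategy actually carried out in \cite{BCM1}. The forward direction there does proceed by differentiating the diagonal $\varphi_{t,t+h}$, using EF3 together with distortion estimates to get uniform $L^{d}$ control, and Vitali/Montel to pass from a countable dense set to a holomorphic limit that is an infinitesimal generator for a.e.~$t$; the converse uses Carath\'eodory ODE theory plus a hyperbolic-metric contraction argument (equivalent to your Lyapunov inequality) to obtain global existence and then reads off EF1--EF3. You have also correctly identified the genuine technical crux: upgrading the pointwise-a.e.\ differentiability furnished by EF3 to a \emph{single} null set $N$ off which $G(\cdot,t)$ is holomorphic and of Berkson--Porta type, and arranging joint measurability of the data. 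In \cite{BCM1} this is handled by first proving that evolution families are jointly continuous and locally Lipschitz in the hyperbolic metric, which gives enough uniformity to make the countable-dense-set argument work; your sketch gestures at this but would need those preliminary regularity lemmas to be made rigorous.
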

Here by {\it essential uniqueness} we mean that two Herglotz vector fields
$G_1(z,t)$ and $G_2(z,t)$ corresponding to the same evolution family must
coincide for a.e. $t\ge0$.

The strong relationship between semigroups and evolution families on the one side and Herglotz vector fields and infinitesimal generators on the other side, is very much reflected by the so-called `product formula' in convex domains of S. Reich and D. Shoikhet \cite{Reich-Shoikhet}. Such a formula can be rephrased as follows: let $G(z,t)$ be a Herglotz vector field. For almost all $t\geq 0$, the holomorphic vector field $\mathbb D\ni z\mapsto G(z,t)$ is an infinitesimal generator. Let $(\phi_r^t)$ be the associated semigroups of holomorphic self-maps of $\mathbb D$. Let $(\varphi_{s,t})$ be the evolution family associated with $G(z,t)$. Then, uniformly on compacta of $\mathbb D$ it holds
\[
\phi_t^r=\lim_{m\to \infty} \varphi_{t,t+\frac{r}{m}}^{\circ m}=\lim_{m\to \infty} \underbrace{(\varphi_{t,t+\frac{r}{m}}\circ \ldots\circ \varphi_{t,t+\frac{r}{m}})}_m.
\]
Using such a formula for the case of the unit disk $\mathbb D$, in \cite{BCD3} it has been proved the following result which gives a description of semigroups-type evolution families:

\begin{theorem}
Let $G(z,t)$ be a Herglotz vector field of order $d$ in $\mathbb D$ and let $(\varphi_{s,t})$ be its associated evolution family. The following are equivalent:
\begin{enumerate}
  \item there exists a function $g\in L^d_{loc}([0,+\infty),\mathbb C)$ and an infinitesimal generator $H$ such that $G(z,t)=g(t)H(z)$ for all $z\in \mathbb D$ and almost all $t\geq 0$,
  \item $\varphi_{s,t}\circ \varphi_{u,v}=\varphi_{u,v}\circ \varphi_{s,t}$ for all $0\leq s\leq t$ and $0\leq u\leq v$.
\end{enumerate}
\end{theorem}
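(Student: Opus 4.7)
The plan is to lift the equivalence from the evolution family to the one-parameter semigroups generated by the time-frozen Herglotz vector field, via the product formula recalled above, and then to invoke a rigidity result for commuting semigroups of holomorphic self-maps of $\mathbb D$.

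For the direction (1) $\Rightarrow$ (2), let $(\phi_r)_{r\ge 0}$ be the one-parameter semigroup with infinitesimal generator $H$, so that $r\mapsto \phi_r(z)$ solves $dw/dr=H(w)$, $w(0)=z$. A direct time-change shows that the unique solution of $dw/dt=g(t)H(w)$, $w(s)=z$, is $\phi_{\sigma_s(t)}(z)$ with $\sigma_s(t):=\int_s^t g(\xi)\,d\xi$; hence $\varphi_{s,t}=\phi_{\sigma_s(t)}$. Commutativity of $\varphi_{s,t}$ with $\varphi_{u,v}$ then reduces to the additivity of the semigroup parameter: $\phi_r\circ\phi_{r'}=\phi_{r+r'}=\phi_{r'}\circ\phi_r$.

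For the direction (2) $\Rightarrow$ (1), fix two times $t_0,t_1\ge 0$ at which $G(\cdot,t_i)$ is an infinitesimal generator and denote the corresponding semigroups by $(\phi^{t_i}_r)$. By the Reich--Shoikhet product formula quoted in the text,
\[
\phi^{t_i}_r=\lim_{m\to\infty}\big(\varphi_{t_i,t_i+r/m}\big)^{\circ m},
\]
uniformly on compacta of $\mathbb D$. Composition is continuous in this topology, and the iterates on the right commute pairwise by hypothesis; passing to the limit gives $\phi^{t_0}_r\circ\phi^{t_1}_{r'}=\phi^{t_1}_{r'}\circ\phi^{t_0}_r$ for all $r,r'\ge 0$ and almost every pair $(t_0,t_1)$. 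Now invoke the classical structural fact, a consequence of the Berkson--Porta decomposition together with the existence of a common Denjoy--Wolff point for any pair of commuting self-maps of $\mathbb D$: two non-trivial one-parameter semigroups in ${\sf Hol}(\mathbb D,\mathbb D)$ commute if and only if their infinitesimal generators differ by a non-negative real constant. If $G(\cdot,t)\equiv 0$ for a.e.~$t$ the statement is trivial; otherwise pick $t_\ast$ with $H(z):=G(z,t_\ast)\not\equiv 0$ and define $g(t)$ as the unique scalar $\ge 0$ with $G(\cdot,t)=g(t)H$.

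It remains to check that $g\in L^d_{loc}([0,+\infty),\mathbb C)$. Choosing $z_0\in \mathbb D$ with $H(z_0)\ne 0$, the formula $g(t)=G(z_0,t)/H(z_0)$ is measurable in~$t$ by WHVF1 of Definition~\ref{D_BCM-VF}, while WHVF3 applied to the compact set $\{z_0\}$ gives $|g(t)|\le k_{\{z_0\},T}(t)/|H(z_0)|$ for each $T>0$, with $k_{\{z_0\},T}\in L^d([0,T],\mathbb R)$; hence $g$ has the required regularity. The main obstacle is the structural characterisation of commuting holomorphic semigroups via proportional infinitesimal generators, which is where the non-triviality of the argument really sits; transferring commutativity from evolution families to semigroups via the product formula and the subsequent time-change bookkeeping are then essentially automatic.
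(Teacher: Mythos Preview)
The paper does not supply its own proof of this statement; it merely records the theorem and attributes it to \cite{BCD3}, noting that the product formula is the key tool. Your overall strategy---pass from the evolution family to the frozen-time semigroups via the product formula, then invoke a rigidity statement for commuting semigroups---is indeed the intended one, so the architecture is right. Two points in the execution need attention, however.

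First, in the direction (1)\,$\Rightarrow$\,(2) your time-change $\varphi_{s,t}=\phi_{\sigma_s(t)}$ with $\sigma_s(t)=\int_s^t g(\xi)\,d\xi$ presumes that $\sigma_s(t)$ lies in the parameter domain $[0,+\infty)$ of the real semigroup $(\phi_r)$. But the hypothesis allows $g$ to be genuinely complex-valued (e.g.\ $H(z)=-z$ and $g(t)=1+i\sin t$ gives a legitimate Herglotz vector field), so $\sigma_s(t)$ need not be real. One must either justify an analytic extension of $(\phi_r)$ to a sector of complex times, or bypass the time-change entirely by showing directly that $t\mapsto \varphi_{u,v}\circ\varphi_{s,t}(z)$ and $t\mapsto \varphi_{s,t}\circ\varphi_{u,v}(z)$ satisfy the same ODE with the same initial condition, using that each $\varphi_{u,v}$, being a (reparametrised) flow map of the autonomous field $H$, pushes $H$ forward to itself.

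Second, and more seriously, the ``classical structural fact'' you invoke for (2)\,$\Rightarrow$\,(1)---that two commuting nontrivial semigroups on $\mathbb D$ have generators differing by a \emph{non-negative real} scalar---is false as stated. The dilation semigroup $\phi_r(z)=e^{-r}z$ (generator $-z$) and the rotation semigroup $\psi_r(z)=e^{ir}z$ (generator $iz$) commute, yet their generators differ by the factor $-i$. The correct rigidity statement is that commuting nontrivial semigroups share a common K\"onigs function (equivalently, a common Denjoy--Wolff point together with simultaneous linearisation), which in the elliptic case forces the generators to be \emph{complex}-proportional, not merely real-proportional. This is precisely why the theorem is stated with $g\in L^d_{\rm loc}([0,+\infty),\mathbb C)$ rather than with values in $[0,+\infty)$. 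Once you replace your structural fact by the correct one, the rest of your argument (choice of $t_\ast$, measurability and $L^d$ bound via evaluation at a point $z_0$ with $H(z_0)\neq 0$) goes through unchanged.
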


In order to end up the picture started with the classical L\"owner theory, we should put in the frame also the L\"owner chains.
The general notion of a L\"owner chain has been given\footnote{See
also~\cite{AbstractLoew} for an extension of this notion to complex manifolds and with a complete different approach even in the unit disk.
The construction of a L\"owner chain associated with a given evolution family
proposed there differs essentially from the one we used in~\cite[Theorems 1.3
and~1.6]{SMP}.} in~\cite{SMP}.
\begin{definition}(\cite{SMP})~\label{D_SMP-LC}
A family $(f_t)_{t\ge0}$ of holomorphic maps of~$\mathbb D$ is called a {\it L\"owner
chain of order $d$} with $d\in [1,+\infty]$  if it satisfies the following conditions:
\begin{enumerate}
\item[LC1.] each function $f_t:\mathbb D\to\mathbb C$ is univalent,

\item[LC2.] $f_s(\mathbb D)\subset f_t(\mathbb D)$ whenever $0\leq s < t<+\infty,$

\item[LC3.] for any compact set $K\subset\mathbb{D}$ and any $T>0$ there exists a
non-negative function $k_{K,T}\in L^{d}([0,T],\mathbb{R})$ such that
\[
|f_s(z)-f_t(z)|\leq\int_{s}^{t}k_{K,T}(\xi)d\xi
\]
whenever $z\in K$ and $0\leq s\leq t\leq T$.
\end{enumerate}
\end{definition}

This definition of (generalized) L\"owner chains matches the abstract notion of
evolution family introduced in~\cite{BCM1}. In particular the following
statement holds.

\begin{theorem}{\rm (\cite[Theorem~1.3]{SMP})}\label{TH_SMP-EF-LC}
For any L\"owner chain $(f_t)$ of order $d\in[1,+\infty]$, if we define
$$
\varphi_{s,t}:= f_t^{-1}\circ f_s \quad \text{whenever }0\le s\le t,
$$
then $(\varphi_{s,t})$ is an evolution family of the same order~$d$.
Conversely, for any evolution family $(\varphi_{s,t})$ of
order~$d\in[1,+\infty]$, there exists a L\"owner chain $(f_t)$ of the same
order~$d$ such that
\begin{equation*}
 f_t\circ\varphi_{s,t}=f_s\quad  \text{whenever }0\le s\le t.
\end{equation*}
\end{theorem}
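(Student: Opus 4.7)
The theorem is a biconditional; I construct each direction in turn.

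\medskip

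\noindent\textbf{Forward direction.} Given a Löwner chain $(f_t)$, set $\varphi_{s,t}:=f_t^{-1}\circ f_s$, which is a holomorphic, univalent self-map of $\mathbb{D}$ by LC1 and LC2. Properties EF1 and EF2 are immediate from $f_s^{-1}\circ f_s=\mathrm{Id}$ and $(f_t^{-1}\circ f_u)\circ(f_u^{-1}\circ f_s)=f_t^{-1}\circ f_s$. To prove EF3, fix $z\in\mathbb{D}$ and $T>0$, set $w_0:=f_s(z)$, and write $\zeta_\xi:=\varphi_{s,\xi}(z)=f_\xi^{-1}(w_0)$. By LC3 the maps $f_\xi$ depend continuously on $\xi$ uniformly on compacta, so by Hurwitz $\xi\mapsto\zeta_\xi$ is continuous on $[s,T]$ and the trajectory lies in some compact $K\Subset\mathbb{D}$. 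The identity $f_u(\zeta_u)-f_t(\zeta_u)=f_t(\zeta_t)-f_t(\zeta_u)$ then combines LC3 on $K$ (left side) with the Koebe distortion lower bound for the univalent $f_t$ (right side, where continuity and univalence give a positive infimum of $\xi\mapsto|f_\xi'(0)|$ on $[0,T]$) to yield $|\zeta_u-\zeta_t|\le c\int_u^t k_{K,T}(\xi)\,d\xi$, which is EF3.

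\medskip

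\noindent\textbf{Converse direction.} Pick a sequence $\tau_n\to+\infty$ and, for $0\le t\le\tau_n$, introduce the affine rescaling
\[
k^{(n)}_t(z):=\frac{\varphi_{t,\tau_n}(z)-\varphi_{0,\tau_n}(0)}{\varphi'_{0,\tau_n}(0)}.
\]
Each $k^{(n)}_t$ is univalent on $\mathbb{D}$, and EF2 yields the chain identity $k^{(n)}_t\circ\varphi_{s,t}=k^{(n)}_s$ for $0\le s\le t\le\tau_n$. At $t=0$ the normalisation places $k^{(n)}_0$ in the standard class $\Sb$. For $t>0$, differentiating the chain identity at $z_t:=\varphi_{0,t}(0)$ gives the $n$-independent data $k^{(n)}_t(z_t)=0$ and $(k^{(n)}_t)'(z_t)=1/\varphi'_{0,t}(0)$; pre-composing with the disk automorphism sending $0$ to $z_t$ and rescaling places $\{k^{(n)}_t\}_n$ inside a fixed scalar multiple of $\Sb$, whence Koebe's growth theorem provides normality for every $t$.

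\medskip

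A diagonal extraction over a countable dense set $D\subset[0,+\infty)$ produces a subsequence $(n_j)$ along which $k^{(n_j)}_t\to f_t$ locally uniformly on $\mathbb{D}$ for every $t\in D$, with each $f_t$ univalent by Hurwitz. Passing to the limit in the chain identity gives $f_t\circ\varphi_{s,t}=f_s$ for $s,t\in D$, $s\le t$, which immediately yields LC1 and LC2. To extend $(f_t)$ to all $t\ge0$ and obtain LC3, write $k^{(n)}_t(z)-k^{(n)}_s(z)=k^{(n)}_t(z)-k^{(n)}_t(\varphi_{s,t}(z))$, bound $|\varphi_{s,t}(z)-z|$ by EF3 (with $u=s$), and control the gradient of $k^{(n)}_t$ on compacta via Koebe distortion; this produces an LC3-type bound on $k^{(n)}_t$ uniformly in $n$ that passes to the limit.

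\medskip

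The chief technical obstacle is the converse direction, and more precisely the final step of transferring the $L^d$-integral regularity from EF3 to LC3. Making that transfer rigorous requires that the normalising data $z_t=\varphi_{0,t}(0)$ and $\varphi'_{0,t}(0)$ depend absolutely continuously on $t$---which follows from EF3 applied at $z=0$ together with Cauchy estimates for the derivative---so that the Koebe gradient bounds are uniform in $t$ on compact intervals. Once this uniformity is in hand, the diagonally-extracted limits fit together into a genuine Löwner chain of the same order $d$.
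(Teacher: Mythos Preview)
The paper is a survey and does not prove this theorem; it simply quotes it from \cite[Theorem~1.3]{SMP}. Your outline follows the expected route---essentially the Pommerenke-style argument generalised to the $L^d$ setting---so there is no methodological divergence to discuss. Two technical points deserve attention, though.

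In the forward direction you fix $s$ and produce a compact set $K$ containing the trajectory $\xi\mapsto\zeta_\xi=\varphi_{s,\xi}(z)$. But EF3 demands a single $k_{z,T}$ valid for \emph{all} $0\le s\le u\le t\le T$, so the compact set must not depend on $s$. You need joint continuity of $(s,\xi)\mapsto f_\xi^{-1}(f_s(z))$ on the closed triangle $\{0\le s\le\xi\le T\}$: this follows because LC3 gives $s\mapsto f_s(z)$ continuous and $\xi\mapsto f_\xi$ continuous locally uniformly, whence $\xi\mapsto f_\xi^{-1}$ is continuous on compacta of the common image via Hurwitz; the image of the triangle is then compact in $\mathbb D$. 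Also, your ``Koebe distortion lower bound'' is really a Lipschitz bound on $f_t^{-1}$ on a compact subset of $f_t(\mathbb D)$, uniform in $t\in[0,T]$ by normality; phrasing it as a pointwise derivative bound on $f_t$ is not quite enough for a complex-valued map.

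In the converse direction the normalisation and diagonal extraction are fine, but your LC3 estimate needs the \emph{compact} version of EF3: a bound $|\varphi_{s,t}(z)-z|\le\int_s^t k_{K,T}(\xi)\,d\xi$ uniform over $z$ in a compact set $K$, not merely for a single $z$. This upgrade from pointwise to uniform is a separate lemma (it appears in \cite{BCM1}, proved via Schwarz--Pick contraction plus a Cauchy-integral argument to control derivatives) and is the actual mechanism that transfers $L^d$-regularity from the evolution family to the chain. Without it your Koebe gradient control on $k_t^{(n)}$ combines with an estimate that depends on $z$, and you do not get a single $k_{K,T}\in L^d$ as LC3 requires.
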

In the situation of this theorem we say that the L\"owner chain~$(f_t)$ and the
evolution family~$(\varphi_{s,t})$ are {\it associated with} each other. It was
proved in~\cite{SMP} that given an evolution family~$(\varphi_{s,t})$, an
associated L\"owner chain~$(f_t)$ is unique up to conformal maps
of~$\cup_{t\ge0}f_t(\mathbb D)$. Thus there are essentially one two different types of L\"owner chains: those such that $\cup_{t\ge0}f_t(\mathbb D)=\mathbb C$ and those such that $\cup_{t\ge0}f_t(\mathbb D)$ is a simply connected domain different from $\mathbb C$ (see \cite{SMP} for a characterization in terms of the evolution family associated with).

Thus in the framework of the approach described above the essence of L\"owner
Theory is represented by the essentially 1-to-1 correspondence among L\"owner
chains, evolution families, and Herglotz vector fields.

Once the previous correspondences are established, given a
L\"owner chain $(f_t)$ of order $d$, the general L\"owner-Kufarev PDE
\begin{equation}\label{gen-Low-Kuf}
\frac{\partial f_t(z)}{\partial t}=-G(z,t)\frac{\partial f_t(z)}{\partial z}.
\end{equation}
 follows by differentiating the
structural equation \begin{equation}\label{gen-Low}
\frac{dz}{dt}=G(z,t), \qquad z(0)=z.
\end{equation}
 Conversely,
given a Herglotz vector field $G(z,t)$ of order $d$, one can
build the associated L\"owner chain (of the same order $d$),
solving \eqref{gen-Low-Kuf} by means of the associated
evolution family.

The Berkson-Porta decomposition of a Herglotz
vector field $G(z,t)$ also gives information on the dynamics of the
associated evolution family. For instance, when
$\tau(t)\equiv \tau\in \mathbb D$, the point $\tau$ is a (common)
fixed point of $(\varphi_{s,t})$ for all $0\leq s\leq t<+\infty$.
Moreover, it can be proved that, in such a case, there exists a
unique locally absolutely continuous function
$\lambda\colon[0,+\infty )\rightarrow\mathbb{C}$ with $\lambda'\in
L_{loc}^{d}([0,+\infty),\mathbb{C})$, $\lambda(0)=0$ and $\re
\lambda(t)\geq\re \lambda(s)\geq0$ for all $0\leq s\leq
t<+\infty$  such that for all $s\leq t$
\[
\varphi_{s,t}^{\prime}(\tau)=\exp(\lambda (s)-\lambda(t)).
\]
A similar characterisation holds when $\tau(t)\equiv \tau\in\partial
\mathbb D$ (see \cite{BCD}).

Let us summarize this section. In the next scheme we show the main three notions of L\"owner theory we are dealing with in this paper and the relationship between them:

\begin{center}
{\color{red}\fbox{{\color{blue}L\"owner chains $(f_t)$}}}
\hbox{{\Large $\overset{\varphi_{s,t}=f_t^{-1}\circ f_s}{\longleftarrow
\hspace{-0.25cm}-\hspace{-0.25cm}-\hspace{-0.25cm}-\hspace{-0.25cm}-\hspace{-0.25cm}\longrightarrow}$}}
{\color{red}\fbox{{\color{blue}Evolution families $(\varphi_{s,t})$}}}

\hspace*{-0.6cm}
\mbox{
{ \begin{minipage}{0.3\textwidth}
L\"owner-Kufarev PDE \\ \\
$\frac{\partial f_t(z)}{\partial t}=-G(z,t)\frac{\partial f_t(z)}{\partial z}$
\end{minipage}
}}
$
\begin{array}{ccc}
\lower.1cm\hbox{$\uparrow$} & \hspace*{3.5cm}& \lower.1cm\hbox{$\uparrow$}\\
\rule[-1cm]{0.1mm}{2cm}& &\rule[-1cm]{0.1mm}{2cm}\\
\raise.1cm\hbox{$\downarrow$} & &\raise.1cm\hbox{$\downarrow$}
\end{array}$
\mbox{
\begin{minipage}{0.3\textwidth}
L\"owner-Kufarev ODE \\ \\
$\frac{dw}{dt}=G(w,t), \, w(s)=z$ \\ \\
$\varphi_{s,t}(z)=w(t)$
\end{minipage}}

{\color{red}\fbox{{\color{blue}Herglotz vector fields $G(w,t)=(w-\tau(t))(\overline{\tau(t)}w-1)p(w,t)$}}}
\end{center}
where $p$ is a Herglotz function.

\section{Extensions to multiply connected domains}

Y{\^u}saku Komatu (1914--2004), in 1942 \cite{Komatu}, was the first to generalise L\"owner's parametric representation to univalent holomorphic functions defined in a circular annulus and with images in the exterior of a disc. Later, Goluzin \cite{GoluzinM} gave a much simpler way to establish Komatu's results. With the same techniques, Li \cite{Li} considered a slightly different case, when the image of the annulus is the complex plane with two slits (ending at infinity and at the origin, respectively). See also \cite{Komatu1950}, \cite{LebedevRing}, \cite{Gutljanski_diss}. The monograph~\cite{Aleksandrov} contains a self-contained detailed
account on the Parametric Representation in the multiply connected setting.

Another way of adapting L\"owner's method to multiply connected domains was developed by Kufarev and Kuvaev \cite{Kufarev-Kuvaev}. They obtained a differential equation satisfied by automorphic functions realizing conformal covering mappings of the unit disc onto multiply connected domains with a gradually erased slit. This differential approach has been also followed by Tsai \cite{Tsai}. Roughly speaking, these results can be considered as a version for multiply connected domains of the slit-radial L\"owner equation. In a similar way, Bauer and Friedrich have developed a slit-chordal theory for multiply connected domains. Moreover, they have even dealt with stochastic versions of both the radial and the chordal cases.
In this framework the situation is more subtle than in the simply connected case, because moduli spaces enter the picture \cite{BaFriedrich1}, \cite{BaFriedrich2}.

Following the guide of the new and general approach to L\"owner theory in the unit disk as described in Section 9, the second and the third author of this survey jointly with Pavel Gumenyuk have developed a global approach to L\"owner theory for double connected domains which give a uniform framework to previous works of Komatu, Goluzin, Li en Pir and Lebedev. More interestingly, this abstract theory shows some phenomena not considered before and also poses many new questions.

Besides the similarities between the general approach for simple and double connected domains, there are a number of significative differences both in concepts and results. For instance, in order to develop an interesting substantial theory for the double
connected case, instead of a static reference domain $D$, one has to consider a family of expanding annuli
$D_t=\mathbb A_{r(t)}:=\{z:r(t)<|z|<1\}$, where ${r:[0,+\infty)\to[0,1)}$ is non-increasing and
continuous. The first who noticed this fact, in a very special case, was already Komatu~\cite{Komatu}. Such a family $(D_t)$, it is usually called a canonical domain system of order $d$, whenever $-\pi/\log(r)$ belongs to $AC^d([0,+\infty),\R)$.

This really non-optional dynamic context forces to modify the definitions of (again) the three basic elements of the general theory: semicomplete vector fields, evolution families and L\"owner chains. Nevertheless, there is still a (essentially) one-to-one correspondence between these three notions.

As in the unit disk, (weak) vector fields are introduced in this picture bearing in mind Carath\'eodory's theory of ODEs.

\begin{definition}\label{D_WHVF}
Let  $d\in[1,+\infty]$.  A function $G:\mathcal D\to\C$ is said to be a {\it weak holomorphic
vector field} of order~$d$ in the domain
$$
\mathcal D:=\{(z,t):\,t\ge0,\,z\in D_t\},
$$
if it satisfies the following three conditions:
\begin{description}
\item[WHVF1.] For each $z\in\C$ the function $G(z,\cdot)$ is measurable in~$E_z:=\{t:(z,t)\in\mathcal
D\}$.
\item[WHVF2.] For each $t\in E$ the function $G(\cdot,t)$ is holomorphic in
$D_t$.

\item[WHVF3.] For each compact set $K\subset\mathcal D$ there exists a non-negative function
$$
{k_K\in L^d\big(pr_{\R}(K),\R\big)},\quad
pr_{\R}(K):=\{t\in
E:~\exists\,z\in\C\quad(z,t)\in K\},
$$
such that
$$
|G(z,t)|\le k_K(t),\quad\text{for all $(z,t)\in K$}.
$$%
\end{description}
\end{definition}

Given a weak holomorphic vector field~$G$ in~$\mathcal D$ and an initial condition $(z,s)\in\mathcal D$, it is possible to consider the initial value problem,
\begin{equation}\label{EQ_CarODE-IVP}
\dot w=G(w,t),\quad w(s)=z.
\end{equation}
A solution to this problem  is any continuous function $w:J\to\C$ such that $J\subset E$ is an interval,
$s\in J$, $(w(t),t)\in\mathcal D$ for all $t\in J$ and
\begin{equation}\label{EQ_opL}
w(t)=z+\int_{s}^t G(w(\xi),\xi)\,d\xi,\quad t\in J.
\end{equation}
When these kind of problems have solutions well-defined globally to the right for any initial condition, the vector field $G$ is called semicomplete.

Putting together the main properties of the flows generated by semicomplete weak vector fields, we arrive to the concept of evolution families for the doubly connected setting.

\begin{definition}\label{def-ev}
A family $(\varphi_{s,t})_{0\leq s\leq t<+\infty}$ of holomorphic mappings $\varphi_{s,t}:D_s\to D_t$ is
said to be an {\it evolution family of order $d$ over~$(D_t)$} (in short, an {\it $L^d$-evolution
family}) if the following conditions are satisfied:
\begin{description}
\item[EF1.] $\varphi_{s,s}=\mathsf{id}_{D_s},$

\item[EF2.] $\varphi_{s,t}=\varphi_{u,t}\circ\varphi_{s,u}$ whenever $0\le
s\le u\le t<+\infty,$

\item[EF3.] for any closed interval $I:=[S,T]\subset[0,+\infty)$ and any $z\in D_S$ there exists a
non-negative function ${k_{z,I}\in
L^{d}\big([S,T],\mathbb{R}\big)}$ such that
\[
|\varphi_{s,u}(z)-\varphi_{s,t}(z)|\leq\int_{u}^{t}k_{z,I}(\xi)d\xi
\]
whenever $S\leq s\leq u\leq t\leq T.$
\end{description}
\end{definition}

As expected, there is a one-to-one
correspondence between evolution families over canonical domain systems and
semicomplete weak holomorphic vector fields, analogous to the correspondence
between evolution families and Herglotz vector fields in the unit
disk.

\begin{theorem}\label{TH_EF<->sWHVF}{\rm \cite[Theorem 5.1]{SMPAnI}} The following two assertions hold:
\begin{description}
\item[(A)] For any $L^d$-evolution family~$(\varphi_{s,t})$ over the canonical
domain system~$(D_t)$ there exists an essentially unique semicomplete weak
holomorphic vector field $G:\mathcal D\to\C$ of order~$d$ and a null-set
$N\subset [0,+\infty)$ such that for all $s\ge0$ the following statements hold:
\end{description}
\begin{description}
\item[(i)] the mapping $[s,+\infty)\ni t\mapsto \varphi_{s,t}\in\Hol(D_s,\C)$ is
differentiable for all $t\in[s,+\infty)\setminus N$;
\item[(ii)] $d\varphi_{s,t}/dt=G(\cdot,t)\circ\varphi_{s,t}$ for all $t\in[s,+\infty)\setminus N$.
\end{description}\vskip3mm

\begin{description}
\item[(B)] For any semicomplete weak holomorphic vector field ${G:\mathcal D\to\C}$
of order~$d$ the formula $\varphi_{s,t}(z):=w^*_s(z,t)$, $t\ge s\ge0$, $z\in
D_s$, where $w_s^*(z,\cdot)$ is the unique non-extendable solution to the
initial value problem
\begin{equation}\label{EQ_genGolKom}%
 \dot w=G(w,t),\quad w(s)=z,
\end{equation}%
defines an $L^d$-evolution family over the canonical domain system~$(D_t)$.
\end{description}
\end{theorem}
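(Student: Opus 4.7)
The strategy mirrors the one used in the simply connected case (cf.\ Theorem~\ref{TH_BCM-EF-VF}), the two directions being essentially independent, but the moving reference domain $(D_t)$ forces us to keep careful track of where each object lives.

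\textbf{Direction (A): from the evolution family to the vector field.}
Fix $s\ge 0$ and $z\in D_s$. The plan is to produce $G(w,t)$ as the right derivative
$\lim_{h\to 0^+}\frac{\varphi_{t,t+h}(w)-w}{h}$ at points $w=\varphi_{s,t}(z)$. First I would exploit EF3 to show that, for a.e.\ $t$, the map $h\mapsto \varphi_{t,t+h}(w)$ is differentiable at $h=0$: by EF2 we have $\varphi_{s,t+h}=\varphi_{t,t+h}\circ\varphi_{s,t}$, so
\[
\frac{\varphi_{s,t+h}(z)-\varphi_{s,t}(z)}{h}=\frac{\varphi_{t,t+h}(\varphi_{s,t}(z))-\varphi_{s,t}(z)}{h}.
\]
By EF3 the left-hand side is the difference quotient of an absolutely continuous curve with $L^d$ derivative bound; hence its limit exists for a.e.\ $t$ and equals $G(\varphi_{s,t}(z),t)$ for a suitable function $G$. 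The cocycle EF2 guarantees that the resulting $G(w,t)$ depends only on $(w,t)$ and not on the initial pair $(s,z)$. Holomorphy in $w$ would then follow from a Vitali/Weierstrass argument applied to the difference quotients $h^{-1}(\varphi_{t,t+h}-\mathrm{id})$, which are holomorphic on $D_t$ and locally uniformly bounded on compacta of $D_t$ thanks to EF3 combined with the distortion/Schwarz--Pick estimates available on an annulus. Measurability in $t$ for fixed $w$ comes from the pointwise almost-everywhere limit of continuous functions of $t$. The local $L^d$-bound WHVF3 is inherited from the $L^d$-bound in EF3 after passing to the limit via Fatou. Essential uniqueness follows because any two such $G$'s produce the same derivative of $t\mapsto \varphi_{s,t}(z)$ for every $(s,z)\in\mathcal D$, and $D_t$ is determined pointwise.

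\textbf{Direction (B): from the vector field to the evolution family.}
Given a semicomplete weak holomorphic vector field $G$ of order $d$, for each $(z,s)\in\mathcal D$ the Carath\'eodory existence-and-uniqueness theorem applied to \eqref{EQ_genGolKom} (the $L^d$-bound WHVF3 on compacta in $\mathcal D$ being precisely the Carath\'eodory hypothesis) yields a maximal solution $t\mapsto w_s^*(z,t)$, and by the semicompleteness assumption this solution is defined on all of $[s,+\infty)$ and stays in $D_t$. Setting $\varphi_{s,t}(z):=w_s^*(z,t)$, I would verify EF1, EF2, EF3 in turn: EF1 is the initial condition; EF2 follows from uniqueness, since $t\mapsto \varphi_{u,t}(\varphi_{s,u}(z))$ and $t\mapsto \varphi_{s,t}(z)$ solve the same Cauchy problem from time $u$ with initial value $\varphi_{s,u}(z)$; EF3 is obtained by integrating \eqref{EQ_opL} and choosing, for fixed $z\in D_S$, a compact set $K\subset\mathcal D$ containing the graph of $t\mapsto (\varphi_{S,t}(z),t)$ over $[S,T]$ (the latter is compact because the trajectory is continuous and stays in $\mathcal D$), so that $k_{z,[S,T]}:=k_K$ does the job. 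Holomorphy of $\varphi_{s,t}$ in $z$ would be established by the standard Carath\'eodory-type theorem on holomorphic dependence on initial data: holomorphy on $D_s$ is first proved on a small $t$-interval by a contraction argument in a space of holomorphic functions, then propagated globally in $t$ by the cocycle identity EF2 (which is already verified).

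\textbf{Main obstacle.}
I expect the principal difficulty to lie in Direction (A): showing that the pointwise right derivative of $t\mapsto\varphi_{s,t}(z)$ exists for a.e.\ $t$ \emph{simultaneously} for all $(s,z)$, and that it factors through the point $\varphi_{s,t}(z)$ alone in a way yielding a \emph{holomorphic} function of $w$. The doubly connected setting complicates the distortion estimates needed to apply Vitali's theorem because one must work on annuli $\mathbb A_{r(t)}$ whose moduli themselves vary with~$t$; the continuity and monotonicity of $r$ built into the definition of a canonical domain system of order $d$ is exactly what makes these estimates uniform on compacta of $\mathcal D$ and allows the argument to proceed. Direction (B) reduces to a careful packaging of classical Carath\'eodory ODE theory with holomorphic parameters, once one uses the semicompleteness hypothesis to avoid blow-up.
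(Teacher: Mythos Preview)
The paper is a survey and does not supply its own proof of this theorem; it merely states the result and attributes it to \cite[Theorem~5.1]{SMPAnI}. So there is nothing in the present paper to compare your argument against line by line.

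That said, your plan is exactly the strategy the paper signals one should follow: it introduces the doubly connected theory explicitly as the analogue of the unit-disk correspondence of Theorem~\ref{TH_BCM-EF-VF}, and your two directions mirror that template faithfully. The one place where your outline is thin is the point you yourself flag as the main obstacle in Direction~(A): in the annulus setting the distortion/Schwarz--Pick type estimates that give local uniform bounds on $h^{-1}(\varphi_{t,t+h}-\mathrm{id})$ are not as immediate as in the disk, and the fact that the reference domain $D_t=\mathbb A_{r(t)}$ itself moves means one must first show that $\varphi_{t,t+h}(K)$ stays in a fixed compact of $\mathcal D$ for small $h$, uniformly over $t$ in a compact interval. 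This is where the $AC^d$ regularity of $-\pi/\log r$ in the definition of a canonical domain system of order~$d$ is actually used, and a complete proof would need to make that step explicit rather than invoke ``distortion estimates available on an annulus'' generically. Apart from that, the skeleton is sound and matches what the cited source does.
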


With the corresponding perspective, there is also a true version of the non-auto-nomous Berkson-Porta description of Herglotz vector fields. Now, the role played by functions associated with Scharwz kernel is fulfiled by a natural class of functions associated with the Villat kernel (see \cite[Theorem 5.6]{SMPAnI}).

L\"owner chains in the double connected setting are introduced in a similar way as it was done in the case of the unit disc.

\begin{definition}\label{D_Loew_chain} A family $(f_t)_{t\ge0}$ of holomorphic functions
$f_t:D_t\to\C$ is called a {\it Loew-ner chain} of order $d$ over~$(D_t)$ if it
satisfies the following conditions:
\begin{enumerate}
\item[LC1.] each function $f_t:D_t\to\C$ is univalent,

\item[LC2.] $f_s(D_s)\subset f_t(D_t)$ whenever $0\leq s < t<+\infty,$

\item[LC3.] for any compact interval $I:=[S,T]\subset[0,+\infty)$ and any compact set $K\subset D_S$  there exists a
non-negative function $k_{K,I}\in L^{d}([S,T],\mathbb{R})$ such that
\[
|f_s(z)-f_t(z)|\leq\int_{s}^{t}k_{K,I}(\xi)d\xi
\]
for all $z\in K$ and all $(s,t)$ such that $S\leq s\leq t\leq T$.
\end{enumerate}
\end{definition}

The following theorem shows that every L\"owner chain generates an
evolution family of the same order.

\begin{theorem}{\rm \cite[Theorem 1.9]{SMPAnII}}\label{TH_LC->EF}
Let $(f_{t})$ be a L\"owner chain of order $d$ over a canonical
domain system $(D_{t})$ of order $d.$ If we define
\begin{equation}\label{EQ_LC->EF}
\varphi _{s,t}:=f_{t}^{-1}\circ f_{s},\qquad 0\leq s\leq t<\infty,
\end{equation}%
then $(\varphi _{s,t})$ is an evolution family of order $d$ over
$(D_{t}).$
\end{theorem}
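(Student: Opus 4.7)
My plan is to verify the three defining properties EF1--EF3 of an evolution family for the family $\varphi_{s,t}:=f_t^{-1}\circ f_s$. The first two are essentially formal, while EF3 carries all of the analytical content.

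First I would establish well-definedness and the algebraic properties. By LC1 each $f_t:D_t\to f_t(D_t)$ is a biholomorphism, and by LC2 we have $f_s(D_s)\subset f_t(D_t)$ whenever $s\le t$, so $\varphi_{s,t}=f_t^{-1}\circ f_s$ is a well-defined holomorphic map $D_s\to D_t$. Then EF1 is immediate from $\varphi_{s,s}=f_s^{-1}\circ f_s=\mathrm{id}_{D_s}$, and EF2 is the telescoping identity $\varphi_{u,t}\circ\varphi_{s,u}=f_t^{-1}\circ f_u\circ f_u^{-1}\circ f_s=f_t^{-1}\circ f_s=\varphi_{s,t}$.

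For EF3, fix $I=[S,T]$ and $z_0\in D_S$. Put $\alpha(\tau):=\varphi_{s,\tau}(z_0)=f_\tau^{-1}(f_s(z_0))$, so that $f_\tau(\alpha(\tau))\equiv f_s(z_0)$ on $[s,T]$. For $s\le u\le t\le T$, this constant-value identity gives the key equation
\begin{equation*}
f_t(\alpha(t))-f_t(\alpha(u)) \;=\; f_u(\alpha(u))-f_t(\alpha(u)).
\end{equation*}
The right-hand side is exactly the quantity controlled by LC3 applied at the single point $\alpha(u)$: it is majorized by $\int_u^t k_{K,I}(\xi)\,d\xi$ for a suitable compact $K$ and $k_{K,I}\in L^d([S,T],\R)$. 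To transfer this to a bound on $|\alpha(t)-\alpha(u)|=|\varphi_{s,t}(z_0)-\varphi_{s,u}(z_0)|$, I would invert using the univalence of $f_t$: Koebe distortion estimates (in the annular setting, applied to $f_t$ restricted to a disc around $\alpha(u)$ contained in $D_t$) produce a constant $c>0$ with $|f_t(\alpha(t))-f_t(\alpha(u))|\ge c\,|\alpha(t)-\alpha(u)|$. Combining the two inequalities yields the required $L^d$ majorant $k_{z_0,I}:=c^{-1}k_{K,I}$.

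The main obstacle is making this estimate \emph{uniform} over the triangle $\{S\le s\le u\le t\le T\}$, a step where the doubly connected setting is genuinely harder than the simply connected one. Specifically, one must exhibit a single compact $K$ containing all trajectory points $\{\varphi_{s,u}(z_0):S\le s\le u\le T\}$ and simultaneously accessible to LC3, together with a uniform positive lower bound for the Koebe factor $c$. For the former I would first prove the joint continuity of $(s,u)\mapsto\varphi_{s,u}(z_0)$ — which is itself a consequence of LC3 applied to $f_s$ followed by the continuous dependence of $f_t^{-1}$ on $t$ (via the inverse function theorem and univalence) — so that the trajectory is automatically contained in a compact subset of $\bigcup_{t\in[S,T]}D_t$; a Schwarz-lemma argument for annuli, relying on the fact that $(D_t)$ is canonical of order $d$ (i.e.\ $-\pi/\log r(t)\in AC^d_{\mathrm{loc}}$), then keeps the trajectory bounded away from both boundary circles uniformly in $t\in[S,T]$. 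Once the trajectory is confined to a fixed compact subset of $D_{S'}$ for some $S'\le S$, one may invoke LC3 over the slightly enlarged interval $[S',T]$ to obtain a single majorant, and the Koebe estimates then produce a uniform $c$, completing the verification of EF3.
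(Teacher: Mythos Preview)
The paper under review is a survey: it quotes this theorem with a reference to \cite[Theorem~1.9]{SMPAnII} but supplies no proof of its own, so there is no argument in the paper against which to compare your proposal.

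Your outline follows the natural route, and the key identity
\[
f_t(\alpha(t))-f_t(\alpha(u))=f_u(\alpha(u))-f_t(\alpha(u))
\]
together with LC3 and a Koebe-type inversion is indeed the standard mechanism. One point deserves care, however. In the annular setting the canonical system $(D_t)=(\mathbb A_{r(t)})$ is \emph{expanding} (since $r$ is non-increasing), so taking $S'\le S$ makes $D_{S'}$ \emph{smaller}, not larger; in particular $z_0\in D_S$ need not lie in $D_{S'}$, and your ``slightly enlarged interval $[S',T]$'' does the opposite of what you intend. What LC3 actually requires is that the whole trajectory $\{\varphi_{s,u}(z_0):S\le s\le u\le T\}$ be contained in a single compact subset of $D_S$, the \emph{smallest} domain in the range. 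This is not automatic, because a~priori $\varphi_{s,u}(z_0)\in D_u\supsetneq D_S$ and could wander into the annular shell $D_u\setminus D_S$. Establishing that it does not --- i.e.\ that the trajectory stays uniformly away from the inner circle $|z|=r(S)$ as well as from $|z|=1$ --- is exactly where the Schwarz-lemma/hyperbolic-metric argument for annuli and the $AC^d$-regularity of $r$ must be made to do real work, and it is the genuine new difficulty compared with the disc case.
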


An interesting consequence of this result is thatany L\"owner chain over a canonical system of
annuli satisfies a PDE driven by a semicomplete weak holomorphic
vector field. Moreover, the concrete formulation of this PDE clearly resembles the celebrated L\"owner-Kufarev PDE appearing in the simple connected case.

The corresponding converse of the above theorem is more subtle than the one shown in the simple connected setting. Indeed, for any evolution family
$(\varphi_{s,t})$ there exists a (essentially unique) L\"owner chain $(f_t)$ of the same
order such that $\eqref{EQ_LC->EF}$ holds but the selection of this fundamental chain is affected by the different conformal types of the domains $D_t$ as well as the behavior of their elements with the index $I(\gamma)$ (with respect to zero) of certain closed curves $\gamma\subset\D_t$.

\begin{theorem}{\rm \cite[Theorem 1.10]{SMPAnII}}\label{Th_from_EF_to_LC}
Let $(\varphi_{s,t})$ be an evolution family of order
$d\in[1,+\infty]$ over the canonical domain system $D_{t}:=\mathbb
A_{r(t)}$ with $r(t)>0$ (a non-degenerate system). Let $r_\infty:=\lim_{t\to+\infty} r(t)$. Then
there exists a L\"owner chain $(f_t)$ of order $d$ over $(D_t)$ such
that
\begin{enumerate}
       \item $f_s=f_t\circ\varphi_{s,t}$ for all $0\leq s\leq t<+\infty$, i.e. $(f_t)$ is associated with $(\varphi_{s,t})$;
       \item $I(f_t\circ\gamma)=I(\gamma)$ for any closed curve $\gamma\subset D_{t}$ and any $t\ge0$;
       \item If $0<r_\infty<1$, then $\cup_{t\in [0,+\infty)} f_t(D_t)=\mathbb A_{r_\infty}$;
       \item If $r_\infty =0$, then $\cup_{t\in [0,+\infty)} f_t(D_t)$ is either $\D^*$, $\C\setminus \overline{\D}$, or $\C^*$.
     \end{enumerate}
If $(g_t)$ is another L\"owner chain over $(D_t)$ associated with
$(\varphi_{s,t})$, then there is a biholomorphism $F:\cup_{t\in
[0,+\infty)} g_t(D_t)\to \cup_{t\in [0,+\infty)} f_t(D_t)$ such that
$f_t=F\circ g_t$ for all $t\ge0$.
\end{theorem}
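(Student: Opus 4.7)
The strategy is to construct the L\"owner chain via the direct limit of $(D_t,\varphi_{s,t})$, then conformally realize that limit as a planar domain. First I form the Riemann surface $X:=\bigsqcup_{t\ge0}D_t/\!\sim$, with $(z,s)\sim(\varphi_{s,t}(z),t)$ for $s\le t$; the cocycle identity EF2 makes this an equivalence relation, and univalence of $\varphi_{s,t}$ makes the canonical holomorphic maps $\iota_t\colon D_t\to X$ injective with $\iota_s(D_s)\subset\iota_t(D_t)$ for $s\le t$ and $\iota_s=\iota_t\circ\varphi_{s,t}$. The images $\iota_t(D_t)$ form an exhaustion of $X$.

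Next I identify the conformal type of $X$. Because the system is non-degenerate ($r(t)>0$), a loop $\gamma\subset D_s$ of index $1$ around $0$ yields a continuous family $t\mapsto\varphi_{s,t}\circ\gamma$ of loops in $\C\setminus\{0\}$ (continuity from EF3), so $I(\varphi_{s,t}\circ\gamma)$ is integer-valued and continuous, hence identically $1$. Thus $[\iota_s\circ\gamma]$ is nontrivial in $\pi_1(X)$, and since any loop in $X$ sits in some $\iota_t(D_t)$ by compactness, $\pi_1(X)\cong\Z$. Hence $X$ is a doubly connected planar Riemann surface; its modulus equals $\sup_t\mathrm{mod}(\iota_t(D_t))=-\log r_\infty/(2\pi)$, by continuity of the modulus under exhausting sequences. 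By the classification of doubly connected planar Riemann surfaces, $X$ is conformally $\mathbb A_{r_\infty}$ when $r_\infty>0$, and either $\D^*$ or $\C^*$ when $r_\infty=0$.

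Now I fix a biholomorphism $\Phi\colon X\to\Omega$ with $\Omega\in\{\mathbb A_{r_\infty},\D^*,\C\setminus\overline{\D},\C^*\}$, oriented so that the inner boundary component of $\iota_0(D_0)$ (the one corresponding to $|z|=r(0)$) is the one encircling $0$ in $\Omega$; this secures condition (2). Setting $f_t:=\Phi\circ\iota_t$, items LC1, LC2 and (1)--(4) are immediate from the construction. For LC3, a continuity argument using EF3 shows the set $\{\iota_t(z):z\in K,\,t\in[S,T]\}$ is relatively compact in $X$ for any compact $K\subset D_S$, whence $|f_t'|$ is uniformly bounded on $K$ for $t\in[S,T]$. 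Combined with $f_s(z)-f_t(z)=f_t(\varphi_{s,t}(z))-f_t(z)$ and a finite-cover upgrade of EF3 to a uniform kernel $k_{K,I}\in L^d([S,T])$, this delivers $|f_s(z)-f_t(z)|\le\int_s^tk_{K,I}(\xi)\,d\xi$.

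Finally, for uniqueness, given another associated chain $(g_t)$, I define $F:=f_t\circ g_t^{-1}$ on $g_t(D_t)$; the intertwining relations $g_s=g_t\circ\varphi_{s,t}$ and $f_s=f_t\circ\varphi_{s,t}$ force $F$ to agree on overlaps, so $F\colon\bigcup_tg_t(D_t)\to\bigcup_tf_t(D_t)$ is a well-defined biholomorphism with $f_t=F\circ g_t$. The main obstacle I foresee is the equality $\mathrm{mod}(X)=-\log r_\infty/(2\pi)$ (not merely $\ge$), which requires quantitative extremal-length estimates on the nested annular exhaustion; a secondary delicacy is the orientation choice that determines which of the three planar realizations actually arises when $r_\infty=0$.
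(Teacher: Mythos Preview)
The paper is a survey and does not prove this theorem; it merely quotes it from \cite{SMPAnII}. So there is no proof here to compare against directly. That said, your direct-limit construction is exactly the ``abstract basin'' philosophy the paper later advertises in Section~14 (Theorem~\ref{ABHKthm} and the surrounding discussion from \cite{AbstractLoew}): build the limit Riemann surface $X$, classify it, and take $f_t=\Phi\circ\iota_t$. For the doubly connected case the extra input is the uniformization of annular Riemann surfaces, which you invoke correctly.

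Two points deserve more care than you give them. First, the identity $\mathrm{mod}(X)=\sup_t\mathrm{mod}\,\iota_t(D_t)=\tfrac{1}{2\pi}\log\tfrac{1}{r_\infty}$, which you flag yourself: you need that each inclusion $\iota_t(D_t)\hookrightarrow X$ is a homotopy equivalence (you essentially showed $\pi_1$-surjectivity; injectivity is automatic since $\pi_1(D_t)\cong\Z$ maps to a nontrivial element), and then the continuity of modulus under exhaustion by essential subannuli, which follows from extremal length. Second, your LC3 argument is too compressed. The set you need to control is $\{\varphi_{s,t}(z):z\in K,\ S\le s\le t\le T\}\cup K$ inside $D_T$ (using that $D_t$ increases with $t$), and you must argue it is compact in $D_T$; EF3 with $u=s$ gives $|\varphi_{s,t}(z)-z|\le\int_s^t k$, but passing from a pointwise kernel $k_{z,I}$ to a uniform $k_{K,I}$ requires the finite-cover argument you allude to, plus a convexity step to ensure the line segment from $z$ to $\varphi_{s,t}(z)$ stays in a fixed compact set where $|f_t'|$ is bounded. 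None of this is deep, but it is where the actual work lies.

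Your uniqueness paragraph is correct and standard. The orientation discussion distinguishing $\D^*$ from $\C\setminus\overline{\D}$ is the right idea but would benefit from being tied explicitly to which boundary circle of $D_t$ is ``collapsing'' under $\varphi_{s,t}$ as $t\to\infty$; this is exactly what Theorem~\ref{TH_types_nondeg} in the paper (also quoted from \cite{SMPAnII}) makes precise via the auxiliary family $\tilde\varphi_{s,t}$.
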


In general, a L\"owner chain associated with a given evolution family is not
unique. We call a L\"owner chain $(f_t)$ to be {\it standard} if it satisfies
conditions (2)\,--\,(4) from Theorem~\ref{Th_from_EF_to_LC}. It follows from
this theorem that the standard L\"owner chain $(f_t)$ associated with a given
evolution family, is defined uniquely up to a rotation (and scaling if
$\cup_{t\in [0,+\infty)} f_t(D_t)=\C^*$).

Furthermore, combining
Theorems~\ref{TH_LC->EF} and~\ref{Th_from_EF_to_LC} one can easily conclude
that for any L\"owner chain~$(g_t)$ of order~$d$ over a canonical domain
system~$(D_t)$ there exists a biholomorphism $F:\cup_{t\in [0,+\infty)}
g_t(D_t)\to L[(g_t)]$, where $L[(g_t)]$ is either $\D^*$, $\C\setminus
\overline{\D}$, $\C^*$, or $\mathbb A_{\rho}$ for some $\rho>0$, and such that the
formula $f_t=F\circ g_t$, $t\ge0$, defines a standard L\"owner chain of
order~$d$ over the canonical domain system~$(D_t)$. Hence, the conformal type of any L\"owner chain $\cup_{t\in [0,+\infty)}
g_t(D_t)$ can be identified with $\D^*$, $\C\setminus
\overline{\D}$, $\C^*$, or $\mathbb A_{\rho}$ for some $\rho>0$. Moreover, it is natural (well-defined) to say that
the {(conformal) type} of an evolution family $(\varphi_{s,t})$
is the conformal type of any L\"owner chain associated with it.

The following statement characterizes the conformal type of a L\"owner chain via dynamical properties of two associated evolution families over $(D_t)$. One of them is the usual one $(\varphi_{s,t})$ and the other one is defined as follows: for each $s\ge0$ and
$t\ge s$,
$$
\tilde\varphi_{s,t}(z):=r(t)/\varphi_{s,t}(r(s)/z).
$$
It is worth mentioning that at least one of the
families $(\varphi_{0,t})$ and $(\tilde\varphi_{0,t})$ converges
to~$0$ as $t\to+\infty$ provided $r_\infty=\lim_{t\to+\infty}r(t)=0$.

\begin{theorem}{\rm \cite[Theorem 1.13]{SMPAnII}}\label{TH_types_nondeg}
Let $\big((D_t),(\varphi_{s,t})\big)$ be a non-degenerate evolution
family and denote as before $r_\infty:=\lim_{t\to+\infty}r(t)$. In the above notation, the following statements hold:
\begin{itemize}
  \item[(i)] the conformal type of the evolution family $(\varphi_{s,t})$ is $\mathbb A_{\rho}$ for some $\rho>0$ if and only
  if $r_\infty>0$;
  \item[(ii)] the conformal type of the the evolution family $(\varphi_{s,t})$ is $\D^*$ if and only if $r_\infty=0$ and $\varphi_{0,t}$  does not converge
  to 0  as  $t\to+\infty$;
  \item[(iii)] the conformal type of the the evolution family $(\varphi_{s,t})$ is $\C\setminus \overline{\D}$ if and only if $r_\infty=0$ and $\tilde{\varphi}_{0,t}$  does not converge
  to 0  as  $t\to+\infty$;
  \item[(iv)] the conformal type of the the evolution family $(\varphi_{s,t})$ is $\C^*$ if and only if $r_\infty=0$  and both $\varphi_{0,t}\to0$ and $\tilde{\varphi}_{0,t}\to0$
  as $t\to+\infty$.
\end{itemize}
\end{theorem}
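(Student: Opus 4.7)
The plan is to establish~(i) directly from Theorem~\ref{Th_from_EF_to_LC}, prove the forward direction of~(ii) by a normal-family analysis of the standard L\"owner chain, obtain the forward direction of~(iii) by applying~(ii) to the dual evolution family, and handle the forward direction of~(iv) by a maximum-principle estimate combined with the supplied fact that at least one of $\varphi_{0,t},\tilde\varphi_{0,t}$ tends to~$0$ when $r_\infty=0$. The reverse directions of (ii)--(iv) then follow by elimination, using the mutual inequivalence of the four conformal types $\mathbb A_\rho$, $\mathbb D^*$, $\mathbb C\setminus\overline{\mathbb D}$, $\mathbb C^*$. Part~(i) is immediate from clauses~(3) and~(4) of Theorem~\ref{Th_from_EF_to_LC}.

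For (ii) forward, let $(f_t)$ be the standard L\"owner chain with $\cup_{t\ge 0}f_t(D_t)=\mathbb D^*$. Any compact $K\subset\mathbb D^*$ lies in $D_t$ for $t$ large, and $f_t(K)\subset\mathbb D^*\subset\mathbb D$ is uniformly bounded, so $\{f_t\}$ is a normal family on~$\mathbb D^*$. A diagonal extraction yields a subsequence $f_{t_n}\to F$ locally uniformly, with $F\colon\mathbb D^*\to\overline{\mathbb D}$ holomorphic. By Hurwitz's theorem $F$ is either univalent or constant; the constant alternative is impossible because $F(\mathbb D^*)\supset\cup_s f_s(D_s)=\mathbb D^*$. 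Hence $F$ is univalent, $F(\mathbb D^*)\cong\mathbb D^*$, $F(\mathbb D^*)\subset\mathbb D$, and $F(\mathbb D^*)\supset\mathbb D^*$; the only possibility is $F(\mathbb D^*)=\mathbb D^*$, so $F\in\mathrm{Aut}(\mathbb D^*)$ and thus $F(z)=e^{i\theta}z$ for some~$\theta$. Since every subsequence has a further subsequence of this form, $|\varphi_{0,t}(z)|=|f_t^{-1}(f_0(z))|\to|f_0(z)|>0$, so $\varphi_{0,t}\not\to 0$, and $|\tilde\varphi_{0,t}(z)|=r(t)/|\varphi_{0,t}(r(0)/z)|\to 0$.

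For (iii) forward, one checks that $\tilde f_t(z):=1/f_t(r(t)/z)$ is a standard L\"owner chain for $(\tilde\varphi_{s,t})$: the involution $z\mapsto r(t)/z$ and the inversion $w\mapsto 1/w$ each reverse the winding around~$0$, so the index condition of Theorem~\ref{Th_from_EF_to_LC}(2) is preserved, and $\cup_t\tilde f_t(D_t)=1/(\mathbb C\setminus\overline{\mathbb D})=\mathbb D^*$ when $\cup_t f_t(D_t)=\mathbb C\setminus\overline{\mathbb D}$. Applying~(ii) to~$\tilde\varphi$ yields $\tilde\varphi_{0,t}\not\to 0$ and $\varphi_{0,t}\to 0$. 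For (iv) forward, assume the type is~$\mathbb C^*$ and suppose for contradiction $|\varphi_{0,t_n}(z_0)|\ge c>0$ along some subsequence. Since $\cup_t f_t(D_t)=\mathbb C^*$, both boundary components of $f_{t_n}(D_{t_n})$ degenerate uniformly to~$\{0,\infty\}$, so $\log|f_{t_n}(z)|\to+\infty$ uniformly on the outer circle~$\{|z|=1\}$. Writing $L_n:=-\log r(t_n)\to+\infty$, the harmonic measure of~$\{|z|=1\}$ at $\varphi_{0,t_n}(z_0)$ in~$D_{t_n}$ equals $1+\log|\varphi_{0,t_n}(z_0)|/L_n\to 1$ (the numerator being bounded). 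A maximum-principle estimate for the harmonic function $\log|f_{t_n}|$ on~$D_{t_n}$ then forces $\log|f_{t_n}(\varphi_{0,t_n}(z_0))|\to+\infty$, contradicting the fixed value $\log|f_0(z_0)|$. Hence $\varphi_{0,t}\to 0$, and since the dual of a type-$\mathbb C^*$ family is again of type~$\mathbb C^*$, the same argument applied to~$\tilde\varphi$ gives $\tilde\varphi_{0,t}\to 0$.

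The main obstacle lies in the maximum-principle estimate of~(iv): the lower bound for $\log|f_{t_n}|$ on the inner circle $\{|z|=r(t_n)\}$ involves $-A_n$ with $A_n:=-\log\min\{|w|:w$ on the inner boundary of $f_{t_n}(D_{t_n})\}$, and $A_n$ can in principle grow much faster than~$L_n$. Absorbing the ensuing error term $A_n(1-\omega_n)$ in the harmonic-measure representation against the positive contribution $\log R_n\cdot\omega_n$ (with $R_n\to\infty$ the minimum modulus on the outer boundary) requires a geometric estimate that exploits the specific property $\cup_t f_t(D_t)=\mathbb C^*$, rather than the family being an arbitrary doubly-connected subdomain of~$\mathbb C^*$; alternatively, the hint that at least one of $\varphi_{0,t},\tilde\varphi_{0,t}$ converges to~$0$ can be coupled with (ii) and (iii) applied in turn to $\varphi$ and~$\tilde\varphi$ to bypass part of the estimate.
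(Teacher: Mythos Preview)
The survey does not prove this theorem; it merely quotes it from \cite[Theorem~1.13]{SMPAnII}, so there is no in-paper proof to compare against. Your overall architecture---establish the forward implications of (ii)--(iv) and then argue the converses by elimination, using the stated fact that at least one of $\varphi_{0,t},\tilde\varphi_{0,t}$ tends to $0$ when $r_\infty=0$---is the right one, and part~(i) is indeed immediate from Theorem~\ref{Th_from_EF_to_LC}.

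There is, however, a genuine gap in your treatment of~(ii), separate from the one you flag in~(iv). You assert that the subsequential limit $F$ cannot be constant ``because $F(\mathbb D^*)\supset\bigcup_s f_s(D_s)=\mathbb D^*$'', but this inclusion is not justified. Kernel convergence of the images $f_{t_n}(D_{t_n})\nearrow\mathbb D^*$ does \emph{not} by itself force the locally-uniform limit to be onto: for $w\in\mathbb D^*$ fixed, the preimages $f_{t_n}^{-1}(w)=\varphi_{0,t_n}(f_0^{-1}(w))$ could in principle escape to $\partial\mathbb D^*$ (either toward $S^1$ or toward $0$), and then the Hurwitz/limit argument says nothing. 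Ruling out escape toward $0$ is exactly the conclusion you are trying to prove, so the reasoning is circular at this point; ruling out escape toward $S^1$ requires an additional modulus or Schwarz--type estimate for self-maps of annuli that you have not supplied. The same gap propagates to~(iii) via your duality reduction.

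For~(iv) you correctly identify the obstacle: the harmonic-measure estimate needs control on how fast the inner boundary of $f_{t_n}(D_{t_n})$ can collapse to $0$ relative to $L_n=-\log r(t_n)$, and nothing in your setup provides that bound. Your suggested workaround---combine the ``at least one converges'' fact with the forward parts of (ii) and (iii)---would indeed finish~(iv) by elimination once (ii) and (iii) are secured, but since those forward parts currently rest on the unjustified non-constancy claim, the argument as written is incomplete. To close the gap in~(ii) you will need a genuine geometric input: for instance, an annular Schwarz lemma bounding $|\varphi_{0,t}(z)|$ away from $1$ uniformly, or a modulus comparison showing that the image under $f_t$ of a fixed circle $\{|z|=\rho\}$ cannot degenerate while $f_t(D_t)\nearrow\mathbb D^*$ and the index condition of Theorem~\ref{Th_from_EF_to_LC}(2) holds.
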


\section{Integrability}

In this section we plan reveal relations between contour dynamics tuned by the L\"owner-Kufarev equations and the Liouville (infinite-dimensional) integrability, which was exploited actively
since establishment of  the Korteweg-de Vries equation as an equation for spectral stability in the Sturm-Liouville problem, and construction of integrable hierarchies in a series of papers
by Gardner, Green, Kruskal, Miura, Zabusky, {\it et al.}, see e.g.,  \cite{Gardner, ZK}, and exact integrability results by Zakharov and Faddeev \cite{ZF}. In this section we mostly follow
our results in \cite{MarkinaVasiliev, MV2011}.

Recently, it has become clear that one-parameter expanding evolution families of simply
connected domains in the complex plane in some special models has been governed by infinite systems
of evolution parameters, conservation laws. This phenomenon reveals a bridge between a non-linear evolution of complex shapes emerged in physical problems, dissipative in most of the cases, and exactly solvable models. One of such processes is the Laplacian growth, in which the harmonic (Richardson's) moments are conserved under the evolution, see e.g., \cite{GustafssonVasiliev, Mineev}. The infinite number of evolution parameters reflects the infinite number of degrees of freedom of the system, and clearly suggests to apply field theory methods as a natural tool of study. The Virasoro algebra provides a structural background in most of field theories, and it is not surprising that it appears in soliton-like problems, e.g., KP, KdV or Toda hierarchies, see \cite{Faddeev, Gervais}.

Another group of models, in which the evolution is governed by an infinite number of parameters, can be observed in controllable dynamical systems, where the infinite number of degrees of freedom follows from the infinite number of driving terms. Surprisingly,
the same algebraic structural background appears again for this group. We develop this viewpoint here.

One of the general approaches to the homotopic evolution of shapes starting from a canonical shape, the unit disk in our case,  is given by the L\"owner-Kufarev theory. A shape evolution is described by a time-dependent conformal parametric map from the canonical domain onto the domain
bounded by the shape at any fixed instant. In fact, these one-parameter conformal maps satisfy the L\"owner-Kufarev differential equation \eqref{LKord}, or an infinite dimensional controllable system, for which
the infinite number of conservation laws is given by the {\it Virasoro generators} in their covariant form.

Recently, Friedrich and
Werner \cite{FriedrichWerner}, and independently Bauer and Bernard \cite{BB}, found relations between SLE (stochastic or Schramm-L\"owner evolution) and the highest weight representation of the Virasoro algebra. Moreover, Friedrich developed the Grassmannian approach to relate SLE with the highest weight representation of the Virasoro algebra in~\cite{Friedrich}.

All  above results encourage us to conclude that the {\it Virasoro algebra} is a common algebraic structural basis for these and possibly other types of contour dynamics and we present the development in this direction here. At the same time, the infinite number of conservation laws suggests
a relation with exactly solvable models.

The geometry underlying classical integrable systems is reflected in Sato's  and Segal-Wilson's constructions of the infinite dimensional {\it Grassmannian} $\Gr$. Based on the idea that the evolution of shapes in the plane is related to an evolution in a general universal space, the Segal-Wilson Grassmannian in our case, we provide
an embedding of the L\"owner-Kufarev evolution into a fiber bundle with the cotangent bundle over $\mathcal F_0$ as a base space, and with the smooth Grassmannian $\Gr_{\infty}$ as a typical fiber. Here $\mathcal F_0\subset \Sb$ denotes the space of all conformal embeddings $f$ of the unit disk into $\mathbb C$ normalized by $f(z)=z\left(1+\sum_{n=1}^{\infty}c_nz^n\right)$ smooth on the boundary $S^1$, and under the  {\it smooth Grassmannian} $\Gr_{\infty}$ we understand  a dense subspace $\Gr_{\infty}$ of $\Gr$ defined further on.

So our plan is as follows. We will
\begin{itemize}
\item Consider homotopy in the space of shapes starting from the unit disk: $\mathbb D\to \Omega(t)$ given by the L\"owner-Kufarev equation;
\item Give its Hamiltonian formulation;
\item Find conservation laws;
\item Explore their algebraic structure;
\item Embed L\"owner-Kufarev trajectories into a moduli space (Grassmannian);
\item Construct $\tau$-function, Baker-Akhiezer function and finally  KP hierarchy.
\end{itemize}
Finally we present a class of solutions to KP hierarchy, which are preserving their form along the L\"owner-Kufarev trajectories.

Let us start with two useful lemmas \cite{MarkinaVasiliev, MV2011}.

\begin{lemma}\label{CarClass}
Let the function $w(z,t)$ be a solution to the Cauchy problem~\eqref{LKord}.
 If the driving function $p(\cdot,t)$, being from the Carath\'eodory class for almost all $t\geq 0$, is  $C^{\infty}$ smooth in the closure $\hat{\mathbb D}$ of the unit disk $\mathbb D$ and summable with respect to $t$, then the boundaries of
the domains $\Omega(t)=w(\mathbb D,t)\subset \mathbb D$ are smooth for all $t$ and $w(\cdot,t)$ extended to $S^1$ is injective on $S^1$.
\end{lemma}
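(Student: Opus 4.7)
The plan is to treat the L\"owner-Kufarev ODE \eqref{LKord} as an ODE with smooth right-hand side defined on a full neighborhood of $\overline{\mathbb D}$, so that Carath\'eodory's existence, uniqueness and smooth-dependence theorems apply up to and including the boundary $S^1$. First I would invoke Seeley's (or Whitney's) extension theorem to produce, for a.e.\ $t$, an extension $\tilde p(\cdot,t)\in C^{\infty}(U)$ on some open $U\supset\overline{\mathbb D}$, coinciding with $p(\cdot,t)$ on $\overline{\mathbb D}$ and depending linearly and continuously on $p(\cdot,t)$ in the $C^k$-seminorms on compacta. Linearity of the extension preserves measurability in $t$ and the $L^1_{\text{loc}}$ bounds in $t$ on compacta of $U$, so the extended vector field $F(w,t):=-w\,\tilde p(w,t)$ is a Carath\'eodory field that is $C^\infty$ in $w$, with each $w$-derivative dominated on compacta by an $L^1_{\text{loc}}$ function of $t$.

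I then run the Carath\'eodory theory for the Cauchy problem
\begin{equation*}
\dot w=-w\,\tilde p(w,t),\qquad w(0)=z,\quad z\in U.
\end{equation*}
The a priori estimate $\frac{d}{dt}|w|^2=-2|w|^2\,\re p(w,t)\le 0$, valid for a.e.\ $t$, shows that a trajectory starting in $\overline{\mathbb D}$ never leaves $\overline{\mathbb D}$, so $w(z,\cdot)$ is globally defined on $[0,+\infty)$ and independent of the chosen extension. Smooth dependence on initial data then yields $w(\cdot,t)\in C^{\infty}(\overline{\mathbb D})$ for every $t\ge0$, which is in particular the $C^\infty$ extension of $w(\cdot,t)$ to~$S^1$.

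Injectivity on $S^1$ would be a direct consequence of uniqueness for the Cauchy problem: if $w(z_1,t_0)=w(z_2,t_0)$ for some $t_0>0$ and $z_1,z_2\in S^1$, both curves would be solutions sharing a value at $t_0$; Carath\'eodory uniqueness forces them to coincide on their common interval of existence, hence at $t=0$, giving $z_1=z_2$. For smoothness of $\partial\Omega(t)$, I would differentiate the ODE in $z$ and solve the resulting scalar linear equation
\begin{equation*}
\frac{d}{dt}\,\partial_z w=-\bigl[p(w,t)+w\,p_w(w,t)\bigr]\partial_z w,\qquad \partial_z w(z,0)=1,
\end{equation*}
obtaining the exponential formula $\partial_z w(z,t)=\exp\!\bigl(-\int_0^t[p(w(z,s),s)+w(z,s)\,p_w(w(z,s),s)]\,ds\bigr)$, which never vanishes. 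Hence $w(\cdot,t)$ is a $C^\infty$ immersion of $\overline{\mathbb D}$; combined with injectivity on $S^1$ it restricts to a $C^\infty$ embedding of $S^1$, so $\partial\Omega(t)=w(S^1,t)$ is a smooth Jordan curve.

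The main technical hurdle is to secure the hypothesis of Carath\'eodory's smooth-dependence theorem in full strength: one needs, for every $k$, an $L^1_{\text{loc}}$ majorant in $t$ of the $C^k$-seminorm of $p(\cdot,t)$ on $\overline{\mathbb D}$. The natural reading of ``$p(\cdot,t)\in C^{\infty}(\overline{\mathbb D})$ and summable in $t$'' (which is what the proof really uses) provides exactly this, and is preserved by the linear Seeley extension. Everything else---boundary invariance from the Herglotz positivity of $p$, uniqueness preventing crossings, and non-vanishing of $\partial_z w$ from the variational equation---is essentially routine once the extended Carath\'eodory framework is in place.
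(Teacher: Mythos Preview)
The paper does not actually prove this lemma: it is stated with a citation to \cite{MarkinaVasiliev, MV2011} and no argument is given in the text. Your proposal is correct and is the natural route---extend $p(\cdot,t)$ smoothly past $S^1$, run Carath\'eodory existence/uniqueness/smooth-dependence on the enlarged domain, use $\frac{d}{dt}|w|^2=-2|w|^2\,\Re p\le0$ to keep trajectories in $\overline{\mathbb D}$ (so the result is extension-independent), get injectivity on $S^1$ from backward uniqueness, and get the immersion property from the explicit exponential solution of the first variational equation. This is essentially what one finds in the cited sources, so there is no meaningful divergence to discuss.
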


\begin{lemma}\label{lemmaX}
With the above notations let $f(z)\in \mathcal F_0$. Then there exists a function $p(\cdot,t)$ from the Carath\'eodory class for almost all $t\geq 0$, and $C^{\infty}$ smooth in  $\hat{\mathbb D}$, such that $f(z)=\lim_{t\to\infty}f(z,t)$ is the final point of the L\"owner-Kufarev trajectory with the driving term $p(z,t)$.
\end{lemma}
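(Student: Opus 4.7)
Plan: I will realise $f$ as the initial element of an explicit L\"owner subordination chain that is $C^\infty$ up to the boundary and evolves through bounded simply connected domains with smooth boundaries, then read off the driving function~$p$ from the L\"owner--Kufarev PDE \eqref{LK}.

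\textbf{A canonical chain of enlargements.} Since $f\in\mathcal F_0$ is $C^\infty$ on $\hat{\mathbb D}$ and univalent, $\tilde\Omega:=f(\mathbb D)$ is bounded by the $C^\infty$ Jordan curve $f(S^1)$ (use Lemma~\ref{CarClass}'s boundary analysis). Let $\psi\colon\{|\zeta|>1\}\to\mathbb C\setminus\overline{\tilde\Omega}$ be the exterior Riemann map with $\psi(\infty)=\infty$ and $\psi'(\infty)>0$. By Kellogg--Warschawski, $\psi$ extends as a $C^\infty$ diffeomorphism of $\{|\zeta|\ge 1\}$. For $s\ge 0$ set
$$
\Omega(s):=\mathbb C\setminus\psi\!\left(\{|\zeta|\ge e^s\}\right).
$$
Then $\{\Omega(s)\}_{s\ge 0}$ is a strictly increasing family of bounded simply connected domains with $C^\infty$ boundaries (real-analytic for $s>0$), depending smoothly on~$s$, with $\Omega(0)=\tilde\Omega$ and conformal radius $\to+\infty$ as $s\to\infty$.

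\textbf{Riemann maps and time reparametrisation.} Let $F(z,s)\colon\mathbb D\to\Omega(s)$ be the Riemann map with $F(0,s)=0$ and $r(s):=\partial_z F(0,s)>0$. Joint $C^\infty$ smoothness of $F$ on $\hat{\mathbb D}\times[0,\infty)$ follows from Kellogg--Warschawski for each fixed~$s$ combined with the classical smooth dependence of the Riemann map on smoothly varying smooth Jordan boundaries (for instance via an implicit-function argument for the Beltrami equation, or for the Dirichlet problem associated with the conformal welding of $\partial\Omega(s)$). Since $r(0)=f'(0)=1$ and $r(s)\to\infty$, the map $t:=\log r(s)$ is a $C^\infty$ diffeomorphism of $[0,\infty)$ onto itself. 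The reparametrised chain $\tilde F(z,t):=F(z,s(t))$ then satisfies
$$
\tilde F(z,0)=f(z),\qquad \tilde F(z,t)=e^t z+O(z^2),
$$
and belongs to $C^\infty(\hat{\mathbb D}\times[0,\infty))$.

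\textbf{Extracting $p$ and concluding.} Define
$$
p(z,t):=\frac{\partial_t\tilde F(z,t)}{z\,\partial_z\tilde F(z,t)}.
$$
Since $\tilde F(0,t)\equiv 0$, the apparent singularity at $z=0$ is removable, and the expansion $\tilde F(z,t)=e^t z+O(z^2)$ gives $p(0,t)=1$; joint smoothness on $\hat{\mathbb D}\times[0,\infty)$ is inherited from that of $\tilde F$. Because $\tilde F$ is a L\"owner subordination chain with the standard normalisation $a_1(t)=e^t$, the classical theory (as recalled before Theorem~\ref{ThProkhVas}) forces $\re p(z,t)\ge 0$ in $\mathbb D$, and the maximum principle upgrades this to $\re p>0$ since $p(0,t)=1$; hence $p(\cdot,t)$ lies in the Carath\'eodory class for every $t\ge 0$. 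Finally, let $w(z,t)$ solve the L\"owner--Kufarev ODE~\eqref{LKord} with this~$p$. Differentiating $\tilde F(w(z,t),t)$ in $t$ shows it is constant in $t$, so $\tilde F(w(z,t),t)\equiv\tilde F(z,0)=f(z)$; using $\tilde F(w,t)=e^tw+O(w^2)$ and $w(z,t)\to 0$ as $t\to\infty$ gives $f(z)=\lim_{t\to\infty}e^t w(z,t)$, as required.

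\textbf{Expected difficulty.} The technical heart of the argument is the joint $C^\infty$ regularity of $(z,s)\mapsto F(z,s)$ up to $\partial\mathbb D$ and including the endpoint $s=0$, where $\psi$ is only $C^\infty$ up to $|\zeta|=1$ rather than analytic across it; one must combine Kellogg--Warschawski with a parameter-smoothness statement for Riemann maps of smoothly varying smooth Jordan domains. Once this regularity is secured, the normalisation, the positivity of $\re p$, and the convergence $e^tw(z,t)\to f(z)$ follow directly from the Pommerenke framework summarised earlier.
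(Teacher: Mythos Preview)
The paper does not actually prove this lemma; it is stated alongside Lemma~\ref{CarClass} and attributed to the references \cite{MarkinaVasiliev, MV2011}, so there is no in-text argument to compare yours against.

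On its own merits your proof is sound and follows the natural route: embed $f$ as the initial element of an explicit smooth subordination chain and then extract $p$ from the L\"owner--Kufarev PDE~\eqref{LK}. Your particular choice of enlargement---pushing outward along the level circles of the exterior Riemann map---is clean, immediately yields real-analytic boundaries for $s>0$, and makes the monotonicity and blow-up of the conformal radius transparent (Schwarz lemma plus the Hadamard variation give the strict positivity of $r'(s)$ needed for $t=\log r(s)$ to be a genuine diffeomorphism). The passage from $\re p\ge 0$ to $\re p>0$ via the minimum principle is correct, and the final identification $f(z)=\lim_{t\to\infty}e^tw(z,t)$ from $\tilde F(w(z,t),t)\equiv f(z)$ is exactly the standard Pommerenke argument recalled around~\eqref{limit}.

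The only point requiring real care is the one you already flag: joint $C^\infty$ regularity of $(z,s)\mapsto F(z,s)$ on $\hat{\mathbb D}\times[0,\infty)$, in particular at $s=0$ where the boundary of $\Omega(0)=f(\mathbb D)$ is merely $C^\infty$ rather than real-analytic. This is a standard smooth-dependence statement for the Riemann map under smoothly varying $C^\infty$ Jordan boundaries (reducible, as you indicate, to smooth dependence of solutions to an elliptic boundary-value problem), but it deserves a precise reference or a short self-contained argument. Once that is in hand, the rest goes through exactly as written.
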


\subsection{Witt and Virasoro algebras}\label{WV}

The complex {\it Witt algebra} is the Lie algebra of holomorphic vector fields defined on $\mathbb C^*=\mathbb C\setminus\{0\}$ acting by derivation
over the ring of Laurent polynomials $\mathbb C[z,z^{-1}]$. It is spanned by the basis ${L}_n=z^{n+1}\frac{\partial}{\partial z}$, $n\in \mathbb Z$.
The Lie bracket of two basis vector fields is given by the commutator $[L_n,L_m]=(m-n)L_{n+m}$. Its central extension is the complex {\it Virasoro algebra} $\mathfrak{vir}_{\mathbb C}$ with the central element $c$ commuting with all $L_n$, $[L_n,c]=0$, and with the Virasoro commutation relation
\[
[L_n,L_m]=(m-n)L_{n+m}+\frac{c}{12}n(n^2-1)\delta_{n,-m}, \quad n,m\in\mathbb Z,
\]
where $c\in \mathbb C$ is the central charge denoted by the same character. These algebras play important role in conformal field theory. In order to construct their representations   one can use an analytic realization.

\subsection{Segal-Wilson Grassmannian}

Sato's (universial) Grassmannian appeared first in 1982 in~\cite{Sato}  as an infinite dimensional generalization of the classical finite dimensional Grassmannian manifolds and they are described as `the topological closure of the inductive limit of' a finite dimensional Grassmanian as the dimensions of the ambient vector space and its subspaces tend to infinity.

It turned out to be a very important
 infinite dimensional manifold being related to the representation theory of loop groups, integrable hierarchies, micrological analysis, conformal and quantum field theories, the second quantization of fermions, and to many other topics~\cite{DJKM83,MPPR,SegalWilson,Witten}.
In the Segal and Wilson
approach~\cite{SegalWilson} the infinite dimensional Grassmannian $\Gr(H)$ is taken over the separable Hilbert space $H$. The first systematic description of the  infinite dimensional Grassmannian can be found in~\cite{PrSe}.

We present here a general definition of the infinite dimensional smooth Grassmannian $\Gr_{\infty}(H)$.
As a separable Hilbert space  we take the space $L^2(S^1)$ and consider its dense subspace $H=C^{\infty}_{\|\cdot\|_2}(S^1)$  of smooth complex valued functions defined on the unit circle endowed with  $L^2(S^1)$ inner product $\langle f,g\rangle=\frac{1}{2\pi}\int\limits_{S^1}f\bar g\,dw$, $f,g\in H$. The orthonormal basis of $H$ is $\{z^k\}_{k\in\mathbb Z}=\{e^{ik \theta}\}_{k\in\mathbb Z}$, $e^{i\theta} \in S^1$.

Let us split all integers $\mathbb Z$ into two sets $\mathbb Z^+=\{0,1,2,3,\dots\}$ and $\mathbb Z^-=\{\dots,-3,-2,-1\}$, and
let us define a polarization by  $$H_+=\spn_H\{z^k,\ k\in\mathbb Z^+\}, \qquad H_-=\spn_H\{z^k,\ k\in \mathbb Z^-\}.$$
Here and further span is taken in the appropriate space indicated as a subscription. The Grassmanian is thought of as the set of closed linear subspaces $W$ of $H$, which are commensurable with $H_+$ in the sense that they have finite codimension in both $H_+$ and~$W$. This can be defined by means of the descriptions of the orthogonal projections of the subspace $W\subset H$ to $H_+$ and $H_-$.

\begin{definition}
The infinite dimensional smooth Grassmannian $\Gr_{\infty}(H)$ over  the space $H$  is the set of  subspaces $W$ of $H$, such that
\begin{itemize}
\item[1.]{the orthogonal projection $pr_+\colon W\to H_+$ is a Fredholm operator,}
\item[2.]{the orthogonal projection $pr_-\colon W\to H_-$ is a compact operator.}
\end{itemize}
\end{definition}

The requirement that $pr_+$ is Fredholm means that the kernel and cokernel of $pr_+$ are finite dimensional. More information about Fredholm operators the reader can find in~\cite{Doug}. It was proved in~\cite{PrSe}, that $\Gr_{\infty}(H)$ is a dense submanifold in a Hilbert manifold modeled over the space $\mathcal L_2(H_+,H_-)$ of Hilbert-Schmidt operators from $H_+$ to $H_-$, that  itself has the structure of a Hilbert space, see~\cite{ReS}.  Any $W\in\Gr_{\infty}(H)$ can be thought of as a graph $W_T$ of a Hilbert-Schmidt operator $T\colon W\to W^{\bot}$, and  points of a neighborhood $U_W$ of $W\in \Gr_{\infty}(H)$ are in one-to-one correspondence with operators from $\mathcal L_2(W,W^{\bot})$.

Let us denote by $\mathfrak S$ the set of all collections $\mathbb S\subset \mathbb Z$ of integers such that $\mathbb S\setminus \mathbb Z^+$ and $\mathbb Z^+\setminus \mathbb S$ are finite. Thus, any sequence $\mathbb S$ of integers is bounded from below and contains all positive numbers starting from some number. It is clear that the sets $H_{\mathbb S}=\spn_{H}\{z^k,\ k\in\mathbb S\}$ are elements of the Grassmanian $\Gr_{\infty}(H)$ and they are usually called {\it special points}. The collection of neighborhoods $\{U_{\mathbb S}\}_{\mathbb S\in \mathfrak S},$
$$
U_{\mathbb S}=\{W\ \mid\ \text{there is an orthogonal projection}\ \pi\colon  W\to H_{\mathbb S}\ \text{that is an isomorphism}\}
$$
forms an open cover of $\Gr_{\infty}(H)$. The virtual cardinality of $\mathbb S$ defines the {\it virtual dimension} (v.d.) of $H_{\mathbb S}$, namely:
\begin{equation}\label{virtdim}
\virtcard(\mathbb S)  = \virtdim(H_{\mathbb S})=\dim(\mathbb N\setminus\mathbb S)-\dim(\mathbb S\setminus\mathbb N)
 = \ind(pr_+).
\end{equation}
The expression $\ind(pr_+)=\dim\kernel(pr_+)-\dim\cokernel(pr_-)$ is called the index of the Fredholm operator~$pr_+$. According to their virtual dimensions the points of $\Gr_{\infty}(H)$ belong to different components of connectivity. The  Grassmannian is the disjoint union of connected components parametrized by their virtual dimensions.

\subsection{Hamiltonian formalism}

Let the driving term $p(z,t)$ in the L\"owner-Kufarev ODE~\eqref{LKord} be from the Carath\'eodory class for almost all $t\geq 0$,   $C^{\infty}$-smooth in  $\hat{\mathbb D}$, and summable with respect to $t$ as in Lemma~\ref{CarClass}.
 Then the domains $\Omega(t)=f(\mathbb D,t)=e^t w(\mathbb D,t)$  have  smooth boundaries $\partial
 \Omega(t)$ and the function $f$ is injective on $S^1$, i.e.; $f\in \mathcal F_0$. So the L\"owner-Kufarev equation can be extended to the
 closed unit disk $\hat{\mathbb D}=\mathbb D\cup S^1$.

Let us consider the sections ${\psi}$ of $T^*\mathcal F_0\otimes\mathbb C$, that are from the class $C^{\infty}_{\|\cdot\|_2}$ of smooth complex-valued functions $S^1\to\mathbb C$ endowed with $L^2$ norm,
\[
\psi(z)=\sum\limits_{k\in\mathbb Z}\psi_kz^{k-1}, \quad |z|=1.
\]
We also introduce the space of observables on  $T^*\mathcal F_0\otimes\mathbb C$, given by integral functionals
\[
\mathcal R(f,\bar\psi, t)=\frac{1}{2\pi}\int_{z\in S^1}r(f(z),\bar\psi(z), t)\frac{dz}{iz},
\]
where the function $r(\xi,\eta,t)$ is smooth in variables $\xi,\eta$ and measurable in $t$.

We define a special observable, the time-dependent pseudo-Hamiltonian $\mathcal H$,  by
 \begin{equation}\label{Ham3}
 \mathcal H(f,\bar\psi,p,t)=\frac{1}{2\pi}\int_{z\in S^1}\bar{z}^2f(z,t)(1-p(e^{-t}f(z,t),t))\bar \psi(z,t)\frac{dz}{iz},
 \end{equation}
 with the driving function (control) $p(z,t)$ satisfying the above properties.
The Poisson structure on the space of observables is given by the canonical brackets
\[
\{\mathcal R_1, \mathcal R_2\}=2\pi \int_{z\in S^1}z^2\left(\frac{\delta \mathcal R_1}{\delta f}\frac{\delta \mathcal R_2}{\delta \bar \psi}-\frac{\delta \mathcal R_1}{\delta \bar \psi}\frac{\delta \mathcal R_2}{\delta f}\right)\frac{dz}{iz},
\]
where $\frac{\delta}{\delta f}$ and $\frac{\delta}{\delta \overline{\psi}}$ are the variational derivatives, $\frac{\delta}{\delta f}\mathcal R=\frac{1}{2\pi}\frac{\partial}{\partial f}r$, $\frac{\delta}{\delta \overline{\psi}}\mathcal R=\frac{1}{2\pi}\frac{\partial}{\partial \overline{\psi}}r$.

Representing the coefficients $c_n$ and $\bar\psi_m$ of $f$ and $\bar{\psi}$ as integral functionals
\[
c_n=\frac{1}{2\pi}\int_{z\in S^1}\bar{z}^{n+1}f(z,t)\frac{dz}{iz},\quad \bar\psi_m= \frac{1}{2\pi}\int_{z\in S^1}{z}^{m-1}\bar\psi(z,t)\frac{dz}{iz},
\]
$n\in \mathbb N$, $m\in\mathbb Z$, we obtain
$\{c_n, \bar\psi_m\}=\delta_{n,m}$, $\{c_n, c_k\}=0$, and $\{\bar\psi_l, \bar\psi_m\}=0$, where $n, k\in \mathbb N$, $l, m\in\mathbb Z$.

The infinite-dimensional Hamiltonian system is written as
\begin{equation}\label{sys1}
\frac{d c_k}{dt}=\{c_k, \mathcal H\},\quad c_k(0)=0,
\end{equation}
\begin{equation}\label{sys2}
\frac{d\bar \psi_k}{dt}=\{\bar{\psi_k}, \mathcal H\}, \quad \psi_k(0)=\xi_k,
\end{equation}
where $k\in \mathbb Z$ and $c_0=c_{-1}=c_{-2}=\dots=0$, or equivalently, multiplying by corresponding  powers of $z$ and summing up,
\begin{equation}\label{sys10}
\frac{d f(z,t)}{dt}=f(1-p(e^{-t}f,t))=2\pi \frac{\delta  \mathcal H}{\delta \overline{\psi}}z^2=\{f, \mathcal H\},\quad f(z,0)\equiv z,
\end{equation}
\begin{equation}\label{sys20}
\frac{d\bar \psi}{dt}=-(1-p(e^{-t}f,t)-e^{-t}fp'(e^{-t}f,t))\bar
\psi=-2\pi \frac{\delta \mathcal H}{\delta f}z^2=\{\bar\psi, \mathcal H\},
\end{equation}
where $\psi(z,0)=\xi(z)=\sum_{k\in\mathbb Z}\xi_kz^{k-1}$
 and $z\in S^1$. So the phase coordinates $(f,\bar{\psi})$ play the role of the canonical Hamiltonian pair and the coefficients $\xi_k$ are free parameters.  Observe that the equation~\eqref{sys10} is the L\"owner-Kufarev equation~\eqref{LKord} for the function $f=e^{t}w$.

Let us set up the {\it generating function} $\mathcal{G}(z)=\sum_{k\in\mathbb Z}\mathcal{G}_kz^{k-1}$, such that  $$\bar{\mathcal G}(z):=f'(z,t) \bar{\psi}(z,t).$$ Consider the `non-positive' $(\bar{\mathcal G}(z))_{\leq 0}$ and `positive' $(\bar{\mathcal G}(z))_{> 0}$ parts of the Laurent series for $\bar{\mathcal G}(z)$:
\begin{eqnarray*}
(\bar{\mathcal G}(z))_{\leq 0} &= &(\bar{\psi}_1+2 c_1\bar{\psi}_2+3 c_2\bar{\psi}_3+\dots)+(\bar{\psi}_2+2 c_1\bar{\psi}_3+\dots)z^{-1}+\dots \\
&= &\sum\limits_{k=0}^{\infty}\bar{\mathcal G}_{k+1}z^{-k}.
\end{eqnarray*}
\begin{eqnarray*}
(\bar{\mathcal G}(z))_{> 0} &= &(\bar{\psi}_0+2  c_1\bar{\psi}_1+3 c_2\bar{\psi}_2+\dots)z+(\bar{\psi}_{-1}+2c_1\bar{\psi}_0+3c_2\bar{\psi}_1\dots)z^{2}+\dots \\
& = & \sum\limits_{k=1}^{\infty}\bar{\mathcal G}_{-k+1}z^{k}.
\end{eqnarray*}

\begin{proposition}\label{timeindep}
Let the driving term $p(z,t)$ in the L\"owner-Kufarev ODE be from the Carath\'eodory class for almost all $t\geq 0$,   $C^{\infty}$-smooth in $\hat{\mathbb D}$, and summable with respect to~$t$.
The functions $\mathcal G(z)$,  $(\mathcal G(z))_{< 0}$, $(\mathcal G(z))_{\geq 0}$, and all coefficients $\mathcal G_n$ are time-independent for all $z\in S^1$.
\end{proposition}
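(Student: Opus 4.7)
The plan is to show directly that the whole product $\bar{\mathcal{G}}(z,t)=f'(z,t)\bar\psi(z,t)$ is a conserved quantity of the Hamiltonian flow \eqref{sys10}--\eqref{sys20}, and then deduce the time-independence of $\mathcal G$, of its two Laurent pieces and of every individual coefficient $\mathcal G_n$ as a trivial corollary.

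First, I would differentiate the Hamiltonian equation \eqref{sys10} in the spatial variable $z$ to obtain an evolution equation for $f'(z,t)$. Writing $p$ for $p(e^{-t}f(z,t),t)$ and $p'$ for its derivative with respect to the first argument, the chain rule gives
\begin{equation*}
\frac{d f'(z,t)}{dt}=f'(z,t)\bigl[(1-p)-e^{-t}f(z,t)\,p'\bigr].
\end{equation*}
Comparing this with \eqref{sys20}, I observe that $f'$ and $\bar\psi$ satisfy linear first-order ODEs (in $t$) with opposite coefficients. Hence
\begin{equation*}
\frac{d}{dt}\bigl(f'(z,t)\bar\psi(z,t)\bigr)=f'\bar\psi\bigl[(1-p)-e^{-t}fp'\bigr]-f'\bar\psi\bigl[(1-p)-e^{-t}fp'\bigr]=0,
\end{equation*}
so that $\bar{\mathcal G}(z,t)$ is independent of $t$ pointwise on $S^1$. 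The regularity hypothesis on $p$ (smooth in $\hat{\mathbb D}$, summable in $t$) combined with Lemma \ref{CarClass} guarantees that $f$, and hence $f'$, extends smoothly to $S^1$, so the computation above is valid uniformly on $S^1$ and the absolutely continuous derivative $d/dt$ makes sense.

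Since $\bar{\mathcal G}(z,t)$ is $t$-independent on $S^1$, so is its complex conjugate $\mathcal G(z,t)$. The Laurent coefficients $\mathcal G_n$ can be recovered as Fourier integrals
\begin{equation*}
\mathcal G_n=\frac{1}{2\pi}\int_{S^1}\mathcal G(z,t)\,z^{-n}\,\frac{dz}{iz},
\end{equation*}
so each $\mathcal G_n$ is $t$-independent. Grouping the coefficients of non-negative and negative index separately, the truncations $(\mathcal G(z))_{\ge 0}$ and $(\mathcal G(z))_{<0}$ are then also $t$-independent, completing the proof.

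The argument is essentially a one-line cancellation, so there is no real obstacle; the only point that requires attention is ensuring that $f'(z,t)$ and $\bar\psi(z,t)$ are genuinely differentiable in $t$ and boundary-regular enough to multiply pointwise on $S^1$. This is exactly what Lemma \ref{CarClass} (plus the smoothness assumption on $p$) supplies, together with the fact that $\bar\psi$ solves the linear equation \eqref{sys20} whose coefficient $(1-p)-e^{-t}fp'$ is smooth in $z$ and integrable in $t$.
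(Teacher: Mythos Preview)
Your proof is correct and is essentially the same approach as the paper's: the paper simply asserts that one checks $\dot{\bar{\mathcal G}}=\{\bar{\mathcal G},\mathcal H\}=0$, and your computation is exactly this check carried out explicitly along the Hamiltonian flow, using that differentiating \eqref{sys10} in $z$ yields the same multiplicative factor as in \eqref{sys20} but with the opposite sign. The only difference is presentational---you compute $\frac{d}{dt}(f'\bar\psi)$ directly from the equations of motion rather than phrasing it as a Poisson bracket, but for observables along trajectories these are the same thing.
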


\begin{proof}
It is sufficient to check the equality $\dot{\bar{\mathcal G}}=\{\bar{\mathcal G}, {\mathcal H}\}=0$ for the function $\mathcal G$, and then, the same holds for the coefficients of the Laurent series for $\mathcal G$.
\end{proof}

\begin{proposition}
The conjugates  $\bar{\mathcal G}_k$, $k=1,2,\ldots$, to the coefficients of the generating function satisfy the Witt commutation relation $\{\bar{\mathcal G}_m,\bar{\mathcal G}_n\}=(n-m)\bar{\mathcal G}_{n+m}$ for $n,m\geq 1$, with respect to our Poisson structure.
\end{proposition}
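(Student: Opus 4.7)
The plan is to reduce the statement to a direct computation in the canonical coordinates $(c_j)_{j\ge 1}$ and $(\bar\psi_k)_{k\in\Z}$, using the explicit Laurent-coefficient form of $\bar{\mathcal G}_m$.

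First I would extract from $\bar{\mathcal G}(z)=f'(z)\bar\psi(z)$, together with $f'(z)=1+\sum_{j\ge 1}(j+1)c_j z^j$ and the Laurent expansion of $\bar\psi$ matching the coefficient formula $\bar\psi_m=\frac{1}{2\pi}\oint z^{m-1}\bar\psi(z)\frac{dz}{iz}$, the explicit form
\[
\bar{\mathcal G}_m=\bar\psi_m+\sum_{j\ge 1}(j+1)c_j\,\bar\psi_{j+m},\qquad m\ge 1,
\]
which agrees with the displayed cases $\bar{\mathcal G}_1=\bar\psi_1+2c_1\bar\psi_2+3c_2\bar\psi_3+\dots$ and $\bar{\mathcal G}_2=\bar\psi_2+2c_1\bar\psi_3+\dots$ in the paper. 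This is pure Laurent-series bookkeeping.

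Then I would expand $\{\bar{\mathcal G}_m,\bar{\mathcal G}_n\}$ by bilinearity and the Leibniz rule, using the canonical brackets $\{c_j,\bar\psi_k\}=\delta_{j,k}$, $\{c_j,c_k\}=0$, $\{\bar\psi_j,\bar\psi_k\}=0$. The bracket breaks into three surviving contributions. Two are linear cross-brackets: $\{\bar\psi_m,\sum_k(k+1)c_k\bar\psi_{k+n}\}=-(m+1)\bar\psi_{m+n}$ and $\{\sum_j(j+1)c_j\bar\psi_{j+m},\bar\psi_n\}=(n+1)\bar\psi_{m+n}$, whose sum is $(n-m)\bar\psi_{m+n}$. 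The third is the quadratic cross-bracket $\{\sum_j(j+1)c_j\bar\psi_{j+m},\sum_k(k+1)c_k\bar\psi_{k+n}\}$; after one further application of Leibniz and resolution of the Kronecker deltas coming from $\{\bar\psi_{j+m},c_k\}$ and $\{c_j,\bar\psi_{k+n}\}$, this term collapses to
\[
\sum_{j\ge 1}(j+1)\bigl[(j+n+1)-(j+m+1)\bigr]\,c_j\,\bar\psi_{j+m+n}=(n-m)\sum_{j\ge 1}(j+1)c_j\,\bar\psi_{j+m+n}.
\]

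Combining the two pieces yields $(n-m)\bigl[\bar\psi_{m+n}+\sum_{j\ge 1}(j+1)c_j\bar\psi_{j+m+n}\bigr]=(n-m)\bar{\mathcal G}_{m+n}$, since $m+n\ge 2$ so the formula for $\bar{\mathcal G}_{m+n}$ applies. The whole argument is elementary, and the only nontrivial point is the combinatorial identity $(j+1)[(j+n+1)-(j+m+1)]=(j+1)(n-m)$ in the quadratic term: it is precisely this identity that both produces the Witt prefactor $(n-m)$ and reproduces the coefficient pattern $(j+1)c_j$ appearing in the definition of $\bar{\mathcal G}_{m+n}$. No central term appears, as expected at the classical Poisson level; a Virasoro central extension would only enter after quantisation or a suitable choice of normal ordering.
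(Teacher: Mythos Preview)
Your argument is correct. The coefficient formula $\bar{\mathcal G}_m=\bar\psi_m+\sum_{j\ge 1}(j+1)c_j\,\bar\psi_{j+m}$ is exactly what the paper's displayed expansion of $(\bar{\mathcal G}(z))_{\le 0}$ encodes, and your term-by-term Poisson bracket computation, including the key cancellation $(j+n+1)-(j+m+1)=n-m$ in the quadratic piece, is clean and accurate.

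The paper itself states this proposition without proof, deferring to the authors' earlier works \cite{MarkinaVasiliev, MV2011}; your direct verification in the canonical coordinates $(c_j,\bar\psi_k)$ is precisely the expected one and matches how the isomorphism $\iota$ with Kirillov's vector fields $L_n=\partial_n+\sum_{k\ge 1}(k+1)c_k\partial_{n+k}$ is subsequently invoked. One small remark: your comment that no central term appears ``as expected at the classical Poisson level'' is apt, but note that even at the Lie-algebra level the bracket $[L_m,L_n]$ with $m,n\ge 1$ never touches the central charge, since $m+n\neq 0$; the restriction to positive indices is the real reason you only see the Witt relations here.
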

\noindent

The isomorphism $\iota: \, \bar\psi_k\to \partial_k=\frac{\partial}{\partial c_k}$, $k>0$, is a Lie algebra isomorphism $$(T^{{*}^{(0,1)}}_f\mathcal F_0, \{\ ,\,\})\to (T^{(1,0)}_f\mathcal F_0, [\ ,\,]).$$ It makes a correspondence between the conjugates $\bar{\mathcal G}_n$ of the
coefficients $\mathcal G_n$ of $(\mathcal G(z))_{\geq 0}$ at the point $(f,\bar{\psi})$ and the Kirillov vectors  $L_n[f]=\partial_n+\sum\limits_{k=1}^{\infty}(k+1)c_{k}\partial_{n+k}
$, $n\in\mathbb N$, see~\cite{KYu}. Both satisfy the Witt commutation relations
\[
[L_n,L_m]=(m-n)L_{n+m}.
\]

\subsection{Curves in Grassmannian}

Let us recall, that the underlying  space for the universal smooth Grassmannian $\Gr_{\infty}(H)$ is $H= C^{\infty}_{\|\cdot\|_2}(S^1)$ with the canonical $L^2$ inner product of functions defined on the unit circle. Its natural polarization
  \begin{eqnarray*}
H_+  = \spn_{H}\{1, z, z^2, z^3, \dots \}, \qquad
H_-= \spn_{H}\{z^{-1}, z^{-2}, \dots \},
 \end{eqnarray*}
was introduced before.
The pseudo-Hamiltonian $\mathcal H(f,\bar\psi,t)$ is defined for an arbitrary $\psi\in L^{2}(S^1)$, but we consider only smooth solutions of the Hamiltonian system, therefore, $\psi \in H$. We identify this  space with the dense subspace of $T^*_f\mathcal F_0\otimes\mathbb C$, $f\in\mathcal F_0$. The generating function $\mathcal G$ defines a linear map $\bar{\mathcal G}$ from the dense subspace of $T^*_f\mathcal F_0\otimes \mathbb C$ to $H$, which being written  in a block matrix form becomes
\begin{equation}\label{bloc}
\renewcommand{\arraystretch}{1.4}
\left(
\begin{array}{c}
\bar{\mathcal G}_{> 0}\\
\bar{\mathcal G}_{\leq 0}
\end{array} \right)=\left(
\begin{array}{cc}
C_{1,1} & C_{1,2} \\
0 & C_{1,1}
\end{array} \right) \left(
\begin{array}{c}
\bar\psi_{> 0}\\
\bar\psi_{\leq 0}
\end{array} \right),
\end{equation}
where
\[
\renewcommand{\arraystretch}{1.4}
\left(
\begin{array}{cc}
C_{1,1} & C_{1,2} \\
0 & C_{1,1}
\end{array} \right)=
 \left(\begin{array}{ccccc|ccccc}
\ddots&\ddots&\ddots&\ddots&\ddots &\ddots&\ddots & \ddots & \ddots & \ddots   \\
 \cdots & 0& {\bf 1} & 2c_1 & 3c_2 & 4c_3 & 5c_4 & 6c_5 & 7c_6 & \cdots \\
  \cdots &0 & 0 & {\bf 1} &2c_1 & 3c_2 & 4c_3 & 5c_4 & 6c_5&  \cdots\\
    \cdots &0 & 0 & 0 &{\bf 1} & 2c_1 & 3c_2 & 4c_3 &  5c_4& \cdots \\
 \hline
  \cdots & 0 & 0 & 0 & 0 & {\bf 1} & 2c_1 & 3c_2 & 4c_3 &  \cdots \\
\cdots &0 & 0 & 0 & 0 & 0 & {\bf 1} & 2c_1 & 3c_2 &   \cdots \\
 \cdots & 0 &0& 0 & 0 & 0 & 0 & {\bf 1} & 2c_1 &   \cdots \\
  \ddots & \ddots &\ddots &\ddots &\ddots&\ddots & \ddots & \ddots & \ddots & \ddots  \\
 \end{array}\right).
\]

\begin{proposition}  The operator $C_{1,1}\colon H_+\to H_+$ is invertible.
\end{proposition}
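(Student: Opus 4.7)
The plan is to identify $C_{1,1}$ as a concrete operator built from~$f'$ and to invert it using the non-vanishing of~$f'$ on~$\oD$. My approach combines two complementary viewpoints: an algebraic one (formal inverse of an upper-triangular operator with $1$'s on the diagonal) and an analytic one (a Toeplitz-type operator whose symbol is $f'(1/w)$).

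First, I would read off from the explicit block form that $C_{1,1}=I+N$ with $N$ strictly upper triangular, whose $k$-th superdiagonal is $(k+1)c_k$. Because $N$ is strictly upper triangular, for any fixed index pair $(n,m)$ only finitely many of the entries $(N^k)_{n,m}$ are nonzero, so the formal Neumann series $\sum_{k\ge 0}(-N)^k$ is entry-wise well-defined and yields a two-sided algebraic inverse of $C_{1,1}$ on the space of all sequences. This already provides invertibility in the purely algebraic sense.

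Second, I would give this inverse a geometric meaning. Encoding a sequence $(x_n)_{n\ge 0}$ as a formal series $X(w)=\sum_n x_n w^n$, a direct computation (using $f'(1/w)=\sum_{k\ge 0}(k+1)c_k w^{-k}$) shows that $(C_{1,1}\,x)(w)=P_+[\,f'(1/w)\,X(w)\,]$, where $P_+$ projects onto non-negative powers of~$w$. Thus $C_{1,1}$ is the Toeplitz-type operator with coanalytic symbol $\phi(w):=f'(1/w)$. Since $\phi$ is supported in non-positive powers of~$w$, the composition rule $T_{\phi_1}T_{\phi_2}=T_{\phi_1\phi_2}$ is valid for such coanalytic symbols, and the natural candidate for the inverse is $T_{1/\phi}=T_{1/f'(1/w)}$.

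Finally, I would verify that $1/f'$ is a genuine smooth function on~$\oD$ so that this inverse lives in the right space. Univalence of~$f$ yields $f'\neq 0$ on~$\D$; the smoothness of $f$ on $\D\cup S^1$ together with injectivity on $S^1$ (guaranteed by $f\in\mathcal F_0$ and by Lemma~\ref{CarClass}) and the classical boundary regularity of conformal maps (Kellogg/Warschawski) give $f'\neq 0$ on $S^1$ as well. Hence $1/f'$ belongs to $H_+$ with rapidly decaying Taylor coefficients, so the associated Toeplitz operator maps the smooth space $H_+$ continuously into itself, delivering $C_{1,1}^{-1}$. I expect the main obstacle to be precisely this last step: ensuring non-vanishing of~$f'$ up to the boundary, which is what upgrades the algebraic inverse to a bounded operator in the \emph{smooth} Grassmannian category.
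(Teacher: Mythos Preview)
The paper is a survey and states this proposition without proof, referring the reader to the original works \cite{MarkinaVasiliev, MV2011}; so there is no ``paper's own proof'' to compare against here. Your argument is correct and is exactly the natural one.

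Your identification of $C_{1,1}$ is right: writing $X(w)=\sum_{n\ge 0}x_n w^n$, a row-by-row check of the displayed matrix gives $(C_{1,1}x)_m=\sum_{k\ge 0}(k+1)c_k\,x_{m+k}$, which is precisely $P_+\big[f'(1/w)\,X(w)\big]$, i.e.\ the Toeplitz operator with coanalytic symbol $f'(1/w)$. For coanalytic symbols the multiplicative rule $T_{\phi_1}T_{\phi_2}=T_{\phi_1\phi_2}$ holds, so the candidate inverse is $T_{1/f'(1/w)}$. The only substantive point, as you note, is that $1/f'$ is smooth on~$\overline{\mathbb D}$; this follows because $f\in\mathcal F_0$ maps $\mathbb D$ conformally onto a Jordan domain with $C^\infty$ boundary (Lemma~\ref{CarClass}), and by the smooth Kellogg--Warschawski theorem $f$ extends to a $C^\infty$ diffeomorphism $\overline{\mathbb D}\to\overline{\Omega}$, forcing $f'\neq 0$ on $S^1$. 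Hence $1/f'(1/w)$ has smooth Fourier coefficients on $S^1$, and $T_{1/f'(1/w)}$ is a bounded two-sided inverse on $H_+=C^\infty_{\|\cdot\|_2}\cap\{$nonnegative frequencies$\}$. The algebraic Neumann-series remark is a useful sanity check but is subsumed by the analytic argument; the entries of the formal inverse you produce coincide (via the Wronski-type formulas the paper quotes for $\bar C_{1,1}^{-1}$) with the Taylor coefficients of $1/f'$.
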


The generating function also defines a map ${\mathcal G}\colon T^{*}\mathcal F_0\otimes\mathbb C\to H$ by $$T^{*}\mathcal F_0\otimes\mathbb C\ni (f(z),\psi(z))\mapsto {\mathcal G}= \bar f'(z)\psi(z)\in H.$$ Observe that any solution $\big(f(z,t),\bar\psi(z,t)\big)$ of the Hamiltonian system is mapped into a single point of the space $H$, since all $\mathcal G_k$, $k\in\mathbb Z$ are time-independent  by Proposition~\ref{timeindep}.

Consider a bundle $\pi \colon\mathcal B\to T^{*}\mathcal F_0\otimes \mathbb C$ with a typical fiber isomorphic to $\Gr_{\infty}(H)$. We are aimed at construction of  a curve $\Gamma\colon [0,T]\to \mathcal B$ that is traced by the solutions to the Hamiltonian system, or in other words, by the L\"owner-Kufarev evolution. The curve $\Gamma$ will have the form $$\Gamma(t)=\Big(f(z,t),\psi(z,t),W_{T_n}(t)\Big)$$ in the local trivialization. Here $W_{T_n}$ is the graph of a finite rank operator $T_n\colon H_+\to H_-$, such that $W_{T_n}$ belongs to the connected component of $U_{H_+}$ of virtual dimension $0$.
In other words, we build an hierarchy of finite rank operators $T_{n}\colon H_+\to H_-$,  $n\in \mathbb Z^+$, whose graphs in the neighborhood $U_{H_+}$ of the point $H_+\in \Gr_{\infty}(H)$ are
 \[
T_{n}((\mathcal G(z))_{> 0})=  T_{n}({\mathcal G}_1,{\mathcal G}_2,\dots,{\mathcal G}_k,\dots )=\left\{
 \renewcommand{\arraystretch}{1.4}
 \begin{array}{l}
 { {G}_0}({\mathcal G}_1,{\mathcal G}_2,\dots,{\mathcal G}_k,\dots)\\
  { {G}_{-1}}({\mathcal G}_1,{\mathcal G}_2,\dots,{\mathcal G}_k,\dots)\\
 \dots \\
 { {G}_{-n+1}}({\mathcal G}_1,{\mathcal G}_2,\dots,{\mathcal G}_k,\dots),
 \end{array}
 \right.
 \]
 with ${G}_0z^{-1}+{G}_{-1}z^{-2}+ \ldots +{G}_{-n+1}z^{-n}\in H_-$. Let us denote by ${G}_k={\mathcal G}_k$, $k\in \mathbb N$. The elements ${G}_0, {G}_{-1},{G}_{-2}, \dots$ are constructed so that all $\{\bar{G}_k\}_{k=-n+1}^{\infty}$ satisfy the truncated Witt commutation relations
 \[
 \{\bar{G}_k,\bar{G}_l\}_n=
 \begin{cases}
 (l-k)\bar{G}_{k+l},\quad &\text{for $k+l\geq -n+1$},\\
 0,\quad &\text{otherwise},
 \end{cases}
 \]
and are related to  Kirilov's vector fields~\cite{KYu} under the isomorphism $\iota$. The projective limit as $n\leftarrow \infty$ recovers the whole Witt algebra and the Witt commutation relations.
Then the operators $T_n$ such that their conjugates are $\bar T_n=(\widetilde B^{(n)}+C^{(n)}_{2,1})\circ C_{1,1}^{-1}$, are operators from $H_+$ to $H_-$ of finite rank and their graphs $W_{T_n}=(\id+T_n)(H_+)$ are elements of the component of virtual dimension $0$ in $\Gr_{\infty}(H)$.
We can construct a basis $\{e_0,e_1,e_2,\dots\}$ in  $W_{T_n}$ as a set of Laurent polynomials defined by means of operators $T_n$ and $\bar C_{1,1}$ as a mapping
$$\{\psi_1,\psi_2,\dots\}\buildrel{\bar{C}_{1,1}}\over\longrightarrow \{G_1,G_2,\dots\}\buildrel{\id+T_n}\over\longrightarrow\{G_{-n+1},G_{-n+2},\dots, G_{0},G_1,G_2,\dots\},$$
of the canonical basis $\{1,0,0,\dots\}$, $\{0,1,0,\dots\}$, $\{0,0,1,\dots\}$,\dots

Let us  formulate the result as the following  main statement.

\begin{proposition}\label{graph}
The operator $T_n$ defines a graph $W_{T_n}=\spn\{e_0,e_1,e_2,\dots\}$ in the Grassmannian $\Gr_{\infty}$ of virtual dimension 0.
Given any $$\psi=\sum_{k=0}^{\infty}\psi_{k+1}z^k\in H_+\subset H,$$ the function
\[
{G}(z)=\sum_{k=-n}^{\infty}{G}_{k+1}z^k=\sum_{k=0}^{\infty} \psi_{k+1}e_k,
\]
is an element of $W_{T_n}$. \end{proposition}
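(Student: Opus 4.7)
The strategy is to read off both statements directly from the construction of $\{e_k\}_{k\geq 0}$ as the image of the monomial basis of $H_+$ under the composition $(\id + T_n)\circ \bar C_{1,1}$. The preceding proposition gives that $C_{1,1}\colon H_+\to H_+$, and hence its conjugate $\bar C_{1,1}$, is a linear isomorphism. On the other hand, $\id + T_n\colon H_+\to W_{T_n}$ is by construction a linear bijection onto the graph of $T_n$, with inverse $pr_+$ restricted to $W_{T_n}$. Thus $(\id + T_n)\circ \bar C_{1,1}\colon H_+\to W_{T_n}$ is a linear bijection, and by definition it sends the standard monomial basis $\{z^k\}_{k\geq 0}$ of $H_+$ precisely to $\{e_k\}_{k\geq 0}$. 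This immediately yields $W_{T_n}=\spn\{e_0,e_1,e_2,\ldots\}$.

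To verify that $W_{T_n}$ lies in the zero virtual-dimension component of $\Gr_\infty(H)$, I would use the fact that $T_n\colon H_+\to H_-$ has finite rank (hence is compact, and in particular Hilbert--Schmidt). The projection $pr_+\colon W_{T_n}\to H_+$ is the inverse of $\id + T_n$ restricted to $W_{T_n}$, so it is a linear isomorphism, in particular a Fredholm operator with $\dim\kernel(pr_+)=\dim\cokernel(pr_+)=0$. By the definition \eqref{virtdim} this gives $\virtdim(W_{T_n})=\ind(pr_+)=0$. Similarly, $pr_-\colon W_{T_n}\to H_-$ factors as $T_n\circ pr_+$, which is compact, confirming that $W_{T_n}\in\Gr_\infty(H)$.

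Finally, the second identity follows by evaluating the composition on a general $\psi=\sum_{k\geq 0}\psi_{k+1}z^k\in H_+$. By the block decomposition \eqref{bloc}, $\bar C_{1,1}$ sends $\psi$ to the positive part $\sum_{k\geq 0}G_{k+1}z^k$ of the generating function; applying $\id+T_n$ then appends the lower-order coefficients $G_{-n+1},\ldots,G_0$ prescribed by the graph of $T_n$, producing $G(z)=\sum_{k=-n}^{\infty}G_{k+1}z^k$. By linearity and the very definition of $\{e_k\}$, the same composition sends $\psi$ to $\sum_{k\geq 0}\psi_{k+1}e_k$, whence the claimed equality.

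The main obstacle is not computational but rests on two foundational ingredients that have already been secured earlier in the paper: the invertibility of $C_{1,1}$ on $H_+$, and the finite-rank (equivalently, Hilbert--Schmidt) nature of $T_n$. The interchange of $\id+T_n$ with the infinite series expansion $\sum\psi_{k+1}e_k$ requires only continuity, which is automatic because $\id+T_n$ is a finite-rank perturbation of the identity, so convergence of the resulting series in $H$ is equivalent to convergence of the $H_+$-series for $\psi$.
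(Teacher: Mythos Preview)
Your proposal is correct and follows exactly the line the paper takes: the paper does not supply a separate proof of this proposition but treats it as a summary of the construction immediately preceding it, where $T_n$ is declared to have finite rank, $W_{T_n}=(\id+T_n)(H_+)$ is asserted to lie in the virtual-dimension-$0$ component, and the basis $\{e_k\}$ is defined via the composite $(\id+T_n)\circ\bar C_{1,1}$ applied to the canonical basis of $H_+$. Your argument simply makes explicit the Fredholm/compactness verification and the linearity bookkeeping that the paper leaves to the reader.
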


\begin{proposition}
In the autonomous case of the Cauchy problem \eqref{LKord}, when the function $p(z,t)$ does not depend on $t$, the pseudo-Hamiltonian $\mathcal H$ plays the role of time-dependent energy and $\mathcal H(t)=\bar{G}_0(t)+const$, where $\bar{G}_0\big|_{t=0}=0$. The constant is defined as
$\sum_{n=1}^{\infty}p_k\bar{\psi}_k(0)$.
\end{proposition}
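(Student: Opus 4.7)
The plan is a direct substitution into the pseudo-Hamiltonian followed by a residue computation at $t=0$, capped by the identification of the time-dependent remainder with the $L_0$-type element of the extended Witt family from Proposition~\ref{graph}. Writing the autonomous driving term as $p(\zeta)=1+\sum_{k\ge1}p_k\zeta^k$ and substituting into~\eqref{Ham3}, the expansion $1-p(e^{-t}f)=-\sum_{k\ge1}p_k e^{-kt}f^k$ yields
\[
\mathcal H(t)=-\sum_{k\ge1}p_k\,e^{-kt}\,\Phi_k(t),\qquad \Phi_k(t):=\frac{1}{2\pi i}\oint_{S^1}\frac{f(z,t)^{k+1}\bar\psi(z,t)}{z^3}\,dz,
\]
so that all the explicit $t$-dependence of $\mathcal H$ is concentrated in the prefactors $e^{-kt}$ and the canonical coordinates $(f,\bar\psi)$.

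The next step is to evaluate $\mathcal H$ at $t=0$. The initial conditions $f(z,0)\equiv z$ and $\bar\psi(z,0)=\bar\xi(z)$ collapse $\Phi_k(0)$ to $\tfrac{1}{2\pi i}\oint z^{k-2}\bar\xi(z)\,dz$, which the integral representation for $\bar\psi_k$ stated before Proposition~\ref{timeindep} identifies with $\bar\xi_k=\bar\psi_k(0)$. Consequently $\mathcal H(0)=\pm\sum_{k\ge1}p_k\bar\psi_k(0)$, the overall sign merely fixing the convention absorbed into $\bar G_0$; this is the constant advertised in the statement.

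Now set $\bar G_0(t):=\mathcal H(t)-\mathcal H(0)$. By construction $\bar G_0|_{t=0}=0$ and $\mathcal H(t)=\bar G_0(t)+\mathrm{const}$ tautologically, so it remains to recognize this time-dependent remainder as the $L_0$-type element of the extended family $\{\bar G_k\}_{k\le0}$ of Proposition~\ref{graph}. Because the autonomous Hamiltonian satisfies $\{\mathcal H,\mathcal H\}=0$, one has $d\mathcal H/dt=\partial_t\mathcal H=\sum_{k\ge1}kp_k e^{-kt}\Phi_k(t)$, and combining this with the conservation $\bar{\mathcal G}_k(t)\equiv\bar\xi_k$ from Proposition~\ref{timeindep} shows that $\bar G_0$ has exactly the evolution prescribed by the Witt bracket of an $L_0$-type charge with the conserved generators $\bar{\mathcal G}_k$.

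The main obstacle, as I see it, is not the residue arithmetic but the last identification: the Witt extension of Proposition~\ref{graph} determines $\bar G_0$ only up to an additive function of the conserved charges $\bar{\mathcal G}_k=\bar\xi_k$, so one has to use the normalization $\bar G_0(0)=0$ together with the uniqueness of the Hamiltonian representation of the autonomous flow to pin down $\bar G_0$ as $\mathcal H-\mathcal H(0)$. Once this identification is justified, the residue computation of the second step supplies the explicit formula $\sum_{k\ge1}p_k\bar\psi_k(0)$ for the additive constant and finishes the proof.
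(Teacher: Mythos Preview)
The paper states this proposition without proof (it is a survey; the computation is deferred to \cite{MarkinaVasiliev, MV2011}). Your residue calculation of $\mathcal H(0)$ is correct: with $f(z,0)=z$ and $\bar\psi(z,0)=\bar\xi(z)$ your $\Phi_k(0)$ reduces to the integral representation of $\bar\psi_k(0)$, giving $\mathcal H(0)=-\sum_{k\ge1}p_k\bar\psi_k(0)$, which matches the displayed constant up to the sign convention you flag.

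The gap is the identification step. You set $\bar G_0(t):=\mathcal H(t)-\mathcal H(0)$ tautologically and then argue that this agrees with the $\bar G_0$ of the Grassmannian construction, the one corresponding to $L_0$ under~$\iota$. But the Witt-bracket characterization you appeal to actually \emph{distinguishes} them rather than identifying them: by construction $\{\bar G_0,\bar{\mathcal G}_m\}=m\bar{\mathcal G}_m$ for $m\ge1$, whereas Proposition~\ref{timeindep} says precisely that $\{\bar{\mathcal G}_m,\mathcal H\}=0$. So $\mathcal H$ and the paper's $\bar G_0$ are different functionals on $T^*\mathcal F_0\otimes\mathbb C$, and the asserted equality $\mathcal H(t)=\bar G_0(t)+\text{const}$ can only hold after restriction to a single L\"owner--Kufarev trajectory. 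Your sentence about ``the evolution prescribed by the Witt bracket of an $L_0$-type charge'' conflates the Poisson-algebraic relations with the time evolution and does not supply the needed dynamical identity $\partial_t\mathcal H=\{\bar G_0,\mathcal H\}$ along solutions. The remark that the Witt extension fixes $\bar G_0$ only up to functions of the conserved $\bar{\mathcal G}_k$ does not help either, since adding such a function cannot turn a functional with $\{\,\cdot\,,\bar{\mathcal G}_m\}=m\bar{\mathcal G}_m$ into one with $\{\,\cdot\,,\bar{\mathcal G}_m\}=0$. To close the gap you must either verify $\partial_t\mathcal H=\{\bar G_0,\mathcal H\}$ directly from the autonomous Hamiltonian system \eqref{sys10}--\eqref{sys20}, or expand both $\mathcal H(t)$ and $\bar G_0(t)$ in the phase coordinates $c_k(t),\bar\psi_k(t)$ and compare term by term.
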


\begin{remark} The Virasoro generator $L_0$ plays the role of the energy functional in CFT. In the view of the isomorphism $\iota$, the observable $\bar{G}_0=\iota^{-1}(L_0)$ plays an analogous role.
\end{remark}

Thus, we constructed a countable family of curves $\Gamma_n\colon[0,T]\to\mathcal B$ in the trivial bundle $\mathcal B=T^{*}\mathcal F_0\otimes \mathbb C\,\times\,\Gr_{\infty}(H)$, such that the curve $\Gamma_n$ admits the form $\Gamma_n(t)=\Big(f(z,t),\psi(z,t),W_{T_n}(t)\Big)$, for $t\in[0,T]$ in the local trivialization. Here $\big(f(z,t),\bar\psi(z,t)\big)$ is the solution of the Hamiltonian system (\ref{sys1}--\ref{sys2}). Each operator $T_n(t)\colon H_+\to H_-$ that maps ${\mathcal G}_{>0}$ to $$\big({G}_0(t),{G}_{-1}(t),\ldots,{G}_{-n+1}(t)\big)$$ defined for any $t\in[0,T]$, $n=1,2,\ldots$, is of finite rank and its graph $W_{T_n}(t)$  is a point in $\Gr_{\infty}(H)$ for any $t$. The graphs $W_{T_n}$  belong to the connected component of the virtual dimension $0$ for every time $t\in[0,T]$ and for fixed~$n$.
The coordinates $(G_{-n+1},\ldots, G_{-2},G_{-1},G_0,{G}_1,{G}_2,\ldots)$ of a point in the graph $W_{T_n}$ considered as a function of $\psi$ are isomorphic to the Kirilov vector fields $$(L_{-n+1},\ldots,L_{-2},L_{-1},L_0,L_1,L_1,L_2,\ldots)$$ under the isomorphism $\iota$.

 \subsection{$\tau$-function}

Remind that any function $g$ holomorphic in the unit disc, non vanishing on the boundary and normalized by $g(0)=1$ defines the multiplication operator $g\varphi$, $\varphi(z)=\sum_{k\in\mathbb Z}\varphi_kz^k$, that can be written in the matrix form
\begin{equation}\label{multip}
\left(
\begin{array}{cc}
a & b \\
0 & d
\end{array} \right) \left(
\begin{array}{c}
\varphi_{\geq 0}\\
 \varphi_{< 0}
\end{array} \right).
\end{equation}
All these upper triangular matrices form a subgroup $GL_{res}^{+}$ of the group of automorphisms $GL_{res}$ of the Grassmannian $\Gr_{\infty}(H)$.

With  any function $g$ and any graph $W_{T_n}$ constructed in the previous section (which is transverse to $H_-$) we can relate the $\tau$-function $\tau_{W_{T_n}}(g)$ by the following formula
$$
\tau_{W_{T_n}}(g)=\det(1+a^{-1}bT_n),$$
where $a,b$ are the blocks in the multiplication operator generated by $g^{-1}$. If we write the function $g$ in the form $g(z)=\exp(\sum_{n=1}^{\infty}t_nz^n)=1+\sum_{k=1}^{\infty}S_k(\tb)z^k$, where the coefficients $S_k(\tb)$ are the homogeneous elementary Schur polynomials, then the coefficients $\tb=(t_1,t_2,\dots)$ are called generalized times.
 For any fixed $W_{T_n}$ we get an orbit in $\Gr_{\infty}(H)$ of curves $\Gamma_n$ constructed in the previous section under the action of the elements of the subgroup  $GL_{res}^{+}$ defined by the function $g$. On the other hand,  the $\tau$-function defines a section in the determinant bundle over $\Gr_{\infty}(H)$ for any fixed $f\in\mathcal F_0$ at each point of the curve $\Gamma_n$.

\subsection{Baker-Akhiezer function, KP flows, and KP equation}

Let us consider the component $\Gr^{0}$ of the Grassmannian $\Gr_{\infty}$  of virtual dimension $0$, and let $g$ be a holomorphic function in $\mathbb D$ considered as an element of $GL_{res}^{+}$ analogously to  the previous section. Then $g$ is an upper triangular matrix with 1s on the principal diagonal. Observe that $g(0)=1$ and $g$ does not vanish on $S^1$. Given a point $W\in \Gr^{0}$ let us define
a subset $\Gamma^+ \subset GL_{res}^{+}$ as $$\Gamma^+=\{g\in GL_{res}^{+}:\, g^{-1} W \text{\ is transverse to \ } H_-\}.$$ Then  there exists \cite{SegalWilson} a unique function $\Psi_W[g](z)$ defined on $S^1$, such that for each $g\in \Gamma^+$, the function $\Psi_W[g]$ is in $W$, and it admits the form
\[
\Psi_W[g](z)=g(z)\left(1+\sum_{k=1}^{\infty}\omega_k(g,W)\frac{1}{z^k}\right).
\]
The coefficients $\omega_k=\omega_k(g,W)$ depend both on $g\in \Gamma^{+}$ and on $W\in \Gr^{0}$, besides they are holomorphic on $\Gamma^+$ and extend to meromorphic functions on $GL_{res}^{+}$. The function $\Psi_W[g](z)$ is called the {\it Baker-Akhiezer function} of $W$.

 Henry Frederick Baker (1866--1956) was British mathematician known for his contribution in algebraic geometry, PDE, and Lie theory. Naum Ilyich Akhiezer (1901--1980) was a Soviet mathematician known for his contributions in approximation theory and the theory of differential and integral operators, mathematical physics and history of mathematics. His brother Alexander was known theoretical physicist.

The Baker-Akhiezer function  plays a crucial role in the definition of the KP (Kadomtsev-Petviashvili) hierarchy which we will define later. We are going to construct the Baker-Akhiezer function explicitly in our case.

Let $W=W_{T_n}$ be a point of $\Gr^0$ defined in Proposition~\ref{graph}. Take a function $g(z)=1+a_1 z+a_2z^2+\dots\in \Gamma^+$, and
let us write the corresponding bi-infinite series for the Baker-Akhiezer function $\Psi_W[g](z)$ explicitly as
\begin{eqnarray*}
\Psi_W[g](z)=\sum_{k\in\mathbb Z}\mathcal W_kz^k=(1 & + & a_1z+a_2z^2 + \dots)\left(1+\frac{\omega_1}{z}+\frac{\omega_2}{z^2}+\dots\right)=\\
= \dots &+& (a_2+a_3\omega_1+a_4\omega_2+a_5\omega_3+\dots)z^2\\
&+& (a_1+a_2\omega_1+a_3\omega_2+a_4\omega_3+\dots)z\\
&+& (1+a_1\omega_1+a_2\omega_2+a_3\omega_3+\dots)\\
&+& (\omega_1+a_1\omega_2+a_2\omega_3+\dots)\frac{1}{z}\\
&+& (\omega_2+a_1\omega_3+a_2\omega_4+\dots)\frac{1}{z^2}+\dots\\
\dots&+& (\omega_k+a_1\omega_{k+1}+a_2\omega_{k+2}+\dots)\frac{1}{z^k}+\dots\\
\end{eqnarray*}
The Baker-Akhiezer function for $g$ and $W_{T_n}$ must be of the form
\[
\Psi_{W_{T_n}}[g](z)=g(z)\left( 1+ \sum_{k=1}^{n}\omega_k(g)\frac{1}{z^k}\right)= \sum_{k=-n}^{\infty}\mathcal W_kz^k.
\]
For a fixed $n\in\mathbb N$ we truncate this bi-infinite series by putting $\omega_k=0$ for all $k>n$.
 In order to satisfy the definition of $W_{T_n}$, and determine the coefficients $\omega_1,\omega_2,\dots,\omega_n$, we must check
 that there exists a vector $\{\psi_1,\psi_2,\dots\}$, such that
 $\Psi_{W_{T_n}}[g](z)=\sum_{k=0}^{\infty} e_k\psi_{k+1}$.
 First we express $\psi_k$ as linear functions  $\psi_k=\psi_k(\omega_1,\omega_2,\dots,\omega_n)$ by
 \begin{equation}\label{psi1}
 (\psi_1,\psi_2,\psi_3,\dots)=\bar C_{1,1}^{-1}\Big(\mathcal W_0(\omega_1,\omega_2,\dots,\omega_n),\mathcal W_1(\omega_1,\omega_2,\dots,\omega_n),\dots\Big).
 \end{equation}
 Using Wronski formula we can write
 \begin{eqnarray*}
 \psi_1&=&\mathcal W_0-2\bar c_1\mathcal W_1-(3\bar c_2-4\bar c_1^2)\mathcal W_2-(4\bar c_3-12\bar c_2\bar c_1+8\bar c_1^3)\mathcal W_3+\dots,\\
\psi_2&=& \mathcal W_1-2\bar c_1\mathcal W_2-(3\bar c_2-4\bar c_1^2)\mathcal W_3-(4\bar c_3-12\bar c_2\bar c_1+8\bar c_1^3)\mathcal W_4+\dots,\\
\psi_3 &= &\mathcal W_2-2\bar c_1\mathcal W_3-(3\bar c_2-4\bar c_1^2)\mathcal W_4-(4\bar c_3-12\bar c_2\bar c_1+8\bar c_1^3)\mathcal W_5+\dots,\\
 \dots &\dots &\dots
 \end{eqnarray*}

 Next we define $\omega_1,\omega_2,\dots,\omega_n$ as functions of $g$ and $W_{T_n}$, or in other words, as functions
 of $a_k, \bar c_k$ by solving linear equations
 \begin{eqnarray*}
 \omega_1&=&\bar c_1\psi_1+2\bar c_2\psi_2+\dots k\bar c_k\psi_k+\dots,\\
 \omega_2&=& \sum_{k=1}^{\infty}\Big((k+2)\bar c_{k+1}-2\bar c_1\bar c_k\Big)\psi_k,\\
 \dots &\dots &\dots
 \end{eqnarray*}
 where $\psi_k$ are taken from \eqref{psi1}. The solution exists and unique because of the general fact of the existence of the Baker-Akhiezer function. It is quite difficult task in general, however, in the case $n=1$, it is possible to write the solution explicitly in the matrix form. If
 \[
 A=\left(
\begin{array}{c}
\dots \\
3\bar c_3\\
2\bar c_2\\
\bar c_1
\end{array} \right)^T\bar C_{1,1}^{-1} \left(\begin{array}{c}
\dots \\
 a_3\\
a_2\\
a_1
\end{array} \right),\quad
B=\left(
\begin{array}{c}
\dots \\
3\bar c_3\\
2\bar c_2\\
\bar c_1
\end{array} \right)^T\bar C_{1,1}^{-1} \left(\begin{array}{c}
\dots \\
 a_2\\
a_1\\
1
\end{array} \right).
 \]
 then $\omega_1=\frac{B}{1-A}$.

 In order to apply further theory of integrable systems we need
to change variables $a_n\to a_n(\tb)$, $n>0$, $\tb=\{t_1,t_2,\dots\}$ in the following way
\[
a_n=a_n(t_1,\dots, t_n)=
S_n(t_1,\dots, t_n),
\]
where $S_n$ is the $n$-th elementary homogeneous Schur polynomial
\[
1+\sum\limits_{k=1}^{\infty}S_k(\tb)z^k=\exp\left(\sum\limits_{k=1}^{\infty}t_kz^k\right)=e^{\xi(\tb,z)}.
\]
In particular,
\[
S_1=t_1,\quad S_2=\frac{t_1^2}{2}+t_2,\quad S_3=\frac{t_1^3}{6}+t_1t_2+t_3,
\]
\[
S_4=\frac{t_1^4}{24}+\frac{t_2^2}{2}+\frac{t_1^2t_2}{2}+t_1t_3+t_4.
\]
Then the Baker-Akhiezer function corresponding to the graph $W_{T_n}$ is written as
\[
\Psi_{W_{T_n}}[g](z)=\sum_{k=-n}^{\infty}\mathcal W_kz^k=e^{\xi(\tb, z)}\left(1+\sum_{k=1}^{n}\frac{\omega_k(\tb,W_{T_n})}{z^k}\right),
\]
and $\tb=\{t_1,t_2,\dots\}$ is called the vector of generalized times.
It is easy to see that
\[
\partial_{t_k}a_m=0,\quad\text{for all \ }m=1,2\dots,k-1,
\]
$\partial_{t_k}a_m=1$ and
\[
\partial_{t_k}a_m=a_{m-k},\quad\text{for all \ }m>k.
\]
In particular, $B=\partial_{t_1}A$. Let us denote $\partial:=\partial_{t_1}$. Then in the case $n=1$ we have
\begin{equation}\label{omega}
\omega_1=\frac{\partial A}{1-A}.
\end{equation}

Now we consider the associative algebra of pseudo-differential operators $\mathcal A=\sum_{k=-\infty}^na_k\partial^k$ over the space of smooth functions, where the derivation symbol $\partial$ satisfies the Leibniz rule and the integration symbol and its powers satisfy
the algebraic rules $\partial^{-1}\partial=\partial \partial^-1=1$ and $\partial^{-1}a$ is the operator $\partial^{-1}a=\sum_{k=0}^{\infty}(-1)^k(\partial^k a)\partial^{-k-1}$ (see; e.g., \cite{Dickey}).
The action of the operator $\partial^{m}$, $m\in\mathbb Z$, is well-defined over the function $e^{\xi(\tb,z)}$, where $\xi(\tb,z)=\sum_{k=1}^{\infty}t_kz^k$,  so that  the function $e^{\xi(\tb,z)}$ is the eigenfunction of the operator $\partial^m$ for any integer $m$, i.e., it satisfies the equation
\begin{equation}\label{eigen}
\partial^me^{\xi(\tb,z)}=z^m e^{\xi(\tb,z)},\quad m\in\mathbb Z,
\end{equation}
see; e.g., \cite{Babelon, Dickey}.
As usual, we identify  $\partial=\partial_{t_1}$, and $\partial^0=1$.

 Let us introduce the dressing operator $\Lambda=\phi\partial \phi^{-1}=\partial+\sum_{k=1}^{\infty}\lambda_k\partial^{-k}$, where $\phi$ is a pseudo-differential operator
$\phi=1+\sum_{k=1}^{\infty}w_k(\tb)\partial^{-k}$.
The operator $\Lambda$ is defined up to the multiplication on the right by a series $1+\sum_{k=1}^{\infty}b_k\partial^{-k}$ with constant coefficients $b_k$.
The $m$-th KP flow is defined by making use of the vector field
\[
\partial_m\phi:=-\Lambda^m_{<0}\phi,\quad \partial_m=\frac{\partial}{\partial t_m},
\]
and the flows commute. In the Lax form the KP flows are written as
\begin{equation}\label{Lax}
\partial_m\Lambda=[\Lambda^m_{\geq 0},\Lambda].
\end{equation}
If $m=1$, then $\partial \Lambda=[\partial, \Lambda]=\sum_{k=1}^{\infty}(\partial{\lambda_k})\partial^{-k}$, which justifies the identification
$\partial=\partial_{t_1}$.

Thus, the Baker-Akhiezer function $\Psi_{W_{T_n}}[g](z)$  admits the form $$\Psi_{W_{T_n}}[g](z)=\phi \exp(\xi(\tb,z)),$$ where $\phi$ is a pseudo-differential operator
$\phi=1+\sum_{k=1}^{n}\omega_k(\tb,W_{T_n})\partial^{-k}$.
The  function $\Psi_{W_{T_n}}[g](z)$ becomes the eigenfunction of the operator $\Lambda^m$, namely $\Lambda^m w=z^m w$, for $m\in\mathbb Z$. Besides, $\partial_m w= \Lambda^m_{>0}w$.  In the view of \eqref{eigen} we can write this function
as previously,
\[
\Psi_{W_{T_n}}[g](z)=(1+\sum_{k=1}^{n}\omega_k(\tb, W_{T_n})z^{-k})e^{\xi(\tb,z)}.
\]

\begin{proposition}
Let $n=1$, and let the Baker-Akhiezer function be of the form
\[
\Psi_{W_{T_n}}[g](z)=e^{\xi(\tb,z)}\left(1+\frac{\omega}{z}\right),
\]
where $\omega=\omega_1$ is given by the formula \eqref{omega}. Then
\[
\partial \omega=\frac{\partial^2 A}{1-A}+\left(\frac{\partial A}{1-A}\right)^2
\]
is a solution to the KP equation with  the Lax operator $L=\partial^2-2(\partial \omega)$.
\end{proposition}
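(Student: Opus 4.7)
The first assertion is a direct differentiation. From $\omega = \partial A /(1-A)$ and $\partial(1-A) = -\partial A$, the quotient rule gives
\[
\partial\omega = \frac{(\partial^2 A)(1-A) - (\partial A)(-\partial A)}{(1-A)^2} = \frac{\partial^2 A}{1-A} + \left(\frac{\partial A}{1-A}\right)^2,
\]
which is the claimed identity.

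For the second assertion, the plan is to insert $\Psi_{W_{T_1}}[g] = (1 + \omega/z)e^{\xi(\tb, z)}$ into the Sato--Segal--Wilson framework developed above. Since $\partial^{-k} e^{\xi(\tb, z)} = z^{-k} e^{\xi(\tb, z)}$, one has $\Psi_{W_{T_1}}[g] = \phi\, e^{\xi(\tb,z)}$ with the dressing pseudo-differential operator $\phi = 1 + \omega\partial^{-1}$. Then $\Lambda := \phi\partial\phi^{-1}$ satisfies $\Lambda\Psi_{W_{T_1}}[g] = z\Psi_{W_{T_1}}[g]$, and the second-order operator $L := \Lambda^2 = \phi\partial^2\phi^{-1}$ satisfies $L\Psi_{W_{T_1}}[g] = z^2\Psi_{W_{T_1}}[g]$. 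Matching coefficients of powers of $\partial$ in the identity $L\phi = \phi\partial^2$ and using $\partial^2\cdot\omega\partial^{-1} = \omega\partial + 2(\partial\omega) + (\partial^2\omega)\partial^{-1}$, the coefficient of $\partial^{0}$ must equal $-2\partial\omega$. Hence the differential truncation is precisely $L_+ = \partial^2 - 2(\partial\omega)$, the Lax operator of the statement.

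With $\phi$ so identified, the KP equation will follow from the general Sato theory. The Baker--Akhiezer function $\Psi_{W_{T_1}}[g]$ arises by the action of $g(z) = \exp\xi(\tb, z) \in GL^+_{\mathrm{res}}$ on the Grassmannian point $W_{T_1}$, so the KP flows $\partial_m\phi = -(\Lambda^m)_{<0}\phi$, equivalently $\partial_m\Lambda = [(\Lambda^m)_+, \Lambda]$ in Lax form, are built into the construction. The Zakharov--Shabat compatibility of the $m=2$ and $m=3$ flows,
\[
\partial_2(\Lambda^3)_+ - \partial_3(\Lambda^2)_+ + [(\Lambda^3)_+, (\Lambda^2)_+] = 0,
\]
yields, upon reading off the coefficient of $\partial^0$, the KP equation for the order-zero coefficient $u := -2\partial\omega$ of $L_+$. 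Since $u$ differs from $\partial\omega$ only by the normalizing constant $-2$, this is exactly the assertion that $\partial\omega$ solves KP.

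The main obstacle is verification rather than conception: one must justify that the truncated ansatz $\phi = 1 + \omega\partial^{-1}$ (with all higher $\partial^{-k}$ coefficients vanishing) is preserved by the flows $\partial_m\phi = -(\Lambda^m)_{<0}\phi$, or equivalently, that the specific $\omega = \partial A/(1-A)$ lies on the invariant submanifold of the KP hierarchy consisting of rational/soliton-type solutions. This follows from the Grassmannian construction itself: $W_{T_1}$ has virtual dimension zero and is a finite-rank perturbation of $H_+$, so by the standard $\tau$-function formula $\tau_{W_{T_1}}(g) = \det(1 + a^{-1}bT_1) = 1 - A$, giving $\omega = -\partial\log\tau_{W_{T_1}}$. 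Hirota's bilinear equations for $\tau_{W_{T_1}} = 1 - A$ then reduce to algebraic identities in the coefficients of $g(z)$, and the KP equation for $\partial\omega$ is their extracted differential form.
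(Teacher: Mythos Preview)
Your argument is correct and is exactly the natural one; the paper itself states this proposition without proof, relying on the integrable-systems machinery it has assembled and the standard references (\cite{Babelon}, \cite{Dickey}, \cite{SegalWilson}). Your computation of $\partial\omega$ by the quotient rule is immediate, and your identification of the dressing operator $\phi=1+\omega\partial^{-1}$, the Lax operator $(\Lambda^2)_+=\partial^2-2\partial\omega$, and the appeal to the Zakharov--Shabat compatibility of the $t_2$ and $t_3$ flows is the standard route. The observation that $\tau_{W_{T_1}}=\det(1+a^{-1}bT_1)=1-A$ and hence $\omega=-\partial\log\tau_{W_{T_1}}$ is a clean way to close the argument via Hirota's identities.

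One small caution: the KP equation is nonlinear, so your remark that ``$u$ differs from $\partial\omega$ only by the normalizing constant $-2$'' does not literally transfer solution status between $u$ and $\partial\omega$ for a fixed normalization of KP. The proposition's phrasing ``$\partial\omega$ is a solution to the KP equation with the Lax operator $L=\partial^2-2(\partial\omega)$'' should be read as asserting that the potential $u=-2\partial\omega$ appearing in $L$ satisfies KP; equivalently, $\partial\omega$ satisfies the rescaled KP obtained by the substitution $u\mapsto -2\partial\omega$. You have proved exactly this, so the content is right---just tighten the wording there.
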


\begin{remark}
Observe that the condition $\re p >0$ in $\mathbb D$ defines a L\"owner-Kufarev cone of trajectories in the class $\mathcal F_0$ of univalent smooth functions
which allowed us to construct L\"owner-Kufarev trajectories in the neighbourood  $U_{H_+}$, which is a cone in the Grassmannian $\Gr_{\infty}$. The form of solutions
to the KP hierarchy is preserved along the L\"owner-Kufarev trajectories. The solutions are parametrized by the initial conditions in the system  \eqref{sys2}.
\end{remark}

Of course, one can express the Baker-Akhiezer function  directly from the $\tau$-function by the Sato formula
\[
\Psi_{W_{T_n}}[g](z)=e^{\xi(\tb,z)}\frac{\tau_{W_{T_n}}(t_1-\frac{1}{z},t_2-\frac{1}{2z^2},t_3-\frac{1}{3z^3},\dots)}{\tau_{W_{T_n}}(t_1,t_2,t_3\dots)},
\]
or applying the {\it vertex operator} $V$ acting on the Fock space $\mathbb C[\tb]$ of homogeneous polynomials
\[
\Psi_{W_{T_n}}[g](z)=\frac{1}{\tau_{W_{T_n}}}V\tau_{W_{T_n}},
\]
where
\[
V=\exp\left(\sum\limits_{k=1}^{\infty}t_kz^k\right)\exp\left(-\sum\limits_{k=1}^{\infty}\frac{1}{k}\frac{\partial}{\partial t_k}z^{-k}\right).
\]
In the latter expression $\exp$ denotes the formal exponential series and $z$ is another formal variable that commutes with all Heisenberg operators $t_k$ and $\frac{\partial}{\partial t_k}$. Observe that
the exponents in $V$ do not commute and the product of exponentials is calculated by the Baker-Campbell-Hausdorff formula.
The operator $V$ is a vertex operator in which the coefficient $V_k$ in the expansion of $V$ is a well-defined linear operator on the space  $\mathbb C[\tb]$. The Lie algebra of operators spanned by $1,t_k,\frac{\partial}{\partial t_k}$, and $V_k$, is isomorphic to the affine Lie algebra $\hat{\mathfrak{s}\mathfrak{l}}(2)$.
The vertex operator  $V$ plays a central role in the highest weight representation of affine Kac-Moody algebras \cite{Kac, Moody}, and can be interpreted as the infinitesimal B\"acklund transformation for the Korteweg--de Vries equation~\cite{Date}.

The vertex operator $V$ recovers the Virasoro algebra in the following sense. Taken in two close points $z+\lambda/2$ and $z-\lambda/2$ the operator product
can be expanded in to the following formal Laurent-Fourier series
\[
:V(z+\frac{\lambda}{2})V(z-\frac{\lambda}{2}):=\sum\limits_{k\in \mathbb Z}W_k(z)\lambda^{k},
\]
where $:a b:$ stands for the bosonic normal ordering.
Then $W_2(z)=T(z)$ is the stress-energy tensor which we expand again as
\[
T(z)=\sum\limits_{n\in \mathbb Z}L_n(\tb)z^{n-2},
\]
where the operators $L_n$ are the Virasoro generators in the highest weight representation over  $\mathbb C[\tb]$. Observe that the generators $L_n$
span the full Virasoro algebra with central extension and with the central charge~1. This can be also interpreted as a quantization of the shape evolution.
We shall define a representation over the space of  conformal Fock space fields based on the Gaussian free field in the next section.

\begin{remark}
Let us remark that the relation of L\"owner equation to the integrable hierarchies of non-linear PDE was noticed by Gibbons and Tsarev \cite{GT}. They observed that
the chordal L\"owner equation plays an essential role in the classification of reductions of Benney's equations. Later Takebe, Teo and Zabrodin \cite{Takebe} showed that
the chordal and radial L\"owner PDE serve as consistency conditions for one-variable reductions of dispersionless KP and Toda hierarchies respectively.
\end{remark}

\begin{remark}
We also mention here relations between L\"owner half-plane  multi-slit equations and the estimates of spectral gaps of changing length  for the periodic Zakharov-Shabat  operators and
for Hamiltonians in KdV and non-linear Schr\"odinger equations elaborated in \cite{KK09, KK10, KK11}.
\end{remark}

\section{Stochastic L\"owner evolutions, Schramm, and connections to CFT}\label{SLE}

\subsection{SLE}

This section we dedicate to the stochastic counterpart of the L\"owner-Kufarev theory
first recalling that one of the last (but definitely not least) contributions to
this growing theory was the description by Oded Schramm  in
1999--2000 \cite{Schramm}, of the stochastic L\"owner evolution (SLE), also known as
the Schramm-L\"owner evolution.
The SLE  is a conformally invariant
stochastic process; more precisely, it is a family of random planar curves generated by solving L\"owner's differential
equation with the Brownian motion as a driving term.
This equation was studied and developed by Oded Schramm
together with Greg Lawler and Wendelin Werner in a series of
joint papers that led, among other things, to a proof of
Mandelbrot's conjecture about the Hausdorff dimension of the
Brownian frontier \cite{SLE}. This achievement was one of the reasons
Werner was awarded the Fields Medal in 2006. Sadly, Oded Schramm, born
10 December 1961 in Jerusalem,
died in a tragic hiking accident on 01 September 2008 while
climbing Guye Peak, north of Snoqualmie Pass in Washington.

The {\it (chordal) stochastic L\"owner
evolution} with parameter $k\geq 0$ (SLE$_k$) starting at a
point $x\in \R$ is the random family of maps $(g_t)$ obtained
from the chordal L\"owner equation \eqref{hydro-Low} by letting
$\xi(t)=\sqrt{k} B_t$, where $B_t$ is a standard one dimensional
Brownian motion such that $\sqrt{k}B_0=x$.
Namely, let us consider the equation
\begin{equation}\label{SL}
\frac{d g_t(z)}{d t}=\frac{2}{g_t(z)-\xi(t)}, \qquad g_0(z)=z,
\end{equation}
where $\xi(t)=\sqrt{k}B_t=\sqrt{k}B_t(\omega)$, where $B_t(\omega)$ is the
standard 1-dimensional Brownian motion defined on the standard filtered probability space $(\Omega,\mathcal G, (\mathcal G_t), P)$  of Brownian motion with the sample space $\omega\in \Omega$, and $t\in[0,\infty)$, $B_0=0$. The solution to \eqref{SL} exists as long as $g_t(z)-h(t)$ remains away from zero and we denote by $T_z$ the first time such that $\lim_{t\to T_z-0}(g_t(z)-h(t))=0$. The function $g_t$ satisfies the hydrodynamic normalization at infinity $g_t(z)=z+\frac{2t}{z}+\dots$ Let $K_t=\{z\in \hat{\mathbb H}\colon T_z\leq t\}$, and let $\mathbb H_t$ be its complement $\mathbb H\setminus K_t=\{z\in {\mathbb H}\colon T_z> t\}$. The set $K_t$ is called {\it SLE hull}. It is compact, $\mathbb H_t$ is a simply conneced domain and $g_t$ maps $\mathbb H_t$ onto $\mathbb H$. SLE hulls grow in time. The {\it trace} $\gamma_t$ is defined as $\lim_{z\to\xi(t)}g^{-1}_t(z)$, where the limit is taken in $\mathbb H$. The unbounded component of $\mathbb H\setminus \gamma_t$ is $\mathbb H_t$. the Hausdorff dimension of the SLE$_k$ trace is $\min(1+\frac{8}{k},2)$, see \cite{Beffara}.
Similarly, one can
define the radial stochastic L\"owner evolution. The terminology comes from the fact that the L\"owner trace tip tends almost surely to a boundary point in the chordal case ($\infty$ in the half-plane version) or to the origin in the disk version of the radial case.

Chordal SLE  enjoys two important properties: scaling invariance and the Markovian property. Namely,
\begin{itemize}
\item $g_t(z)$ and $\frac{1}{\lambda}g_{\lambda^2t}(\lambda z)$ are identically distributed;
\item $h_t(z)=g_t(z)-\xi(t)$ possesses the Markov property. Furthermore, $h_s\circ h_t^{-1}$ is distributed as $h_{s-t}$ for $s>t$.
\end{itemize}

The SLE$_k$ depends on the choice of $\omega$ and
it comes in several flavors depending on the type of Brownian
motion exploited. For example, it might start at a fixed point
or start at a uniformly distributed point, or might have a
built in drift and so on. The parameter $k$ controls the rate
of diffusion of the Brownian motion and the behaviour of
the SLE$_k$ critically depends on the value of~$k$.

The SLE$_2$ corresponds to the loop-erased random walk and the
uniform spanning tree. The SLE$_{8/3}$ is conjectured to be the
scaling limit of self-avoiding random walks. The SLE$_3$ is
proved \cite{ChelkakSmirnov} to be the limit of interfaces for the Ising model (another Fields Medal 2010 awarded to Stanislav Smirnov),
while the SLE$_4$ corresponds to the harmonic explorer and the
Gaussian free field. For all $0\leq k\leq 4$ SLE gives slit maps. The SLE$_6$ was used by
Lawler, Schramm and Werner in 2001 \cite{SLE} to prove the conjecture of
Mandelbrot (1982) that the boundary of planar Brownian motion
has fractal dimension $4/3$. Moreover, Smirnov \cite{Smirnov} proved the SLE$_6$ is the scaling limit of
critical site percolation on the triangular lattice. This result follows from his celebrated proof of Cardy's formula.
SLE$_8$ corresponds to the uniform spanning tree. For $4<k<8$ the curve intersects itself and every point is contained in a loop but the curve is not space-filling almost surely.
For $k\geq 8$ the curve is almost sure space-filling. This phase change is due to the Bessel process interpretation of SLE, see \eqref{Bessel}.
\begin{figure}[ht] \scalebox{0.7}{
\begin{pspicture}(2,1)(17,8)
\psline[linecolor=blue,linewidth=0.8mm](11,4)(13.6,4)
\psline[linecolor=blue,linewidth=0.8mm](16.6,4)(18,4)
\rput(15.3,7){$y$}\rput(18,4.3){$x$}
\rput(13.7,5){$\mathbb H$}\rput(15.2,4.2){0}
\psline[linewidth=0.15mm]{->}(15,1)(15,7)
\psline[linewidth=0.15mm]{->}(11,4)(18,4)
\rput(4.65,5.25){\includegraphics[height=1 in]{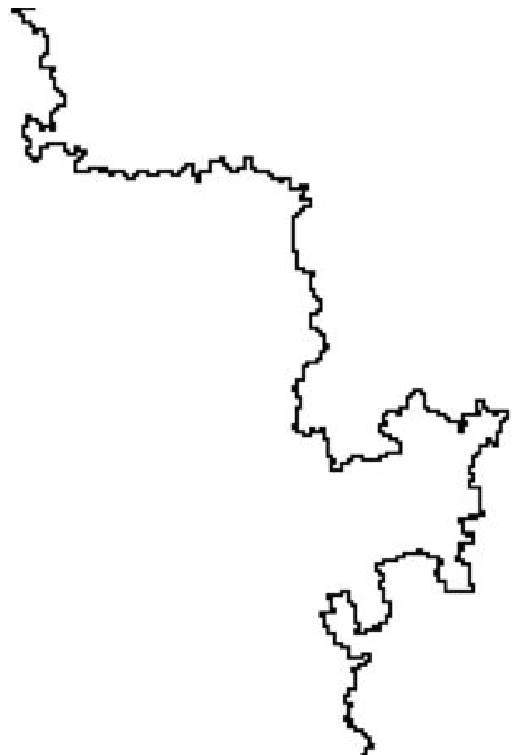}}
\psline[linecolor=blue,linewidth=0.8mm](2,4)(8,4)
\rput(5.3,7){$\eta$}\rput(8,4.3){$\xi$}
\rput(3,5){$\mathbb H_t$}\rput(5.2,4.2){0}
\psline[linewidth=0.15mm]{->}(5,1)(5,7)
\psline[linewidth=0.15mm]{->}(2,4)(8,4)
\rput(4.5,6.5){$\gamma_t$}
 \pscurve[linewidth=0.8mm,
linecolor=red]{->}(8,5)(9.5,5.5)(11,5)
\rput(9.5,5.9){$g_t(z)$}
\pscurve[linewidth=0.3mm,
linecolor=red](5.2,3.8)(8,2)(10,2)(12,2.5)
\psline[linecolor=red,linewidth=0.3mm]{->}(12,2.5)(13.5,3.8)
\psline[linecolor=red,linewidth=0.3mm]{->}(12,2.5)(16.5,3.8)
\rput(13.5,4.3){$g_t^-(0)$}
\rput(16.5,4.3){$g_t^+(0)$}
 \pscircle[fillstyle=solid,
fillcolor=black](13.5,4){.1}
 \pscircle[fillstyle=solid,
fillcolor=black](16.5,4){.1}
 \pscircle[fillstyle=solid,
fillcolor=black](5,4){.1}
\end{pspicture}}
\end{figure}

An invariant approach to SLE starts with probability measures on non-self-crossing random curves in a domain $\Omega$ connecting two given points $a,b\in\partial\Omega$
and satisfying the properties of 
\begin{itemize}
\item {\it Conformal invariance}. Consider a triple $(\Omega, a, b)$ and a conformal map $\phi$. If $\gamma$ is a trace SLE$_{k}(\Omega, a, b)$, then $\phi(\gamma)$ is  a trace SLE$_{k}(\phi(\Omega), \phi(a), \phi(b))$.;
\item {\it Domain Markov property}. Let $\{\mathcal F_t\}_{t\geq 0}$ be the filtration in $\mathcal F$ by $\{B_t\}_{t\geq 0}$ and let $g_t$ be a L\"owner flow generated by $\xi(t)=\sqrt{k}B_t$. Then the hulls $(g_t(K_{s+t}\cap \mathbb H_t)-\xi_t)_{s\geq 0}$ are also generated by SLE$_k$ and independent of the sigma-algebra $\mathcal F$.
\end{itemize}

The expository paper \cite{La} is perhaps the best option to start an exploration of this fascinating branch of mathematics. Nice papers \cite{Rohde05, Rohde11} give
up-to-date exposition of developments of SLE so we do not intend to survey SLE in details here. Rather, we are going to show relations with CFT and other related
stochastic variants of L\"owner (generalized) evolution.

Here let us also mention  the work by Carleson and  Makarov \cite{Carleson-Makarov} studying growth processes motivated by DLA
(diffusion-limited aggregation) via L\"owner's equations.

In this section we review the connections between conformal field theory (CFT)  and  Schramm-L\"owner evolution (SLE) following, e.g., \cite{BB, FriedrichWerner, MK}. Indeed, SLE, being, e.g., a continuous limit of CFT's archetypical Ising model at its critical point, gives an approach to CFT which emphasizes CFT's roots in statistical physics.

The equation \eqref{SL} is deterministic with a random entry and we solve it for every fixed $\omega$. The corresponding stochastic differential equation (SDE) in the   It\^{o} form
for the function $h_t(z)=g_t(z)-\xi(t)$ is
\begin{equation}\label{Bessel}
dh_t(z)=\frac{2}{h_t(z)}dt-\sqrt{k}dB_t,
\end{equation}
where $-h_t/\sqrt{k}$ represents a Bessel process (of order $(4+k)/k$).
For any holomorphic function $M(z)$ we have the It\^{o}  formula
\begin{equation}\label{L1}
(dM)(h_t)=-d\xi_t \mathcal L_{-1}M(h_t)+dt (\frac{k}{2} \mathcal L_{-1}^2-2 \mathcal L_{-2})M(h_t),
\end{equation}
where $ \mathcal L_n=-z^{n+1}\partial$.
From the form of the equation (\ref{Bessel}) one can see immediately that $h_t$ is a (time-homogeneous) diffusion, i.~e., a continuous strong Markov process.
The infinitesimal generator of $h_t$ is given by $A=(\frac{k}{2} \mathcal L_{-1}^2-2 \mathcal L_{-2})$ and this operator appears here for the first time. This differential operator makes it possible to reformulate many probabilistic questions about $h_t$ in the language of PDE theory. If we consider $h_t(z)$ with fixed $z$, then the equation \eqref{Bessel} for $h_t$ describes the motion of particles in the time-dependent field $v$ with
$dv=-d\xi_t \mathcal L_{-1}+dt A$. For instance, if we denote by $u_t(z)$ the mean function of $h_t(z)$
\[
 u_t(z) = \mathsf{E} h_t(z),
\]
then it follows from Kolmogorov's backward equation that $u_t$ satisfies
\[
\begin{cases}
 \frac{\partial u_t}{\partial t} = A u_t,\\
u_0(z)= z,
\end{cases}
 z\in \mathbb{\mathbb H}.
 \]
The kernel of the operator $A$ describes driftless observables with time-independent expectation known as local martingales or conservation (in mean) laws of the process.

\subsection{SLE and CFT}

A general picture of the connections between SLE and CFT can be viewed as follows. The axiomatic approach to CFT grew up from
the Hilbert sixth problem \cite{Wightman}, and the Euclidean axioms were suggested by Osterwalder and Schrader \cite{OS}. They are centered around group symmetry, relative to unitary representations of Lie groups in Hilbert space. They
define first {\it correlators} (complex values amplitudes)
dependent on $n$ complex variables,  and
a Lie group of conformal transformations of the correlators under the M\"obius group. This can be extended at the infinitesimal level of the Lie algebra to invariance
under infinitesimal conformal transforms, and therefore, to so-called Ward identities. The adjoint representation of this group is given with the help of the  enveloping algebra of
an algebra of special operators, that act on a Fock space of {\it fields}. The existence of fields and relation between correlators and fields are given by a {\it reconstruction theorem}, see e.g., \cite{Schott}. The boundary version of this approach BCFT, i.e., CFT on domains with boundary,
was developed by Cardy \cite{Cardy}. SLE approach starts with a family of statistical fields generated by non-random central charge modification of the random fields
defined initially by the {\it Gaussian Free Field} (GFF) and the algebra of Fock space fields,  and then, defines SLE correlators,  which turn to be local martingales after coupling of modified GFF on SLE random domains. These correlators satisfy  the axiomatic properties of BCFT in which the infinitesimal boundary distortion leads to Ward identities
involving a special boundary changing operator of conformal dimension that depends on the amplitude $\sqrt{k}$ of the Brownian motion in SLE.
 
Connections between SLE and CFT were considered for the first time by Bauer and Bernard \cite{BB02}. General motivation was as follows. Belavin, Polyakov and Zamolodchikov \cite{BPZ} defined in 1984 a class of conformal theories `minimal models', which described some discrete models (Ising, Potts, etc.) at criticality. Central theme is {\it universality}, i.e.,  the properties of a system close to the critical point are independent of its microscopic realization. Universal classes are characterized by a special parameter, central charge. Schramm's approach  is based on a special evolution of  conformal maps describing possible candidates for the scaling limit of interface curves. How these approaches are related? BPZ conjectured that the behaviour  of the system at criticality should be  described by critical exponents  identified as highest weights of  degenerate representations of infinite-dimensional Lie algebras, Virasoro in our case.

In order to make a bridge to CFT, let us address the definition of {\it fields} under consideration, which will be in fact {\it Fock space fields} and non-random {\it martingale-observables} following
Kang and Makarov exposition~\cite{MK}, see also \cite{RBGW} for physical encouragement. The underlying idea is to construct the representation Fock space based on the Gaussian Free Field (GFF). GFF is a particular case of the L\'evy Brownian motion (1947) \cite{Levy} as a space extension of the classical Brownian motion. Nelson (1973) \cite{Nelson} considered relations of Markov properties of generalized random fields and QFT, in particular, he proved that GFF possesses the Markov property. Albeverio, H{\o}egh-Krohn (1979) \cite{AH} and  R\"ockner (1983,85)
\cite{Rockner83, Rockner86} proved the Markov property of measure-indexed GFF and revealed relations of GFF to potential theory. Difficulty in the approach described below
comes from the fact that in the definition of the Markov property, the domain of reference is chosen to be random which requires a special coupling between random fields and domains,
which was realized by Schramm and Sheffield \cite{SchrammSheffield10} and Dub\'edat \cite{Dub}.

GFF $\Phi(z)$ is defined on a simply connected domain $D$ with the Dirichlet boundary conditions, i.e., GFF
is indexed by the Hilbert space $\mathcal E(D)$, the completion of test functions $f\in C^{\infty}_0(D)$ (with compact support in $D$) equipped with the norm
\[
\|f\|^2=\int_{D}\int_{D}f(\zeta)\bar{f}(z)G(z,\zeta)d\zeta dz,
\]
where $G(z,\zeta)$ is the Green function of the domain $D$, so
$\Phi\colon \mathcal E(D)\to L^2(\Omega)$. For example, if $\mathbb H=D,$ then
$G(z,\zeta)=\frac{1}{2\pi}\log\bigg|\frac{\zeta-\bar{z}}{\zeta-z}\bigg|$. One can think of GFF as a generalization of the Brownian motion to complex time, however, such analogy is very rough since for a fixed $z\in D$ the expression $\Phi(z)$
is not well-defined as a random variable, e.g., the correlation is $\mathsf{E}(\Phi(z)\Phi(\zeta))=G(z,\zeta)$, but the variance does not exist in a usual sense.
Instead, in terms of distributions $(\Phi, f)$ over the space  $\mathcal E(D)$ we have covariances
\[
\text{Cov}((\Phi, f_1), (\Phi, f_2))= \frac{1}{2\pi}\int_{D}\int_{D}f_1(\zeta)\bar{f}_2(z)G(z,\zeta)d\zeta dz.
\]

The distributional derivatives $J=\partial \Phi$ and $\bar{J}=\bar{\partial} \Phi$ are well-defined as, e.g.,  $J(f)=-\Phi(\partial f)$, $J\colon \mathcal E(D)\to L^2(\Omega)$. They are  again Gaussian distributional fields.

The tensor $n$-th symmetric product of Hilbert spaces $\mathcal H(D)$ we denote by $\mathcal H^{\odot n}$, $\mathcal H^{\odot 0}=\mathbb C$ and
the Fock symmetric space is defined as the direct sum $\bigoplus_{n=0}^{\infty}\mathcal H^{\odot n}$. Here the sign $\odot$ of the Wick product (defined below) denotes  an isomorphism to the symmetric tensor algebra multiplication. Gian Carlo Wick (1909-1992) introduced originally the product $:\cdot:\equiv \odot$ in  \cite{Wick}  in 1950, in order to provide useful information of infinite quantities in Quantum Field Theory. In physics, Wick product  is related to the normal ordering of operators over a representation space, namely, in terms of annihilation and creation operators all the creation
operators appear to the left of all annihilation operators. The Wick product $\odot$ of random variables $x_j$ is a random variable, commutative, and is defined formally as $:\,\,:=1$, $\frac{\partial}{\partial x_j}(x_1\odot \dots \odot x_n)=(x_1\odot \dots\odot x_{j-1}\odot x_{j+1}\odot \dots \odot x_n)$, $\mathsf{E}(x_1\odot \dots \odot x_n)=0$ for any $k\geq 1$. For example, $:x:=\odot x=x-\mathsf{E}(x)$. However, we have to understand that $\Phi(z)$ and its derivatives are not random variables in the usual sense and must be think of as distributional random variables. Becides, $\odot \Phi=\Phi$. The mean of the field $\mathsf{E}\Phi$ is harmonic.

The {\it Fock space correlation functionals} are defined as  span of basis correlation functionals 1 and $X(z_1)\odot \dots \odot X(z_n)$, $z_1,\dots,z_n\in D$, $X_k=\partial^{\alpha}\bar{\partial}^{\beta}\Phi$ as well as infinite combinations (exponentials) which will play a special role in the definition of the vertex operator.
We have  $\partial\bar{\partial} \Phi\approx 0$, i.e., $\mathsf{E}((\partial\bar{\partial} \Phi(z) Y(\zeta))=0$, for any functional $Y(\zeta)$, $\zeta\neq z$. In view of this notation $\Delta \Phi\approx 0$.

The {\it basis Fock space fields} are formal Wick products  of the derivatives of the Gaussian free field $1$, $\Phi$, $\Phi\odot \Phi$, $\partial \Phi\odot \Phi$, etc.
Again, since GFF and its derivatives are understood in distributional sense, the above Wick product is formal. A {\it Fock space field} $F_{D}$ is a linear combination of basis Fock space fields over the ring of smooth functions in $D$. If $F_1, \dots, F_n$ are Fock space fields and $z_1,\dots, z_n$ are distinct points of $D$, then $F_1(z_1)\dots F_n(z_n)$ is a correlation functional and $\mathsf{E}(F_1(z_1)\dots F_n(z_n))$ is a {\it correlation function} of simply correlator. The product here is thought of as  a tensor product defined by the Wick formula over Feynman diagrams.

Being formally defined all these objects can be recovered through approximation by well-defined objects (scaling limit of lattice GFF) and expectations and correlators can be calculated.

We continue specifying the Markov property of domains and fields. For a shrinking deterministic subordination $D(t)\subset D(s)$ for $t<s$, the Markov property of \linebreak $\{D(t)\}_{t\geq 0}$ means that domains $D(t)\big| {D(s)}$ and $D_{t-s}$ coincide in law, which simply means in the L\"owner case that if $\varphi_t\colon \mathbb H_t\to \mathbb H$ preserves  $\infty$, and such that the tip $\gamma(t)\to 0$, then $\varphi_s\circ g_t$ and $g_{t-s}$ satisfy the same equation fixing $s$. In the random case we have that the domains $D(t)\big| {D(s)}$ and $D_{t-s}$ coincide in law in the sense of distribution, or Law($D(s)\big|{D(t)}$) = Law($D_{s-t}$). For more thorough treatment
of the Markov property of domains, see \cite{La05}.

In order to
formalize the Markov property for fields and domains, we coming at  coupling of $\{D(t)\}_{t\geq 0}$ SLE and GFF to be defined on the same probability space $(\Omega,\mathcal G, (\mathcal G_t), P)$, see \cite{Dub, SchrammSheffield}.

Let us start with a toy example of a classical real-valued field $F_D$ defined on a simply connected domain $D$. Similarly to the Markov property of $g_t$, we say that
$F$ possesses the Markov property if for a decreasing $t$-dependent family of domains $D_t$, $F_{D_{t}}\big|{D_s}(z)=F_{D_{s-t}}$, where
$0<s<t$, $D_0=D$. If in \eqref{SL} we put $k=0$, then the family of domains $\mathbb H_t$ is the family  $D_t=\mathbb H\setminus \gamma(t)$, $\gamma(t)=(0,2i\sqrt{t}]$. As a trivial example, construct the field $F_{D_t}(z)=\arg \varphi_t'(z)$, where $\varphi_t=z^2+4t^2$. The function $\varphi_t$ maps $D_t$ onto $\mathbb C\setminus [0,\infty)$ and $F_{D_t}$  satisfies the Markov property.

A collection $\mathcal F=\{F_k\}$ of random holomorphic Fock space fields defined on $D_t$ is said to satisfy the {Markov property}, if the process
\[
M_t(z_1,\dots,z_n):=\mathsf{E}[F_{t1}(z_1)\dots F_{tn}(z_n)\big|D_t],
\]
  is a local martingale. Here $F_t(z_j)$ is push-forward of $F(z_j)$ with respect to a conformal map $D\to D_t$. That is, let Aut$(D,p,q)$ is the group of conformal automoprphisms
  of $D$ fixing $p,q\in \partial D$ ($0,\infty$ for $\mathbb H$ or $\gamma(t),\infty$ for $\mathbb H_t$). The first requirement is that $F$ is invariant under Aut$(D,p,q)$. The second is
that if $\varphi_t\colon D_t\to D$ is a conformal map fixing $q$, then primary fields $F$ are $(\lambda,\bar{\lambda})$-forms, i.e.,  $F_t(z)=(\varphi_t')^{\lambda}(\overline{\varphi_t'})^{\bar{\lambda}}F(\varphi_t(z))$. A Schwarzian form $F$ satisfies $F_t(z)=(\varphi_t')^2F(\varphi_t(z))+\mu S_{\varphi_t}(z)$, etc.

The space of  conformal Fock space fields is a graded commutative differential algebra over the ring of non-random conformal
fields. Conformal invariance in $D$ is assumed for non-random conformal fields, and for random conformal fields this means that all correlations are conformally invariant as non-random conformal fields.  Observe that all definitions can be given for Riemann surfaces as in~\cite{MK}.
The non-random field $M(z_1,\dots,z_n):=M_0(z_1,\dots,z_n)$ is called an {\it SLE martingale-observable}.

Now we want to use some Fock space fields as {\it states} and others as {\it observables} acting on states by {\it operator product expansion} (OPE).
OPE is defined being based on the expansion for GFF
\[
\Phi(\zeta)\Phi(z)=\log\frac{1}{|\zeta-z|^2}+2\log R_D(z)+\Phi(z)\odot\Phi(z)+o(1),\quad  \mbox{as $\zeta\to z$},
\]
where $R_D(z)$ is the conformal radius of $D$ with respect to $z$. The product $\Phi(\zeta)\Phi(z)$ is defined as a tensor product and is given by the Wick formula, see e.g.,~\cite[\S~4.3]{Peskin}.  A Fock space field $F(z)$ is called holomorphic if $\mathsf{E} (F(z)Y(\zeta))$ is a holomorphic function of $z$ for any
field $Y(\zeta)$, $z\neq \zeta$. The operator product expansion of a holomorphic field is given as a Laurent series
\[
F(\zeta)Y(z)=\sum_{n\in\mathbb Z}C_n(z)(\zeta-z)^n,\quad \text{as $\zeta\to z$}.
\]
Obviously, OPE is neither commutative nor associative unless one of the fields is non-random. The coefficients $C_n$ are also Fock space fields. We denote  $F*Y=C_0$ and $F*_{(n)}Y=C_{n}$ for holomorphic Fock space fields. This product satisfies the Leibniz rule.

The complex Virasoro algebra was introduced in Section~\ref{WV}. Let us define a special field, Virasoro field of GFF, $T(z)$ by the equality $T=-\frac{1}{2}J*J$. In particular,
\[
J(\zeta)J(z)=-\frac{1}{(\zeta-z)^2}-2T(z)+o(1),\quad\text{as $\zeta\to z$ in $\mathbb H$},
\]
where $J(z)=\partial \Phi(z)$.

Let us define a non-random modification $\Phi_{(b)}$ on a simply connected domain $D$ of the Gaussian free field $\Phi_{(0)}=\Phi$ on  $D$ parametrized by a real constant $b$, $\Phi_{(b)}=\Phi_{(0)}-2b\arg\varphi'$, where $\varphi\colon D\to D$ fixing a point $\in \partial D$. Then $J_{(b)}=J_{(0)}+ib\frac{\varphi''}{\varphi'}$ is a pre-Schwarzian form and $T_{(b)}=T_{(0)}-b^2S_{\varphi}$  is a Schwarzian form. The modified families of Fock space fields $\mathcal F_{(b)}$ have the central charge $c=1-12b^2$. In order to simplify notations we omit subscript writing simply $T:=T_{(b)}$, $J:=J_{(b)}$, etc.

The field $T$ becomes a stress-energy tensor in Quantum Field Theory and satisfies the equality
$$
T(\zeta)T(z)=\frac{c/2}{(\zeta-z)^4}+\frac{2T(z)}{(\zeta-z)^2}+\frac{\partial T(z)}{\zeta-z}+o(1),
$$
It is not a primary field and changes under conformal change of variables as $T(\zeta')=T(\zeta)-\frac{c}{12}S_{\zeta'}(\zeta)$,
where $S_{\zeta'}(\zeta)$, as usual, is the Schwarzian derivative of $\zeta'(\zeta)$.

The Virasoro field has the expansion
\[
T(\zeta)X(z)=\sum\limits_{n\in \mathbb Z}\frac{L_nX(z)}{(\zeta-z)^{n+2}},
\]
for any Fock space field $X(z)$. This way the Virasoro generators act on a field $X$: $L_nX=T*_{(-n-2)}X$ and can be viewed as operators over the space of Fock space fields.
The result is again a Fock space field and one can define iteratively the field $L_{n_k}L_{n_{k-1}}\dots L_{n_1}X\equiv L_{n_k}L_{n_{k-1}}\dots L_{n_1}|X\rangle$, understanding $L_j:=L_j*$ as operators acting on a `vector' $X$, where we adapt Dirac's notations `bra' and `ket' for vectors, operators and correlators. So we obtained a representation of the   Virasoro algebra  on Fock space fields
\[
[L_n,L_m]=L_n*L_m-L_m*L_n =(n-m)L_{n+m}+\frac{c}{12}n(n^2-1)\delta_{n,-m}, \quad n,m\in\mathbb Z.
\]

Let us recall that a field $X$ is called {\it primary} of conformal weight $(\lambda,\bar{\lambda})$  if $X(z)=X(\phi(z))(\phi')^{\lambda}(\bar\phi')^{\bar{\lambda}}$.
The {\it Virasoro primary field} $|V\rangle$ of conformal dimension $\lambda$, or ($(\lambda,\bar{\lambda}$), is defined to satisfy $L_n|V\rangle=0$, $n>0$, $L_0|V\rangle=\lambda |V\rangle$ and $L_{-1}|V\rangle=\partial |V\rangle$. Analogously, the conjugate part of this definition is valid for the dimension $\bar{\lambda}$. Further on we omit the conjugate part
because of its complete symmetry with the non-conjugate one.
The {\it Virasoro-Verma module} $\mathcal V(\lambda,c)$ is constructed spanning and completeing the basis vectors $L_{n_k}L_{n_{k-1}}\dots L_{n_1}|V\rangle$, where $n_k<n_{k-1}<\dots<n_1<0$, and $|V\rangle$ is taken to be the highest weight vector. We have a decomposition
\[
\mathcal V(\lambda,c)=\bigoplus\limits_{m=0}^{\infty}\mathcal V_m(\lambda,c),
\]
where the level $m$ space $\mathcal V_m(\lambda,c)$ is the eigenspace of $L_0$ with eigenvalue $\lambda+m$.
A singular vector $|X\rangle$ by definition lies in some $\mathcal V_m(\lambda,c)$ and $L_{n}|X\rangle=0$ for any $n>0$.
The Virasoro-Verma module is generically irreducible, having only singular-generated submodules. The Virasoro primary field becomes the highest weight vector in the representation of the Virasoro algebra $\mathfrak{vir}_{\mathbb C}$.

\begin{proposition}
Let $|V\rangle$ be a Virasoro primary field in $\mathcal F$ of conformal dimension $\lambda$ with central charge $c$. Then the field
\[
[L_{-m}+\eta_1L_{-1}[L_{-m+1}+\eta_2L_{-1}[L_{-m+2}+\dots+\eta_{m-2}L_{-1}[L_{-2}+\eta_{m-1}L_{-1}^2]\dots]|V\rangle
\]
is a primary singular field of dimension $(\lambda+m)$, if and only if $\eta_1,\dots,\eta_{m-1}$, $c$, and $\lambda$ satisfy a system of $m$ linear equations.
\end{proposition}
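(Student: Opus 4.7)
My plan is to prove the equivalence by reducing the primary singular condition to finitely many explicit relations obtained via Virasoro commutators. First, I write
\[
X := [L_{-m}+\eta_1L_{-1}[L_{-m+1}+\eta_2L_{-1}[\cdots [L_{-2}+\eta_{m-1}L_{-1}^2]\cdots]]]|V\rangle
= \sum_{k=0}^{m-1} \beta_k\, L_{-1}^{k} L_{-(m-k)} |V\rangle,
\]
where $\beta_0 = 1$ and $\beta_k = \eta_1 \eta_2 \cdots \eta_k$ for $1 \leq k \leq m-1$ (in the top summand one uses $L_{-(m-k)} = L_{-1}$ for $k=m-1$). Since $X$ lies in the level-$m$ subspace $\mathcal V_m(\lambda,c)$, the relation $L_0 X = (\lambda+m) X$ is automatic, which accounts for the claimed dimension $\lambda+m$. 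It remains to translate the conditions $L_n X = 0$ for $n \ge 1$ into linear equations in $\eta_1,\dots,\eta_{m-1}$ with coefficients depending on $\lambda$ and $c$.

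Next I reduce the infinite family $\{L_n\}_{n\ge 1}$ to a finite check. From $[L_1, L_n] = (n-1) L_{n+1}$, one has $L_{n+1} = (n-1)^{-1}[L_1, L_n]$ for every $n \ge 2$, so the Lie subalgebra generated by $L_1$ and $L_2$ contains all $L_n$, $n\geq 1$. Consequently, once $L_1 X = 0$ and $L_2 X = 0$ are imposed, every $L_n X$ with $n\geq 3$ vanishes automatically. Thus the entire problem collapses to two vector equations, at levels $m-1$ and $m-2$ respectively.

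The bulk of the work is then a direct computation of $L_1 X$ and $L_2 X$. Commuting $L_1$ and $L_2$ past each factor $L_{-1}^{k} L_{-(m-k)}$ using
\[
[L_j, L_{-i}] = (j+i)L_{j-i} + \frac{c}{12}j(j^2-1)\delta_{j,i},
\]
together with the primary conditions $L_j|V\rangle = 0$ for $j>0$ and $L_0 |V\rangle = \lambda |V\rangle$, reduces the outcome to a linear combination of the basis vectors $L_{-1}^{j} L_{-(m-1-j)}|V\rangle$ at level $m-1$ and of $L_{-1}^{j} L_{-(m-2-j)}|V\rangle$ at level $m-2$. Because of the telescoping form of $X$, the $m-1$ coefficients obtained from $L_1 X = 0$ are affine in the $\beta_k$ (hence linear in the $\eta_i$ given the product structure) with coefficients depending affinely on $\lambda$, while the collapse $L_2 X = 0$ contributes exactly one further independent equation: when $L_2$ meets the rightmost $L_{-2}$, the central-charge term $\frac{c}{2}$ enters and generates the equation involving $c$. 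Adding up gives precisely $m$ linear equations in $\eta_1, \dots, \eta_{m-1}$ (with coefficients polynomial in $\lambda$ and $c$).

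The main obstacle will be bookkeeping: one must verify that after full commutation the output of $L_1 X$ does land in the $(m-1)$-dimensional subspace of $\mathcal V_{m-1}(\lambda,c)$ spanned by $L_{-1}^{j} L_{-(m-1-j)}|V\rangle$, and analogously for $L_2 X$ at level $m-2$; and that the single central-charge equation coming from $[L_2,L_{-2}]$ is independent of the $m-1$ equations at level $m-1$. Once the combinatorial accounting is done, the equivalence follows: $X$ is primary singular of dimension $\lambda+m$ if and only if the resulting linear system of $m$ equations in the parameters $\eta_i$, together with $\lambda$ and $c$, is satisfied.
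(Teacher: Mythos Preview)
The paper does not actually prove this proposition: it states it and then only illustrates it with the cases $m=2$ and $m=3$, writing out the resulting systems explicitly. So there is no ``paper's own proof'' to compare against, and your strategy---reduce the singular-vector condition to $L_1X=0$ and $L_2X=0$ via the fact that $L_1,L_2$ generate the positive Virasoro modes, and then read off the coefficients---is exactly the standard one and is what the paper is implicitly invoking.

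That said, there is a real gap at the counting step. Your expansion $X=\sum_{k=0}^{m-1}\beta_k\,L_{-1}^kL_{-(m-k)}|V\rangle$ is correct, and you are right that $L_1X$ lands in the span of $\{L_{-1}^{j}L_{-(m-1-j)}|V\rangle\}_{0\le j\le m-2}$, giving $m-1$ coefficient equations. But $L_2X$ lands in the span of $\{L_{-1}^{j}L_{-(m-2-j)}|V\rangle\}_{0\le j\le m-3}$, which for $m\ge4$ is $(m-2)$-dimensional, not one-dimensional. Your assertion that ``the collapse $L_2X=0$ contributes exactly one further independent equation'' is precisely the nontrivial part, and you have not argued it. A direct check at $m=4$ already shows the issue: $L_1X=0$ yields three equations and $L_2X=0$ yields two, for a total of $2m-3=5$ coefficient conditions in $(\beta_1,\beta_2,\beta_3,\lambda,c)$; the extra $L_{-2}$-coefficient equation coming from $L_2X$ is \emph{not} a linear combination of the three $L_1X$-equations for generic $\lambda$. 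So either a further redundancy must be exhibited (using the special telescoping structure of $X$ together with the Virasoro relations), or the informal count ``$m$'' in the proposition has to be interpreted more carefully. Until that dependency is established, your argument proves only that the singular condition is equivalent to an explicit system of at most $2m-3$ affine equations in the $\beta_k$, not $m$.
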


For example, if $m=2$, then the singular field is
\begin{equation}\label{singular}
[L_{-2}+\eta_{1}L_{-1}^2]|V\rangle,
\end{equation}
and
\[
\begin{cases}
3+2\eta_1+4\eta_1\lambda=0,\\
c+8\lambda+12\eta_1\lambda=0,
\end{cases}
\]
 if $m=3$, then the singular field is $[L_{-3}+\eta_2L_{-1}L_{-2}+\eta_{1}\eta_2L_{-1}^3]|V\rangle$ and
 \[
\begin{cases}
2+(\lambda+2)\eta_2=0,\\
1+4(\lambda+1)\eta_1=0,\\
5+(4\lambda+\frac{c}{2}+3)\eta_2+6(3\lambda+1)\eta_1\eta_2=0.
\end{cases}
\]

Now we consider the `holomorphic part' of GFF $\phi(z)=\int^zJ(z)$, where $J(z)=\partial \Phi$. Of course the definition requires more work because the field constructed this way is ramified at all points. However, in correlations with a Fock space functional, the monodromy group is well-defined and finitely generated. A vertex operator is a  field $\mathcal V_{\star}^{\alpha}(z)=e^{* i\alpha \phi(z)}$. It is a primary field of  conformal dimension  $\lambda=\frac{\alpha^2}{2}+\alpha\sqrt{\frac{1-c}{12}}$. Considering an infinitesimal boundary distortion $w_{\varepsilon}(z)=z+\frac{\varepsilon}{z-\xi}+o(\varepsilon)$ is equivalent  to the insertion operator $|X(z)\rangle\to T(\xi)|X(z)\rangle$. The Ward identity implies the BCFT Cardy equation
\begin{equation}\label{Cardy}
(\mathcal L_{-1}^2-2\alpha^2\mathcal L_{-2})\mathsf{E} \left[\mathcal V_{\star}^{\alpha}(\xi)|X(z)\rangle\right]=0,\quad
\text{for \ }\alpha\left(\alpha+\sqrt{\frac{1-c}{12}}\right)=1.
\end{equation}
 Here the representation of the Virasoro algebra is $$\mathcal L_n=-(z-\xi)^{n+1}\partial-\lambda(n+1)(z-\xi)^n.$$

Merging the form of the infinitesimal generator $A$ in \eqref{L1}, Virasoro primary singular field \eqref{singular} and the Cardy equation \eqref{Cardy} we arrive at
 the SLE numerology $\eta=-2\alpha^2$, $\alpha=\frac{\sqrt{k}}{2}$,
\[
c=\frac{(6-k)(3k-8)}{2k},\quad \lambda=\frac{6-k}{2k},
\]
with the unique free parameter $k$.

Given a Fock space field $X(z)$, its push-forward $X_t(z)$ generically does not possess the Markov property. However, summarizing above, we can formulate the following statement.
\begin{proposition} For SLE evolution we have
\begin{itemize}
\item   { $\mathcal V_{\star}^{\alpha}$} is a Virasoro primary field;
\item  { $V=\mathcal V_{\star,t}^{\alpha}(\xi_t)X_t(z)$} possesses the Markov property, $\xi_t=\sqrt{k}B_t$;
\item { $M(z)=\mathsf{E} \left[\mathcal V_{\star}^{\alpha}(0)X(z)\right]$} is a one-point martingale-observable;
\item  The process { $M_t(z)=\mathsf{E} \left[\mathcal V_{\star,t}^{\alpha}(\xi_t)X_t(z)\big| D_t\right]$} is a one-point martingale;
\item {$(\frac{k}{2}L_{-1}^2-2L_{-2})|\mathcal V_{\star}^{\alpha}\rangle$} is  Virasoro primary singular field of level 2;
\item   { $(\frac{k}{2}\mathcal L_{-1}^2-2\mathcal L_{-2})M(z)=0$} is the Cardy equation for the SLE martingale observables.
\end{itemize}
\end{proposition}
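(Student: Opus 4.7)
The plan is to verify the six bullet points as a chain, exploiting the numerology $\alpha=\sqrt{k}/2$, $c=(6-k)(3k-8)/(2k)$, $\lambda=(6-k)/(2k)$ that was derived just before the statement. First I would check that $\mathcal{V}^{\alpha}_{\star}$ is a Virasoro primary of the claimed weight. This is a general fact about vertex operators in a modified GFF: starting from the OPE $\Phi(\zeta)\Phi(z)=\log|\zeta-z|^{-2}+\ldots$ and using the Wick exponential definition of $e^{*i\alpha\phi}$, one computes $T(\zeta)\mathcal{V}^{\alpha}_{\star}(z)$ and identifies the coefficient of $(\zeta-z)^{-2}$ with $\lambda\,\mathcal{V}^{\alpha}_{\star}(z)$ and the $(\zeta-z)^{-1}$ coefficient with $\partial\mathcal{V}^{\alpha}_{\star}(z)$. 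The shift by $2b\arg\varphi'$ in $\Phi_{(b)}$ is precisely what produces the $\alpha\sqrt{(1-c)/12}$ term in $\lambda$.

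Next I would establish the Markov property for $V=\mathcal{V}^{\alpha}_{\star,t}(\xi_t)X_t(z)$. The push-forward $X_t$ is defined via the conformal map $g_t:\mathbb{H}_t\to\mathbb{H}$ using the $(\lambda,\bar\lambda)$-form rule, so that by construction $V$ is the conformal transport of $\mathcal{V}^{\alpha}_{\star}(\xi_t)X(g_t(z))$. Combining the domain Markov property of SLE (Law$(D(s)|D(t))=$Law$(D_{s-t})$) with the coupling of the modified GFF to the SLE flow in the sense of Schramm--Sheffield and Dub\'edat yields that the conditional law of $V_{s}$ given $\mathcal{G}_t$ agrees with that of the transported process from time $t$. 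Once the Markov property is in place, the one-point martingale claim for $M_t(z)=\mathsf{E}[\mathcal{V}^{\alpha}_{\star,t}(\xi_t)X_t(z)\,|\,D_t]$ is a direct consequence of the tower property of conditional expectation.

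For the singular-vector claim I would plug $m=2$ into the system displayed immediately before the proposition, namely $3+2\eta_1+4\eta_1\lambda=0$ and $c+8\lambda+12\eta_1\lambda=0$. Substituting $\lambda=(6-k)/(2k)$ and $c=(6-k)(3k-8)/(2k)$ one verifies both equations are solved by $\eta_1=-4/k$, so $(L_{-2}-\tfrac{4}{k}L_{-1}^2)|\mathcal{V}^{\alpha}_{\star}\rangle$ is singular; rescaling gives $(\tfrac{k}{2}L_{-1}^2-2L_{-2})|\mathcal{V}^{\alpha}_{\star}\rangle$ as stated. Finally, for the Cardy equation I would compute the It\^o differential of $M_t(z)$ using the It\^o formula \eqref{L1} together with the Ward identity: the infinitesimal conformal distortion $w_{\varepsilon}(z)=z+\varepsilon/(z-\xi)$ translates the Virasoro action on states into differential operators $\mathcal{L}_{n}$ on correlators, so that the drift term becomes $(\tfrac{k}{2}\mathcal{L}_{-1}^2-2\mathcal{L}_{-2})M(z)\,dt$. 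The martingale property forces this drift to vanish for every $X$, which is \eqref{Cardy}.

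The main obstacle will be the rigorous coupling step underlying bullets (2)--(4): showing that the modified GFF $\Phi_{(b)}$ and the SLE$_k$ curve can be realized on a common probability space so that $\mathcal{V}^{\alpha}_{\star,t}(\xi_t)X_t(z)$ truly transports as a Fock-space field along $\mathbb{H}_t$, and that conditional expectations with respect to the growing $\sigma$-algebra of SLE hulls coincide with the non-random conformal push-forwards. Everything else is algebraic bookkeeping with OPE coefficients and the Virasoro commutation relations; the translation from the Virasoro action on states ($L_n$ acting on $|\mathcal{V}^{\alpha}_{\star}\rangle$) to the differential action on one-point functions ($\mathcal{L}_n$ on $M(z)$) is the standard Ward identity argument, which converts the level-2 singular vector into the second-order PDE satisfied by $M$.
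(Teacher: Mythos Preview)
Your outline is essentially the paper's own development: the paper does not give a separate proof of this proposition but presents it as a summary of the preceding discussion, which contains exactly the ingredients you list --- the OPE computation showing $\mathcal{V}^{\alpha}_{\star}$ is primary of weight $\lambda=\alpha^2/2+\alpha\sqrt{(1-c)/12}$, the coupling of modified GFF with the SLE flow (citing Schramm--Sheffield and Dub\'edat), the It\^o formula with generator $A=\tfrac{k}{2}\mathcal{L}_{-1}^2-2\mathcal{L}_{-2}$, the level-$2$ singular vector system, and the Ward/Cardy identity. Your identification of the rigorous coupling as the only non-algebraic input is exactly right.

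One computational slip: from $3+2\eta_1+4\eta_1\lambda=0$ with $\lambda=(6-k)/(2k)$ one gets $1+2\lambda=6/k$ and hence $\eta_1=-3/(2(1+2\lambda))=-k/4$, not $-4/k$. With $\eta_1=-k/4$ the singular vector is $L_{-2}-\tfrac{k}{4}L_{-1}^2$, and multiplying by $-2$ gives $\tfrac{k}{2}L_{-1}^2-2L_{-2}$ as claimed; you can check that $\eta_1=-k/4$ also kills the second equation $c+8\lambda+12\eta_1\lambda=0$ once $c=(6-k)(3k-8)/(2k)$ is inserted. With this correction the chain closes.
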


As a simple example, consider the Fock space field  $V=\mathcal V_{\star}^{\alpha}(0)T(z)$. Then
$M(z)=\mathsf{E} \left[\mathcal V_{\star}^{\alpha}(0)T(z)\right]$ is a one-point martingale-observable and
 $M(z)=1/z^2$.
If $k=8/3$,  then  $M_t(z)=(h'_t(z)/h_t(z))^2$  is a local martingale.

 More general construction including multi-point observables requires more technical work related to a vertex field $V$ and boundary condition changing operators, see \cite[\S 8.4]{MK}. Examples of martingale observables were found, e.g., by Friedrich and Werner \cite{FriedrichWerner} and Schramm and Sheffield \cite{SchrammSheffield, SchrammSheffield10}. A radial version of SLE and relations to conformal field theory one can find  in \cite{MK2}.

Another construction of the stress-energy tensor of CFT comes as a local observable of the CLE (Conformal Loop Ensemble) (see, e.g., \cite{Sheffield}) loops for any central charge, see \cite{Doyon1, Doyon2}. More general construction is performed on a groupoid of conformal maps of a simply connected domain, a natural generalization of the finite-dimensional conformal group. The underlying manifold structure is Fr\'echet. Similarly to moduli (Teichm\"uller) spaces, the elements of the cotangent bundle are analogues of quadratic differentials, see  \cite{Doyon3}. It is shown there, that the stress-energy tensor of CFT is exactly such a quadratic differential.

\subsection{Generalized L\"owner-Kufarev stochastic evolution}

Another attempt  to construct random interfaces different from SLE has been launched by conformal welding in \cite{Astala}.

We considered a setup \cite{IV} in which the sample paths are represented by the trajectories of a point (e.g., the origin) in the unit disk $\mathbb D$ evolving randomly under the generalized L\"owner equation. The driving mechanism differs from SLE. In the SLE case the Denjoy-Wolff attracting point ($\infty$ in the chordal case or a boundary point of the unit disk in the radial case) is fixed. In our case, the attracting point is the driving mechanism and the Denjoy-Wolff point is different from it.  Relation with this model to CFT is the subject of a forthcoming paper.
Let us consider the generalized L\"owner evolution driven by a Brownian particle on the unit circle. In other words, we study the following initial value problem.
\begin{equation}
\label{eq:randomloewner}
\begin{cases}
 \frac{d}{dt} \phi_t (z,\omega) = \frac{(\tau(t,\omega)  - \phi_t(z,\omega))^2}{\tau(t,\omega)} p(\phi_t (z,\omega),t,\omega), \\
\phi_0(z,\omega) = z,
\end{cases}
 t \geq 0, \, z \in\mathbb{D}, \, \omega \in \Omega.
\end{equation}

The function $p(z,t,\omega)$ is a Herglotz function for each fixed $\omega\in \Omega.$ In order for $\phi_t(z,\omega)$ to be an It\^o process adapted to the Brownian filtration, we require that the function $p(z,t,\omega)$ is adapted to the Brownian filtration for each $z\in\mathbb{D}.$
 Even though the driving mechanism in our case differs from that of SLE, the generated families of conformal maps still possess the important  time-homogeneous Markov property.

For each fixed $\omega\in \Omega,$ equation \eqref{eq:randomloewner} similarly to SLE, may be considered as a deterministic generalized L\"owner equation with the Berkson-Porta data $(\tau(\cdot,\omega),p(\cdot, \cdot, \omega)).$ In particular, the solution $\phi_t(z,\omega)$ exists, is unique for each $t>0$ and $\omega\in\Omega,$ and moreover, is a family of holomorphic self-maps of the unit disk.

The equation in (\ref{eq:randomloewner}) is an example of a so-called \emph{random differential equation} (see, for instance, \cite{soong73}). Since for each fixed $\omega \in\Omega$ it may be regarded as an ordinary differential equation, the sample paths $t \mapsto \phi_t(z,\omega)$ have continuous first derivatives for almost all $\omega$. See an example of a sample path of $\phi_t(0,\omega)$ for $p(z,t) = \frac{\tau(t) + z }{ \tau(t)-z }$, $\tau(t) = e^{ikB_t},$ $k = 5$, $t \in[0,30]$ in the figure to the left.

In order to give an explicitly solvable example let $p(z,t,\omega) = \frac{\tau(t,\omega)}{\tau(t,\omega) - z}= \frac{e^{ikB_t(\omega)}}{e^{ikB_t(\omega)} - z}$. It makes  equation  \eqref{eq:randomloewner} linear:
\[
\frac{d}{dt} \phi_t (z,\omega) = e^{ikB_t(\omega)} - \phi_t(z,\omega),
\]
and a well-known formula from the theory of ordinary differential equation yields
\[
 \phi_t(z,\omega) = e^{-t} \left(z + \int_0^t e^{s} e^{ikB_s(\omega)}ds\right).
\]

Taking into account the fact that $\mathsf{E} e^{ikB_t(\omega)} = e^{-\frac12 t k^2},$ we can also write the expression for the mean function  $ \mathsf{E}\phi_t(z,\omega)$

\begin{equation}
\label{eq:meanfunctionphi}
\mathsf{E}\phi_t(z,\omega) = \begin{cases}
                               e^{-t}(z+t), \quad \quad \quad \quad k^2 = 2, \\
			      e^{-t} z + \frac{e^{-tk^2/2}-e^{-t}}{1-k^2/2},\quad \textrm{otherwise.}
                              \end{cases}
\end{equation}

Thus, in this example all maps $\phi_t$ and $\mathsf{E} \phi_t$ are affine transformations (compositions of a scaling and a translation).

In general, solving the random differential equation (\ref{eq:randomloewner}) is much more complicated than solving its deterministic counterpart, mostly because of the fact that for almost all $\omega$ the function $t \mapsto \tau(t,\omega)$ is nowhere differentiable.

If we assume  that the Herglotz function has the form $p(z,t,\omega) = \tilde{p}(z/\tau(t,\omega)),$ then it turns out that the process $\phi_t(z,\omega)$ has an important invariance property, that were crucial in development of SLE.

Let $s>0$ and introduce the notation
\[
 \tilde{\phi}_t(z) = \frac{\phi_{s+t}(z)}{\tau(s)}.
\]
Then $\tilde{\phi}_t(z)$ is the solution to the initial-value problem
\[
 \begin{cases}
  \frac{d}{dt} \tilde{\phi}_t (z,\omega) = \frac{\left(\tilde{\tau}(t,\omega)  - \tilde{\phi}_t(z,\omega)\right)^2}{\tilde{\tau}(t,\omega)} \tilde{p}\left(\tilde{\phi}_t (z,\omega)/\tilde{\tau}(t)\right), \\
\tilde{\phi}_0(z,\omega) =\phi_s(z,\omega)/\tilde{\tau}(s),
 \end{cases}
\]
where $\tilde{\tau}(t) = \tau(s+t)/ \tau(s) = e^{ik(B_{s+t}-B_s)}$ is again a Brownian motion on $\mathbb{T}$ (because $\tilde{B}_t = B_{s+t}-B_s$ is a standard Brownian motion). In other words, the conditional distribution of $\tilde{\phi}_{t}$ given $\phi_{r},$ $r\in[0,s] $ is the same as the distribution of $\phi_t.$

By the complex It\^o formula, the process $\frac{1}{\tau(t,\omega)} = e^{-ik B_t}$ satisfies the equation
\[
 d e^{-ikB_t} = -ik e^{-i k B_t} dB_t - \frac{k^2}{2} e^{- ikB_t}dt.
\]
Let us denote $\frac{\phi_t(z,\omega)}{\tau(t,\omega)}$ by $\Psi_t(z,\omega).$ Applying the integration by parts formula to $\Psi_t,$ we arrive at the following initial value problem for the It\^o stochastic differential equation
\begin{equation}
\begin{cases}
\label{eq:stochasticloewner}
  d\Psi_t =  - ik \Psi_t dB_t  + \left( - \frac{k^2}{2}\Psi_t + (\Psi_t -1)^2 p(\Psi_t e^{ikB_t(\omega)}, t, \omega)\right) dt, \\
\Psi_0(z) = z.
\end{cases}
\end{equation}
A numerical solution to this equation for a specific choice for $p(z,t) \equiv i,$ $k=1$, and $t\in[0,2]$, is shown in the figure to the right.

Analyzing the process  $\frac{\phi_t(z,\omega)}{\tau(t,\omega)}$ instead of the original process $\phi_t(z,\omega)$ is in many ways similar to one of the approaches used in SLE theory.

The image domains $\Psi_t(\mathbb{D},\omega)$ differ from $\phi_t(\mathbb{D},\omega)$ only by rotation. Due to the fact that $|\Psi_t(z,\omega)|=|\phi_t(z,\omega)|$, if we compare the processes $\phi_t(0,\omega)$ and $\Psi_t(0,\omega),$ we note that their first hit times of the circle $\mathbb{T}_r$ with radius $r<1$ coincide, i.~e.,
\[
\inf \{t \geq 0, |\phi_t(0,\omega)| = r \} = \inf \{t \geq 0, | \Psi_t(0,\omega)| = r \}.                                                                                                                                                                                                                                                                                                                                                                                                                                                                                                                                                                                                                                                                                                                                                                                                                                                                                                                                                                                                                                                                                                                                 \]
In other words, the answers to probabilistic questions about the expected time of hitting the circle $\mathbb{T}_r,$ the probability of exit from the disk $\mathbb{D}_r = \{z : |z| < r\},$ etc. are the same for $\phi_t(0,\omega)$ and $\Psi_t(0,\omega).$

If the Herglotz function has the form $p(z,t,\omega) = \tilde{p}(z/\tau(t,\omega))$, then the equation \eqref{eq:stochasticloewner} becomes
\begin{equation}
\begin{cases}
\label{eq:loewnerdiffusion}
  d\Psi_t =  - ik \Psi_t dB_t  + \left( - \frac{k^2}{2}\Psi_t + (\Psi_t -1)^2 \tilde{p}(\Psi_t)\right) dt, \\
\Psi_0(z) = z,
\end{cases}
\end{equation}
and may be regarded as an equation of a 2-dimensional time-homogeneous real diffusion written in complex form. This implies, in particular, that $\Psi_t$ is a time-homogeneous strong Markov process. By construction, $\Psi_t(z)$ always stays in the unit disk.

Analogously to SLE we can consider random conformal Fock space fields defined on $\mathbb D$ changing correspondingly the definition using the Green function for $\mathbb D$
instead of $\mathbb H$. Coupling of equation  (\ref{eq:randomloewner}) and the Gaussian free field in $\mathbb D$ we define the correlators $f_t(z_1,\dots, z_n)=f(z_1,\dots, z_n)\big|_{\mathbb D_t}$ as martingale-observables.

For a smooth function $f(z)$ defined in $\mathbb D$  we derive the It\^o differential in the complex form
\begin{equation*}
df(\Psi_t)=-ikdB_t (L_{-1}-\bar{L}_{-1})f(\Psi_t)+dt Af(\Psi_t),
\end{equation*}
where $A$ is the infinitesimal generator of $\Psi_t$,
\begin{multline*}
  A = \left(-\frac{k^2}{2}z + (z-1)^2 \tilde{p}(z)\right) \frac{\partial}{\partial z} -\frac{1}{2}k^2z^2 \frac{\partial^2}{\partial z^2} \\
+\left(-\frac{k^2}{2}\bar{z} + (\bar{z}-1)^2 \overline{\tilde{p}(z)}\right) \frac{\partial}{\partial \bar{z}} -\frac{1}{2}k^2\bar{z}^2 \frac{\partial^2}{\partial \bar{z}^2} + k^2 |z|^2 \frac{\partial^2}{\partial z \partial \bar{z}}.
\end{multline*}
In particular, if $f$ is holomorphic, then
\begin{equation}
\label{eq:generator}
 A = \left(-\frac{k^2}{2}z + (z-1)^2 \tilde{p}(z)\right) \frac{\partial}{\partial z} -\frac{1}{2}k^2z^2 \frac{\partial^2}{\partial z^2}.
\end{equation}
If $f(z)$ is a martingale-observable, then  $Af=0$.

In \cite{IV} we proved the existence of a unique stationary point of $\Psi_t$ in terms of  the stochastic vector field
\[
 \frac{d}{dt}\Psi_t(z,\omega) = G_0(\Psi_t(z,\omega)),
\]
where the Herglotz vector field $G_0(z,\omega)$ is given by
\[
G_0(z,\omega) = -ikz W_t(\omega)  - \frac{k^2}{2}z+  (z -1)^2 \tilde{p}(z).
\]
Here, $W_t(\omega)$ denotes a generalized stochastic process  known as  \emph{white noise}. Also $n$-th moments were calculated and the boundary diffusion on the unit circle was
considered, which corresponds, in particular, to North-South flow, see e.g., \cite{Carverhill85}.

\section{Related topics}

\subsection{Hele-Shaw flows, Laplacian growth}

One of the most influential works in fluid dynamics at the end of
the 19-th century was a series of papers, see e.g., \cite{Hele} written by {\it
Henry Selby Hele-Shaw} (1854--1941). There Hele-Shaw first
described his famous cell that became a subject of deep
investigation only more than 50 years later. A Hele-Shaw cell is a
device for investigating two-dimensional flow (Hele-Shaw flow or Laplacian growth) of a viscous fluid
in a narrow gap between two parallel plates.

This cell is the
simplest system in which multi-dimensional convection is present.
Probably the most important characteristic of flows in such a cell
is that when the Reynolds number based on gap width is
sufficiently small, the Navier-Stokes equations averaged over the
gap reduce to a linear relation for the velocity similar to Darcy's law and then to
a Laplace equation for the fluid pressure. Different driving mechanisms can
be considered, such as surface tension or external forces
(e.g., suction, injection). Through the similarity in the governing
equations, Hele-Shaw flows are particularly useful for
visualization of saturated flows in porous media, assuming they
are slow enough to be governed by Darcy's law. Nowadays, the
Hele-Shaw cell is used as a powerful tool in several fields of
natural sciences and engineering, in particular,  soft condensed matter physics,
materials science, crystal growth and, of course, fluid mechanics. See more on historical and scientific account in \cite{Vas09}.

The century-long development connecting the original Hele-Shaw experiments, the  conformal mapping formulation of the Hele-Shaw flow by {\it
Pelageya Yakovlevna Polubarinova-Kochina}
(1899--1999) and {\it Lev
Aleksan\-dro\-vich Galin} (1912--1981) \cite{Polub1}, \cite{Polub2}, \cite{Galin}, and
the modern treatment of the Hele-Shaw evolution  based on integrable systems and on the general theory of plane contour motion, was marked by
several important contributions by individuals and groups.

The main idea of Polubarinova-Kochina and Galin was to apply the Riemann
mapping from an appropriate canonical domain (the unit disk $\mathbb D$ in
most situations) onto the phase domain in order to parameterize the free
boundary. The evolution equation for this map, named after its creators,
allows to construct many explicit solutions and to apply methods
of conformal analysis and geometric function theory to investigate
Hele-Shaw flows. In particular, solutions to this equation in the
case of advancing fluid give subordination chains of simply
connected domains which have been studied by L\"owner and Kufarev. The resulting equation
for the family of functions $f(z,t)=a_1(t)z+a_2(t)z^2+\dots$ from $\mathbb D$ onto domains occupied by viscous fluid is
\begin{equation}\label{eq4}
\re[\dot{f}(\zeta,t)\overline{\zeta
f'(\zeta,t)}]=\frac{Q}{2\pi}, \quad |\zeta|=1.
\end{equation}
The corresponding equation in $\mathbb D$ is a first-order integro-PDE
\[
\dot{f}(\zeta,t)=\zeta f'(\zeta,t) \int_0^{2\pi}\frac{Q}{4\pi^2|f'(e^{i\theta},t)|^2}\frac{e^{i\theta}+\zeta}{e^{i\theta}-\zeta}d\theta,\quad |\zeta|<1.
\]

Here $Q$ is positive in the case of injection of negative in the case of suction.
The Polubarinova-Galin  and the
L\"owner-Kufarev equations, having some evident geometric connections,
are of somewhat different nature. While the evolution of the Laplacian growth given by the Polubarinova-Galin equation is completely defined by
the initial conditions,  the L\"owner-Kufarev evolution depends also on an arbitrary control function.
The Polubarinova-Galin
equation is essentially non-linear and the corresponding
subordination chains are of rather complicated nature. The treatment of classical Laplacian growth was given in the monograph \cite{GustafssonVasiliev}.

The newest direction in the development of Hele-Shaw flow is related to Integrable Systems and Mathematical Physics, as a particular case of a contour dynamics.
This story started in 2000 with a short note \cite{Mineev} by Mineev-Weinstein, Wiegmann and Zabrodin, where the idea of the equivalence
of 2D contour dynamics and the dispersionless limit of the integrable 2-D Toda hierarcy appeared for the first time.
A list of complete references
to corresponding works would be rather long so we only list the names of some key contributors:
Wiegmann, Mineev-Weinstein, Zabrodin, Krichever, Kostov, Marshakov, Takebe, Teo  {\it et al.},  and some important references: \cite{Kostov, Krichever3, Krichever4, Marshakov, Mineev3, Takebe, Wiegmann}.
\begin{figure}[h!] \scalebox{1.0}{
\begin{pspicture}(2,1)(15,7)
\pscurve[linewidth=0.15mm, fillstyle=solid,
fillcolor=lightgray](5,5)(3.5,6)(2.5,2)(6,2.5)(5,5) \rput(4.7,3.7){0}
\rput(5.3,7){$y$}\rput(8,4.3){$x$}
\rput(3.7,3){$\Omega^-$}\rput(6,5){$\Omega^+$}
\psline[linewidth=0.15mm]{->}(5,1)(5,7)
\psline[linewidth=0.15mm]{->}(1,4)(8,4)
 \pscircle[fillstyle=solid,
fillcolor=black](5,4){.1}\rput(12.5,4){
\begin{minipage}{8cm}\vspace{-2cm}\begin{itemize}
\item ${\displaystyle M_{-k}=-\int_{\Omega^+}z^{-k} d\sigma_z}$;
\item ${\displaystyle M_0=|\Omega^-|}$;
\item ${\displaystyle M_{k}=\int_{\Omega^-}z^{k} d\sigma_z}$;
\item $k\geq 1$,
\item $t =M_0/\pi$, $t_k=M_k/\pi k$ \newline generalized times.
\end{itemize}
\end{minipage}}
\end{pspicture}}
\end{figure}
Let us consider an `exterior' version of the process when source/sink of viscous fluid is at  $\infty$ and the bounded domain $\Omega^-$ is occupied by the inviscid
one. Then the conformal map $f$ of the exterior of the unit disk onto $\Omega^+$
\[ 
f(\zeta,t)=b(t)\zeta+b_0(t)+\sum\limits_{k=1}^{\infty}\frac{b_j(t)}{\zeta^k},\quad b(t)>0;
\] 
satisfies the analogous boundary equation
\begin{equation}\label{eq4a}
\re[\dot{f}(\zeta,t)\overline{\zeta
f'(\zeta,t)}]=-Q, 
\end{equation}
 Following the definition of Richardson's moments let us define interior and exterior moments as in the above figure.
The integrals for $k=1,2$ are assumed to be properly regularized. 
Then the properties of the moments are
\begin{itemize}
\item $M_0(t)=M_0(0)-Qt$ is `physical time';
\item $M_k$ are conserved for $k\geq 1$;
\item $M_0$ and $\{M_k\}_{k\geq 1}$ determine the domain $\Omega^+$ locally (given $\partial \Omega^+$ is smooth);
\item $\{M_k\}_{k< 0}$ evolve in time in a quite complicated manner;
\item $M_0$ and $\{M_k\}_{k\geq 1}$ can be viewed as local parameters on the space of `shapes';
\end{itemize}

Suppose $\Gamma =\partial \Omega^-=\partial \Omega^+$ is analytic and  
$\phi(x_1,x_2,t)=0$ is an implicit representation of the free
boundary $\Gamma(t)$.
Substituting $x_1=(z+\bar{z})/2$ and $x_2=(z-\bar{z})/2i$ into
this equation and solving it for $\bar{z}$ we obtain that $\Gamma=\{z:\,\,S(z,t)=\bar z\}$.
is given by the {\it Schwarz function} $S(z,t)$ which is defined and analytic in a neighbourhood of $\Gamma$. The Schwarz function can be constructed by means of the
 Cauchy integral
\[
g(z)=-\frac{1}{2\pi i}\int\limits_{\partial
\Omega}\frac{\bar{\zeta}d\zeta}{\zeta-z}.
\]
Define the analytic functions, $g_e(z)$, in 
$\Omega^+$ and  $g_i(z)$, in $\Omega^-$.
On $\Gamma=\partial \Omega$
the jump condition is
\[
g_e(z)-g_i(z)=\bar{z},\quad z\in \partial \Omega.
\]
Then the Schwarz function $S(z)$ is defined formally by
$S(z)=g_e(z)-g_i(z)$. The Cauchy integral implies the Cauchy transform  of $\Omega^+$
\[
g_e(z)=-\frac{1}{2\pi i}\int\limits_{\partial
\Omega}\frac{\bar{\zeta}d\zeta}{\zeta-z}=-\frac{1}{\pi}\int\limits_{\Omega^+}\frac{d\sigma_{\zeta}}{\zeta-z},
\]
with the Laurent expansion
$$
g_e(z)=\sum_{k=0}^\infty \frac{M_k}{z^{k+1}}, \quad z\sim \infty.
$$
 Similarly for $g_i(z)$
$$
g_i(z)=-\sum_{k=1}^\infty {M_{-k}}{z^{k-1}},  \quad z\sim 0;
$$
So formally
$$
S(z)=\sum_{k=-\infty}^\infty \frac{M_k}{z^{k+1}}=\sum_{k=1}^\infty {M_{-k}}{z^{k-1}}+\frac{M_0}{z}+\sum_{k=1}^\infty \frac{M_k}{z^{k+1}}.
$$
Let us write the logarithmic energy as
$$
\mathcal{F}(\Omega^+)=
-\frac{1}{\pi^2}\int_{\mathbb C}\int_{\mathbb C} \log
|z-\zeta| d\sigma_{z}d\sigma_{\zeta},
$$
where $\sigma_{z}$ is a measure supported in $\Omega^+$. It is the potential for the momens
$$
\frac{1}{k}M_{-k}(\Omega) = \frac{\partial \mathcal{F}(\Omega)}{\partial M_k}.
$$
So (see, e.g., \cite{Mineev})  the moments satisfy the 2-D Toda dispersionless lattice hierarchy
 $$\frac{\partial M_{-k}}{\partial t_j}=\frac{ \partial M_{-j}}{\partial t_k},\;\;\;\;\;
  \frac{ \partial M_{-k}}{\partial \bar t_j}=
\frac{\partial \bar M_{-j}}{\partial t_k}.$$
The function $\exp(\mathcal{F}(\Omega^+))=:\tau(\Omega^+)=\tau(\mathbf{t})$ is the $\tau$-function,
and $t=M_0/\pi$,\dots, $t_k=M_k/\pi k$ are generalized times.

The real-valued $\tau$- function becomes the solution to the Hirota equation
\[
S_{f^{-1}}(z)=\frac{6}{z^2}\sum\limits_{k,n=1}^{\infty}\frac{1}{z^{n+k}}\frac{\partial^2\mbox{log}\, \tau}{\partial t_k \partial t_n},
\]
where $z=f(\zeta)$ is the parametric map of the unit disk onto the exterior phase domain and $S_{f^{-1}}(z)$ denotes the Schwarzian derivative of the inverse to $f$.
\[
\frac{M_{-k}}{\pi}=
  \frac{\partial \,\mbox{log}\,\tau}{\partial t_k}\,,
\;\;\;\;\;\;
\frac{\bar M_{-k}}{\pi}=
  \frac{\partial \,\mbox{log}\, \tau}{\partial \bar t_k}\,,
\;\;\;\;\;\; k\geq 1.
\]
The $\tau$-function introduced by the `Kyoto School' as a central element in the description of soliton equation hierarchies.

If $\zeta = e^{i\theta}$, $M_0(t)=M_0(0)-Qt$, then the derivatives are
\[
\frac{\partial f}{\partial \theta} = i \zeta \frac{\partial
f}{\partial\zeta},\quad \frac{\partial f}{\partial t}=Q\frac{\partial f}{\partial M_0}.
\]
Let $f^*(\zeta)=\overline{f(1/\bar \zeta)}$, and let 
$$ \{f,g\}=\zeta \frac{\partial f}{\partial \zeta}\frac{\partial g}{\partial M_0}-\zeta \frac{\partial g}{\partial \zeta}\frac{\partial f}{\partial M_0}.$$
In view of this the Polubarinova-Galin equation (\ref{eq4a}) can be rewritten as $\{f,f^*\}=1$.
This equation is known as the {\it string constraint}.
The equation for the $\tau$-function with a proper initial condition provided by the string equation solves the inverse moments problem for small deformations of a simply connected domain with analytic boundary. Indeed, define the
Schwarzian derivative $S_{F}=\frac{F'''}{F''}-\frac{3}{2}\left(\frac{F''}{F'}\right)^2$, $F=f^{-1}$.
 If we know the Schwarzian derivative $S(\zeta)$, we know the conformal map $w=F(\zeta)=\eta_1/\eta_2$ normalized accordingly, where
$\eta_1$ and $\eta_2$ are linearly independent solutions to the Fuchs equation
\[
w''+\frac{1}{2}S(\zeta)w=0.
\]

The connection extends to the Lax-Sato approach to the dispersionless 2-D Toda hierarchy. In this setting it is shown that a Laurent series for a univalent function that provides an invertible conformal map of the exterior of the unit circle  to the exterior of the domain, can be identified with the Lax function. The $\tau$-function appears to be a generating function for the inverse map. The formalism allows one to associate a notion of $\tau$-function to the analytic curves.

The conformal map 
\[
f(\zeta,t)=b(t)\zeta+b_0(t)+\sum\limits_{k=1}^{\infty}\frac{b_j(t)}{\zeta^k},\quad b(t)>0
\]
 obeys
 the relations
\[
\frac{\partial f}{\partial t_k}=\{H_k,f\}, \quad \frac{\partial f}{\partial \bar t_k}=\{\bar H_k,f\},
\]
where 
\begin{itemize}
\item  $H_k=(f^k(\zeta))_{+}+\frac{1}{2}(f^k(\zeta))_{0}$,
\item $\bar{H}_k=(\bar f^k(1/\zeta))_{-}+\frac{1}{2}( \bar f^k(1/\zeta))_{0}$.
\end{itemize}

In the  paper \cite{Krichever4}, an analog of this theory for multiply-connected domains is developed. The answers are formulated in terms of the so-called Schottky double of the plane with holes. The Laurent basis used in the simply connected case is replaced by the Krichever-Novikov basis. As a corollary, analogs of the 2-D Toda hierarchy depend on standard times plus a finite set of additional variables. The solution of the Dirichlet problem is written in terms of the $\tau$-function of this hierarchy. The relation to some matrix problems is briefly discussed.

\subsection{Fractal growth}

Beno{\^\i}t  Mandelbrot (1924, Warsaw, Poland--2010, Cambridge, Massachusetts, United States)
brought to the world's attention that many natural
objects simply do not have a preconceived form determined by a
characteristic scale. He \cite{M} first saw a visualization of the set named after him, at IBM's Thomas J. Watson Research Center in upstate New York.

Many of the structures in space and
processes reveal new features when magnified beyond their usual
scale in a wide variety of natural and industrial processes, such
as crystal growth, vapor deposition, chemical dissolution,
corrosion, erosion, fluid flow in porous media and biological
growth a surface or an interface, biological processes. A
fractal, a structure coined by Mandelbrot in 1975 (`fractal' from Latin `fractus'), is a
rough or fragmented geometric shape that can be subdivided in
parts, each of which is (at least approximately) a reduced-size
copy of the whole. Fractals are generally self-similar,
independent of scale, and have (by Mandelbrot's own definition)
the Hausdorff dimension strictly
greater than the topological dimension. There are many
mathematical structures that are fractals, e.g., the Sierpinski
triangle, the Koch snowflake, the Peano curve, the Mandelbrot set,
and the Lorenz attractor. One of the ways to model a fractal is
the process of fractal growth that can be either stochastic or
deterministic. A nice overview of fractal growth phenomena is
found in \cite{Viczek}.

Many models of fractal growth patterns combine complex geometry
with randomness. A typical and important model for pattern
formation is {\it Diffusion-Limited
Aggregation} (DLA) (see a
survey in \cite{Hasley}). Considering colloidal particles
undergoing Brownian motion in some fluid and
letting them adhere irreversibly on contact with another one bring
us to the basics of DLA. Fix a seed particle at the origin and
start another one form infinity letting it perform a random walk.
Ultimately, that second particle will either escape to infinity or
contact the seed, to which it will stick irreversibly. Next
another particle starts at infinity to walk randomly until it
either  sticks to the two-particle cluster or escapes to infinity.
This process is repeated to an extent limited only by modeler's
patience. The clusters generated by this process are highly
branched and fractal, see figure to the left.

\begin{wrapfigure}{r}{0.3\textwidth}
\vspace{-20pt}
\begin{center}
\includegraphics[width=0.25\textwidth]{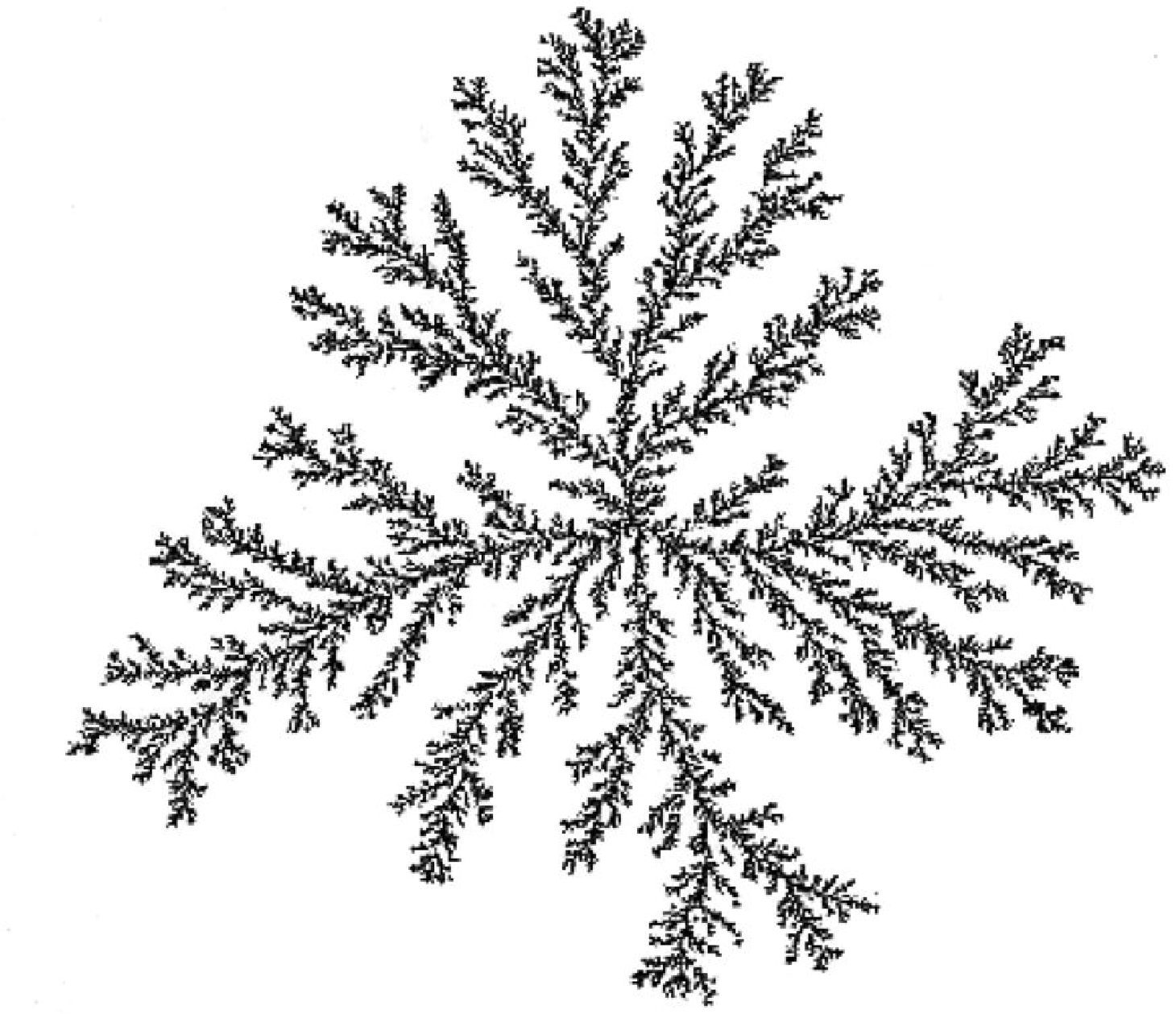}
\end{center}
\vspace{-20pt}
\end{wrapfigure}

The DLA model was introduced in 1981 by Witten and Sander
\cite{WittenSander1}, \cite{WittenSander2}. It was shown to
have relation to dielectric breakdown \cite{Niemeyer}, one-phase
fluid flow in porous media \cite{Chan}, electro-chemical
deposition \cite{Grier}, medical sciences \cite{Savakis}, etc. A
new conformal mapping language to study DLA was proposed by
Hastings and Levitov \cite{Hastings1}, \cite{Hastings2}. They
showed that two-dimensional DLA can be grown by iterating
stochastic conformal maps. Later this method was thoroughly
handled in \cite{Davidovitch}.

For a continuous random walk in 2-D the diffusion equation provides the law for the probability $u(z,t)$ that
the walk reaches a point $z$ at the time $t$,
\[
\frac{\partial u}{\partial t}=\eta \Delta u,
\]
where $\eta$ is the diffusion coefficient. When the cluster growth
rate per surface site is negligible compared to the diffusive
relaxation time, the time dependence of the relaxation may be
neglected (see, e.g., \cite{WittenSander2}). With a steady flux
from infinity and the slow growth of the cluster the left-hand
side derivative can be neglected and we have just the Laplacian
equation for $u$. If $K(t)$ is the closed aggregate at the time
$t$ and $\Omega(t)$ is the connected part of the complement of
$K(t)$ containing infinity, then the probability of the appearance
of the random walker in $\mathbb C\setminus \Omega(t)$ is zero.
Thus, the boundary condition $u(z,t)\big|_{\Gamma(t)}=0$,
$\Gamma(t)=\partial\Omega(t)$ is set. The only source of time
dependence of $u$ is the motion of $\Gamma(t)$. The problem
resembles the classical Hele-Shaw problem, but the complex structure of $\Gamma(t)$ does not allow
us to define the normal velocity in a good way although it is
possible to do this in the discrete models.

Now let us construct a Riemann conformal map $f:\,\mathbb D^-\to
\hat{\mathbb C}$, $\mathbb D^-=\{z\colon |z|>1\}$, which is meromorphic in $\mathbb D^-$,
$\displaystyle
f(\zeta,t)=\alpha(t)\zeta+a_0(t)+\frac{a_1(t)}{\zeta}+\dots$,
$\alpha(t)>0$, and maps $\mathbb D^-$ onto $\Omega(t)$. The boundary
$\Gamma(t)$ need not even be a quasidisk, as considered earlier.
While we are not able to construct a differential equation
analogous to the Polubarinova-Galin one on the unit circle, the
retracting L\"owner subordination chain still exists, and the
function $f(\zeta,t)$ satisfies the equation
\begin{equation}
\dot{f}(\zeta,t)=\zeta f'(\zeta,t)p_f(\zeta,t),\quad\zeta\in \mathbb D^-,\label{basic}
\end{equation}
where $p_f(\zeta,t)=p_0(t)+p_1(t)/\zeta+\dots$ is a Carath\'eodory
function: $\re p(z,t)>0$ for all $\zeta\in \mathbb D^-$ and for almost all
$t\in [0,\infty)$. A difference from the Hele-Shaw problem is that
the DLA problem is well-posed on each level of discreteness  by
construction. An analogue of DLA model was treated by means of
L\"owner chains by Carleson and Makarov in \cite{Carleson-Makarov}. In
this section we follow their ideas as well as those from
\cite{Hohlov2}.

Of course, the fractal growth phenomena can be seen without
randomness. A simplest example of such growth is the Koch
snowflake (Helge von Koch, 1870--1924).
\begin{figure}
\centering
{\scalebox{0.6}{\includegraphics{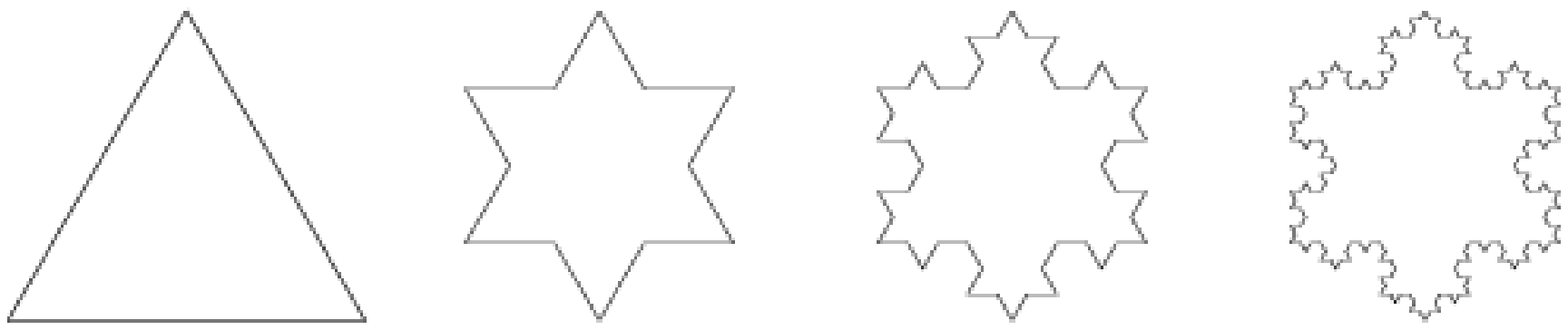}}}
\end{figure}
DLA-like fractal growth without randomness can be found, e.g.,
in \cite{Davidovitch2}.

Returning to the fractal growth we want to study a rather wide
class of models with complex growing structure. We note that
$\alpha(t)=\emk K(t)=\emk \Gamma(t)$.  Let $M(0,2\pi)$ be the
class of positive  measures $\gamma$ on $[0,2\pi]$. The control
function $p_f(\zeta,t)$ in (\ref{basic}) can be represented by the
Riesz--Herglotz formula
\[
p_f(\zeta, t)=\int\limits_{0}^{2\pi}\frac{e^{i\theta}+\zeta}{e^{i\theta}-\zeta}d\gamma_t(\theta),
\]
and $p_0(t)=\|\gamma_t\|$, where $\gamma_t(\theta)\in M(0,2\pi)$
for almost all $t\geq 0$ and absolutely continuous in $t\geq 0$.
Consequently, $\dot{\alpha}(t)=\alpha(t)\|\gamma_t\|$. There is a
one-to-one correspondence between  one-parameter ($t$) families of
measures $\gamma_t$ and L\"owner chains $\Omega(t)$ (in our case
of growing domains $\mathbb C\setminus \Omega(t)$ we have only
surjective correspondence).
\medskip

\noindent
{\it Example 1.} Suppose we have an initial domain $\Omega(0)$.
If the derivative of the measure $\gamma_t$ with respect to the Lebesgue measure is the Dirac
measure $d\gamma_t(\theta)\equiv\delta_{\theta_0}(\theta)d\theta$, then
\[
p_f(\zeta,t)\equiv\frac{e^{i\theta_0}+\zeta}{e^{i\theta_0}-\zeta},
\]
and $\Omega(t)$ is obtained by cutting $\Omega(0)$ along a geodesic arc. The preimage of the endpoint
of this slit is exactly $e^{i\theta_0}$. In particular, if $\Omega(0)$ is a complement of a disk, then
$\Omega(t)$ is $\Omega(0)$ minus a radial slit.
\medskip

\noindent
{\it Example 2.} Let $\Omega(0)$ be a domain bounded by an analytic curve $\Gamma(t)$.
If the derivative of the measure $\gamma_t$ with respect to the Lebesgue measure is
\[
\frac{d\gamma_t(\theta)}{d\theta}=\frac{1}{2\pi|f'(e^{i\theta},t)|^2},
\]
then
\[
p_f(\zeta,t)=\frac{1}{2\pi}
\int\limits_{0}^{2\pi}\frac{1}{|f'(e^{i\theta},t)|^2}\frac{e^{i\theta}+\zeta}{e^{i\theta}-\zeta}d\theta,
\]
and letting $\zeta$ tend to the unit circle we obtain
$\re[\dot{f}\,\,\overline{\zeta f'}]=1$, which corresponds to the
classical Hele-Shaw case, for which the solution exists locally in
time.
\medskip

In the classical Hele-Shaw process the boundary develops by fluid
particles moving in the normal direction. In the discrete DLA
models either lattice or with circular patterns the attaching are
developed in the normal direction too. However, in the continuous
limit it is usually impossible to speak of any normal direction
because of the irregularity of $\Gamma(t)$.

In \cite[Section 2.3]{Carleson-Makarov} this difficulty was circumvented
by evaluating the derivative of $f$ occurring in $\gamma_t$ in the
above L\"owner model slightly outside the boundary of the unit
disk.

  Let $\Omega(0)$ be any simply connected domain, $\infty \in
 \Omega(0)$, $0\not\in \Omega(0)$.
 The derivative of the measure $\gamma_t$ with respect to the
Lebesgue measure is
\[
\frac{d\gamma_t(\theta)}{d\theta}=\frac{1}{2\pi|f'((1+\varepsilon)e^{i\theta},t)|^2},
\]
with sufficiently small positive $\varepsilon$. In this case the
derivative is well defined.

It is worth to mention that the estimate
\[
\frac{\partial\emk \Gamma(t)}{\partial t}=\dot{\alpha}(t)\lesssim
\frac{1}{\varepsilon}
\]
would be equivalent to the Brennan conjecture\index{Brennan
conjecture} (see \cite[Chapter 8]{Pom3}) which is still unproved.
However, Theorem 2.1 \cite{Carleson-Makarov} states that if
\[
R(t)=\max\limits_{\theta\in
[0,2\pi)}|f((1+\varepsilon)e^{i\theta},t)|,
\]
then
\[
\limsup\limits_{\Delta t\to 0}\frac{R(t+\Delta t)-R(t)}{\Delta
t}\leq \frac{C}{\varepsilon},
\]
for some absolute constant $C$.  Carleson and Makarov
\cite{Carleson-Makarov} were, with the above model, able to establish an
estimate for the growth of the cluster or aggregate given as a
lower bound for the time needed to multiply the capacity of the
aggregate by a suitable constant. This is an analogue of the upper
bound  for the size of the cluster in two-dimensional stochastic
DLA given by \cite{Kesten}.

\subsection{Extension to several complex variables}

Pfaltzgraff in 1974 was the first one who extended the basic
L\"owner theory to $\C^n$ with the aim of giving bounds and
growth estimates to some classes of univalent mappings from the
unit ball of $\C^n$. The theory was later developed by 
Poreda, Graham, Kohr, Kohr, Hamada and others (see
\cite{G-K} and \cite{BCM2}).

Since then, a lot of work was devoted to successfully extend the theory to several complex variables, and finally, it has been accomplished. The main and dramatic difference between the one dimensional case and the higher dimensional case is essentially due to the lack of a Riemann mapping theorem or, which is the same, to the existence of the so-called Fatou-Bieberbach phenomena, that is, the existence of proper open subsets of $\C^n$, $n\geq 2$, which are biholomorphic to $\C^n$. This in turn implies that there are no satisfactory growth estimates for univalent functions on the ball (nor in any other simply connected domain) of $\C^n$, $n\geq 2$.

The right way to proceed is then to look at the L\"owner theory in higher dimension as a discrete complex dynamical system, in the sense of random iteration, and to consider abstract basins of attraction as the analogous of the L\"owner chains. In order to state the most general results, we first give some definitions and comment on that.

Most estimates in the unit disc can be rephrased in terms of the Poincar\'e distance, which gives a more intrinsic point of view. In higher dimension one can replace the Poincar\'e distance with the Kobayashi distance. First, we recall the definition of Kobayashi distance (see \cite{Kob} for details and properties). Let $M$ be a complex manifold and let $z,w\in M$. A {\sl chain of analytic discs} between $z$ and $w$ is a finite family of holomorphic mappings $f_j:\D \to M$, $j=1,\ldots, m$ and points $t_j\in (0,1)$ such that
\[
f_1(0)=z, f_1(t_1)=f_2(0),\ldots, f_{m-1}(t_{m-1})=f_m(0), f_m(t_m)=w.
\]
We denote by $\mathcal C_{z,w}$ the set of all chains of analytic discs joining $z$ to $w$. Let $L\in \mathcal C_{z,w}$. The {\sl length of $L$}, denoted by $\ell (L)$ is given by
\[
\ell (L):=\sum_{j=1}^m \omega (0, t_j)=\sum_{j=1}^m \frac{1}{2}\log \frac{1+t_j}{1-t_j}.
\]
We define the {\sl Kobayashi (pseudo)distance $k_M(z,w)$} as follows:
\[
k_M(z,w):=\inf_{L\in \mathcal C_{z,w}} \ell(L).
\]
If $M$ is connected, then $k_M(z,w)<+\infty$ for all $z,w\in M$. Moreover, by construction, it satisfies the triangular inequality. However, it might be that $k_M(z,w)=0$ even if $z\neq w$ (a simple example is represented by $M=\C$, where $k_\C\equiv 0$). In the unit disc $\D$, $k_\D$ coincides with the Poincar\'e distance.

\begin{definition}
A complex manifold $M$ is said to be {\sl (Kobayashi) hyperbolic} if $k_M(z,w)>0$ for all $z,w\in M$ such that $z\neq w$. Moreover, $M$ is said {\sl complete hyperbolic} if $k_M$ is complete.
\end{definition}

Important examples of complete hyperbolic manifolds are given by bounded convex domains in $\C^n$.

The main property of the Kobayashi distance is the following: let $M,N$ be two complex manifolds and let $f: M \to N$ be holomorphic. Then for all $z,w\in M$ it holds
\[
k_N(f(z), f(w))\leq k_M(z,w).
\]

It can be proved that if $M$ is complete hyperbolic, then $k_M$ is Lipschitz continuous (see \cite{AB}). If $M$ is a bounded strongly convex domain in $\C^n$ with smooth boundary, Lempert (see, {\sl e.g.} \cite{Kob}) proved that the Kobayashi distance is of class $C^\infty$ outside the diagonal. In any case, even if $k_M$ is not smooth, one can consider the differential $dk_M$ as the Dini-derivative of $k_M$, which coincides with the usual differential at almost every point in $M\times M$.

As it is clear from the one-dimensional general theory of L\"owner's equations, evolution families and Herglotz vector fields are pretty much related to semigroups and infinitesimal generators. Kobayashi distance can be used to characterize infinitesimal generators of continuous semigroups of holomorphic self-maps of complete hyperbolic manifolds. The following characterization of infinitesimal generators is proved for strongly convex domains in \cite{BCD}, and in general in \cite{AB}:

\begin{theorem}\label{autonomo}
Let $M$ be a complete hyperbolic complex manifold and let $H:M \to TM$ be an holomorphic vector field on $M$. Then the following are equivalent.
\begin{enumerate}
\item $H$ is an infinitesimal generator,
\item For all $z, w\in M$ with $z\neq w$
it holds $$(dk_M)_{(z,w)}\cdot (H(z),H(w))\leq 0.$$
\end{enumerate}
\end{theorem}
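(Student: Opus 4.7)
The plan is to establish the two implications separately. The forward implication $(1)\Rightarrow(2)$ is essentially formal; the substantive content lies in $(2)\Rightarrow(1)$.

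For $(1)\Rightarrow(2)$: assuming $H$ generates a continuous semigroup $(\phi_t)_{t\geq 0}\subset \Hol(M,M)$ with $\phi_0=\mathrm{id}_M$ and $\tfrac{d}{dt}\phi_t=H\circ\phi_t$, I would invoke the fundamental contraction property of the Kobayashi pseudodistance under holomorphic self-maps, which yields $k_M(\phi_t(z),\phi_t(w))\leq k_M(z,w)$ for all $z,w\in M$ and $t\geq 0$. Hence the function $g(t):=k_M(\phi_t(z),\phi_t(w))$ is non-increasing, so its upper Dini derivative at $t=0$ is non-positive. Since $k_M$ is locally Lipschitz continuous on $M\times M$, this Dini derivative is exactly $(dk_M)_{(z,w)}(H(z),H(w))$, as understood in the excerpt, giving~(2).

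For $(2)\Rightarrow(1)$: because $H$ is holomorphic, classical ODE theory produces, for each $z\in M$, a unique maximal solution $t\mapsto\phi_t(z)$ of the Cauchy problem $\dot w=H(w)$, $w(0)=z$, defined on some interval $[0,T_z)$ with $T_z\in(0,+\infty]$. The point is to prove $T_z=+\infty$ for every $z\in M$, since then $\phi_t(z):=w^z(t)$ satisfies the semigroup law by uniqueness, and its holomorphy in $z$ follows from holomorphic dependence on initial data. The key step is to promote the infinitesimal inequality~(2) to a global Lyapunov estimate along the flow: for any $z,w\in M$ and any $t$ in the common domain of $\phi_\cdot(z)$ and $\phi_\cdot(w)$, the function $t\mapsto k_M(\phi_t(z),\phi_t(w))$ has non-positive upper Dini derivative, and therefore is non-increasing, yielding
\[
k_M(\phi_t(z),\phi_t(w))\leq k_M(z,w), \qquad 0\leq t<\min\{T_z,T_w\}.
\]
From this I would derive semicompleteness by contradiction. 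Suppose $T_{z_0}<+\infty$ for some $z_0\in M$. By standard extension theory, the orbit $\{\phi_t(z_0):0\leq t<T_{z_0}\}$ must eventually leave every compact subset of $M$. On the other hand, local existence and continuous dependence on initial data provide $\delta>0$ and $r>0$ such that $\phi_t(w)$ is defined for all $w$ in the Kobayashi ball $\{w:k_M(w,z_0)<r\}$ and all $t\in[0,\delta]$. Iterating the flow in steps of length $\delta$ and using the Lyapunov estimate above to control the Kobayashi distance between $\phi_{n\delta}(z_0)$ and the image under $\phi_{n\delta}$ of nearby initial points, one confines the orbit of $z_0$ to a countable union of Kobayashi balls of radii bounded by a fixed constant. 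Since $M$ is complete hyperbolic, closed Kobayashi balls are compact, contradicting the escape from compact sets.

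The main obstacle lies in making the backward direction fully rigorous: one must carefully handle the fact that $k_M$ is only Lipschitz, so that $dk_M$ has to be interpreted via Dini derivatives, and verify that the chain rule applied to $t\mapsto k_M(\phi_t(z),\phi_t(w))$ genuinely produces the pairing $(dk_M)_{(\phi_t(z),\phi_t(w))}(H(\phi_t(z)),H(\phi_t(w)))$ almost everywhere, together with a standard lemma stating that a continuous function whose upper Dini derivative is almost-everywhere non-positive is non-increasing. For strongly convex domains with smooth boundary, Lempert's regularity theorem ($k_M\in C^\infty$ off the diagonal) makes this immediate and the argument reduces to the one in~\cite{BCD}. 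In the general complete hyperbolic setting one uses the refined semicontinuity properties of the Dini differential of $k_M$ established in~\cite{AB}.
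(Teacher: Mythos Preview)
The paper does not actually prove this theorem: it is stated with the preamble ``The following characterization of infinitesimal generators is proved for strongly convex domains in~\cite{BCD}, and in general in~\cite{AB}'' and no argument is given in the text. Your outline is consistent with the strategy of those references --- contraction of $k_M$ under holomorphic self-maps for $(1)\Rightarrow(2)$, and a Lyapunov-type monotonicity of $t\mapsto k_M(\phi_t(z),\phi_t(w))$ combined with completeness of $k_M$ for $(2)\Rightarrow(1)$ --- and you correctly flag that the delicate point is the Dini-derivative interpretation of $dk_M$ when $k_M$ is merely Lipschitz, which is exactly the refinement that~\cite{AB} supplies over~\cite{BCD}.

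One remark on your escape-from-compacta argument: the paragraph beginning ``Iterating the flow in steps of length $\delta$'' is vague as written and does not obviously close. Knowing that $k_M(\phi_t(z),\phi_t(w))\le k_M(z,w)$ controls the \emph{relative} distance of two orbits, but not directly the distance $k_M(\phi_t(z_0),z_0)$ of a single orbit from a fixed base point, which is what you need to trap the orbit in a Kobayashi ball. The cleaner route (and the one actually used) is to show that for $0\le s\le t<T_{z_0}$ the increment $k_M(\phi_s(z_0),\phi_t(z_0))$ is controlled uniformly in terms of $t-s$ --- for instance by combining the monotonicity with a local bound on the Kobayashi length of $H$ over a compact neighbourhood and the semigroup property --- so that $t\mapsto\phi_t(z_0)$ is uniformly continuous into $(M,k_M)$ and hence extends past $T_{z_0}$ by completeness. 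Your sketch gestures at this but would need that missing speed estimate to go through.
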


This apparently harmless characterization contains instead all the needed information to get good growth estimates. In particular, it is equivalent to the Berkson-Porta representation formula in the unit disc.

\subsection{$L^d$-Herglotz vector fields and Evolution families on complete hyperbolic manifolds}

Let $M$ be a complex manifold, and denote by  $\|\cdot \|$  a Hermitian metric on $TM$ and by $d_M$ the corresponding integrated distance.

\begin{definition}
\label{Her-vec-man} Let $M$ be a complex manifold. A \textit{weak holomorphic vector field of
order $d\geq 1$} on $M$ is a mapping $G:M\times \R^+\to
TM$ with the following properties:
\begin{itemize}
\item[(i)] The mapping $G(z,\cdot)$ is measurable on $\R^+$ for all
$z\in M$.
\item[(ii)] The mapping $G(\cdot,t)$ is a holomorphic vector field on $M$ for all $t\in \R^+$.
\item[(iii)] For any compact set $K\subset M$ and all $T>0$, there exists
a function $C_{K,T}\in L^d([0,T],\mathbb{R}^+)$ such that
$$\|G(z,t)\|\leq C_{K,T}(t),\quad z\in K, \mbox{ a.e.}\ t\in [0,T].$$
\end{itemize}

A \textit{Herglotz vector field of order $d\geq 1$} is a weak holomorphic vector field $G(z,t)$  of order $d$ with the property that
$M\ni z\mapsto G(z,t)$ is an infinitesimal generator
for almost all $t\in [0,+\infty)$.
\end{definition}

If $M$ is complete hyperbolic, due to the previous characterization of infinitesimal generators,  a weak holomorphic vector field $G(z,t)$  of order $d$  if a Herglotz vector field
of order $d$ if and only if
\begin{equation}\label{herglotz_def}
(dk_M)_{(z,w)}\cdot(G(z,t),G(w,t))\leq 0,\quad z,w\in M, z\neq w,\ \mbox{ a.e. }
t\geq 0.
\end{equation}
This was proved in \cite{BCD}  for strongly convex domains, and in \cite{AB} for the general case.

One can also generalize the concept of evolution families:

\begin{definition}\label{L^d_EF}
Let $M$ be a complex manifold.
A family $(\varphi_{s,t})_{0\leq s\leq t}$ of holomorphic
self-mappings of $M$ is  an {\sl
 evolution family of order $d\geq 1$} (or $L^d$-evolution family) if it satisfies the {\sl evolution property}
\begin{equation}\label{evolution_property}
\varphi_{s,s}={\sf id},\quad \varphi_{s,t}=\varphi_{u,t}\circ \varphi_{s,u},\quad 0\leq s\leq u\leq t,
\end{equation}
 and if for any $T>0$ and for any compact set
  $K\subset\subset M$ there exists a  function $c_{T,K}\in
  L^d([0,T],\R^+)$ such that
  \begin{equation}\label{ck-evd}
d_M(\varphi_{s,t}(z), \varphi_{s,u}(z))\leq \int_{u}^t
c_{T,K}(\xi)d \xi, \quad z\in K,\  0\leq s\leq u\leq t\leq T.
  \end{equation}
\end{definition}
It can be proved that  all elements of  an evolution family are  univalent (cf. \cite[Prop. 2.3]{AbstractLoew}).

The classical L\"owner and Kufarev-L\"owner equations can now be completely generalized as follows:

\begin{theorem}\label{prel-thm}
Let $M$ be a complete hyperbolic complex manifold. Then for any  Herglotz vector field $G$ of order
$d\in [1,+\infty]$ there exists a unique $L^d$-evolution family
$(\varphi_{s,t})$  over $M$ such that for all
$z\in M$
  \begin{equation}\label{solve}
\frac{\de \varphi_{s,t}}{\de t}(z)=G(\varphi_{s,t}(z),t) \quad
\hbox{a.e.\ } t\in [s,+\infty).
  \end{equation}
Conversely for any  $L^d$-evolution family $(\varphi_{s,t})$  over $M$ there exists a  Herglotz vector field $G$ of
order $d$ such that \eqref{solve} is satisfied. Moreover,
if $H$ is another weak holomorphic vector field which satisfies
\eqref{solve} then $G(z,t)=H(z,t)$ for all $z\in M$ and almost
every $t\in \R^+$.
\end{theorem}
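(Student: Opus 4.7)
The plan is to build the evolution family from the Herglotz vector field by solving the Cauchy problem \eqref{solve} pointwise in $z$, and conversely to recover the vector field by $t$-differentiation. For the forward direction, fix $s\geq 0$ and $z\in M$. Conditions WHVF1\,--\,WHVF3 in Definition~\ref{Her-vec-man} (measurability in $t$, holomorphy in $z$, and the local $L^d$-majorant) place the ODE $\dot w=G(w,t)$, $w(s)=z$, precisely in the Carath\'eodory framework, so a unique maximal solution $w(\cdot)=:\varphi_{s,\cdot}(z)$ exists on some interval $[s,T_{s,z})$. The algebraic property EF2, namely $\varphi_{s,t}=\varphi_{u,t}\circ\varphi_{s,u}$, is then immediate from uniqueness of Carath\'eodory solutions, since the tail of $w(\cdot)$ starting at time $u$ solves the same IVP from $\varphi_{s,u}(z)$.

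The crux --- and the main obstacle --- is upgrading this to $T_{s,z}=+\infty$ for every $z$. This is where the hypothesis that $G(\cdot,t)$ be an \emph{infinitesimal generator} for a.e.\ $t$ enters, via the Kobayashi-distance criterion of Theorem~\ref{autonomo}. Whenever $\varphi_{s,t}(z)$ and $\varphi_{s,t}(w)$ are simultaneously defined,
\begin{equation*}
\tfrac{d}{dt}k_M(\varphi_{s,t}(z),\varphi_{s,t}(w))=(dk_M)_{(\varphi_{s,t}(z),\varphi_{s,t}(w))}\bigl(G(\varphi_{s,t}(z),t),G(\varphi_{s,t}(w),t)\bigr)\leq 0,
\end{equation*}
so the Kobayashi distance is nonincreasing along the flow. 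To convert this into an a priori bound without a circular argument (we have no globally defined reference trajectory yet), I would approximate $G$ by piecewise-constant-in-$t$ Herglotz vector fields $G_n$: on each subinterval $G_n(\cdot,t)$ is a \emph{fixed} infinitesimal generator whose semigroup is globally defined by Theorem~\ref{autonomo} in its autonomous form, so the approximate flow $\varphi^n_{s,t}$, obtained by composing those semigroups, is defined for all $t\geq s$ and contracts $k_M$. Complete hyperbolicity of $M$ makes Kobayashi balls relatively compact; together with WHVF3 this yields normality of the family $(\varphi^n_{s,t})$ on compacta, and extracting a limit while invoking uniqueness in the Carath\'eodory ODE produces the desired global solution $\varphi_{s,t}$.

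Once global existence is established, the remaining properties of the evolution family are routine. Holomorphy of $\varphi_{s,t}(\cdot)$ follows from the classical holomorphic dependence of Carath\'eodory solutions on initial data (the $\bar z$-derivative of both sides of the integral form of the ODE vanishes because $G$ is holomorphic in $z$), while the continuity estimate EF3 follows from
\begin{equation*}
d_M(\varphi_{s,u}(z),\varphi_{s,t}(z))\leq\int_u^t\|G(\varphi_{s,\xi}(z),\xi)\|\,d\xi
\end{equation*}
combined with WHVF3 on a compact neighbourhood of the trajectory (which exists by the Kobayashi-contraction argument above).

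For the converse, given an $L^d$-evolution family $(\varphi_{s,t})$, set
\begin{equation*}
G(z,t):=\lim_{h\to 0^+}\tfrac{1}{h}\bigl(\varphi_{t,t+h}(z)-z\bigr),
\end{equation*}
whose existence for almost every $t$ (for each fixed $z$) comes from Lebesgue differentiation applied to the integral bound in EF3. The semigroup-like identity EF2 then yields $\partial_t\varphi_{s,t}(z)=G(\varphi_{s,t}(z),t)$ for a.e.\ $t$, so $(\varphi_{s,t})$ is recovered from $G$ via the forward construction. Holomorphy of $G(\cdot,t)$ transfers from that of $\varphi_{t,t+h}$ by the Cauchy estimates in $z$; and since each $\varphi_{t,t+h}$ is a holomorphic self-map of~$M$ and hence Kobayashi-contracting, dividing $k_M(\varphi_{t,t+h}(z),\varphi_{t,t+h}(w))\leq k_M(z,w)$ by $h$ and letting $h\to 0^+$ yields \eqref{herglotz_def} for $G(\cdot,t)$, so Theorem~\ref{autonomo} identifies it as an infinitesimal generator for a.e.~$t$. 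The essential uniqueness of $G$ asserted at the end of the statement is then immediate from this pointwise definition.
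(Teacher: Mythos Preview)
The paper is a survey and does not actually prove Theorem~\ref{prel-thm}; immediately after the statement it records that the result was established in stages in \cite{BCM1}, \cite{HKM}, and finally in full generality in \cite{AB}. So there is no in-paper proof to compare against. I can, however, comment on your outline on its own terms and against what the paper does say about those references.

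Your overall architecture---Carath\'eodory local theory plus a Kobayashi-distance monotonicity argument for global existence, then $t$-differentiation for the converse---is the right shape, and indeed tracks the philosophy the paper describes. But two of your key steps are exactly the ones that, according to the paper's own discussion, forced the passage from \cite{BCM1} to \cite{AB}, and you gloss over both.

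First, on a general complete hyperbolic manifold $k_M$ is only Lipschitz, and $dk_M$ is to be read as a Dini derivative (as the paper emphasizes just before Theorem~\ref{autonomo}). Writing $\tfrac{d}{dt}k_M(\varphi_{s,t}(z),\varphi_{s,t}(w))=(dk_M)(\ldots)$ as an honest equality is therefore not available; one only gets an upper Dini-derivative inequality, and turning that into genuine monotonicity along Carath\'eodory trajectories is work. This is precisely why \cite{BCM1} assumed $k_M$ of class $C^1$ off the diagonal and got only $d=\infty$; your sketch tacitly makes the same simplifying assumption.

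Second, your global-existence step via piecewise-constant-in-$t$ approximants is plausible but incomplete: normality of $(\varphi^n_{s,t})$ and uniqueness for the limiting ODE do not by themselves force subsequential limits to solve that ODE---you need a stability/compactness statement for Carath\'eodory solutions under the $L^d$ control in WHVF3, which you have not supplied.

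In the converse direction, your limit defining $G(z,t)$ exists for a.e.\ $t$ \emph{depending on $z$}; to obtain a single null set so that $G(\cdot,t)$ is holomorphic on all of $M$ for a.e.\ $t$ you need a separability argument (e.g.\ a countable dense set together with the Cauchy estimates) that you omit. Finally, ``dividing $k_M(\varphi_{t,t+h}(z),\varphi_{t,t+h}(w))\le k_M(z,w)$ by $h$'' is a slip---you mean the difference quotient $\bigl(k_M(\varphi_{t,t+h}(z),\varphi_{t,t+h}(w))-k_M(z,w)\bigr)/h\le 0$.
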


Equation \eqref{solve} is the bridge between the $L^d$-Herglotz vector fields and $L^d$-evolution families. In \cite{BCM1} the result has been proved for any complete hyperbolic complex manifold $M$ with Kobayashi distance of class $C^1$ outside the diagonal, but the construction given there only allowed to start with evolution families of order $d=+\infty$. Next, in \cite{HKM} the case of $L^d$-evolution families has been proved for the case $M=\B^n$  the unit ball in $\C^n$. Finally, in \cite{AB},  Theorem \ref{prel-thm} was proved in full generality.

The previous equation, especially in the case of the unit ball of $\C^n$ and for the case $d=+\infty$, with evolution families fixing the origin and having some particular first jets at the origin has been studied by many authors, we cite here Pfaltzgraff \cite{Pf74}, \cite{Pf75}, Poreda \cite{Por91}, Graham, Hamada, Kohr \cite{GHK01}, Graham, Hamada, Kohr, Kohr \cite{G-H-K-K} (see also \cite{GKP03}).

Using the so-called `product formula', proved in convex domains by Reich and Shoikhet \cite{RS} (see also \cite{Reich-Shoikhet}), and later generalized on complete hyperbolic manifold in \cite{AB} we get a strong relation between the semigroups generated at a fixed time by a Herglotz vector field and the associated evolution family. Let $G(z,t)$ be a Herglotz vector field on a complete hyperbolic complex manifold $M$. For almost all $t\geq 0$, the holomorphic vector field $M\ni z\mapsto G(z,t)$ is an infinitesimal generator. Let $(\phi_r^t)$ be the associated semigroups of holomorphic self-maps of $M$. Let $(\varphi_{s,t})$ be the evolution family associated to $G(z,t)$. Then, uniformly on compacta of $M$ it holds
\[
\phi_t^r=\lim_{m\to \infty} \varphi_{t,t+\frac{r}{m}}^{\circ m}=\lim_{m\to \infty} \underbrace{(\varphi_{t,t+\frac{r}{m}}\circ \ldots\circ \varphi_{t,t+\frac{r}{m}})}_m.
\]

\subsection{L\"owner chains on complete hyperbolic manifolds}

Although one could easily guess how to extend the notion  of Herglotz vector fields and evolution families  to several complex variables, the concept of L\"owner chains is not so easy to extend. For instance, starting from a Herglotz vector field on the unit ball of $\C^n$, one would be tempted to define in a natural way L\"owner chains with range in $\C^n$. However, sticking with such a definition, it is rather hard to get a complete solution to the L\"owner PDE. In fact, in case $D=\B^n$ the unit ball, much effort has been done to show that, given an evolution family $(\varphi_{s,t})$ on $\B^n$ such that $\varphi_{s,t}(0)=0$ and $d(\varphi_{s,t})_0$ has a special form, then there exists an associated L\"owner chain. We cite here the contributions of  Pfaltzgraff \cite{Pf74}, \cite{Pf75}, Poreda \cite{Por91}, Graham, Hamada, Kohr \cite{GHK01}, Graham, Hamada, Kohr, Kohr \cite{G-H-K-K}, Arosio \cite{A10}, Voda \cite{Vo}. In the last two mentioned papers, resonances phenomena among the eigenvalues of $d(\varphi_{s,t})_0$ are taken into account.

The reason for these difficulties are due to the fact that, although apparently natural, the definition of L\"owner chains as a more or less regular family of univalent mappings from the unit ball to $\C^n$ is not the right one. And the reason why this is meaningful in one dimension is just because of the Riemann mapping theorem, as we will explain.

Indeed, as shown before, there is essentially no difference in considering evolution families or Herglotz vector fields in the unit ball of $\C^n$ or on complete hyperbolic manifolds, since the right estimates to produce the L\"owner equation are provided just by the completeness of the Kobayashi distance and its contractiveness properties.

The right point of view is to consider evolution families as random iteration families,  and thus, the `L\"owner chains' are just the charts of the abstract basins of attraction of such a dynamical system. To be more precise, let us recall the theory developed in \cite{AbstractLoew}. Interesting and surprisingly enough, regularity conditions--which were basic in the classical theory for assuming the classical limiting process to converge---do not play any role.

\begin{definition}\label{algebraic_EF}
Let $M$ be a complex manifold.
An {\sl algebraic evolution family} is a family $(\varphi_{s,t})_{0\leq s\leq t}$ of univalent self-mappings of $M$  satisfying the  evolution property \eqref{evolution_property}.
\end{definition}

A $L^d$-evolution family is an algebraic evolution family because all elements of a $L^d$-evolution family are injective as we said before.

\begin{definition}
Let $M, N$ be two complex manifolds of the same dimension.  A family  $(f_t)_{t\geq 0}$ of holomorphic mappings $f_t:M\to N$ is a {\sl subordination chain}  if for each $0\leq s\leq t$ there
exists a holomorphic mapping $v_{s,t}:M\to M$ such that
$f_s=f_t\circ v_{s,t}$.
A subordination chain $(f_t)$ and an algebraic evolution family $(\varphi_{s,t})$ are {\sl associated} if
$$ f_s=f_t\circ \varphi_{s,t},\quad 0\leq s\leq t.$$

An {\sl algebraic L\"owner chain} is a  subordination chain such that each mapping $f_t: M\to N$ is univalent.   The {\sl range}  of an algebraic  L\"owner chain is defined as
\[
\rg(f_t):=\bigcup_{t\geq 0}f_t(M).
\]
\end{definition}

Note that an algebraic L\"owner chain $(f_t)$ has the property that
$$f_s(M)\subset f_t(M),\quad 0\leq s\leq t.$$

We have the following result which relates algebraic evolution families with algebraic L\"owner chains, whose proof is essentially based on abstract categorial analysis:

\begin{theorem}\cite{AbstractLoew}\label{ABHKthm}
Let $M$ be a complex manifold. Then any  algebraic evolution family $(\varphi_{s,t})$ on $M$ admits  an associated algebraic L\"owner chain $(f_t\colon M\to N)$.
Moreover if $(g_t\colon M\to Q)$ is a subordination chain associated with $(\varphi_{s,t})$ then there exist a holomorphic mapping $\Lambda\colon \rg(f_t)\to Q$ such that $$g_t=\Lambda\circ f_t,\quad \forall t\geq 0.$$ The mapping $\Lambda$ is univalent if and only if $(g_t)$ is an algebraic L\"owner chain, and in that case $\rg(g_t)=\Lambda(\rg(f_t)).$
\end{theorem}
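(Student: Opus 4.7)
The natural strategy is to construct the target manifold $N$ as a direct (inductive) limit in the category of complex manifolds, taking the algebraic evolution family $(\varphi_{s,t})$ as the directed system. Concretely, let $M_t$ be a copy of $M$ for each $t\ge 0$ and form the quotient
\[
N:=\Bigl(\bigsqcup_{t\ge 0}M_t\Bigr)\Big/\sim,
\]
where $\sim$ is the equivalence relation generated by $(z,s)\sim(\varphi_{s,t}(z),t)$ for all $0\le s\le t$ and $z\in M$. Define $f_t\colon M\to N$ by sending $z\in M_t$ to its equivalence class. The algebraic evolution property $\varphi_{s,t}=\varphi_{u,t}\circ\varphi_{s,u}$ guarantees that $\sim$ is well-defined (in particular transitive), and by construction $f_s=f_t\circ\varphi_{s,t}$ for $s\le t$, with $\rg(f_t)=N$ so the chain automatically fills its ambient manifold.

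The main technical step is to endow $N$ with a complex manifold structure for which every $f_t$ is a biholomorphism onto its (open) image. One gives $N$ the quotient topology coinduced by the $f_t$ and transfers charts from $M$ via $f_t^{-1}$: on an overlap $f_s(M)\cap f_t(M)$ with $s\le t$, the transition function agrees with $\varphi_{s,t}$ (or, on the set where it is locally invertible, with its local inverse), hence is holomorphic because the $\varphi_{s,t}$ are univalent. The principal obstacle I expect is verifying Hausdorffness of $N$: given two classes $[z,s]\neq[w,t]$ with $s\le t$, one has $\varphi_{s,t}(z)\neq w$ in $M_t$ and can separate them by $f_t$-images of disjoint neighbourhoods in $M$, but one must promote such local separation to a global one along cofinal sequences in $\R^+$, using the directedness of the indexing to reduce any would-be offending pair $([z_1,s_1],[z_2,s_2])$ to a pair lying in a single $f_t(M)$. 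Once $N$ is a complex manifold and each $f_t$ is a holomorphic embedding (injectivity being inherited from the univalence of the $\varphi_{s,t}$), the family $(f_t)$ is by construction an algebraic L\"owner chain associated with $(\varphi_{s,t})$.

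The universality follows from the universal property of the direct limit. If $(g_t\colon M\to Q)$ is any subordination chain associated with $(\varphi_{s,t})$, then the compatibility $g_s=g_t\circ\varphi_{s,t}$ is exactly what is needed to define $\Lambda\colon N\to Q$ by $\Lambda([z,t]):=g_t(z)$; this map is well-defined and holomorphic in each chart $f_t(M)$, hence on $\rg(f_t)=N$, and satisfies $g_t=\Lambda\circ f_t$. If $\Lambda$ is univalent then each $g_t=\Lambda\circ f_t$ is a composition of injective holomorphic maps, so $(g_t)$ is an algebraic L\"owner chain. Conversely, assume $(g_t)$ is a L\"owner chain and $\Lambda(p)=\Lambda(q)$ with $p=f_{t_1}(x)$, $q=f_{t_2}(y)$, say $t_1\le t_2$. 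Using $f_{t_1}=f_{t_2}\circ\varphi_{t_1,t_2}$ one rewrites $p=f_{t_2}(\varphi_{t_1,t_2}(x))$, so that $g_{t_2}(\varphi_{t_1,t_2}(x))=\Lambda(p)=\Lambda(q)=g_{t_2}(y)$; univalence of $g_{t_2}$ forces $\varphi_{t_1,t_2}(x)=y$, and hence $p=q$. In this case $\rg(g_t)=\bigcup_{t\ge0}g_t(M)=\bigcup_{t\ge0}\Lambda(f_t(M))=\Lambda(\rg(f_t))$, completing the statement.
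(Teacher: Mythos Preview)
Your approach is correct and is precisely the one the paper alludes to: the paper does not reproduce the proof but states that it ``is essentially based on abstract categorial analysis,'' which is exactly your direct-limit construction $N=\varinjlim (M,\varphi_{s,t})$ with the $f_t$ as the canonical insertions and $\Lambda$ obtained from the universal property. Your identification of Hausdorffness as the one nontrivial point is accurate, and your reduction---pushing any pair of distinct classes into a single $f_t(M)$ via the total order on $[0,\infty)$ and then separating there using injectivity of $f_t$---is the standard argument; the phrase about ``cofinal sequences'' is more elaborate than necessary, since for any $p=[z_1,s_1]$ and $q=[z_2,s_2]$ the single index $t=\max(s_1,s_2)$ already suffices.
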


The previous theorem shows that the range $\rg (f_t)$ of an algebraic L\"owner chain $(f_t)$ is uniquely defined up to biholomorphisms. In particular, given an algebraic evolution family $(\varphi_{s,t})$ one can define its {\sl L\"owner range} ${\sf Lr}(\varphi_{s,t})$  as the biholomorphism class  of the range of any associated algebraic L\"owner chain.

In particular, if $M=\D$ the unit disc, then the L\"owner range of any evolution family on $\D$ is a simply connected non compact Riemann surface, thus, by the uniformization theorem, the L\"owner range is either the unit disc $\D$ or $\C$. Therefore, in the one-dimensional case, one can harmlessly stay with the classical definition of L\"owner chains as a family of univalent mappings with image in $\C$.

One can also impose $L^d$-regularity as follows:

\begin{definition}
Let $d\in [1,+\infty]$. Let $M, N$ be two complex manifolds of
the same dimension. Let $d_N$ be the distance induced by a Hermitian metric on $N$. An algebraic L\"owner chain $(f_t\colon M\to N)$  is a {\sl $L^d$-L\"owner chain} (for $d\in [1,+\infty]$) if for any compact set $K\subset\subset M$ and any
  $T>0$ there exists a $k_{K,T}\in L^d([0,T], \R^+)$ such that
 \begin{equation}\label{LCdef}
d_N(f_s(z), f_t(z))\leq \int_{s}^t k_{K,T}(\xi)d\xi
 \end{equation}
for all $z\in K$ and for all $0\leq s\leq t\leq T$.
\end{definition}

The $L^d$-regularity passes from evolution family to L\"owner chains:

\begin{theorem}\cite{AbstractLoew}
Let  $M$ be a  complete hyperbolic manifold with a given Hermitian metric and $d\in [1,+\infty]$. Let $(\varphi_{s,t})$ be an algebraic evolution family on $M$ and let $(f_t\colon M\to N)$ be an associated algebraic L\"owner
chain.
Then $(\varphi_{s,t})$ is a $L^d$-evolution family on $M$ if and only if $(f_t)$ is a $L^d$-L\"owner chain.
\end{theorem}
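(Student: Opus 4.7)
The plan is to derive both implications from the cocycle identity $f_s = f_t \circ \varphi_{s,t}$, valid for $0 \leq s \leq t$, which gives the key equality
\begin{equation*}
d_N(f_s(z), f_t(z)) = d_N\bigl(f_t(\varphi_{s,t}(z)), f_t(z)\bigr).
\end{equation*}
The task then reduces to comparing $d_M$ and $d_N$ via uniform Lipschitz bounds for $\{f_t\}_{t \in [0,T]}$ (forward direction) and for $\{f_t^{-1}\}_{t \in [0,T]}$ (converse) on suitable compacta.

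For the implication $L^d$-evolution family $\Longrightarrow$ $L^d$-L\"owner chain, I would fix a compact $K \subset M$ and $T > 0$. The $L^d$-EF bound EF3, combined with the evolution property EF2 and the holomorphy of each $\varphi_{s,t}$, implies joint continuity of $(s,t,z) \mapsto \varphi_{s,t}(z)$; consequently, both
\begin{equation*}
K_1 := \{\varphi_{s,t}(z) : z \in K,\ 0 \leq s \leq t \leq T\} \quad \text{and} \quad K_2 := \{\varphi_{t,T}(p) : p \in K_1,\ 0 \leq t \leq T\}
\end{equation*}
are compact in $M$. Since $f_t = f_T \circ \varphi_{t,T}$, the holomorphic family $\{f_t\}_{t \in [0,T]}$ is uniformly bounded on $K_1$ by the compact set $f_T(K_2) \subset N$. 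Cauchy estimates in finitely many charts covering $K_1$, together with the local equivalence of the Hermitian distances $d_M, d_N$ to the Euclidean distances in those charts, then yield a constant $L > 0$ such that $d_N(f_t(p), f_t(q)) \leq L\, d_M(p,q)$ on a neighborhood of $K_1$, uniformly in $t \in [0,T]$. The key equality above becomes
\begin{equation*}
d_N(f_s(z), f_t(z)) \leq L\, d_M(\varphi_{s,t}(z), z) \leq L \int_s^t c_{T,K}(\xi)\, d\xi,
\end{equation*}
so $k_{K,T} := L\, c_{T,K} \in L^d([0,T], \mathbb{R}^+)$ verifies LC3.

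For the converse, set $w := \varphi_{s,u}(z)$, so that $\varphi_{s,t}(z) = \varphi_{u,t}(w)$ and $f_u(w) = f_t(\varphi_{u,t}(w))$. Then
\begin{equation*}
d_M(\varphi_{u,t}(w), w) = d_M\bigl(f_t^{-1}(f_u(w)),\ f_t^{-1}(f_t(w))\bigr),
\end{equation*}
and the same type of compactness argument (now using continuity of $\varphi_{s,u} = f_u^{-1} \circ f_s$, granted by LC3) confines the relevant $w$'s in a compact $K_1 \subset M$ and the corresponding images $f_u(w), f_t(w)$ in a compact $K_1' \subset N$. The main obstacle is securing a Lipschitz bound for $\{f_t^{-1}\}_{t \in [0,T]}$ on a neighborhood of $K_1'$ uniform in $t$, which reduces to showing that $\|df_t(p)^{-1}\|$ is bounded uniformly in $(t,p) \in [0,T] \times K_1$. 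I would argue by contradiction: if no such uniform bound existed, there would exist sequences $t_n \to t_* \in [0,T]$ and $p_n \to p_* \in K_1$ along which $df_{t_n}(p_n)$ degenerates; the $L^d$-LC inequality forces $f_{t_n} \to f_{t_*}$ locally uniformly, so by Weierstrass' theorem $df_{t_n}(p_n) \to df_{t_*}(p_*)$, and the limit would be non-invertible, contradicting the univalence of $f_{t_*}$ (via Hurwitz's theorem, which guarantees that the limit of univalent maps in the same dimension is again univalent). With this uniform inverse bound in hand, Cauchy estimates furnish a uniform Lipschitz constant $L'$ for $\{f_t^{-1}\}$, whereupon $c_{T,K} := L'\, k_{K_1, T} \in L^d([0,T], \mathbb{R}^+)$ realizes EF3 and closes the argument.
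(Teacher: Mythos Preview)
The paper is a survey and does not contain a proof of this statement; it merely cites the result from \cite{AbstractLoew}. So there is no in-paper argument to compare against directly. Your strategy---exploit the cocycle identity $f_s=f_t\circ\varphi_{s,t}$ to rewrite both conditions in terms of a single map, then reduce each implication to a uniform Lipschitz bound for $\{f_t\}_{t\in[0,T]}$ (resp.\ $\{f_t^{-1}\}_{t\in[0,T]}$) on compacta obtained via Cauchy estimates and a normal-families/Hurwitz argument---is the standard and correct one, and is essentially how the result is established in the cited reference.

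One technical point deserves care. Your Cauchy estimates yield a uniform bound on $\|df_t\|$ over $K_1$, but to pass to the global inequality $d_N(f_t(p),f_t(q))\leq L\,d_M(p,q)$ for \emph{all} $p,q\in K_1$ you must integrate along paths that remain where the derivative bound holds; a near-minimizing path for $d_M$ need not stay in $K_1$ (and ``complete hyperbolic'' refers to the Kobayashi distance, not the Hermitian one, so $d_M$-geodesics are not guaranteed). The usual fix is a dichotomy: choose a relatively compact open $U\supset K_1$ on which the derivative bound $L$ holds; for $p,q\in K_1$ with $d_M(p,q)$ small enough that a near-minimizing path stays in $U$, the local Lipschitz estimate applies; for $d_M(p,q)\geq\epsilon$, use that $f_t(K_1)$ has uniformly bounded $d_N$-diameter $D$, so $d_N(f_t(p),f_t(q))\leq D\leq (D/\epsilon)\,d_M(p,q)$. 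The same refinement applies to the inverse bound in the converse direction. With this adjustment, your argument goes through.
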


Once the general L\"owner equation is established and L\"owner chains have been well defined, even the L\"owner-Kufarev PDE can be generalized:

\begin{theorem}\cite{AbstractLoew}
Let $M$ be a complete hyperbolic complex manifold, and let $N$ be a complex manifold of the same dimension.
Let $G:M\times \R^+\to TM$ be a Herglotz vector
field of order $d\in[1,+\infty]$
associated with the $L^d$-evolution family $(\varphi_{s,t})$. Then a family of univalent mappings $(f_t\colon M\to N)$ is an $L^d$-L\"owner chain associated with $(\varphi_{s,t})$ if and only if   it is locally absolutely continuous
on $\R^+$ locally uniformly with respect to $z\in M$ and solves the  L\"owner-Kufarev PDE
\begin{equation*}
\frac{\partial f_s}{\partial s}(z)=-(df_s)_zG(z,s),\quad\mbox{a.e. }s\geq 0,z\in M.
\end{equation*}
\end{theorem}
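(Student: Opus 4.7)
The plan is to prove both implications by differentiating the associated relation $f_s=f_t\circ\varphi_{s,t}$ in one direction, and by using uniqueness of solutions to an ODE obtained from the chain rule in the other. The passage from locally absolutely continuous solutions of the PDE to the composition formula is the conceptual core.

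For the forward direction, suppose $(f_t)$ is an $L^d$-L\"owner chain associated with $(\varphi_{s,t})$, so $f_s=f_t\circ\varphi_{s,t}$ for all $0\le s\le t$. First I would establish that, for fixed $t$, the map $s\mapsto \varphi_{s,t}$ satisfies $\partial_s\varphi_{s,t}(z)=-(d\varphi_{s,t})_zG(z,s)$ for a.e.\ $s\in[0,t]$ and every $z\in M$. This follows by combining the evolution property $\varphi_{s,t}=\varphi_{s+h,t}\circ\varphi_{s,s+h}$ with the forward equation $\partial_t\varphi_{s,t}(z)=G(\varphi_{s,t}(z),t)$ from Theorem~\ref{prel-thm}: writing $\varphi_{s,s+h}(z)=z+\int_s^{s+h}G(\varphi_{s,\xi}(z),\xi)\,d\xi$ and expanding, one gets the backward equation at Lebesgue points of $G(\cdot,\cdot)$. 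Then, by the chain rule applied to $f_s(z)=f_t(\varphi_{s,t}(z))$ (with $t$ fixed, $s$ varying), differentiating in $s$ gives
\[
\frac{\partial f_s}{\partial s}(z)=(df_t)_{\varphi_{s,t}(z)}\cdot\partial_s\varphi_{s,t}(z)=-(df_t)_{\varphi_{s,t}(z)}(d\varphi_{s,t})_zG(z,s)=-(df_s)_zG(z,s),
\]
for a.e.\ $s$. Local absolute continuity of $s\mapsto f_s(z)$ uniformly on compact sets of $M$ is inherited from the $L^d$-L\"owner chain property (cf.\ the previous theorem in the excerpt, which already relates $L^d$-regularity on both sides).

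For the converse, assume $(f_t)$ is a family of univalent mappings, locally absolutely continuous in $t$ uniformly on compacta of $M$, solving the L\"owner--Kufarev PDE. Fix $s\ge0$ and $z\in M$ and define
\[
\Psi_{s,z}(t):=f_t(\varphi_{s,t}(z)),\qquad t\ge s.
\]
I would show $\Psi_{s,z}$ is locally absolutely continuous in $t$ and compute, at every point where both the PDE holds and the ODE $\partial_t\varphi_{s,t}(z)=G(\varphi_{s,t}(z),t)$ is satisfied,
\[
\frac{d\Psi_{s,z}}{dt}(t)=\frac{\partial f_t}{\partial t}(\varphi_{s,t}(z))+(df_t)_{\varphi_{s,t}(z)}G(\varphi_{s,t}(z),t)=0.
\]
Hence $\Psi_{s,z}(t)=\Psi_{s,z}(s)=f_s(z)$ for all $t\ge s$, yielding $f_s=f_t\circ\varphi_{s,t}$. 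Once this subordination identity is established, the inclusion $f_s(M)\subset f_t(M)$ and univalence give an algebraic L\"owner chain, and the $L^d$-integrability estimate required in the definition of an $L^d$-L\"owner chain follows from the local absolute continuity assumption together with a bound on $\|(df_t)_z\|$ on compact sets, which is uniform on compact time intervals by Cauchy estimates (since $f_t$ is holomorphic and bounded on slightly larger compacta, by continuity of $t\mapsto f_t(z)$).

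The main obstacle I foresee is the technical justification of the chain rule and the exchange of derivatives in the almost-everywhere setting: $G(\cdot,t)$ is defined only for a.e.\ $t$, $\partial_t\varphi_{s,t}$ and $\partial_tf_t$ exist only a.e., and one must check that the null sets can be chosen independently of $z$ locally uniformly in order to conclude $\Psi_{s,z}$ is absolutely continuous with vanishing derivative a.e. This is handled by Fubini-type arguments exploiting the holomorphy of $G(\cdot,t)$ and $f_t$: the locally uniform-in-$z$ $L^d$ bounds from the definitions of Herglotz vector field, $L^d$-evolution family and $L^d$-L\"owner chain upgrade pointwise a.e.\ identities to identities valid for every $z$ and a.e.\ $t$, with the exceptional set of $t$ independent of $z$. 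With this technical point handled, both directions reduce to a one-line differentiation.
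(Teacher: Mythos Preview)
The paper is a survey and does not give a detailed proof of this theorem; it merely cites \cite{AbstractLoew}. The only hint in the paper is the one-dimensional analogue in Section~\ref{generaltheory}, where it is said that the L\"owner--Kufarev PDE ``follows by differentiating the structural equation'' $f_s=f_t\circ\varphi_{s,t}$, and that conversely one builds the L\"owner chain ``by means of the associated evolution family''. Your proposal follows exactly this outline, so it is in line with what the paper sketches.

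Your argument is essentially correct and is the standard one. A small comment on the forward direction: rather than first deriving the backward equation $\partial_s\varphi_{s,t}(z)=-(d\varphi_{s,t})_z G(z,s)$ and then composing with $f_t$, it is slightly cleaner to differentiate $f_t(z)=f_{t+h}(\varphi_{t,t+h}(z))$ in $h$ at $h=0$ directly, which immediately gives $0=\partial_t f_t(z)+(df_t)_z G(z,t)$; this avoids the detour through the backward equation. Your identification of the genuine technical issue --- choosing the exceptional null set of times independently of $z$ --- is accurate, and your proposed resolution via holomorphy in $z$ plus the locally uniform $L^d$ bounds is the right one.
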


\subsection{The L\"owner range and the general L\"owner PDE in $\C^n$} As we saw before, given a $L^d$-evolution family (or just an algebraic evolution family) on a complex manifold, it is well defined the L\"owner range ${\sf Lr}(\varphi_{s,t})$ as the class of biholomorphism of the range of any associated L\"owner chain.

In practice, it is interesting to understand the L\"owner range of an evolution family on a given manifold. For instance, one may ask whether, starting from an evolution family on the ball, the L\"owner range is always biholomorphic to an open subset of $\C^n$. This problem turns out to be related to the so-called {\sl Bedford's conjecture}. Such a conjecture states that given a complex manifold $M$, an automorphism $f:M\to M$ and a $f$-invariant compact subset $K\subset M$ on which the action of $f$ is hyperbolic, then the stable manifold of $K$ is biholomorphic to $\C^m$ for some $m\leq \dim M$. The equivalent formulation which resembles the problem of finding the L\"owner range of an evolution family in the unit ball is in \cite{FornaessStensones}, see also \cite{A} where such a relation is well explained.

In  \cite[Section 9.4]{A10} it is shown that there exists an algebraic evolution family $(\varphi_{s,t})$ on $\B^3$ which does not admit any associated algebraic L\"owner chain with range in $\C^3$. Such an evolution family is however not $L^d$ for any $d\in [1,+\infty]$.

In the recent paper \cite{ABW} it has been proved the following result:

\begin{theorem}
Let $D\subset \C^n$ be a complete hyperbolic starlike domain (for instance the unit ball). Let $(\varphi_{s,t})$ be an $L^d$-evolution family, $d\in [1,+\infty]$. Then the L\"owner range ${\sf Lr}(\varphi_{s,t})$ is biholomorphic to a Runge and Stein open domain in $\C^n$.
\end{theorem}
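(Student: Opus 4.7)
My plan is to realise the L\"owner range as an increasing union of starlike Runge and Stein open subsets of $\mathbb C^n$. Since $D$ is starlike, after translation I may assume $0\in D$ is a starlike centre, so that the radial contractions $\tau_r(z)=rz$, $r\in(0,1]$, form an absorbing semigroup of univalent self-maps of $D$. The goal is to build an associated L\"owner chain $(f_t\colon D\to\mathbb C^n)$ with each $f_t(D)$ starlike. By the uniqueness part of Theorem~\ref{ABHKthm}, any such chain realises ${\sf Lr}(\varphi_{s,t})$ up to biholomorphism, so its range $N=\bigcup_{t\ge 0} f_t(D)\subset\mathbb C^n$ will be the domain we are after.

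The first step is to construct $(f_t)$ as a limit
\begin{equation*}
f_t(z)\;=\;\lim_{s\to+\infty}\Lambda_s\circ\varphi_{t,s}(z),
\end{equation*}
where $\Lambda_s\in\mathrm{Aff}(\mathbb C^n)$ is chosen so that $\Lambda_s\circ\varphi_{0,s}$ fixes $0$ with a prescribed linear part at $0$, obtained by inverting the asymptotic linearisation of $d(\varphi_{0,s})_0$. Locally uniform convergence on $D$ should follow from the Kobayashi contractivity $k_D(\varphi_{t,s}(z),\varphi_{t,s}(w))\le k_D(z,w)$ (which, combined with the normalisation, yields equicontinuity of the rescaled family and Montel compactness in $\mathrm{Hol}(D,\mathbb C^n)$). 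The $L^d$-regularity of $(\varphi_{s,t})$ transfers to the limit, so $(f_t)$ is an $L^d$-L\"owner chain. The semigroup identity $\varphi_{t,u}\circ\varphi_{s,t}=\varphi_{s,u}$ passes to the limit and yields the chain relation $f_s=f_t\circ\varphi_{s,t}$; univalence of each $f_t$ follows from the univalence of $\varphi_{t,s}$ by a Hurwitz-type argument.

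The second step is to identify $f_t(D)$ as starlike in $\mathbb C^n$. Since $\tau_r(D)\subset D$ for $r\in(0,1]$ and each $\Lambda_s$ is affine, the approximants satisfy $\Lambda_s\circ\varphi_{t,s}\circ\tau_r(D)\subset\Lambda_s\circ\varphi_{t,s}(D)$; passing to the limit (and recentering at $f_t(0)$) yields $r\bigl(f_t(D)-f_t(0)\bigr)\subset f_t(D)-f_t(0)$, so $f_t(D)$ is starlike with respect to $f_t(0)$. Starlike open subsets of $\mathbb C^n$ are Runge and Stein: the straight-line homotopy to the centre provides a plurisubharmonic exhaustion and polynomial approximation of holomorphic functions by Taylor sections. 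Finally, the nested increasing union $N=\bigcup_{t\ge 0} f_t(D)$ of Runge and Stein open subsets of $\mathbb C^n$ is again Runge and Stein, by the Behnke--Stein theorem together with preservation of Runge-ness under increasing unions in $\mathbb C^n$.

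The main obstacle is the construction of the affine normalisations $\Lambda_s$ and the proof of convergence of $\Lambda_s\circ\varphi_{t,s}$. In one variable the Schwarz lemma selects a scalar dilation (the classical $e^t$-normalisation); in several variables $d(\varphi_{0,s})_0$ is an invertible matrix whose iterates may exhibit resonances and non-trivial Jordan structure, so the asymptotic linearisation must be extracted carefully to avoid blow-up or loss of univalence. Without the starlike hypothesis on $D$ the limit may even fail to embed in $\mathbb C^n$ at all, as is suggested by the counterexamples related to Bedford's conjecture discussed above; it is precisely the radial semigroup $\tau_r$ supplied by starlikeness that both provides the compactness needed for convergence and transfers starlikeness of $D$ to each image domain $f_t(D)$, enabling the Runge/Stein conclusion.
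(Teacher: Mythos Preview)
Your second step contains a genuine error. From $\tau_r(D)\subset D$ you correctly deduce $\Lambda_s\circ\varphi_{t,s}(\tau_r(D))\subset\Lambda_s\circ\varphi_{t,s}(D)$, and hence $f_t(rD)\subset f_t(D)$. But this is a tautology: $rD\subset D$ so of course $f_t(rD)\subset f_t(D)$. It does \emph{not} yield $r\bigl(f_t(D)-f_t(0)\bigr)\subset f_t(D)-f_t(0)$, because $f_t(rz)$ is not $r f_t(z)+(1-r)f_t(0)$ unless $f_t$ is affine. Starlikeness of the source domain simply does not transfer to the image under a biholomorphism, and an affine post-composition $\Lambda_s$ cannot repair this. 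So the claim that each $f_t(D)$ is starlike in $\mathbb C^n$ is unsupported, and with it the Runge/Stein conclusion via starlikeness collapses.

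Your first step is also not carried out: you identify the convergence of $\Lambda_s\circ\varphi_{t,s}$ as the main obstacle, flag resonances and Jordan blocks of $d(\varphi_{0,s})_0$, but do not actually establish convergence. The affine normalisations $\Lambda_s$ blow up as $s\to\infty$, so Montel does not apply directly, and nothing in the starlike hypothesis supplies the needed compactness.

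The paper's proof (from \cite{ABW}) takes a completely different route that sidesteps both problems. It starts from the \emph{abstract} L\"owner chain $(f_t\colon D\to N)$ whose existence is guaranteed by Theorem~\ref{ABHKthm}; the range $N$ is then an increasing union of copies of $D$ with the $L^d$-regularity controlling how successive copies sit inside one another. A result of Docquier and Grauert gives that each inclusion is a Runge pair, so $N$ is Stein. The embedding of $N$ into $\mathbb C^n$ is then obtained via the Anders\'en--Lempert approximation theory for automorphisms of $\mathbb C^n$, not by any direct limit of normalised $\varphi_{t,s}$. In particular, the argument never asserts that the image domains are starlike.
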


The proof, which starts from the existence of a L\"owner chain with abstract range, is based on the study of manifolds which are union of balls, using a result by Docquier and Grauert to show that the regularity hypothesis guarantees Runge-ness and then one can use approximation results of Anders\'{e}n   and  Lempert in order to construct a suitable embedding.

As a corollary of the previous consideration, we have a general solution to L\"owner PDE in higher dimension, which is the full analogue of the one-dimensional situation:

\begin{theorem}\cite{ABW}
Let $D\subset \C^N$ be a complete hyperbolic starlike domain.
Let $G:D\times \R^+\to \C^N$ be a Herglotz vector
field of order $d\in[1,+\infty]$. Then there exists a family  of univalent mappings $(f_t\colon D\to \C^N)$ of order $d$ which solves the L\"owner PDE
\begin{equation}\label{L-PDE}
\frac{\partial f_t}{\partial t}(z)=-df_t(z)G(z,t),\quad\mbox{a.a. }t\geq 0,\forall z\in D.
\end{equation}
Moreover, $R:=\cup_{t\geq 0}f_t(D)$ is a Runge and Stein domain in $\C^N$ and any other solution to \eqref{L-PDE} is of the form $(\Phi \circ f_t)$ for a suitable holomorphic map $\Phi:R\to \C^N$.
\end{theorem}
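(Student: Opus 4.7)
The plan is to reduce the statement to the three black boxes already established in the survey: Theorem~\ref{prel-thm} (the Herglotz\,$\leftrightarrow$\,evolution family correspondence on complete hyperbolic manifolds), Theorem~\ref{ABHKthm} (existence and uniqueness up to biholomorphism of an algebraic L\"owner chain associated to an algebraic evolution family), the regularity upgrade that passes $L^d$ from the evolution family to an associated L\"owner chain, the L\"owner--Kufarev PDE characterization of associated chains, and the Runge--Stein realization theorem proved just above. With these in hand the proof is essentially formal.

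First I would apply Theorem~\ref{prel-thm} to the Herglotz vector field $G$ to obtain a unique $L^d$-evolution family $(\varphi_{s,t})$ on $D$ satisfying $\partial_t\varphi_{s,t}(z)=G(\varphi_{s,t}(z),t)$ for a.e.\ $t\geq s$. Next, by Theorem~\ref{ABHKthm}, I would pick an algebraic L\"owner chain $(\tilde f_t\colon D\to N)$ associated to $(\varphi_{s,t})$, where $N$ is some abstract complex $N$-manifold; the regularity result stated immediately after the definition of $L^d$-L\"owner chains upgrades this to an $L^d$-L\"owner chain of the same order~$d$, and the PDE characterization then gives
\[
\frac{\partial \tilde f_t}{\partial t}(z)=-(d\tilde f_t)_z\,G(z,t),\qquad \text{a.e.\ }t\ge0,\ \forall z\in D.
\]

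Now comes the crucial step, which is also the only non-formal one: invoke the previous theorem (the one preceding the statement to be proved) to conclude that the L\"owner range ${\sf Lr}(\varphi_{s,t})$, i.e.\ the biholomorphism class of $\tilde R:=\bigcup_{t\ge0}\tilde f_t(D)$, is biholomorphic to a Runge and Stein domain $R\subset\C^N$. Fix such a biholomorphism $\Psi\colon \tilde R\to R$ and set $f_t:=\Psi\circ \tilde f_t\colon D\to\C^N$. Each $f_t$ is univalent, the family $(f_t)$ inherits the $L^d$-regularity from $(\tilde f_t)$ via the Lipschitz character of $\Psi$ on compacta, and the chain rule
\[
\frac{\partial f_t}{\partial t}(z)=(d\Psi)_{\tilde f_t(z)}\frac{\partial \tilde f_t}{\partial t}(z)=-(d\Psi)_{\tilde f_t(z)}(d\tilde f_t)_z\,G(z,t)=-(df_t)_z\,G(z,t)
\]
delivers the L\"owner PDE~\eqref{L-PDE}. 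By construction $\bigcup_t f_t(D)=\Psi(\tilde R)=R$ is Runge and Stein. Finally, if $(g_t\colon D\to \C^N)$ is any other solution of~\eqref{L-PDE}, the PDE characterization forces $(g_t)$ to be an $L^d$-L\"owner chain associated to $(\varphi_{s,t})$ (since the relations $g_s=g_t\circ\varphi_{s,t}$ can be read off by integrating the PDE along the flow of $G$), and then the uniqueness clause of Theorem~\ref{ABHKthm} produces a holomorphic $\Phi\colon R\to\C^N$ with $g_t=\Phi\circ f_t$ for all $t$.

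The only genuine obstacle is the starlikeness/Runge step, but that is absorbed into the previously stated theorem, whose proof relies on Docquier--Grauert and Anders\'en--Lempert approximation; everything else is bookkeeping. A small point of care I would treat explicitly is the verification that composition by $\Psi$ preserves local absolute continuity with the same $L^d$-majorant (using that $\Psi$ is Lipschitz on compacta because $R$ is hyperbolic, or simply by shrinking to relatively compact subsets), and that the associated-chain hypothesis needed to invoke the uniqueness part of Theorem~\ref{ABHKthm} for $(g_t)$ is indeed implied by its solving~\eqref{L-PDE}, which is exactly the content of the PDE characterization of L\"owner chains stated at the end of the previous subsection.
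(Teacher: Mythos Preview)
Your proposal is correct and follows precisely the route the paper itself indicates: the theorem is presented there explicitly ``as a corollary of the previous consideration,'' and your argument unpacks that corollary by chaining Theorem~\ref{prel-thm}, Theorem~\ref{ABHKthm}, the $L^d$-regularity transfer, the L\"owner--Kufarev PDE characterization, and the Runge--Stein realization theorem in exactly the order one would expect. One cosmetic slip: you justify that $\Psi$ is Lipschitz on compacta ``because $R$ is hyperbolic,'' but a Runge--Stein domain in $\C^N$ need not be hyperbolic; the correct (and simpler) reason is that $\Psi$ is holomorphic, hence smooth, hence Lipschitz on compacta.
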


In general, one can infer some property of the L\"owner range from the dynamics of the evolution family.
In order to state the result, let us recall what the Kobayashi pseudometric is:

\begin{definition}
Let $M$ be a complex manifold. The {\sl Kobayashi pseudometric} $\kappa_M: TM \to \R^+$ is defined by
\[
\kappa_M(z; v):=\inf\{ r>0: \exists g: \D \to M\  \hbox{holomorphic}\ : g(0)=z, g'(0)=\frac{1}{r}v\}.
\]
\end{definition}

The Kobayashi pseudometric has the remarkable property of being contracted by holomorphic maps, and its integrated distance is exactly the Kobayashi pseudodistance. We refer the reader to  \cite{Kob} for details.

\begin{definition}
Let $(\varphi_{s,t})$ be an algebraic evolution family on a complex manifold $M$. For $v\in T_zM$ and $s\geq 0$  we define
\begin{equation}\label{beta}
\beta^s_z(v):=\lim_{t\to \infty} \kappa_M(\varphi_{s,t}(z); (d\varphi_{s,t})_z(v)).
\end{equation}
\end{definition}

Since the Kobayashi pseudometric is
contracted by holomorphic mappings the limit in \eqref{beta} is well defined.

The function $\beta$ is the bridge between the dynamics of an algebraic evolution family  $(\varphi_{s,t})$ and the geometry of its L\"owner range. Indeed, in \cite{AbstractLoew} it is proved that, if $N$ is a representative of the L\"owner range of $(\varphi_{s,t})$ and $(f_t:M\to N)$ is an associated algebraic L\"owner chain, then
for all $z\in M$ and $v\in T_zM$ it follows
\[
f_s^*\kappa_N(z;v)=\beta^s_z(v).
\]

In the unit disc case, if $(\varphi_{s,t})$ is an algebraic evolution family, the previous formula allows to determine the L\"owner range: by the Riemann mapping theorem the L\"owner range is either $\C$ or $\D$. The first being non-hyperbolic, if $\beta^s_z(v)=0$ for some $s>0, z\in \D$ ($v$ can be taken to be $1$), then the L\"owner range is $\C$, otherwise it is $\D$.

Such a result can be generalized to a complex manifold $M$.
Let ${\sf aut}(M)$ denote the group of
holomorphic automorphisms of a complex manifold $M$. Using a result by Forn\ae ss and Sibony \cite{F-S}, in \cite{AbstractLoew} it is shown that the previous formula implies

\begin{theorem}
Let $M$ be a complete hyperbolic complex manifold and assume that
$M/\sf{aut}(M)$ is compact. Let $(\varphi_{s,t})$ be an algebraic evolution
family on $M$.  Then
\begin{enumerate}
\item If there exists $z\in M$, $s\geq0$ such that $\beta^s_z(v)\neq 0$ for
all $v\in T_zM$ with $v\neq 0$ then  ${\sf Lr}(\varphi_{s,t})$ is biholomorphic to $M$.
\item If there exists $z\in M$, $s\geq0$ such that $\dim_\C\{v\in T_zM: \beta_z^s(v)= 0\}=1$  then  ${\sf Lr}(\varphi_{s,t})$ is a fiber bundle with fiber $\mathbb{C}$
over a closed complex submanifold of $M$.
\end{enumerate}
\end{theorem}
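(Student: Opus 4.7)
The plan is to transfer the hypotheses on $\beta$ into statements about the Kobayashi pseudometric of a concrete model of ${\sf Lr}(\varphi_{s,t})$. By Theorem~\ref{ABHKthm} I fix an associated algebraic L\"owner chain $(f_t\colon M\to N)$ with $N$ representing ${\sf Lr}(\varphi_{s,t})$, and work with the identity $\beta_z^s(v) = f_s^*\kappa_N(z;v)$ recorded just before the theorem. Univalence of $f_s$ together with $\dim_{\mathbb{C}} N = \dim_{\mathbb{C}} M$ make $f_s$ a local biholomorphism, so the kernel of $\beta_z^s$ at $z$ corresponds under $df_s$ to the kernel of $\kappa_N$ at $f_s(z)$. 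A useful auxiliary fact is that $\beta_z^s$ is upper semicontinuous in $(z,v)$ as a decreasing pointwise limit of continuous Kobayashi lengths along the flow.

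For case (1), I would invoke the Forn\ae ss--Sibony rigidity dichotomy from \cite{F-S}: for a holomorphic map between complex manifolds of equal dimension, the pulled-back Kobayashi pseudometric either coincides identically with the source one or is degenerate on a dense open subset. The nondegeneracy hypothesis at the single point $(z,v)$ rules out the second alternative, forcing $f_s^*\kappa_N = \kappa_M$; in particular $f_s$ is infinitesimally and hence integrally a Kobayashi isometry. Completeness of $k_M$ then makes $f_s(M)$ closed in $N$, while univalence keeps it open; connectedness of $N = \bigcup_{t\geq 0} f_t(M)$ forces $f_s(M)=N$, so $f_s\colon M \xrightarrow{\sim} N$ is a biholomorphism.

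For case (2), upper semicontinuity together with the cocompactness of $M$ under ${\sf aut}(M)$ propagates the rank-$1$ kernel to an ${\sf aut}(M)$-saturated open dense subset, yielding a rank-$1$ holomorphic null-distribution for $\kappa_N$ on an open subset of $N$. The Forn\ae ss--Sibony structure theorem then produces a holomorphic foliation $\mathcal{F}$ of $N$ by Kobayashi-flat Riemann surfaces. Each leaf is necessarily a quotient of $\mathbb{C}$, and the torus alternative is excluded because a compact elliptic leaf would lie in some $f_t(M)$ and pull back to a compact complex curve in the hyperbolic manifold $M$, which is impossible. The leaf space $N/\mathcal{F}$ then inherits a nondegenerate Kobayashi pseudometric, and transporting a local holomorphic transversal to the pull-back foliation $f_s^{-1}(\mathcal{F})$ via $f_s$ identifies $N/\mathcal{F}$ with a closed complex submanifold of $M$; simple connectedness of the $\mathbb{C}$-fibres combined with an Oka-principle style argument finally upgrades $N\to N/\mathcal{F}$ to a locally trivial holomorphic $\mathbb{C}$-bundle.

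The main obstacle, I expect, is the last step of case (2): globalising a holomorphic transversal so that $N/\mathcal{F}$ appears as a closed submanifold of $M$ itself (and not merely of an abstract quotient), and promoting the foliated projection to a locally trivial fibre bundle. Both points lean essentially on the homogeneity of $M$ to spread a local construction around, and the second also uses the specific topology of holomorphic $\mathbb{C}$-bundles over a hyperbolic base.
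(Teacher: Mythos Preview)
The paper does not give a self-contained proof here: it simply records the identity $f_s^*\kappa_N=\beta^s$ and then invokes the Forn\ae ss--Sibony theorem on \emph{increasing unions} of complex manifolds, applied to $N=\bigcup_{t\ge 0} f_t(M)$, which is an increasing union of open subsets each biholomorphic to the fixed complete hyperbolic manifold $M$ with $M/{\sf aut}(M)$ compact. That theorem directly yields both conclusions: either $N$ is biholomorphic to $M$ (when $\kappa_N$ is nondegenerate at a point), or $N$ is a holomorphic $\mathbb C$-bundle over a closed complex submanifold of $M$ (when the kernel of $\kappa_N$ is one-dimensional somewhere). The formula $f_s^*\kappa_N=\beta^s$ is used only to translate these Kobayashi-metric hypotheses on $N$ into the dynamical hypotheses on $\beta^s_z$.

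Your proposal correctly identifies the formula and the reference, but the dichotomy you attribute to Forn\ae ss--Sibony in case~(1) is not a theorem: it is \emph{false} that for an arbitrary holomorphic map between equidimensional manifolds the pulled-back Kobayashi pseudometric either agrees with the source one or degenerates on a dense open set (already $z\mapsto z/2$ on $\mathbb D$ is a counterexample). The Forn\ae ss--Sibony result is not about a single map $f_s$; it is about the structure of a manifold that is an increasing union of copies of $M$, and the homogeneity hypothesis $M/{\sf aut}(M)$ compact enters by letting one renormalise the biholomorphisms $f_t$ via automorphisms and pass to a limit. Your argument in case~(1) therefore has no valid first step. In case~(2) you are essentially trying to reprove the Forn\ae ss--Sibony structure theorem by hand, and the difficulties you flag at the end (globalising a transversal, trivialising the bundle) are exactly what that theorem absorbs; there is no need to redo this once you quote the right statement.
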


\end{document}